\documentclass[12pt]{amsart}
\usepackage[english]{babel}
\usepackage{graphicx}
\usepackage{amsmath,amssymb,hyperref,srcltx}
\selectlanguage{english}

\newcommand{\diff}[2]{\mbox{{\rm Diff}{${\,}_{#1}({\mathbb C}^{#2},0)$}}}
\newcommand{\diffh}[2]{\mbox{$\widehat{\rm Diff}{{\,}_{#1}({\mathbb C}^{#2},0)}$}}

\newcommand{\cn}[1]{\mbox{(${\mathbb C}^{#1},0$)}}

\newtheorem{pro}{Proposition}[section]
\newtheorem{teo}{Theorem}[section]

\newtheorem{cor}{Corollary}[section]

\newtheorem{lem}{Lemma}[section]

\theoremstyle{remark}
\newtheorem{rem}{Remark}[section]

\theoremstyle{definition}
\newtheorem{defi}{Definition}[section]

\begin{document}

\title[Dimension of groups of diffeomorphisms]{Finite dimensional groups of local diffeomorphisms}

\author{Javier Rib\'{o}n}
\address{Instituto de Matem\'{a}tica, UFF, Rua M\'{a}rio Santos Braga S/N
Valonguinho, Niter\'{o}i, Rio de Janeiro, Brasil 24020-140}
\thanks{e-mail address: javier@mat.uff.br}
\thanks{MSC class. Primary: 32H50, 37C85; Secondary: 37F75,  20G20, 20F16}
\thanks{Keywords: local diffeomorphism, solvable group, nilpotent group}
\maketitle

\bibliographystyle{plain}
\section*{Abstract}
We are interested in classifying groups of local biholomorphisms (or even formal
diffeomorphisms) that can be endowed with a canonical structure of algebraic group up
to add extra formal diffeomorphisms.
We show that this is the case for  virtually polycyclic subgroups and in particular
finitely generated virtually nilpotent groups of local biholomorphisms.
We provide several methods to identify this property and build examples.

As a consequence we generalize results of Arnold, Seigal-Yakovenko and Binyamini
on uniform estimates of local intersection multiplicities to bigger classes of groups, including for example
virtually polycyclic groups.
\section{Introduction}
We study the action of groups of self-maps on intersection multiplicities.
More precisely, given varieties $V$ and $W$ of complementary dimension of
an ambient space $M$ and a subgroup $G$ of self-maps of $M$,
we want to identify conditions guaranteeing that $F \mapsto (F(V), W)$
is bounded as a function of $G$.
%
%
Let us introduce a classical example of an application of such a property.
Consider a continuous map $F:M \to M$ and an isolated fixed point $P$ of $F$.
The fixed point index of $F$ at $P$ is equal to the topological intersection index of
$\Delta$ and $(F \times Id)(\Delta)$ at $(P,P)$ where $\Delta$ is the diagonal of
$M \times M$.
By considering the iterates $(F \times Id)^{n}$ with $n \in {\mathbb Z}$ we obtain
fixed point indexes for the fixed points of the iterates of $F^{n}$, i.e. for periodic points.
In the context of $C^{1}$ maps Shub and Sullivan proved that the intersection
index of $\Delta$ and $(F \times Id)^{n}(\Delta)$ at isolated fixed points is uniformly bounded.
\begin{teo}[\cite{shub-sullivan}]
Let  $U$ be an open subset of ${\mathbb R}^{m}$. Let $F: U \to {\mathbb R}^{m}$ be a
$C^{1}$ map such that $0$ is an isolated fixed point of $F^{n}$ for any $n \geq 1$.
Then the fixed point index of $F^{n}$ at $0$  is bounded as a function of $n$.
\end{teo}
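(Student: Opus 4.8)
The plan is to organize the estimate around the linear part $A=DF(0)$ and to reduce the whole statement to a single delicate case, the one in which $A$ is tangent to the identity. Recall first that the fixed point index $\mathrm{ind}(F^{n},0)$ is the local degree at $0$ of the map $x\mapsto F^{n}(x)-x$, which is well defined precisely because $0$ is an isolated fixed point of $F^{n}$; this is exactly where that hypothesis enters. Split the spectrum of $A$ into the eigenvalues that are roots of unity and those that are not, and let $\mathbb{R}^{m}=R\oplus S$ be the corresponding (real form of the) generalized-eigenspace decomposition, with $S$ collecting the non-roots of unity. If $A$ has no eigenvalue that is a root of unity, then $1$ is not an eigenvalue of $A^{n}$ for any $n$, so $0$ is a nondegenerate fixed point of every $F^{n}$ and $\mathrm{ind}(F^{n},0)=\operatorname{sign}\det(I-A^{n})=\pm 1$; the conclusion is immediate. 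This is the model for the general bound.

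For the general case the key observation is that for every $n\geq 1$ the restriction $A^{n}|_{S}-I$ is invertible, since an eigenvalue of $A$ on $S$ is never a root of unity and so its $n$-th power is never $1$. Writing points as $(x,y)\in R\oplus S$ and using that $DF(0)$ preserves this splitting, the implicit function theorem lets me solve the $S$-component of $F^{n}(x,y)=(x,y)$ for $y=\phi_{n}(x)$ near $0$; the standard multiplicativity of the local index under such a reduction then gives
\begin{equation*}
\mathrm{ind}(F^{n},0)=\operatorname{sign}\det\!\big(I-A^{n}|_{S}\big)\cdot \mathrm{ind}\big(\overline{F^{n}},0\big),
\end{equation*}
where $\overline{F^{n}}$ is the reduced map on $R$ with linear part $A^{n}|_{R}$. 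The first factor is $\pm 1$, so it remains to bound the index of the reduced maps, whose linear parts are unipotent; after absorbing the nontrivial Jordan blocks this brings everything to the identity-tangent case $DF(0)=\mathrm{Id}$. Arranging this reduction so that it is compatible with taking iterates, so that the reduced maps are again iterates of a single germ, is one of the points requiring care.

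The heart of the argument is therefore the identity-tangent case, handled through the telescoping identity
\begin{equation*}
F^{n}(x)-x=\sum_{k=0}^{n-1}\big(F(F^{k}(x))-F^{k}(x)\big)=\sum_{k=0}^{n-1} g\big(F^{k}(x)\big),\qquad g:=F-\mathrm{id},
\end{equation*}
where $g(0)=0$ and $Dg(0)=0$. On a sufficiently small sphere the isolatedness of $0$ as a fixed point of both $F$ and $F^{n}$ keeps $g$ and $F^{n}-\mathrm{id}$ zero-free. I would then construct a homotopy, through maps remaining nonvanishing on that sphere, between $x\mapsto F^{n}(x)-x$ and a positive multiple of $x\mapsto g(x)$, by deforming each iterate $F^{k}$ toward the identity; this yields $\mathrm{ind}(F^{n},0)=\mathrm{ind}(F,0)$, independent of $n$. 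Combined with the two reduction steps, this bounds $\mathrm{ind}(F^{n},0)$ uniformly.

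The main obstacle is exactly this last homotopy in the only-$C^{1}$ setting. The low regularity rules out Taylor-expansion normal forms, and because $k$ ranges up to $n-1$ one cannot keep $F^{k}(x)$ uniformly close to $x$ on a fixed sphere as $n\to\infty$. The delicate point is to choose the radius and the deformation so that the partial sums never vanish, that is, so that the directions $g(F^{k}(x))/|g(F^{k}(x))|$ cannot conspire to cancel; controlling this cancellation using only continuity of $g/|g|$ away from $0$ and the $C^{1}$-closeness of $F$ to the identity is the crux, together with the bookkeeping needed to pass from a unipotent linear part to the genuine identity.
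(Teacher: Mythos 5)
The paper does not prove this statement; it is quoted from Shub--Sullivan purely as motivation, so your attempt can only be measured against the original argument. Your high-level strategy \emph{is} essentially theirs (split the spectrum of $A=DF(0)$ into roots of unity and the rest, telescope $F^{n}-\mathrm{id}=\sum_{k<n}g\circ F^{k}$ with $g=F-\mathrm{id}$, and homotope to a multiple of $g$), but as written there are two genuine gaps. First, the reduction is not correct as stated: on the block $R$ carrying the root-of-unity eigenvalues, $A^{n}|_{R}$ is unipotent only when $n$ is divisible by the orders of those eigenvalues (if $-1$ is an eigenvalue and $n$ is odd, $A^{n}|_{R}$ still has eigenvalue $-1$), so ``absorbing the Jordan blocks'' does not bring every $n$ to the identity-tangent case. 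Moreover the reduced germ $\overline{F^{n}}$ obtained by solving the $S$-component of $F^{n}(x,y)=(x,y)$ is not the $n$-th iterate of a single reduced germ --- the graph $y=\phi_{n}(x)$ depends on $n$ --- so the telescoping identity, the engine of the whole proof, is unavailable on $R$. Shub and Sullivan avoid both problems: their key lemma compares $\mathrm{ind}(F^{mp},0)$ with $\mathrm{ind}(F^{m},0)$ directly on the ambient space, for a prime $p$ such that no eigenvalue of $DF(0)^{m}$ is a primitive $p$-th root of unity, and the theorem follows by peeling off primes until the exponent lies in a finite set determined by the orders of the root-of-unity eigenvalues.

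Second, and more seriously, you explicitly leave the crux unproved: the non-vanishing of the homotopy, i.e.\ that the vectors $g(F^{k}(x))$ cannot cancel. This is the entire content of the theorem, and it is not obtained from ``continuity of $g/|g|$ away from $0$''; it is obtained from a quantitative bootstrap. When $DF(0)$ is unipotent, choose a norm in which $\|DF(0)-I\|<\epsilon$, so $\|Dg\|\le 2\epsilon$ on a small ball; then $|g(F^{k}(x))-g(x)|\le 2\epsilon\,|F^{k}(x)-x|\le 2\epsilon\sum_{j<k}|g(F^{j}(x))|$, and induction gives $|g(F^{k}(x))|\le (1+2\epsilon)^{k}|g(x)|$ and hence $|g(F^{k}(x))-g(x)|\le ((1+2\epsilon)^{k}-1)|g(x)|$. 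Summing, $|F^{n}(x)-x|\ge \bigl(n-\sum_{k<n}((1+2\epsilon)^{k}-1)\bigr)|g(x)|>0$ for $\epsilon=\epsilon(n)$ small, and the same estimate keeps the straight-line homotopy from $F^{n}-\mathrm{id}$ to $n\,g$ away from $0$ on a sphere of radius $r(n)$; since the index does not depend on the radius, this yields $\mathrm{ind}(F^{n},0)=\mathrm{ind}(F,0)$. Without this (or an equivalent) estimate, and without repairing the $n$-dependence of the spectral reduction, your argument does not close.
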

As an immediate corollary they show that a $C^1$ map $F:M \to M$
defined in a compact differentiable manifold $M$ has
infinitely many periodic points if
the sequence of Lefschetz numbers $(L(F^{n}))_{n \geq 1}$ is unbounded.

 We denote by $\diff{}{n}$ the group of germs of biholomorphism defined in a neighborhood of
the origin in ${\mathbb C}^{n}$. 
We are interested in 
uniform intersection results in the local holomorphic setting.
More precisely,
we want to identify subgroups $G$ of $\diff{}{n}$ satisfying that the set
\[ \{ (\phi(V), W) : \phi \in G \ \mathrm{and} \  (\phi(V), W) < \infty \} \] 
of intersection multiplicities (cf. Definition \ref{def:intmul})
is bounded for any pair of germs of holomorphic varieties $V, W$ defined
in a neighborhood of $0$ in ${\mathbb C}^{n}$.

The first result in this direction is due to Arnold.
\begin{teo}[{\cite[Theorem 1]{Arnold:intersection}}]
\label{teo:a}
Let $\phi \in \diff{}{n}$. Consider germs  of submanifolds $V,W$ of $({\mathbb C}^{n},0)$
of complementary dimension. Suppose that the intersection multiplicity $\mu_{n}:= (\phi^{n}(V),W)$ is finite for
any $n \in {\mathbb Z}$. Then the sequence $(\mu_{n})_{n \in {\mathbb Z}}$ is bounded.
\end{teo}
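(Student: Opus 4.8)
The plan is to convert the geometric statement into an algebraic statement about the multiplicity of a sequence of map germs, and then to exploit the fact that the iterates $\phi^n$ form an orbit inside a finite--dimensional commutative algebraic group, so that their dependence on $n$ is explicit. Throughout I relabel the ambient dimension as $N$ (the letter $n$ being reserved for the iterate index), so $\phi\in\mathrm{Diff}(\mathbb{C}^N,0)$ and $V,W\subset(\mathbb{C}^N,0)$.

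First I would reduce to the multiplicity of a map germ. Set $p=\dim V$; then $W$ has codimension $p$, so choose an embedding $\gamma:(\mathbb{C}^p,0)\to(\mathbb{C}^N,0)$ parametrizing $V$ and a submersion $g:(\mathbb{C}^N,0)\to(\mathbb{C}^p,0)$ with $W=g^{-1}(0)$. Since $\phi^n$ is a biholomorphism, $\phi^n(V)$ is a submanifold parametrized by $\phi^n\circ\gamma$, and the intersection multiplicity equals the colength
\[ \mu_n=\dim_{\mathbb{C}}\mathcal{O}_{\mathbb{C}^p,0}\big/(h_{n,1},\dots,h_{n,p}),\qquad h_n:=g\circ\phi^n\circ\gamma , \]
which is finite exactly when $0$ is an isolated zero of $h_n$. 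In that case $\mathfrak{m}^{\mu_n}\subseteq(h_n)$, so by finite determinacy of the ideal $(h_n)$ the number $\mu_n$ depends only on a jet $j^{k}h_n$ with $k=O(\mu_n)$, hence only on $j^{k}\phi^n$. This is the bridge to jets.

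Next I would use the algebraic structure of the iterates. For each $k$ the truncation $j^k\phi^n=(j^k\phi)^n$ is the $n$-th power, inside the finite--dimensional linear algebraic group $\mathcal{D}_k$ of $k$-jets of local diffeomorphisms, of the single element $g_k:=j^k\phi$. The Zariski closure $\overline{\langle g_k\rangle}$ is a commutative algebraic group, whose identity component is isomorphic to $(\mathbb{C}^*)^a\times\mathbb{C}^b$ with finite component group; decomposing $g_k=s\cdot u$ into its semisimple and unipotent parts, the coordinates of $s^n$ are monomials in $\lambda_1^n,\dots,\lambda_N^n$ (where $\lambda_i$ are the eigenvalues of $D\phi(0)$) and those of $u^n=\exp(n\log u)$ are polynomial in $n$. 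Treating the finitely many residue classes of $n$ modulo the order of the component group separately, I can therefore realize $j^k\phi^n$ as the sample at a point $\tau_n$, determined by the data $(\lambda_1^n,\dots,\lambda_a^n,\,n,\dots,n)$, of an algebraic family of polynomial--representative germs $\Phi_\tau$ parametrized by $\tau$ in the torus--times--affine space $T=(\mathbb{C}^*)^a\times\mathbb{C}^b$. The function $m_k(\tau)=\dim_{\mathbb{C}}\mathcal{O}_{\mathbb{C}^p,0}/(g\circ\Phi_\tau\circ\gamma)$ is upper semicontinuous on $T$, its finite locus is Zariski open, and there it is generically constant. Each hypothesis $\mu_n<\infty$ places $\tau_n$ in the finite locus, and whenever $\tau_n$ avoids the proper closed jump locus $\Sigma_k$ one gets that $\mu_n$ equals the generic value, which I would control by a Newton--polygon/weight estimate governed by $D\phi(0)$.

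The hard part is exactly the resonant $n$, for which $\tau_n\in\Sigma_k$: multiplicative relations $\prod_i\lambda_i^{m_i}=1$ among the eigenvalues confine the points $\tau_n$ to a finite union of translated subtori times affine subspaces, and on the noncompact locus $\Sigma_k$ the multiplicity could a priori be unbounded. The crux is to rule this out by a Noetherian descent on $\dim T$, equivalently on the number of multiplicatively independent eigenvalues: I would restrict the family to each subvariety carrying infinitely many $\tau_n$, observe that the restricted sequence is again of the same multiplicative--times--additive type but with strictly fewer free parameters, bound its generic multiplicity by the inductive hypothesis, and push the finitely many remaining exceptional subvarieties into the next stage; the descent terminates because $\dim T$ strictly drops. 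The genuinely delicate point, where the arithmetic of the eigenvalue semigroup enters, is to make this descent uniform in the jet order — to show that the bounds produced at successive stages do not blow up as $k\to\infty$ and that a single $k\gtrsim\mu_n$ may be fixed — so that the jet determinacy of the first step actually closes the argument rather than running in a circle.
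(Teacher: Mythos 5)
Your reduction to the colength of $(h_{n,1},\dots,h_{n,p})$ and the observation that the entries of $j^k\phi^n$ are quasipolynomials in $n$ (monomials in the $\lambda_i^n$ times polynomials in $n$, coming from the Jordan decomposition of $j^k\phi$ in the jet group) are correct and are exactly the right starting point. But the proof does not close: the circularity you flag at the end --- that finite determinacy only tells you $\mu_n$ is read off from $j^{k}h_n$ for $k$ of order $\mu_n$, which is the quantity you are trying to bound --- is not a ``delicate point to be made uniform''; it is the whole theorem, and the descent on $\dim T$ does not resolve it. Your semicontinuity/stratification scheme controls the multiplicity of the truncated family $\Phi_\tau$ at a fixed jet order $k$, and nothing in the argument prevents the true multiplicity $\mu_n$ from exceeding $k$ along a subsequence, at which point the $k$-jet carries no information about $\mu_n$. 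As written, the argument runs in exactly the circle you describe.

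The way out (Arnold's, via the Skolem--Mahler--Lech theorem, and the route taken in Section \ref{sec:locint} for the general Theorem \ref{teo:main}) is to reverse the quantifiers. One does not try to compute $\mu_n$ from a jet; instead, for each fixed threshold $m$, the condition $\mu_n>m$ is equivalent to a system of polynomial equations on the coefficients of the $m$-jet of $\phi^n$ (cf. \cite[Lemma 3]{Seigal-Yakovenko:ldi}). Substituting the quasipolynomial expressions for those coefficients, the sets $\{n:\mu_n>m\}$ are cut out by an increasing chain of ideals $I_1\subseteq I_2\subseteq\cdots$ living in \emph{one fixed} noetherian ring: the ring of quasipolynomials in $n$ with exponents in the multiplicative group generated by the eigenvalues of $D_0\phi$, or, in the language of the paper, the affine coordinate ring ${\mathbb C}[G_{k_0}]$ of the algebraic group $\overline{\langle\phi\rangle}$, which is finite dimensional by Proposition \ref{pro:cloif}. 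The chain stabilizes at some $m_0$, so $\mu_n>m_0$ forces $\mu_n>m$ for every $m$, i.e. $\mu_n=\infty$; the hypothesis that every $\mu_n$ is finite then yields $\mu_n\le m_0$ for all $n\in{\mathbb Z}$. No uniformity in the jet order is ever needed, because the ring in which the chain sits does not depend on $m$. I suggest you keep the first half of your argument (the quasipolynomial description of $j^k\phi^n$) and replace the entire stratification-and-descent second half by this stabilization step.
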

The proof is a consequence of the Skolem-Mahler-Lech theorem on roots of quasipolynomials
\cite{skolem:seq}.

The previous result was generalized to the finitely generated abelian case by Seigal and Yakovenko.
We denote by $\diffh{}{n}$ the group of formal diffeomorphisms (cf. Definition \ref{def:fordif}).
\begin{teo}[{\cite[Theorem 1]{Seigal-Yakovenko:ldi}}]
\label{teo:sy}
Let $G$ be an abelian subgroup of $\diffh{}{n}$ generated by finitely many cyclic and one parameter groups.
Consider formal subvarieties $V$, $W$. 
Then the set 
\[ \{ (\phi(V), W) : \phi \in G \ \mathrm{and} \  (\phi(V), W) < \infty \} \] 
is bounded.
\end{teo}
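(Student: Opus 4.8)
The plan is to convert the geometric quantity $(\phi(V),W)$ into the colength of an ideal whose generators depend on the group element through exponential polynomials, and then to prove a stabilization (descending chain) property for the loci where this colength is large, the engine being a multivariate, mixed discrete/continuous version of the Skolem--Mahler--Lech theorem for which Arnold's Theorem \ref{teo:a} is the one-generator prototype. Concretely, I would first fix a good algebraic presentation of the intersection number. Choosing a parametrization $\gamma\colon(\mathbb{C}^{d},0)\to(\mathbb{C}^{n},0)$ of $V$ and defining equations $g_{1},\dots,g_{d}$ of $W$, one has
\[
(\phi(V),W)\;=\;\dim_{\mathbb{C}}\mathbb{C}[[u]]\big/\big(g_{1}\!\circ\!\phi\!\circ\!\gamma,\dots,g_{d}\!\circ\!\phi\!\circ\!\gamma\big),
\]
which is finite exactly when the map germ $F_{\phi}\colon u\mapsto(g_{i}(\phi(\gamma(u))))_{i}$ is finite. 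This description is purely formal, so it applies to formal subvarieties and to $\phi\in\diffh{}{n}$, and it isolates the dependence on $\phi$ inside the power series $g_{i}\circ\phi\circ\gamma$.

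Next I would exploit the group structure. Using commutativity, write every element in the normal form $\phi=\phi_{1}^{m_{1}}\cdots\phi_{k}^{m_{k}}\,\exp(t_{1}X_{1})\cdots\exp(t_{l}X_{l})$ with $m_{i}\in\mathbb{Z}$ and $t_{j}\in\mathbb{C}$, where the $\phi_{i}$ generate the cyclic factors and the $X_{j}$ are the infinitesimal generators of the one-parameter factors. Iterating $\phi_{i}$ raises each eigenvalue $\lambda$ of its linear part to a power $\lambda^{m_{i}}$ and introduces polynomial-in-$m_{i}$ factors coming from the Jordan blocks, while $\exp(t_{j}X_{j})$ exponentiates eigenvalues to $e^{\mu t_{j}}$ with polynomial-in-$t_{j}$ corrections. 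Consequently every Taylor coefficient of each $g_{i}\circ\phi\circ\gamma$ is an \emph{exponential polynomial} in $(m_{1},\dots,m_{k},t_{1},\dots,t_{l})$, i.e. lies in the ring generated by the $m_{i}$, the $t_{j}$, the $\lambda^{m_{i}}$ and the $e^{\mu t_{j}}$.

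The geometric statement then becomes a stabilization problem. For each integer $N$, the condition $\dim_{\mathbb{C}}\mathbb{C}[[u]]/(F_{\phi})\ge N$ is expressed by the simultaneous vanishing of finitely many determinants built from the Taylor coefficients of the $g_{i}\circ\phi\circ\gamma$ (rank-drop conditions for the multiplication maps acting on the finite jet spaces $\mathbb{C}[[u]]/(u)^{M}$). By the previous paragraph these determinants are exponential polynomials $\Delta_{N,\beta}(m,t)$, so each locus $Z_{N}=\{(m,t):\mathrm{colength}\ge N\}$ is the common zero set of finitely many exponential polynomials, one has $Z_{1}\supseteq Z_{2}\supseteq\cdots$, and $\bigcap_{N}Z_{N}$ is exactly the locus of infinite colength. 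Boundedness of $\{(\phi(V),W):(\phi(V),W)<\infty\}$ is precisely the assertion that this chain stabilizes modulo the infinite-colength locus, namely $Z_{N_{0}}\setminus\bigcap_{N}Z_{N}=\varnothing$ for some $N_{0}$. To prove it I would analyse the common zero set of exponential polynomials on $\mathbb{Z}^{k}\times\mathbb{C}^{l}$: in the continuous directions the $Z_{N}$ are analytic and Noetherian descent applies, while in the discrete directions the structure theorem underlying Skolem--Mahler--Lech (the zero set of an exponential polynomial on $\mathbb{Z}^{k}$ is a finite union of cosets of subgroups up to a lower-dimensional remainder) forces the same descent. Along each positive-dimensional component of $\bigcap_{N}Z_{N}$ the colength is either identically infinite, and then discarded, or generically finite and constant, with the excess governed by the analogous problem on a parameter space of strictly smaller dimension; the induction terminates at the single cyclic or one-parameter case, which is Arnold's Theorem \ref{teo:a}.

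I expect the crux to be this last step: upgrading the one-generator Skolem--Mahler--Lech input to a genuinely multivariate statement that simultaneously accommodates the integer exponents of the cyclic factors and the continuous times of the one-parameter factors, and that cleanly separates the locus of infinite intersection multiplicity — which we are free to ignore — from the locus of merely large but finite multiplicity, which must be bounded uniformly. Quantifying how much the colength can jump along the degeneration subvarieties where the leading determinants vanish, and organizing the dimensional induction so that it genuinely terminates, is where the essential difficulty lies.
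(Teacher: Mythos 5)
Your reduction of $(\phi(V),W)$ to a colength, and the observation that the Taylor coefficients of $g_i\circ\phi\circ\gamma$ are exponential polynomials in the exponents $m_i$ and times $t_j$, reproduce the first half of the Seigal--Yakovenko argument faithfully. But the decisive step --- why the descending chain $Z_1\supseteq Z_2\supseteq\cdots$ stabilizes modulo the infinite-colength locus --- is exactly the step you leave unproven, and the route you sketch for it (a multivariate Skolem--Mahler--Lech structure theorem on $\mathbb{Z}^k\times\mathbb{C}^l$ plus a dimensional induction on degeneration loci) is both harder than necessary and not obviously completable as stated. The structure theorem for zero sets of exponential polynomials does not by itself yield a descending chain condition on arbitrary common zero sets; and your proposed base case fails, since Arnold's Theorem \ref{teo:a} assumes \emph{all} intersection multiplicities are finite, whereas the whole point here is to handle elements with $(\phi(V),W)=\infty$ by discarding them rather than excluding them by hypothesis. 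The standard and much cleaner resolution is ring-theoretic, not set-theoretic: one shows that the determinantal conditions ``$\mathrm{colength}>N$'' generate an \emph{ascending} chain of ideals $I_1\subseteq I_2\subseteq\cdots$ in a single noetherian ring of functions of $(m,t)$ (Seigal--Yakovenko use the noetherianity of the ring of quasipolynomials), and stabilization of the ideals immediately forces $Z_{N_0}=\bigcap_N Z_N$. No Skolem--Mahler--Lech input is needed at all once commutativity has been used to put $\phi$ in normal form.

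For comparison, the present paper bypasses exponential polynomials entirely: it proves that a finitely generated (in the extended sense) abelian subgroup $G$ of $\diffh{}{n}$ is finite dimensional (Proposition \ref{pro:fgaifd}, via $G=H_1\cdots H_lJ_1\cdots J_m$ with each cyclic or one-parameter factor of dimension at most $n$, Propositions \ref{pro:fw}, \ref{pro:cloif} and \ref{pro:fginfp}), so that all Taylor coefficients of elements of $\overline{G}$ are regular functions on a fixed jet group $G_{k}$, and then runs the same ascending-chain argument inside the noetherian affine coordinate ring ${\mathbb C}[G_{k}]$ (Theorem \ref{teo:main}). If you want to salvage your write-up, replace the Skolem--Mahler--Lech descent by a proof that your determinants $\Delta_{N,\beta}$ all live in one noetherian ring and invoke the ascending chain condition there; as written, the crux of the theorem is missing.
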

The group $\diff{}{n}$ is a subgroup of $\diffh{}{n}$ and hence Theorem \ref{teo:sy}
holds for subgroups of $\diff{}{n}$ and germs of subvarieties $V$ and $W$.
In contrast with Theorem \ref{teo:a}
notice that it is not necessary to require that all intersection multiplicities are finite.
The proof relies on a noetherianity argument (cf. section \ref{sec:locint}).

An analogous result was proved by Binyamini for the case in which the subgroup $G$ of
$\diffh{}{n}$ is embedded in a group of formal diffeomorphisms that has a natural
Lie group structure.
\begin{teo}[{\cite[Theorem 5]{Binyamini:finite}}]
\label{teo:b}
Let $G$ be a Lie subgroup (cf. Definition \ref{def:lie})
of $\diffh{}{n}$ with finitely many connected components.
Consider formal subvarieties $V$, $W$. 
Then the set 
\[ \{ (\phi(V), W) : \phi \in G \ \mathrm{and} \  (\phi(V), W) < \infty \} \] 
is bounded.
\end{teo}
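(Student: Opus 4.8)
The plan is to pass from the formal diffeomorphisms to their jets, where the problem becomes one about constructible functions on algebraic groups, and to use the finite-dimensionality of $G$ to control the order of jet one must inspect.

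First I would reduce to $G$ connected. As $G$ has finitely many components, write $G=\bigcup_{i=1}^{m}g_{i}G^{0}$ and bound $(\phi(V),W)$ on each coset separately; on $g_{i}G^{0}$ one rewrites the multiplicity as an intersection coming from $G^{0}$ applied to transformed data, so it suffices to treat a connected $G$, which is generated by the one-parameter groups $\exp(tX)$ with $X$ in the (finite-dimensional) Lie algebra of $G$. Next, the jet setup: for each $N$ the $N$-jet group $\mathcal{J}_{N}$ of formal diffeomorphisms is a linear algebraic group and truncation $\diffh{}{n}\to\mathcal{J}_{N}$ is a homomorphism; let $G_{N}$ be the image of $G$ and $H_{N}=\overline{G_{N}}$ its Zariski closure, an algebraic subgroup of $\mathcal{J}_{N}$ with finitely many components.

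For $V,W$ of complementary dimension, the condition $(\phi(V),W)\ge k$ is cut out, on jets of high enough order, by determinantal (resultant) equations, so the truncated multiplicity is a constructible function of the $N$-jet; restricted to the variety $H_{N}$ it takes finitely many values and is therefore bounded, say by $B_{N}$. This bounds $(\phi(V),W)$ for all $\phi\in G$ \emph{provided} the multiplicities do not exceed what an $N$-jet can detect.

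The main obstacle is precisely that the jet order needed to certify multiplicity $\ge k$ grows with $k$, so no fixed level $N$ is evidently enough. Here the hypothesis that $G$ is a \emph{finite-dimensional} Lie group is decisive: the image $G_{N}$, and hence $H_{N}$, has dimension at most $d=\dim G$ for every $N$, whereas for a general (Zariski-dense) subgroup these dimensions would grow without bound and the conclusion could fail. Exploiting this, I would run a Noetherian descending-chain argument on the bounded-dimensional pro-algebraic group $H=\varprojlim H_{N}$: the nested Zariski-closed loci $Z_{k}=\{(\phi(V),W)\ge k\}$ form a decreasing chain which, because the relevant dimensions stabilize above some level $N_{1}$, must stabilize at some $k_{0}$ with $Z_{k_{0}}=\bigcap_{k}Z_{k}=\{(\phi(V),W)=\infty\}$. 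Consequently every finite value of $(\phi(V),W)$ on $H$, and a fortiori on $G$, is strictly less than $k_{0}$. The technically heaviest point is converting the stabilization of dimensions into an effective, uniform-in-$N$ control of the multiplicity, namely producing the single level $N_{1}$ and the bound $k_{0}$ depending only on $H$ and on $V,W$, which is where a quantitative Bézout/Noetherian estimate on the algebraic groups $H_{N}$ is needed.
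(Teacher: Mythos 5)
Your overall strategy is the one the paper follows: show that a Lie subgroup with finitely many components is finite dimensional in the sense of Definition \ref{def:dim}, reduce everything to a single jet level, and run a Noetherian stabilization argument on the chain of loci $\{(\phi(V),W)>m\}$ inside the affine coordinate ring of one algebraic matrix group (this is exactly the proof of Theorem \ref{teo:main}, combined with Proposition \ref{pro:lieialg} and Lemma \ref{lem:fis}). However, there is a genuine gap at the step you treat as immediate: you claim that ``the image $G_{N}$, and hence $H_{N}$, has dimension at most $d=\dim G$ for every $N$.'' The ``hence'' is false, because taking Zariski closure can strictly increase dimension. For example, the one-real-parameter group $t\mapsto (e^{t}z_{1},e^{\sqrt{2}\,t}z_{2})$ is a connected Lie subgroup of $\diff{}{2}$ of real dimension $1$, but already its first jet group has Zariski closure equal to the full two-dimensional diagonal torus in $\mathrm{GL}(2,{\mathbb C})$. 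So the uniform boundedness of $(\dim H_{N})_{N}$ — which is the entire content of Proposition \ref{pro:lieialg}/\ref{pro:lieialge} and the only place where the Lie group hypothesis is actually used — cannot be read off from $\dim G$ and requires a real argument. The paper obtains it by splitting $L$ into its derived group $L'$, which for a connected complex matrix group is algebraic (hence Zariski closed of controlled dimension), and the abelian quotient $L/L'$, which is finitely generated in the extended sense so that each generator contributes at most $n$ to the dimension of the closure (Propositions \ref{pro:cloif}, \ref{pro:fginfp} and \ref{pro:fgaifd}); the resulting bound is $\dim\tilde{L}+(\dim\tilde{L}-\dim\tilde{L}')(n-1)$, not $\dim G$.

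Two smaller points. First, once boundedness of $(\dim H_{N})_{N}$ is granted, passing from ``the dimensions stabilize'' to ``all the loci $Z_{k}$ live in a single Noetherian ring'' is not automatic: you need that $\pi_{m,N_{1}}:H_{m}\to H_{N_{1}}$ is an isomorphism of algebraic groups for $m\geq N_{1}$, which is the implication $(4)\Rightarrow(3)\Rightarrow(2)$ of Proposition \ref{pro:elem} (proved there via surjectivity of the differentials of the truncation maps and the exponential of the Lie algebra); you correctly flag this as the heaviest point but do not supply it. Second, your reduction to complementary-dimension $V,W$ and determinantal equations is unnecessarily restrictive: the paper works with $(I,J)=\dim_{\mathbb C}\hat{\mathcal O}_{n}/(I+J)$ for arbitrary ideals and quotes that the condition $\dim_{\mathbb C}\hat{\mathcal O}_{n}/J>m$ is algebraic in the $m$-jets of generators, which covers the general statement.
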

Theorem \ref{teo:sy} has more natural hypotheses (commutativity and finite generation)
but Theorem \ref{teo:b} is somehow more general. Indeed Binyamini shows that any finitely generated
abelian subgroup of $\diffh{}{n}$ is a subgroup of a Lie group with finitely many connected
components \cite{Binyamini:finite}. Thus it is interesting to study how to find an extension of a
subgroup of $\diffh{}{n}$ that is also a Lie group.
In this paper we characterize the subgroups
of $\diffh{}{n}$ that can be embedded  in a Lie group
(with finitely many connected components) in a natural way.
Moreover we show that every such a group can be canonically
embedded in an algebraic matrix group.
We call these groups {\it finite dimensional}.

Let us be more precise.
We define a Zariski-closure $\overline{G}$ 
of a subgroup $G$ of $\diffh{}{n}$ (cf. Definition \ref{def:zarclos}); it
is a projective limit of algebraic matrix groups and hence it has a
natural definition of dimension. We will say that $G$ is finite dimensional if $\overline{G}$ is finite
dimensional (cf. Definition \ref{def:dim}).
If $G$ is finite dimensional then $\overline{G}$ is isomorphic to one of its subgroups of 
$k$-jets and
hence $\overline{G}$ can be interpreted as an algebraic group (Proposition \ref{pro:elem}).
Equivalently the group $G$ is finite dimensional if and only if there exists $k_{0} \in {\mathbb N}$ such that 
the coefficients of degree greater than $k_0$ in the Taylor expansion at the origin of the elements 
of $G$ are polynomial functions on the coefficients of degree less or equal than $k_0$ 
(Remark \ref{rem:hotlot}).

Finite dimensional subgroups satisfy uniform local intersection properties.
\begin{teo}
\label{teo:main}
Let $G$ be a  finite dimensional subgroup of $\diffh{}{n}$.
Consider ideals $I$ and $J$ of $ \hat{\mathcal O}_{n}$.
Then the set 
\[ \{ (\phi^{*}(I), J) : \phi \in G \ \mathrm{and} \ (\phi^{*}(I), J) < \infty \} \]
is bounded.
\end{teo}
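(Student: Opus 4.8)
The plan is to reduce the finite-dimensional case to Binyamini's Theorem \ref{teo:b}. The key structural fact available to me is that a finite dimensional group $G$ has a Zariski-closure $\overline{G}$ that is finite dimensional, and by Proposition \ref{pro:elem} this $\overline{G}$ is isomorphic to one of its groups of $k$-jets, hence can be interpreted as an algebraic matrix group. The natural strategy is therefore to show that $\overline{G}$ (or a suitable realization of it inside $\diffh{}{n}$) is a Lie subgroup with finitely many connected components, and then simply invoke Theorem \ref{teo:b} applied to $\overline{G}$, noting that $G \subseteq \overline{G}$ so the intersection multiplicities arising from $G$ form a subset of those arising from $\overline{G}$.

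Let me spell out the steps. First I would recall from the stated properties (Remark \ref{rem:hotlot}) that finite dimensionality means there is a threshold $k_0$ such that all Taylor coefficients of degree $> k_0$ of elements of $G$ are polynomial functions of the coefficients of degree $\le k_0$. This says precisely that the projection $\pi_{k_0} : \overline{G} \to J^{k_0}(\overline{G})$ onto the group of $k_0$-jets is an isomorphism onto its image, and that image is an algebraic subgroup of the (finite-dimensional, linear) algebraic group of $k_0$-jets of formal diffeomorphisms. Second, an algebraic matrix group has finitely many connected components in the Euclidean topology, so $J^{k_0}(\overline{G})$ is a Lie group with finitely many connected components; pulling back along the isomorphism $\pi_{k_0}^{-1}$ equips $\overline{G}$ with the structure of such a Lie subgroup of $\diffh{}{n}$ in the sense of Definition \ref{def:lie}. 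Third, I would observe that intersection multiplicities are unchanged under passing from $G$ to its Zariski-closure in the relevant sense: since $\{(\phi^*(I),J) : \phi \in G\}$ is contained in $\{(\phi^*(I),J) : \phi \in \overline{G}\}$, boundedness of the latter yields boundedness of the former. Finally, apply Theorem \ref{teo:b} to the Lie group $\overline{G}$, with $V$ and $W$ the subvarieties cut out by the ideals $I$ and $J$ (translating the ideal-theoretic statement $(\phi^*(I),J)$ into Binyamini's variety-theoretic intersection number via Definition \ref{def:intmul}).

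\textbf{The main obstacle} I expect is the compatibility of the two framings rather than either half in isolation. Concretely, Theorem \ref{teo:b} is stated for a Lie subgroup of $\diffh{}{n}$ and for intersection of formal subvarieties $V,W$, whereas Theorem \ref{teo:main} is phrased for arbitrary ideals $I,J$ of $\hat{\mathcal O}_n$ and uses pullback $\phi^*(I)$ rather than image $\phi(V)$. I must verify that $\overline{G}$ as constructed genuinely satisfies Definition \ref{def:lie} of a Lie subgroup (not merely that it is abstractly a Lie group), and that the ideal-pullback intersection multiplicity $(\phi^*(I),J)$ coincides with the geometric intersection multiplicity to which Theorem \ref{teo:b} applies. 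The delicate point is ensuring that the algebraic-group structure on $J^{k_0}(\overline{G})$, which exists by Proposition \ref{pro:elem}, transports to an honest Lie structure on $\overline{G}$ compatible with the topology induced from $\diffh{}{n}$; once Definition \ref{def:lie} is matched, the boundedness is immediate from Theorem \ref{teo:b}. I would handle the ideal-versus-variety discrepancy by unwinding Definition \ref{def:intmul}, under which $(\phi^*(I),J)$ is exactly the intersection multiplicity measured by Theorem \ref{teo:b} after identifying $V = V(I)$ and $W = V(J)$ and using that $\phi^*$ pulls back the defining ideal of $\phi(V)$.
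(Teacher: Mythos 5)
Your reduction to Binyamini's Theorem \ref{teo:b} is sound, and it is precisely the route the paper acknowledges in the introduction (``Theorem \ref{teo:main} can be seen as a consequence of Binyamini's Theorem \ref{teo:b}'') but deliberately declines to take. The ingredients you need are all in place: Proposition \ref{pro:elem} together with Remark \ref{rem:eae} shows that $\pi_{k_0}^{-1}: G_{k_0} \to \overline{G}$ is a group isomorphism that is algebraic in every jet space, so taking $L = G_{k_0}$ and $\sigma = \pi_{k_0}^{-1}$ verifies Definition \ref{def:lie} with finitely many connected components, and the containment of the multiplicity set for $G$ in that for $\overline{G}$ is trivial. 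The two compatibility issues you flag are real but harmless: the ideal-theoretic multiplicity of Definition \ref{def:intmul} is exactly what Binyamini's argument controls, and the discrepancy between $(\phi(V),W)$ in Theorem \ref{teo:b} and $(\phi^{*}(I),J)$ (i.e.\ $(\phi^{-1}(V),W)$) in Theorem \ref{teo:main} is absorbed by replacing $\phi$ with $\phi^{-1}$, a bijection of the group. The paper instead gives a self-contained noetherianity argument in the spirit of Seigal--Yakovenko: since every Taylor coefficient of degree greater than $k_0$ of an element of $\overline{G}$ is a regular function of the $k_0$-jet (Remark \ref{rem:hotlot}), the conditions $(\phi^{*}(I),J) > m$ cut out an increasing chain of ideals in the single noetherian ring ${\mathbb C}[G_{k_0}]$, which must stabilize at some $m_0$, whence every finite multiplicity is at most $m_0$. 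What the paper's route buys is independence from Binyamini's machinery (in particular the Iwasawa--Malcev decomposition) and a proof that exhibits the finite dimensional hypothesis as the exact source of the noetherianity; what your route buys is brevity, at the cost of importing Theorem \ref{teo:b} as a black box and carrying out the two translations above.
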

We define $\hat{\mathcal O}_{n}$ as the ring of formal power series with complex
coefficients in $n$ variables.

Since an algebraic group is a complex Lie group with finitely many connected components,
Theorem \ref{teo:main} can be seen as a consequence of Binyamini's Theorem \ref{teo:b}.
Anyway, the finite dimensional hypothesis provides a simplification of the proof.

We will exhibit different methods to find finite dimensional subgroups of $\diffh{}{n}$
(cf.  Theorems \ref{teo:fw}, \ref{teo:ext}...).
In particular we
identify several algebraic group properties implying that a subgroup of $\diffh{}{n}$ is finite
dimensional, including some notable ones.  Our main result is the following theorem:
\begin{teo}
\label{teo:mainc}
Let $G$ be a subgroup of $\diffh{}{n}$ such that either is
\begin{itemize}
\item virtually polycyclic or
\item virtually nilpotent and generated by finitely many cyclic and one parameter subgroups of $G$.
\end{itemize}
Then $G$ is finite dimensional. In particular 
the set 
\[ \{ (\phi^{*}(I), J) : \phi \in G \ \mathrm{and} \ (\phi^{*}(I), J) < \infty \} \]
is bounded for any pair of ideals $I$ and $J$ of $ \hat{\mathcal O}_{n}$.
\end{teo}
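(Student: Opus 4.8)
The plan is to prove the finite dimensionality of $G$; the uniform intersection bound then follows immediately from Theorem \ref{teo:main}. By Remark \ref{rem:hotlot} finite dimensionality amounts to a uniform bound, independent of the jet order $k$, on the dimension of the Zariski closure $\overline{G_k}$ of the group $G_k$ of $k$-jets of elements of $G$. So everything reduces to controlling how the dimensions $\dim \overline{G_k}$ grow with $k$ and to showing that they stabilize.

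First I would reduce the ``virtually'' hypotheses to the corresponding plain hypotheses. If $H$ is a finite index subgroup of $G$, then $H_k$ has finite index in $G_k$ for every $k$; taking Zariski closures of the finitely many cosets gives $\overline{G_k} = \bigcup_j g_j \overline{H_k}$, so $\overline{H_k}$ has finite index in $\overline{G_k}$ and therefore $\dim \overline{H_k} = \dim \overline{G_k}$. Thus $G$ is finite dimensional if and only if $H$ is, and I may replace $G$ by a polycyclic (resp. nilpotent) finite index subgroup. In the nilpotent case I must check that the generation by finitely many cyclic and one parameter subgroups survives: a one parameter subgroup is divisible, hence has no proper finite index subgroup, so every one parameter subgroup of $G$ is already contained in $H$; since $H$ is finitely generated, it is generated by those one parameter subgroups together with finitely many cyclic subgroups.

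For the polycyclic case I would use a subnormal series $1 = H_0 \triangleleft H_1 \triangleleft \cdots \triangleleft H_m = H$ with each quotient $H_i/H_{i-1}$ infinite cyclic and argue by induction on $i$. The base case $H_0 = 1$ is trivial, and a single cyclic group has finite dimensional Zariski closure by the Jordan decomposition of a formal diffeomorphism (the mechanism underlying Theorems \ref{teo:a} and \ref{teo:sy}). Each step is a cyclic extension $1 \to H_{i-1} \to H_i \to {\mathbb Z} \to 1$ in which the new generator normalizes the finite dimensional group $H_{i-1}$, and the extension method of Theorem \ref{teo:ext} keeps $H_i$ finite dimensional. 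For the nilpotent case I would instead run the induction along the lower central series: the abelianization $H/[H,H]$ is abelian and generated by finitely many cyclic and one parameter subgroups, hence finite dimensional by Theorem \ref{teo:sy} (or \ref{teo:b}), and I would lift through the central extensions $1 \to \gamma_i/\gamma_{i+1} \to H/\gamma_{i+1} \to H/\gamma_i \to 1$, each factor $\gamma_i/\gamma_{i+1}$ being abelian and finite dimensional (again by the abelian case) and each extension central, so that Theorem \ref{teo:ext} applies.

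The main obstacle is precisely the stability under these extensions, because finite dimensionality is not preserved by arbitrary group extensions: iterated conjugation can create new independent directions at arbitrarily high jet order and push $\dim \overline{G_k}$ to infinity. What rescues the argument is the polycyclic/nilpotent structure together with finite generation. In the polycyclic case the finite Hirsch length bounds the number of cyclic extensions, and the normalizing action at each step is algebraic on the finite dimensional closure already constructed; in the nilpotent case the vanishing of sufficiently long commutators forces the pro-unipotent part of $\overline{G}$ to close up after finitely many steps of the central series. Verifying that Theorem \ref{teo:ext} genuinely applies at each step—that is, that the conjugation action extends to an algebraic action on the previously built algebraic group and does not raise its dimension without bound—is the technical heart of the proof.
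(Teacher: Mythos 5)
Your overall strategy---reduce the virtual hypotheses to the plain ones via finite-index invariance of dimension, then run the group through a tower of finite, cyclic and finitely generated abelian extensions and invoke Theorem \ref{teo:ext}---is exactly the paper's strategy for the polycyclic case (Theorem \ref{teo:vp} is literally ``a virtually polycyclic group is cyclic-by-$\ldots$-by-cyclic-by-finite, apply Theorem \ref{teo:ext}''). For the nilpotent case you diverge: you induct along the lower central series, whereas the paper (Theorem \ref{teo:vnfgifd}) first replaces each one-parameter generator by a single element with the same Zariski-closure (Lemma \ref{lem:fginf}), so that $G$ may be assumed honestly finitely generated, and then uses the standard fact that a finitely generated nilpotent group is polycyclic to fall back on the polycyclic case. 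Both routes work, but the paper's is shorter because it outsources the induction to the polycyclic theorem and, crucially, it dodges the point where your argument is thinnest: your claim that the finite-index nilpotent subgroup $H$ ``is finitely generated, hence generated by those one-parameter subgroups together with finitely many cyclic subgroups'' is not justified as written. Finite-index subgroups of finitely generated groups are finitely generated, but $G$ here is only finitely generated \emph{in the extended sense}; one needs either a Reidemeister--Schreier-type argument (noting that conjugates of one-parameter subgroups are one-parameter subgroups) or, as in the paper, Lemma \ref{lem:fginf} applied \emph{before} passing to the finite-index subgroup. Your divisibility observation correctly places the one-parameter subgroups inside $H$ but does not by itself give the generation statement.

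One further remark on your closing paragraph: you present ``verifying that the conjugation action extends to an algebraic action and does not raise the dimension without bound'' as the technical heart still to be supplied. In the paper's framework no such verification is needed. Theorem \ref{teo:ext} (via Proposition \ref{pro:fgaifd}) asserts unconditionally that any normal extension with finitely generated abelian quotient is finite dimensional; its proof does not analyze the conjugation action at all but uses the word decomposition $G = H_{1}\cdots H_{l} J_{1} \cdots J_{m} H$ together with the algebraic-image bound of Proposition \ref{pro:fw} and the bounds $\dim \langle \phi\rangle \leq n$, $\dim\{\exp(tX)\} \leq n$ of Propositions \ref{pro:cloif} and \ref{pro:fginfp}. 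So the worry you flag is already discharged by the cited theorem, and your appeal to Theorem \ref{teo:sy} for finite dimensionality of the abelianization should instead be to Proposition \ref{pro:fgaifd} (Theorem \ref{teo:sy} is an intersection-multiplicity statement, not a dimension statement). With those two repairs your proof is correct and essentially parallel to the paper's.
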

Check Definitions \ref{def:vir}, \ref{def:norp} and \ref{def:nil} out
for the definitions of virtual property, polycyclic and nilpotent groups respectively.
%
%
%

Notice that in particular Theorem \ref{teo:mainc} applies to finitely generated virtually nilpotent
subgroups of $\diffh{}{n}$, i.e. to subgroups of polynomial growth of formal diffeomorphisms.

We introduce several techniques that allow to build finite dimensional subgroups of $\diffh{}{n}$.
Every time that we identify such a group we obtain an analogue of Theorem \ref{teo:mainc}.
Instead of writing down the most general possible result,
we prefer to highlight  some remarkable properties that imply finite dimensionality.

Let us compare the hypotheses of Theorems \ref{teo:b} and \ref{teo:main}.
A priori it could be possible for a subgroup $G$ of $\diffh{}{n}$ to satisfy
$\dim G = \infty$ and being the image by a morphism of a real Lie group with finitely many
connected components (cf. Definition \ref{def:lie}). We will see that it never happens (Theorem \ref{teo:ext})
and thus our canonical
approach encloses the results by
Seigal-Yakovenko  and Binyamini \cite{Seigal-Yakovenko:ldi, Binyamini:finite}.
The definition of finite dimensional
subgroup of $\diffh{}{n}$ allows us to apply the techniques of the algebraic group theory
in the study of local intersection problems.

Our canonical approach makes simpler to analyze whether or not subgroups of $\diffh{}{n}$
are embeddable in algebraic groups. We will relate finite dimensionality with other group properties, namely
\begin{itemize}
\item Finite determination (cf. Definition \ref{def:findet}) properties.
We will show
that finite determination implies finite dimension under certain closedness properties (Corollary \ref{cor:charfdifd}). \\
\item Finite decomposition properties. A subgroup $G$ of $\diffh{}{n}$ is finite dimensional if and only if
every element can be written as a word of uniformly bounded length in an alphabet whose letters belong 
to the union of finitely many cyclic and one parameter subgroups of $\diffh{}{n}$ (Theorem \ref{teo:fw} and
Remark \ref{rem:fw}). \\
\item Virtually solvable subgroups of $\diffh{}{n}$ with suitable finite generation hypotheses are finite dimensional
(Proposition \ref{pro:fd1fd}, Theorems \ref{teo:vp}, \ref{teo:vnfgifd}, Corollaries \ref{cor:vsfg}, \ref{cor:vssp}...) \\
\item Decomposition of the group in a tower of extensions of the trivial group. \\
\end{itemize}
%
Let us expand on the final item of the previous list.
Consider a normal subgroup $H$ of a subgroup $G$ of $\diffh{}{n}$.
We can define the codimension of $H$ or equivalently the dimension of the extension
$G/H$ as the codimension of $\overline{H}$ in $\overline{G}$.
Such definition is interesting because a tower of finite dimensional extensions is finite dimensional
(Proposition \ref{pro:elems}). In particular it is possible to decide whether or not a subgroup of $\diffh{}{n}$
is finite dimensional
by considering it as a tower of (easier to handle) extensions of the trivial group.  We will exhibit
some classes of extensions that are finite dimensional, namely
\begin{itemize}
\item Finite extensions. \\
\item Finitely generated abelian extensions. \\
\item $G/H$ is a  connected Lie group (cf. Definition \ref{def:lie}). \\
\item Certain subextensions of virtually solvable extensions (Theorems \ref{teo:subus} and \ref{teo:rgwgu}).
\end{itemize}
For instance a virtually polycyclic group is a tower of cyclic and finite extensions of 
the trivial group and hence it is always finite dimensional.  
Hence 
we can apply Theorem \ref{teo:main} to show Theorem \ref{teo:mainc}.
The extension approach provides a method to build examples of finite dimensional subgroups
of $\diffh{}{n}$.
\section{Pro-algebraic groups}
Let us explain some of the basic properties of the pro-algebraic subgroups of
$\diffh{}{n}$. Pro-algebraic groups of formal diffeomorphisms have been used
in the study of differential Galois theory by
Morales-Ramis-Sim\'{o} \cite{Mor-Ram-Simo}.
Most of the results in this section can be found in \cite{JR:arxivdl} and
\cite{JR:solvable25}. We explain them here for the sake of clarity and completeness.
\subsection{Formal vector fields and diffeomorphisms}
\label{sec:fvfd}
Let us introduce some notations.
\begin{defi}
We denote by ${\mathcal O}_{n}$ the ring ${\mathbb C}\{z_{1},\ldots,z_{n}\}$
of germs of holomorphic functions defined in the neighborhood of $0$ in ${\mathbb C}^{n}$.
We denote by ${\mathfrak m}$ the maximal ideal of ${\mathcal O}_{n}$.

Analogously we define $\hat{\mathcal O}_{n}$ as the ring of formal power series with complex
coefficients in $n$ variables whose maximal ideal will be denoted by $\hat{\mathfrak m}$.
\end{defi}
Next,  we define formal vector fields as a generalization of local vector fields.
\begin{defi}
We denote by $\mathfrak{X} ({\mathbb C}^{n},0)$ the Lie algebra of germs of holomorphic
vector fields defined in the neighborhood of $0$ in ${\mathbb C}^{n}$ that are singular at
$0$.
\end{defi}
\begin{rem}
An element $X$ of $\mathfrak{X} ({\mathbb C}^{n},0)$ is of the form
\[ X= f_{1}(z_1,\ldots,z_n) \frac{\partial}{\partial z_1} + \ldots +
 f_{n}(z_1,\ldots,z_n) \frac{\partial}{\partial z_n}  \]
where $f_{1}, \ldots, f_{n}$ belong to the maximal ideal ${\mathfrak m}$ of ${\mathcal O}_{n}$.
Analogously $X$ can be interpreted as a derivation of the ${\mathbb C}$-algebra ${\mathfrak m}$.
\end{rem}
\begin{defi}
We define the Lie algebra $\hat{\mathfrak{X}} ({\mathbb C}^{n},0)$ as the set of
derivations of the ${\mathbb C}$-algebra $\hat{\mathfrak m}$. Analogously we can
identify an element $X$ of $\hat{\mathfrak{X}} ({\mathbb C}^{n},0)$ with the
expression
\[ X = X(z_1)  \frac{\partial}{\partial z_1} + \ldots +  X(z_n)  \frac{\partial}{\partial z_n} \]
where the coefficients of the vector field belong to $\hat{\mathfrak m}$.
\end{defi}
Let us apply the same program to diffeomorphisms.
\begin{defi}
We denote by $\diff{}{n}$ the group of germs of biholomorphism defined in the neighborhood of
$0$ in ${\mathbb C}^{n}$.
\end{defi}
\begin{rem}
An element $\phi$ of $\diff{}{n}$ is of the form
\[ \phi(z_{1},\ldots,z_{n}) = (f_{1}(z_1,\ldots,z_n), \ldots, f_{n}(z_1,\ldots,z_n)) \]
where $f_1, \ldots, f_n \in {\mathfrak m}$ and its linear part $D_{0} \phi$ at the origin 
is an invertible linear map.
\end{rem}
\begin{defi}
\label{def:fordif}
We say that $\phi$ belongs to the group $\diffh{}{n}$ of formal diffeomorphisms if it is of the form
\[ \phi(z_{1},\ldots,z_{n}) = (f_{1}(z_1,\ldots,z_n), \ldots, f_{n}(z_1,\ldots,z_n)) \]
where $f_1, \ldots, f_n \in \hat{\mathfrak m}$ and $D_{0} \phi$ is an invertible linear map.
\end{defi}
We will use the Krull topology (the $\hat{\mathfrak m}$-adic topology) in our spaces of formal
objects.
\begin{defi}
The sets of the form $f + \hat{\mathfrak m}^{j}$ for any choice of  $f \in \hat{\mathcal O}_{n}$
and $j \geq 0$ are a base of open sets of a topology in $\hat{\mathcal O}_{n}$,
the so called $\hat{\mathfrak m}$-{\it adic} (or {\it Krull}) {\it topology}.
Since we can interpret formal vector fields and diffeomorphisms as $n$-uples of elements
in $\hat{\mathfrak m}$ we can define the Krull topology in
$\hat{\mathfrak X} ({\mathbb C}^{n},0)$ and $\diffh{}{n}$.
\end{defi}
\begin{rem}
A sequence $(f_{k})_{k \geq 1}$ of elements of $\hat{\mathcal O}_{n}$
converges to $f \in \hat{\mathcal O}_{n}$ in the Krull topology if
for any $j \in {\mathbb N}$ there exists $k_{0} \in {\mathbb N}$ such that
$f - f_k \in \hat{\mathfrak m}^{j}$ for any $k \geq k_{0}$.
Convergence of sequences in $\hat{\mathfrak X} ({\mathbb C}^{n},0)$ and $\diffh{}{n}$
is analogous.
\end{rem}
\begin{rem}
It is clear that $\hat{\mathcal O}_{n}$, $\hat{\mathfrak X} ({\mathbb C}^{n},0)$ and $\diffh{}{n}$
are the closures in the Krull topology of
${\mathcal O}_{n}$, ${\mathfrak X} ({\mathbb C}^{n},0)$ and $\diff{}{n}$
respectively.
\end{rem}

It is difficult to work with $\diffh{}{n}$ since it is an infinite dimensional space.
Anyway we can understand $\diffh{}{n}$ as a projective limit
$\varprojlim_{k \in {\mathbb N}} D_{k}$ where every $D_{k}$ is a finite dimensional matrix
group for $k \in {\mathbb N}$  \cite[Lemma 2.2]{JR:solvable25}.
We should interpret $D_{k}$ as the group of $k$-jets of elements of $\diffh{}{n}$.
Next, let us explain how to define rigorously the groups $D_k$ for $k \in {\mathbb N}$
and how this allows to
apply the theory of linear algebraic groups to the groups of formal diffeomorphisms.

Given $X \in \hat{\mathfrak X}({\mathbb C}^{n},0)$ and $\phi \in \diffh{}{n}$ we can associate
$X_{k}, \phi_{k} \in \mathrm{GL}({\mathfrak m}/ {\mathfrak m}^{k+1})$
respectively for any $k \in {\mathbb N}$. They are  given by
\[
\begin{array}{ccc}
\hat{\mathfrak m}/ \hat{\mathfrak m}^{k+1} & \stackrel{X_{k}}{\to}
& \hat{\mathfrak m}/ \hat{\mathfrak m}^{k+1} \\
f + \hat{\mathfrak m}^{k+1} & \mapsto & X(f)+ \hat{\mathfrak m}^{k+1},
\end{array} \ \ \
\begin{array}{ccc}
\hat{\mathfrak m}/ \hat{\mathfrak m}^{k+1} & \stackrel{\phi_{k}}{\to}
& \hat{\mathfrak m}/ \hat{\mathfrak m}^{k+1} \\
f + \hat{\mathfrak m}^{k+1} & \mapsto & f  \circ \phi+ \hat{\mathfrak m}^{k+1}.
\end{array}
\]
The linear map $X_{k}$ (resp. $\phi_k$)
determines and is determined by the $k$-jet of $X$ (resp. $\phi$).
Moreover $L_{k}:= \{ X_{k} : X \in \hat{\mathfrak X}({\mathbb C}^{n},0) \}$ is the Lie algebra
of the group $D_k := \{ \phi_k : \phi \in \diffh{}{n} \}$.
It is an algebraic subgroup of
$\mathrm{GL}(\hat{\mathfrak m}/ \hat{\mathfrak m}^{k+1})$ since it satisfies
\[ D_k = \{ \alpha \in \mathrm{GL}(\hat{\mathfrak m}/ \hat{\mathfrak m}^{k+1}):
\alpha (f g) = \alpha (f) \alpha (g) \ \forall f,g \in \hat{\mathfrak m}/ \hat{\mathfrak m}^{k+1}), \]
cf. \cite[section 3]{JR:arxivdl} \cite[Lemma 2.1]{JR:solvable25}. 
\begin{defi}
\label{def:trunc}
Given $k \geq l \geq 1$
we define the maps $\pi_{k}: \diffh{}{n} \to D_{k}$ and  $\pi_{k,l}:D_{k} \to D_{l}$ given by
$\pi_k (\phi)= \phi_k$ and $\pi_{k,l}(\phi_{k})= \phi_{l}$.
\end{defi}
Since $D_k$ is the group of truncations of elements of
$\diffh{}{n}$ up to level $k$, we can interpret $\diffh{}{n}$ as
the projective limit of the projective system
$(\varprojlim_{k \in {\mathbb N}} D_{k}, (\pi_{k,l})_{k \geq l \geq 1})$
of algebraic groups and morphisms of algebraic groups \cite[Lemma 2.2]{JR:solvable25}.
Analogously $\hat{\mathfrak X}({\mathbb C}^{n},0)$ is the projective limit
$\varprojlim L_{k}$.
\subsection{Exponential map}
Given $X \in \hat{\mathfrak X} ({\mathbb C}^{n},0)$ we can define its
exponential $\mathrm{exp}(X)$. Indeed given 
$(X_{k})_{k \geq 1} \in \varprojlim L_{k} = \hat{\mathfrak X}({\mathbb C}^{n},0)$
the family
$(\mathrm{exp}(X_{k}))_{k \geq 1}$ defines an element
$\mathrm{exp}(X)$ of $\diffh{}{n} = \varprojlim D_{k}$.
Equivalently we consider a sequence
$(X_{j})_{j \in {\mathbb N}}$ of convergent vector fields that converges to $X$ in the
Krull topology and then we define $\mathrm{exp}(X)$ as the limit in the Krull topology
of $(\mathrm{exp}(X_{j}))_{j \in {\mathbb N}}$
where $\mathrm{exp}(X_j)$ is the time $1$ flow of $X_j$ for $j \in {\mathbb N}$.
\begin{defi}
We say that a formal vector field $X \in \hat{\mathfrak X} ({\mathbb C}^{n},0)$ is {\it nilpotent}
if its linear part $D_{0} X$ is nilpotent. We denote by $\hat{\mathfrak X}_{N} ({\mathbb C}^{n},0)$
the subset of $\hat{\mathfrak X} ({\mathbb C}^{n},0)$ of formal nilpotent vector fields.
\end{defi}
\begin{defi}
We say that a formal diffeomorphism $\phi \in \diffh{}{n}$ is {\it unipotent}
if its linear part $D_{0} \phi$ is unipotent. We denote by $\diffh{u}{n}$
the subset of $\diffh{}{n}$ of formal unipotent diffeomorphisms.
We say that a subgroup $G$ of $\diffh{}{n}$ is unipotent if $G \subset \diffh{u}{n}$.
\end{defi}
\begin{pro}[cf. {\cite{Ecalle, MaRa:aen}} {\cite[Th. 3.17]{Ilya-Yako}}]
The map 
\[ \mathrm{exp}: \hat{\mathfrak X}_{N} ({\mathbb C}^{n},0) \to \diffh{u}{n} \]
is a bijection.
\end{pro}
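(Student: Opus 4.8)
The plan is to exploit the projective limit structure $\diffh{}{n} = \varprojlim_k D_k$ and $\hat{\mathfrak X}({\mathbb C}^n,0) = \varprojlim_k L_k$ in order to reduce the statement to the corresponding fact for each finite-dimensional algebraic group $D_k$. Recall that $\mathrm{exp}$ was defined precisely so that it acts levelwise: $\mathrm{exp}(X)$ is the element of $\varprojlim_k D_k$ whose $k$-jet is $\mathrm{exp}(X_k)$, where $X_k \in L_k$ is the induced endomorphism (derivation). So I would first show that for each fixed $k$ the map $\mathrm{exp}$ restricts to a bijection from the set of nilpotent elements of the Lie algebra $L_k$ onto the set of unipotent elements of the algebraic group $D_k$, and then check that these bijections are compatible with the truncation morphisms $\pi_{k,l}$, so that they assemble into the desired bijection in the limit.

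Second, I would verify that the pointwise conditions at each level match the definitions of $\hat{\mathfrak X}_N({\mathbb C}^n,0)$ and $\diffh{u}{n}$. Since a vector field $X$ with coefficients in $\hat{\mathfrak m}$ preserves the filtration $\hat{\mathfrak m} \supset \hat{\mathfrak m}^2 \supset \cdots$, the endomorphism $X_k$ respects the induced filtration of $\hat{\mathfrak m}/\hat{\mathfrak m}^{k+1}$ and acts on the $j$-th graded piece $\hat{\mathfrak m}^j/\hat{\mathfrak m}^{j+1}$ (degree-$j$ homogeneous forms) through the linear part $D_0 X$, i.e. through the $j$-th symmetric power of $D_0 X$. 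As the symmetric powers of a nilpotent operator are nilpotent, the hypothesis $D_0 X$ nilpotent forces $X_k$ nilpotent for every $k$; the converse is read off at level $k=1$, where $X_1$ is the action on $\hat{\mathfrak m}/\hat{\mathfrak m}^2$. Thus $X \in \hat{\mathfrak X}_N({\mathbb C}^n,0)$ iff $X_k$ is nilpotent for all $k$, and the identical argument (with symmetric powers and unipotence) gives that $\phi \in \diffh{u}{n}$ iff $\phi_k$ is unipotent for all $k$.

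Third, the levelwise bijection is the standard fact from the theory of linear algebraic groups in characteristic zero: for a linear algebraic group $G$ with Lie algebra $\mathfrak g$, the truncated power series $\mathrm{exp}(N) = \sum_{j \geq 0} N^j/j!$ and $\log(U) = \sum_{j \geq 1}(-1)^{j+1}(U-\mathrm{Id})^j/j$ are mutually inverse polynomial maps between the nilpotent elements of $\mathfrak g$ and the unipotent elements of $G$; in particular $\mathrm{exp}$ carries a nilpotent element of $\mathfrak g$ into $G$ and $\log$ carries a unipotent element of $G$ into $\mathfrak g$. Applying this to $G = D_k$ and $\mathfrak g = L_k$ yields the bijection at each level, with explicit inverse $\log$.

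Finally, I would assemble the levels. Since each $\pi_{k,l} \colon D_k \to D_l$ is a morphism of algebraic groups, it intertwines $\mathrm{exp}$ and $\log$ with the induced Lie algebra map (which is just the truncation $L_k \to L_l$ of the projective system); hence the family $(\log \phi_k)_k$ is compatible with the $\pi_{k,l}$ and defines an element $\log(\phi) \in \varprojlim_k L_k = \hat{\mathfrak X}({\mathbb C}^n,0)$, which is nilpotent by the second step. Injectivity of $\mathrm{exp}$ then follows because equality of images forces equality of all $k$-jets, and surjectivity because $\mathrm{exp}(\log(\phi)) = \phi$ level by level. The one point requiring care — and the main obstacle — is the levelwise assertion that $\mathrm{exp}$ of a nilpotent element of $L_k$ genuinely lands inside the group $D_k$ (and, dually, that $\log$ of a unipotent element of $D_k$ lands in $L_k$): this is exactly where the algebraicity of $D_k$ and the characteristic-zero hypothesis enter, via the fact that the one-parameter subgroup $t \mapsto \mathrm{exp}(t X_k)$ generated by a nilpotent $X_k \in L_k$ is algebraic and contained in $D_k$.
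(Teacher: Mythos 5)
The paper does not actually prove this proposition: it is stated with references to Ecalle, Martinet--Ramis and Ilyashenko--Yakovenko, so there is no in-text argument to compare yours against line by line. That said, your proof is correct, and it is worth noting that it is not the route those classical references take. They establish the existence and uniqueness of the infinitesimal generator by direct manipulation of formal power series -- solving $\phi=\mathrm{exp}(X)$ degree by degree via the Lie--Taylor expansion $\mathrm{exp}(X)f=\sum_{j\geq 0}X^{j}(f)/j!$, with convergence in the $\hat{\mathfrak m}$-adic sense guaranteed because $X$ raises the order of vanishing on each graded piece. Your argument instead runs entirely through the pro-algebraic machinery that the paper sets up in Section 2: reduce to each jet level, invoke the standard characteristic-zero correspondence between nilpotent elements of the Lie algebra $L_{k}$ of the algebraic group $D_{k}$ and unipotent elements of $D_{k}$ (with $\log$ as explicit inverse), and pass to the projective limit using compatibility with the truncations $\pi_{k,l}$. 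This is exactly the style of argument the paper itself uses to construct the Jordan decomposition of formal diffeomorphisms from Chevalley's theorem, so your proof fits the paper's framework better than the cited ones; what it buys is that the only nontrivial input is the algebraicity of $D_{k}$ (which guarantees $\log\phi_{k}\in L_{k}$ for $\phi_{k}$ unipotent), a point you correctly isolate as the crux. The identification of $\hat{\mathfrak X}_{N}({\mathbb C}^{n},0)$ and $\diffh{u}{n}$ with the sets of levelwise nilpotent, respectively unipotent, families via the filtration by powers of $\hat{\mathfrak m}$ is also right (and for $\diffh{u}{n}$ is the paper's Remark 2.4). I see no gap.
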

\begin{defi}
Given $\phi \in \diffh{u}{n}$ we define its {\it infinitesimal generator} $\log \phi$ as the unique
formal nilpotent vector field such that $\phi = \mathrm{exp}(\log \phi)$.
We denote $\phi^{t} =  \mathrm{exp}(t \log \phi)$ for $t \in {\mathbb C}$.
\end{defi}

\subsection{Zariski-closure of a group of formal diffeomorphisms}
\begin{defi}
Let $G$ be a subgroup of $\diffh{}{n}$. Given $k \in {\mathbb N}$ we define
$G_{k}^{*} = \{ \phi_k : \phi \in G \}$ and $G_{k}$ as the Zariski-closure of
$G_{k}^{*}$  in
$\mathrm{GL}(\hat{\mathfrak m}/ \hat{\mathfrak m}^{k+1})$. 
\end{defi}
Let us remark that
since $D_k$ is algebraic, $G_k$ is a subgroup of $D_k$ for any $k \in {\mathbb N}$.
\begin{rem}
\label{rem:pik}
Given $k \geq l \geq 1$ the image  of the algebraic closure
of $G_{k}^{*}$ by $\pi_{k,l}$ is the algebraic closure of the image
$G_{l}^{*}$ (cf. \cite[2.1 (f), p. 57]{Borel}).
Hence we obtain $\pi_{k,l}(G_{k})= G_{l}$ for any $k \geq l \geq 1$.
In particular $(\varprojlim_{k \in {\mathbb N}} G_{k}, (\pi_{k,l})_{k \geq l \geq 1})$ is
a projective system.
\end{rem}
\begin{defi}
\label{def:zarclos}
Let $G$ be a subgroup of $\diffh{}{n}$. We define the Zariski-closure $\overline{G}$
of $G$ as $\varprojlim_{k \in {\mathbb N}} G_{k}$ or in other words
\[ \overline{G} = \{ \phi \in \diffh{}{n} : \phi_{k} \in G_{k} \ \forall k \in {\mathbb N} \} . \]
\end{defi}
\subsection{Definition of pro-algebraic group}
\begin{defi}
Let $G$ be a subgroup of $\diffh{}{n}$. We say that $G$ is {\it pro-algebraic}
if $G= \overline{G}$.
\end{defi}
\begin{rem}
\label{rem:pik2}
Since $\pi_{k,l}(G_{k})= G_{l}$ for any $k \geq l \geq 1$,
the natural projection
$(\pi_k)_{|\overline{G}}: \overline{G} \to G_{k}$ is surjective for any $k \in {\mathbb N}$
(cf. \cite[Lemma 2.5]{JR:solvable25}, \cite[Corollary 3.25]{rib:cimpa}).
Thus  the Zariski-closure of
$\overline{G}$ coincides with $\overline{G}$ and
$\overline{G}$ is pro-algebraic. It is the minimal pro-algebraic group containing $G$.
\end{rem}
We can characterize pro-algebraic subgroups of $\diffh{}{n}$.
\begin{pro}[{\cite[Proposition 2.2]{JR:solvable25}}, cf. {\cite[Proposition 3.26]{rib:cimpa}}]
\label{pro:char}
Let $G$ be a subgroup of $\diffh{}{n}$.
Then $G$ is pro-algebraic if and only if $G_{k}^{*}$
is an algebraic subgroup of
$\mathrm{GL}(\hat{\mathfrak m}/ \hat{\mathfrak m}^{k+1})$ for any $k \in {\mathbb N}$
and $G$ is closed in the Krull topology.
\end{pro}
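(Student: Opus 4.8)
The plan is to prove the two implications separately, using as the single nontrivial ingredient the surjectivity of the projections onto the $G_k$ recorded in Remark \ref{rem:pik2}; everything else reduces to the observation that the truncation map $\pi_k$ depends only on finitely many Taylor coefficients.

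First I would record that $\overline{G}$ is automatically closed in the Krull topology. The point is that whether $\phi_k \in G_k$ holds depends only on the $k$-jet of $\phi$, so if $\psi$ differs from $\phi$ by an element of $\hat{\mathfrak m}^{k+1}$ in each coordinate then $\psi_k = \phi_k$. Consequently each set $\{ \phi \in \diffh{}{n} : \phi_k \in G_k \}$ is simultaneously open and closed in the Krull topology, and $\overline{G}$, being their intersection over $k \in {\mathbb N}$, is Krull-closed. This observation settles half of each direction at once.

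For the forward implication I would assume $G = \overline{G}$. By Remark \ref{rem:pik2} the restriction $(\pi_k)_{|\overline{G}} \colon \overline{G} \to G_k$ is surjective, so $\pi_k(G) = G_k$; since $\pi_k(G) = G_k^*$ by definition, this gives $G_k^* = G_k$. As $G_k$ is the Zariski-closure of $G_k^*$, the equality $G_k^* = G_k$ says exactly that $G_k^*$ is algebraic for every $k$. Finally $G = \overline{G}$ is Krull-closed by the preceding paragraph, so both required conditions hold.

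For the converse I would assume that every $G_k^*$ is algebraic and that $G$ is Krull-closed. Since $G_k$ is the Zariski-closure of the already-closed $G_k^*$, we get $G_k = G_k^*$ for all $k$, hence $\overline{G} = \{ \phi : \phi_k \in G_k^* \ \forall k \}$. The inclusion $G \subseteq \overline{G}$ is immediate. For the reverse inclusion take $\phi \in \overline{G}$: for each $k$ the relation $\phi_k \in G_k^*$ produces some $\psi^{(k)} \in G$ agreeing with $\phi$ up to order $k$, so that $\psi^{(k)} \to \phi$ in the Krull topology; as $G$ is Krull-closed this forces $\phi \in G$. Thus $G = \overline{G}$, i.e. $G$ is pro-algebraic. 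The main (indeed only) subtlety is the surjectivity of $(\pi_k)_{|\overline{G}}$ invoked in the forward direction, which is the standard fact that an inverse limit of a surjective system of algebraic groups projects onto each factor; since it is already quoted in Remark \ref{rem:pik2}, the rest of the argument is routine Krull-topology bookkeeping.
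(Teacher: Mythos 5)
Your proof is correct. The paper does not actually prove Proposition \ref{pro:char} --- it imports it from the cited references --- but your argument is the standard one and each step checks out: the clopen-ness of the jet conditions gives Krull-closedness of $\overline{G}$, the surjectivity of $(\pi_k)_{|\overline{G}}$ from Remark \ref{rem:pik2} yields $G_k^*=G_k$ in the forward direction, and in the converse the diagonal approximation $\psi^{(k)}\to\phi$ together with Krull-closedness of $G$ gives $\overline{G}\subseteq G$. You correctly isolate the only nontrivial input (the surjectivity of the projections onto the truncated groups), which the paper supplies as a quoted fact.
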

\subsection{Lie algebra of a pro-algebraic group}
Pro-algebraic groups have a connected component of $Id$ whose properties are analogous
to the connected component of $Id$ of an algebraic matrix group. This is a particular
instance of a more general situation:
many analogues of concepts involving algebraic groups can be transferred to the
pro-algebraic setting.
\begin{defi}
Let $G$ be a subgroup of $\diffh{}{n}$.
We define $G_{k,0}$ as the connected component of $Id$ of $G_{k}$ for $k \in {\mathbb N}$.
We define $\overline{G}_{0}  = \varprojlim_{k \in {\mathbb N}} G_{k,0}$ or equivalently
\[ \overline{G}_{0}  =
\{ \phi \in \overline{G} : \phi_{k} \in G_{k,0} \ \forall k \in {\mathbb N} \}. \]
We say that $\overline{G}_{0}$ is the
{\it connected component of} $Id$ of $\overline{G}$.
If $G$ is pro-algebraic then we denote $G_{0} = \overline{G}_{0}$.
\end{defi}
\begin{rem}
\label{rem:ccia}
The group $\overline{G}_{0}$ is a finite index normal
pro-algebraic subgroup of $\overline{G}$
\cite[Proposition 2.3 and Remark 2.9]{JR:solvable25}, cf.
\cite[Proposition 3.35 and Remark 3.37]{rib:cimpa}.
\end{rem}
\begin{pro}[{\cite[Proposition 2]{JR:arxivdl}}]
\label{pro:liecorr}
Let $G$ be a subgroup of $\diffh{}{n}$.
We consider
\[ {\mathfrak g} = \{ X \in \hat{\mathfrak X}  ({\mathbb C}^{n},0) :
\mathrm{exp}(t X) \in \overline{G} \ \forall t \in {\mathbb C} \}. \]
Then ${\mathfrak g}$ is a Lie algebra and $\overline{G}_{0}$ is generated by
the set $\mathrm{exp} ({\mathfrak g})$.
\end{pro}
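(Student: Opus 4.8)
The plan is to work level by level, exploiting that $\overline{G}=\varprojlim_{k} G_k$ and $\overline{G}_0=\varprojlim_{k} G_{k,0}$ are projective limits of algebraic groups. First I would pin down $\mathfrak g$ on each $k$-jet level. For $X\in\hat{\mathfrak X}({\mathbb C}^n,0)$ the one parameter group $t\mapsto\mathrm{exp}(tX)$ lies in $\overline{G}$ if and only if $\mathrm{exp}(tX_k)=\mathrm{exp}(tX)_k\in G_k$ for every $k$ and every $t\in{\mathbb C}$; since $G_k$ is an algebraic subgroup of $\mathrm{GL}(\hat{\mathfrak m}/\hat{\mathfrak m}^{k+1})$ and the base field has characteristic zero, this is equivalent to $X_k\in\mathfrak g_k:=\mathrm{Lie}(G_k)=\mathrm{Lie}(G_{k,0})$. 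Hence $\mathfrak g=\{X:X_k\in\mathfrak g_k\ \forall k\}=\varprojlim_k\mathfrak g_k$. Moreover $\pi_{k,l}(G_k)=G_l$ (Remark \ref{rem:pik}), so the differentials $d\pi_{k,l}$ carry $\mathfrak g_k$ onto $\mathfrak g_l$ and $(\mathfrak g_k,d\pi_{k,l})$ is a projective system of finite dimensional Lie algebras with surjective connecting maps.

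The Lie algebra assertion then follows at once. The linear and bracket operations on $\hat{\mathfrak X}({\mathbb C}^n,0)=\varprojlim_k L_k$ are computed componentwise; indeed $[X,Y]_k=[X_k,Y_k]$ straight from the description of $X_k$ as the derivation induced by $X$ on $\hat{\mathfrak m}/\hat{\mathfrak m}^{k+1}$. Since each $\mathfrak g_k$ is a Lie subalgebra of $L_k$, the set $\mathfrak g$ is stable under brackets and linear combinations, i.e. it is a Lie subalgebra of $\hat{\mathfrak X}({\mathbb C}^n,0)$.

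For the generation statement, the inclusion $\langle\mathrm{exp}(\mathfrak g)\rangle\subseteq\overline{G}_0$ is immediate: for $X\in\mathfrak g$ one has $\mathrm{exp}(X)_k=\mathrm{exp}(X_k)\in G_{k,0}$ for all $k$, so $\mathrm{exp}(X)\in\overline{G}_0$, and $\overline{G}_0$ is a group. For the reverse inclusion I would first record that $\langle\mathrm{exp}(\mathfrak g)\rangle$ already fills every finite level. Because the maps $\mathfrak g_k\to\mathfrak g_l$ are surjective, the projection $\mathfrak g\to\mathfrak g_k$ is surjective, whence $\pi_k(\mathrm{exp}(\mathfrak g))=\mathrm{exp}(\mathfrak g_k)$ and therefore $\pi_k(\langle\mathrm{exp}(\mathfrak g)\rangle)=\langle\mathrm{exp}(\mathfrak g_k)\rangle=G_{k,0}$, the last equality holding because a connected algebraic group over ${\mathbb C}$ is generated by the image of the exponential of its Lie algebra (the exponential is a local biholomorphism at $0$, so its image contains an open, hence generating, neighbourhood of the identity).

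It remains to pass from all the finite levels to the projective limit, and this coherence is the crux. Let $H$ denote the Krull closure of $\langle\mathrm{exp}(\mathfrak g)\rangle$. Since $\langle\mathrm{exp}(\mathfrak g)\rangle\subseteq H\subseteq\overline{G}_0$ and $\overline{G}_0=\{\phi:\phi_k\in G_{k,0}\ \forall k\}$, we get $G_{k,0}=\pi_k(\langle\mathrm{exp}(\mathfrak g)\rangle)\subseteq\pi_k(H)\subseteq G_{k,0}$, so $\pi_k(H)=G_{k,0}$ for every $k$. A Krull closed subgroup of $\varprojlim_k G_{k,0}$ that surjects onto each $G_{k,0}$ must be the whole limit: given $\phi\in\overline{G}_0$ and any $k$, choose $\psi\in H$ with $\psi_k=\phi_k$; then $\psi_j=\phi_j$ for all $j\le k$, so such $\psi$ approximate $\phi$ in the Krull topology and $\phi\in H$. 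Hence $H=\overline{G}_0$, which establishes the statement with $\overline{G}_0$ equal to the smallest Krull closed subgroup containing $\mathrm{exp}(\mathfrak g)$. Whether every $\phi\in\overline{G}_0$ is moreover a \emph{genuine finite product} of exponentials is the delicate point and I expect it to be the main obstacle, since the minimal number of factors needed at level $k$ may a priori grow with $k$. When $\overline{G}_0$ is solvable (the case relevant to the virtually polycyclic and nilpotent groups treated later) it can be handled cleanly: a connected solvable algebraic group $G_{k,0}$ splits as $U\cdot T$ with $U$ unipotent and $T$ a torus, and the exponential is surjective on each factor, giving the uniform bound of two factors at every level; the level by level decompositions then assemble into a single $\phi=\mathrm{exp}(A)\mathrm{exp}(B)$ with $A,B\in\mathfrak g$ once one checks, by a Mittag--Leffler argument on the constructible fibres $\{(A,B)\in\mathfrak g_k\times\mathfrak g_k:\mathrm{exp}(A)\mathrm{exp}(B)=\phi_k\}$, that their inverse system has nonempty projective limit.
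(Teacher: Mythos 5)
The paper itself does not prove this proposition (it is quoted from \cite[Proposition 2]{JR:arxivdl}), so I can only judge your argument on its own terms. The preparatory parts are sound: the identification ${\mathfrak g}=\varprojlim_k\mathrm{Lie}(G_k)$, the closure of ${\mathfrak g}$ under brackets, the inclusion $\langle\mathrm{exp}({\mathfrak g})\rangle\subseteq\overline{G}_0$, the surjectivity of ${\mathfrak g}\to{\mathfrak g}_k$, and the fact that the Krull closure of $\langle\mathrm{exp}({\mathfrak g})\rangle$ is all of $\overline{G}_0$ are all correct. But the proposition asserts that $\overline{G}_0$ \emph{is generated} by $\mathrm{exp}({\mathfrak g})$, i.e.\ every element is a genuine finite product of exponentials, and this is exactly the step you flag as ``the delicate point'' and leave open outside the solvable case. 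That is a real gap, not a cosmetic one: the paper uses the finite-product form explicitly (in the proof of Lemma \ref{lem:fiscg} an arbitrary element of $G_0$ is written as $\mathrm{exp}(X_1)\circ\cdots\circ\mathrm{exp}(X_m)$ in order to place it in a finite-index subgroup), and Krull density would not suffice there. Your proposed repair by Mittag--Leffler on the fibres $\{(A,B):\mathrm{exp}(A)\mathrm{exp}(B)=\phi_k\}$ is itself shaky even in the solvable case: $\mathrm{exp}$ is transcendental on the semisimple part, so these fibres are not constructible, and an inverse limit of nonempty sets with no compactness or surjectivity hypothesis can be empty.

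The missing idea is a reduction to the first jet, which removes the dependence of the number of factors on $k$. Any $\eta\in\overline{G}$ with $\eta_1\equiv Id$ is unipotent, and then $\overline{\langle\eta\rangle}=\{\eta^t:t\in{\mathbb C}\}\subseteq\overline{G}$ by Remark \ref{rem:closu}, so $\log\eta\in{\mathfrak g}$ and $\eta$ is a \emph{single} exponential of an element of ${\mathfrak g}$ (this is precisely the device used in the proof of Proposition \ref{pro:elem}). On the other hand $G_{1,0}$ is a fixed connected algebraic subgroup of $\mathrm{GL}(n,{\mathbb C})$, hence by Borel's finite-product theorem (cf.\ \cite[2.2, p.~57]{Borel}, the result underlying Theorem \ref{teo:chevc}) it is a product $C_1\cdots C_m$ of finitely many one-dimensional closed connected subgroups, each of the form $C_j=\{\mathrm{exp}(tY_j):t\in{\mathbb C}\}$ with $Y_j\in{\mathfrak g}_1$. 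Lifting each $Y_j$ to some $X_j\in{\mathfrak g}$ via the surjection ${\mathfrak g}\to{\mathfrak g}_1$ you already established, every $\phi\in\overline{G}_0$ can be written as $\mathrm{exp}(t_1X_1)\circ\cdots\circ\mathrm{exp}(t_mX_m)\circ\eta$ with $\eta\in\overline{G}$ tangent to the identity, whence $\overline{G}_0\subseteq{(\mathrm{exp}({\mathfrak g}))}^{m+1}$. Without some such uniformity argument your proof establishes only topological generation, which is strictly weaker than the statement.
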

\begin{defi}
We say that ${\mathfrak g}$ is the Lie algebra of $\overline{G}$.
\end{defi}
It is natural to consider $\overline{G}_{0}$ as the connected component of $Id$ of
$\overline{G}$ since it is a finite index normal subgroup of $\overline{G}$
that is generated by the exponential of the Lie algebra of $\overline{G}$.

The Zariski-closure of a cyclic subgroup of $\diffh{u}{n}$ is connected and one dimensional.
\begin{rem}[{\cite[Remark 2.11]{JR:solvable25}}, cf. {\cite[Remark 3.30]{rib:cimpa}}]
\label{rem:closu}
Let $\phi$ be a unipotent element of $\diffh{}{n}$.
Then $\overline{ \langle \phi \rangle}$ is equal to
$\{ \phi^{t} : t \in {\mathbb C} \}$.
In particular the Lie algebra of $\overline{ \langle \phi \rangle}$ is the one dimensional 
complex vector space generated by $\log \phi$.
\end{rem}

The next property is well-known in the finite dimensional setting.
\begin{lem}
\label{lem:fiscg}
Let $H$ be a finite index subgroup of a pro-algebraic subgroup $G$ of $\diffh{}{n}$.
Then $H$ contains $G_{0}$.
\end{lem}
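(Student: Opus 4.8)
The plan is to mirror the classical fact behind the finite dimensional case: the identity component of an algebraic group over $\mathbb{C}$ is contained in every finite index subgroup, because a connected algebraic group over $\mathbb{C}$ has no proper subgroup of finite index. In the pro-algebraic setting the cleanest route avoids the projective limit altogether and leans on Proposition \ref{pro:liecorr}. Since $G$ is pro-algebraic we have $G = \overline{G}$ and $G_{0} = \overline{G}_{0}$, so that proposition tells us $G_{0}$ is generated by $\exp(\mathfrak{g})$, where $\mathfrak{g} = \{ X \in \hat{\mathfrak X}({\mathbb C}^{n},0) : \exp(tX) \in G \ \forall t \in {\mathbb C} \}$. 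Hence it suffices to prove $\exp(\mathfrak{g}) \subseteq H$: as $H$ is a subgroup, this forces $G_{0} = \langle \exp(\mathfrak{g}) \rangle \subseteq H$.

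So I would fix $X \in \mathfrak{g}$ and study the one-parameter subgroup $t \mapsto \exp(tX)$, which is a homomorphism from $({\mathbb C},+)$ into $G$ by the flow property $\exp(sX)\exp(tX) = \exp((s+t)X)$. Setting $S_{X} := \{ t \in {\mathbb C} : \exp(tX) \in H \}$, the fact that $H$ is a subgroup and $t \mapsto \exp(tX)$ is a homomorphism makes $S_{X}$ a subgroup of $({\mathbb C},+)$. Moreover the fibres of the induced map $t \mapsto \exp(tX)H$ into the coset space of $H$ are precisely the cosets of $S_{X}$, so $[{\mathbb C} : S_{X}]$ equals the number of distinct cosets $\exp(tX)H$, which is at most $[G:H] < \infty$.

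The decisive point is then elementary: $({\mathbb C},+)$ is a divisible torsion-free abelian group and therefore has no proper subgroup of finite index. Thus $S_{X} = {\mathbb C}$, i.e. $\exp(tX) \in H$ for all $t \in {\mathbb C}$ and in particular for every $X \in \mathfrak{g}$. Combined with $G_{0} = \langle \exp(\mathfrak{g}) \rangle$ this yields $G_{0} \subseteq H$, as desired.

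The argument is short because Proposition \ref{pro:liecorr} performs the reduction to one-parameter subgroups, and the only genuine input is the divisibility of $({\mathbb C},+)$. The steps I would be most careful about are the bookkeeping around the index computation — namely that $S_{X}$ is honestly a subgroup (which uses that each one-parameter subgroup is abelian) and that the fibres of $t \mapsto \exp(tX)H$ coincide with its cosets — together with the verification that $t \mapsto \exp(tX)$ is a homomorphism of $({\mathbb C},+)$ into $G$ for \emph{arbitrary} $X \in \mathfrak{g}$, not merely unipotent ones, which is exactly the flow property of the exponential constructed in the previous subsection. The main virtue of this route is that no projective-limit (or Mittag-Leffler type) descent through the groups $G_{k,0}$ is needed, which is where a naive level-by-level approach would run into trouble.
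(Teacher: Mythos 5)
Your argument is correct and is essentially the paper's own proof: both reduce to showing each one-parameter group $\{\mathrm{exp}(tX) : t \in {\mathbb C}\}$ with $X \in {\mathfrak g}$ lies in $H$ (because $({\mathbb C},+)$ has no proper finite-index subgroup) and then invoke Proposition \ref{pro:liecorr} to conclude $G_{0} = \langle \mathrm{exp}({\mathfrak g}) \rangle \subseteq H$. You merely spell out the index bookkeeping that the paper leaves implicit.
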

\begin{proof}
Given an element $X$ in the Lie algebra ${\mathfrak g}$ of $G$, its one-parameter
group $\{ \mathrm{exp}(t X): t \in {\mathbb C} \}$ is contained in $G$.
Since $H$ is a finite index subgroup of $G$,
$\{ \mathrm{exp}(t X): t \in {\mathbb C} \}$ is contained in $H$.
Any element of $G_{0}$ is of the form
$\mathrm{exp}(X_{1}) \circ \ldots \circ \mathrm{exp}(X_{m})$ for some
$X_{1}, \ldots, X_{m} \in {\mathfrak g}$ by Proposition \ref{pro:liecorr}.
Hence $G_{0}$ is contained in $H$.
\end{proof}
The next result will be used later on to identify pro-algebraic groups.
\begin{teo}[Chevalley, cf. {\cite[section I.2.2, p. 57]{Borel}}]
\label{teo:chevc}
The group generated by a family of connected algebraic matrix groups is algebraic.
\end{teo}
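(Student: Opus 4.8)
The plan is to realize the group $H$ generated by the family $\{G_i\}_{i\in I}$ of connected algebraic matrix groups as a single finite product of the $G_i$, and then to show that this product is already Zariski closed. Since each $G_i$ is a connected algebraic group it is irreducible as a variety and contains the identity $e$. For a finite word $\mathbf a=(i_1,\dots,i_m)$ of indices, consider the product set $W_{\mathbf a}=G_{i_1}\cdots G_{i_m}$, the image of the irreducible variety $G_{i_1}\times\cdots\times G_{i_m}$ under the multiplication morphism. Hence $W_{\mathbf a}$ is an irreducible constructible set, its Zariski closure $\overline{W_{\mathbf a}}$ is an irreducible closed subvariety containing $e$, and $W_{\mathbf a}$ contains a dense open subset of $\overline{W_{\mathbf a}}$.

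First I would choose a word $\mathbf a$ for which $\dim\overline{W_{\mathbf a}}$ is maximal; this maximum is attained because all such dimensions are bounded by the dimension of the ambient group $\mathrm{GL}$. Write $K=\overline{W_{\mathbf a}}$ and $W=W_{\mathbf a}$. The point of maximality is that appending generators cannot enlarge $K$: for each $j$ the set $W\,G_j$ is again a finite product, so $\dim\overline{W G_j}\le\dim K$; since $e\in G_j$ forces $\overline{W G_j}\supseteq K$, irreducibility gives $\overline{W G_j}=K$, whence $W\,G_j\subseteq K$ and (using $e\in W$) $G_j\subseteq K$ for every $j$. The same argument applied to $W\cdot W$ gives $\overline{WW}=K$, so $WW$ is dense in $K$.

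Next I would show $K$ is a closed subgroup. For $g\in W$ the left translation $\lambda_g\colon x\mapsto gx$ is an isomorphism of the ambient variety, so $gK=\overline{gW}\subseteq\overline{WW}=K$; taking closures in the left factor yields $KK\subseteq K$, so $K$ is a closed submonoid of the ambient group. For any $g\in K$ the set $gK$ is closed, being the image of the closed set $K$ under the variety isomorphism $\lambda_g$, and the descending chain $K\supseteq gK\supseteq g^2K\supseteq\cdots$ stabilizes by Noetherianity; translating back by $\lambda_{g^{-1}}$ one obtains $gK=K$, hence $e\in gK$ and $g^{-1}\in K$. Thus $K$ is a connected algebraic subgroup containing every $G_j$, so $K\supseteq H$.

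Finally I would check the reverse inclusion to conclude $H=K$, which is exactly what makes $H$ algebraic. Since $W$ is constructible and dense in $K$, it contains a dense open subset $\Omega$ of $K$; because $K$ is now a connected (irreducible) group, $\Omega\cdot\Omega=K$, for given $g\in K$ the dense opens $\Omega$ and $g\Omega^{-1}$ must meet, and a point $x=gy^{-1}$ of their intersection gives $g=xy\in\Omega\Omega$. As $\Omega\subseteq W\subseteq H$ and $H$ is a group, $K=\Omega\Omega\subseteq H$, so $H=K=WW$. I expect the main obstacle to be this last step: passing from the soft statement $\overline H=K$ to the exact equality $H=K$, which is not a formal consequence of $K$ being closed but requires the maximality of $\dim K$ together with the dense-open product identity $\Omega\Omega=K$.
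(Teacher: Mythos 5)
Your proof is correct: the paper states this as a classical result of Chevalley and cites Borel (section I.2.2) without giving a proof, and your argument --- maximizing the dimension of the closure of a finite product $W=G_{i_1}\cdots G_{i_m}$, showing $K=\overline{W}$ is a closed connected subgroup absorbing every generator, and then recovering $H=K=WW$ from the fact that a dense open subset $\Omega$ of an irreducible algebraic group satisfies $\Omega\cdot\Omega=K$ --- is essentially the standard proof found in that reference. All the individual steps (constructibility of images, the dimension/irreducibility comparison, the descending chain argument for inverses, and the final density argument) are sound.
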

\subsection{Normal subgroups of pro-algebraic groups}
Next results relate the properties of normal subgroups with those of their algebraic
closures.
\begin{lem}
\label{lem:clnin}
Let $H$  be a normal subgroup of a subgroup $G$ of $\diffh{}{n}$.
Then $\overline{H}$ is a normal subgroup of $\overline{G}$.
Moreover $H_{k}$ is a normal subgroup of $G_{k}$ for any
$k \in {\mathbb N}$.
\end{lem}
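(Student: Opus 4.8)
The plan is to prove the two assertions separately, each by reducing to the corresponding statement at every finite level $k$ via the projective system $(\overline{G} = \varprojlim G_k, (\pi_{k,l}))$ and then invoking the classical algebraic-group facts recorded in Remark \ref{rem:pik}. First I would establish that $\overline{H}$ is normal in $\overline{G}$. The natural strategy is to show normality level by level: fix $k \in {\mathbb N}$ and prove that $H_k$ is a normal subgroup of $G_k$, then assemble these into normality of the projective limit. Since normality of the limit group follows once $H_k \trianglelefteq G_k$ holds for all $k$ (an element $\phi \in \overline{G}$ conjugates $\overline{H}$ into itself precisely when each truncation $\phi_k$ conjugates $H_k$ into itself), it suffices to treat a single level. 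I expect the main obstacle to be exactly this finite-level step, so I would concentrate the argument there.

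At level $k$, the starting data is that $H$ is normal in $G$, hence $H_k^*$ is normal in $G_k^*$ in the abstract group-theoretic sense, since truncation $\pi_k$ is a group homomorphism and the image of a normal subgroup under a homomorphism is normal in the image. The task is to upgrade this to normality of the Zariski-closures $H_k \trianglelefteq G_k$. Here I would use the standard fact that for a fixed $g \in G_k^*$, conjugation $x \mapsto g x g^{-1}$ is an automorphism of the algebraic group $D_k$ (indeed a morphism of algebraic varieties), so it carries the Zariski-closure $H_k = \overline{H_k^*}$ onto the Zariski-closure of $g H_k^* g^{-1}$; since $g H_k^* g^{-1} \subseteq H_k^*$ by normality of $H_k^*$ in $G_k^*$, we get $g H_k g^{-1} \subseteq H_k$ for every $g \in G_k^*$. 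Thus $H_k$ is normalized by the dense subset $G_k^*$ of $G_k$. Finally, because the normalizer $N_{G_k}(H_k) = \{ g \in G_k : g H_k g^{-1} = H_k \}$ is a Zariski-closed subgroup of $G_k$ (the normalizer of a closed subgroup is closed, cf. the reference \cite{Borel} already cited in the excerpt) and it contains the dense set $G_k^*$, it must equal all of $G_k$. This gives $H_k \trianglelefteq G_k$, completing the finite-level claim.

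To conclude normality of $\overline{H}$ in $\overline{G}$ in the Krull topology, I would argue that for any $\phi \in \overline{G}$ and $\psi \in \overline{H}$, the conjugate $\phi \psi \phi^{-1}$ lies in $\overline{H}$; by Definition \ref{def:zarclos} this is equivalent to checking $(\phi \psi \phi^{-1})_k = \phi_k \psi_k \phi_k^{-1} \in H_k$ for every $k$, which holds since $\phi_k \in G_k$, $\psi_k \in H_k$, and $H_k \trianglelefteq G_k$ by the previous step. This simultaneously proves both sentences of the statement: the displayed identity $H_k \trianglelefteq G_k$ for all $k$ is precisely the ``moreover'' clause, and it is the engine that yields $\overline{H} \trianglelefteq \overline{G}$. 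The only subtlety worth flagging is the use of the compatibility $\pi_{k,l}(G_k) = G_l$ and $\pi_{k,l}(H_k) = H_l$ from Remark \ref{rem:pik}, which guarantees that the collection $(H_k)_k$ forms a genuine projective subsystem of $(G_k)_k$ so that its limit is $\overline{H}$ and not something larger; this is routine given the cited results but should be mentioned for completeness.
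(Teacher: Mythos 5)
Your proposal is correct and follows essentially the same route as the paper: show $H_k^*$ is normal in $G_k^*$, pass to Zariski-closures via the fact that conjugation is an algebraic automorphism, conclude $H_k \trianglelefteq G_k$ because the normalizer of $H_k$ in $G_k$ is algebraic and contains $G_k^*$, and then read off normality of $\overline{H}$ in $\overline{G}$ from the levelwise statement. The only cosmetic difference is that you spell out the passage from $gH_kg^{-1}\subseteq H_k$ to membership in the normalizer and the final limit argument, which the paper leaves implicit.
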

\begin{proof}
Fix $k \in {\mathbb N}$.
We have $A  H_{k}^{*} A^{-1} = H_{k}^{*}$ for any
$A \in G_{k}^{*}$. We deduce
$A H_{k} A^{-1} = H_{k}$ for any $A \in G_{k}^{*}$.
The normalizer of the algebraic subgroup $H_{k}$ in the algebraic group
$G_{k}$ is algebraic and contains $G_{k}^{*}$. Hence
it is equal to $G_{k}$ and then $H_{k}$ is a normal subgroup of $G_{k}$ for
any $k \in {\mathbb N}$. As a consequence
$\overline{H}$ is a normal subgroup of $\overline{G}$.
\end{proof}
\begin{lem}
\label{lem:profis}
Let $H$ be a finite index subgroup of a subgroup $G$ of $\diffh{}{n}$.
Then $H$ is pro-algebraic if and only if $G$ is pro-algebraic.
\end{lem}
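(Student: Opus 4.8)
The plan is to reduce both implications to the characterization of pro-algebraicity in Proposition \ref{pro:char}: a subgroup $K$ of $\diffh{}{n}$ is pro-algebraic if and only if $K_{k}^{*}$ is a Zariski-closed (algebraic) subgroup of $\mathrm{GL}(\hat{\mathfrak m}/\hat{\mathfrak m}^{k+1})$ for every $k \in {\mathbb N}$ and $K$ is closed in the Krull topology. The single elementary observation driving both directions is that a finite union of cosets of a closed subgroup is closed: if $C$ is Zariski-closed and $K_{k}^{*} = \bigsqcup_{i=1}^{m} g_{i} C$ with $g_{i} \in K_{k}^{*}$, then $K_{k}^{*}$ is Zariski-closed because left translations are homeomorphisms for the Zariski topology, and the same statement holds verbatim in the Krull topology. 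Thus in each direction it suffices to exhibit, inside the group we wish to control, a closed subgroup of finite index.

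First I would treat the implication ``$H$ pro-algebraic $\Rightarrow$ $G$ pro-algebraic'', the easier ``overgroup'' direction. For fixed $k$, the surjection $\pi_{k} \colon G \to G_{k}^{*}$ carries the finite-index subgroup $H$ onto $H_{k}^{*}$, so $H_{k}^{*}$ has finite index in $G_{k}^{*}$; moreover $H_{k}^{*} = H_{k}$ is algebraic since $H$ is pro-algebraic. Its identity component $C := (H_{k}^{*})_{0}$ is then a Zariski-closed subgroup of finite index in $G_{k}^{*}$, and writing $G_{k}^{*}$ as a finite union of cosets of $C$ shows that $G_{k}^{*}$ is algebraic. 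For the Krull topology, $H$ is closed by Proposition \ref{pro:char}, and $G$ is a finite union of cosets of $H$, hence closed. Proposition \ref{pro:char} then gives that $G$ is pro-algebraic.

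The reverse implication ``$G$ pro-algebraic $\Rightarrow$ $H$ pro-algebraic'' is the genuine ``subgroup'' direction, where the closed finite-index subgroup living inside $H$ must be supplied by the theory already developed. Since $G$ is pro-algebraic and $H$ has finite index in $G$, Lemma \ref{lem:fiscg} gives $G_{0} \subseteq H$, and $G_{0} = \overline{G}_{0}$ is a pro-algebraic subgroup of finite index in $G$ by Remark \ref{rem:ccia}. Projecting to level $k$, the surjectivity of the projection onto connected components (analogous to Remark \ref{rem:pik2}) yields $\pi_{k}(G_{0}) = G_{k,0}$, so that $G_{k,0} \subseteq H_{k}^{*} \subseteq G_{k}$ with $G_{k,0}$ the Zariski-closed identity component of $G_{k}$; writing $H_{k}^{*}$ as a finite union of cosets of $G_{k,0}$ shows $H_{k}^{*}$ is algebraic. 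For the Krull topology, $G_{0}$ is Krull-closed by Proposition \ref{pro:char}, and $H$ is a finite union of cosets of $G_{0}$, hence closed. Proposition \ref{pro:char} again gives that $H$ is pro-algebraic.

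The main obstacle is exactly this subgroup direction, because passing to a finite-index subgroup can a priori destroy closedness: a finite-index subgroup of a closed subgroup of a general topological group need not be closed, and likewise $H_{k}^{*}$ need not be algebraic inside the algebraic group $G_{k}^{*}$. What rescues the argument is that for pro-algebraic $G$ the identity component $G_{0}$ is forced to lie inside \emph{every} finite-index subgroup by Lemma \ref{lem:fiscg}, which reduces $H$ to a finite union of cosets of the already-closed group $G_{0}$, both level-wise and in the Krull topology. I would take particular care to justify the identity $\pi_{k}(G_{0}) = G_{k,0}$ through the surjectivity of the projections on connected components, since this is precisely what transfers the global containment $G_{0} \subseteq H$ into the level-wise containment $G_{k,0} \subseteq H_{k}^{*}$ needed to apply Proposition \ref{pro:char}.
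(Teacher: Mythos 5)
Your proof is correct and follows essentially the same route as the paper's: both directions rest on Proposition \ref{pro:char} together with the observation that a finite union of closed cosets is closed, and the subgroup direction is unlocked by $G_{0}\subseteq H$ via Lemma \ref{lem:fiscg} and Remark \ref{rem:ccia}. The only (cosmetic) difference is that the paper disposes of the second direction by applying the first direction to the pair $H\supseteq G_{0}$, whereas you rerun the level-$k$ and Krull-closedness arguments explicitly.
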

\begin{proof}
Since $G_{k}^{*}$ and $H_{k}^{*}$
are images of $G$ and $H$ respectively by the morphism of groups
$\pi_{k} : \diffh{}{n} \to D_{k}$,
$H_{k}^{*}$ is a finite
index subgroup of $G_{k}^{*}$ for any $k \in {\mathbb N}$.

Suppose $H$ is pro-algebraic. Then $H_{k}^{*}$ is algebraic for any
$k \in {\mathbb N}$ by Proposition \ref{pro:char}.
Hence $G_{k}^{*}$ is algebraic for any $k \in {\mathbb N}$.
In order to prove that $G$ is pro-algebraic, it suffices to show that $G$ is closed
in the Krull topology. Since $G$ is the union of finitely many
left cosets of $H$ and they
are all closed in the Krull topology, we deduce that $G$ is closed in the Krull topology.

Suppose $G$ is pro-algebraic. Then $G_{0}$ is pro-algebraic by Remark \ref{rem:ccia}.
Moreover $G_{0}$
is contained in $H$ by Lemma \ref{lem:fiscg}. Since $G_{0}$ is a finite index subgroup of
$G$ and then of $H$, $H$ is pro-algebraic by the first part of the proof.
\end{proof}
\begin{lem}
\label{lem:lficl}
Let $H$  be a finite index normal subgroup of a subgroup $G$ of $\diffh{}{n}$.
Then $\overline{H}$ is a finite index normal subgroup of $\overline{G}$.
Moreover $H_{k}$ is a finite index normal subgroup of $G_{k}$ for any
$k \in {\mathbb N}$.
\end{lem}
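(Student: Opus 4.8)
The plan is to obtain normality for free from Lemma \ref{lem:clnin} and to concentrate all the work on the finiteness of the indices. Since $H$ is normal in $G$, Lemma \ref{lem:clnin} already gives that $\overline{H}$ is a normal subgroup of $\overline{G}$ and that $H_k$ is a normal subgroup of $G_k$ for every $k \in {\mathbb N}$. So only the finite index assertions remain to be proved.

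First I would establish the finite index at each finite level $k$, with a uniform bound. The projection $\pi_k : \diffh{}{n} \to D_k$ is a group homomorphism, so $G_k^* = \pi_k(G)$, $H_k^* = \pi_k(H)$, and the induced map $G/H \to G_k^*/H_k^*$ is a well-defined surjection of coset spaces; hence $[G_k^* : H_k^*] \leq [G : H] =: d$. Writing $G_k^* = \bigcup_{i=1}^{m} g_i H_k^*$ with $m \leq d$, I would take Zariski closures: closure commutes with finite unions, and left translation by $g_i$ is a Zariski homeomorphism of $\mathrm{GL}(\hat{\mathfrak m}/ \hat{\mathfrak m}^{k+1})$, so $G_k = \overline{G_k^*} = \bigcup_{i=1}^m g_i \overline{H_k^*} = \bigcup_{i=1}^m g_i H_k$. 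This exhibits $G_k$ as a union of at most $d$ cosets of $H_k$, whence $[G_k : H_k] \leq d$ for every $k \in {\mathbb N}$.

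Next I would pass to the projective limit. Using $\pi_{k,l}(G_k) = G_l$ (Remark \ref{rem:pik}) and $\pi_{k,l}(H_k) = H_l$ (same remark applied to $H$), the surjection $\pi_{k,l}$ gives $[G_l : H_l] \leq [G_k : H_k]$ for $k \geq l$; so the sequence $([G_k : H_k])_{k}$ is non-decreasing and bounded by $d$, hence constant equal to some $D \leq d$ for all $k \geq k_0$. The key step is then to show $\overline{H} = \{ \phi \in \overline{G} : \phi_{k_0} \in H_{k_0} \}$. The inclusion $\subseteq$ is immediate from the definition of $\overline{H}$. For the reverse inclusion, take $\phi \in \overline{G}$ with $\phi_{k_0} \in H_{k_0}$. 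For $k \leq k_0$ one has $\phi_k = \pi_{k_0,k}(\phi_{k_0}) \in \pi_{k_0,k}(H_{k_0}) = H_k$. For $k \geq k_0$ the subgroup $\pi_{k,k_0}^{-1}(H_{k_0}) \cap G_k$ contains $H_k$ and, since $\pi_{k,k_0}: G_k \to G_{k_0}$ is surjective, has index $[G_{k_0} : H_{k_0}] = D = [G_k : H_k]$ in $G_k$, forcing $\pi_{k,k_0}^{-1}(H_{k_0}) \cap G_k = H_k$; hence $\phi_k \in H_k$. Thus $\phi_k \in H_k$ for all $k$, i.e. $\phi \in \overline{H}$.

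Finally, since $\pi_{k_0} : \overline{G} \to G_{k_0}$ is surjective (Remark \ref{rem:pik2}) and $\overline{H} = \pi_{k_0}^{-1}(H_{k_0})$, the assignment $\phi \overline{H} \mapsto \phi_{k_0} H_{k_0}$ is a bijection between $\overline{G}/\overline{H}$ and $G_{k_0}/H_{k_0}$, so $[\overline{G} : \overline{H}] = [G_{k_0} : H_{k_0}] = D < \infty$. The main obstacle is precisely the identification $\overline{H} = \pi_{k_0}^{-1}(H_{k_0})$: it is exactly here that the stabilization of the level indices is essential, since without it $\overline{H}$ could a priori be strictly smaller than the preimage of $H_{k_0}$. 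Everything else is routine bookkeeping with surjective morphisms of the pro-algebraic groups involved.
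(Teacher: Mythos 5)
Your proof is correct, but it runs in the opposite direction from the paper's. The paper works ``top down'': it picks coset representatives $\phi_1,\ldots,\phi_m$ of $H$ in $G$, forms $J=\langle \overline{H},\phi_1,\ldots,\phi_m\rangle$, shows the natural map $G/H\to J/\overline{H}$ is surjective (so $\overline{H}$ has finite index in $J$), invokes Lemma \ref{lem:profis} to conclude $J$ is pro-algebraic and hence equals $\overline{G}$, and only then projects via $\pi_k$ to get the statement about $H_k\lhd G_k$. You work ``bottom up'': you first get the uniform bound $[G_k:H_k]\le [G:H]$ at every finite level by writing $G_k^*$ as a finite union of cosets of $H_k^*$ and taking Zariski closures (using that left translation is a Zariski homeomorphism), then use surjectivity of the $\pi_{k,l}$ to see the indices are non-decreasing and hence stabilize at some $D$ from a level $k_0$ on, and finally deduce the crucial identification $\overline{H}=\pi_{k_0}^{-1}(H_{k_0})\cap\overline{G}$ by an index count, which gives $[\overline{G}:\overline{H}]=D$. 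Both arguments are sound and both yield $[\overline{G}:\overline{H}]\le[G:H]$; yours has the advantage of bypassing Lemma \ref{lem:profis} entirely (you only need Lemma \ref{lem:clnin} for normality and Remarks \ref{rem:pik} and \ref{rem:pik2} for surjectivity of the projections), and the stabilization-plus-preimage identification you prove along the way is exactly condition (1) of Proposition \ref{pro:elem}, so your argument simultaneously establishes $\dim G/H=0$, anticipating Lemma \ref{lem:fis}. The paper's route is shorter given the machinery already in place and reuses Lemma \ref{lem:profis}, which is needed elsewhere anyway.
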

\begin{proof}
Consider $\phi_{1}, \ldots, \phi_{m} \in G$ such that
$G/H = \{ \phi_{1} H, \ldots, \phi_{m} H \}$. We define the group
$J = \langle \overline{H} , \phi_{1}, \ldots, \phi_{m} \rangle$,
it satisfies $J \subset \overline{G}$.
Let us show that $J= \overline{G}$ and that $\overline{H}$ is a finite index
normal subgroup of $J$.

Since $\overline{H}$ is a normal subgroup of $J$ by Lemma \ref{lem:clnin},
every element $\psi$ of $J$ is of the form
\[ \psi = \phi_{i_{1}}^{\pm 1} \circ \ldots \circ  \phi_{i_{l}}^{\pm 1} \circ h  \]
where $h \in \overline{H}$. The choice of $\phi_{1}, \ldots, \phi_{m}$ implies
the existence of $1 \leq j \leq m$ and $h' \in H$ such that
$\psi = \phi_{j} \circ (h' \circ h)$. In particular the natural map
$G/H \to J/\overline{H}$ is surjective and hence $\overline{H}$ is a finite
index normal subgroup of $J$.
The group $J$ is pro-algebraic by Lemma \ref{lem:profis}.
Since $G \subset J \subset \overline{G}$, we deduce
$J= \overline{G}$ and $\overline{H}$ is a finite index normal subgroup
of $\overline{G}$. Since
$G_k$ and $H_k$ are images of $\overline{G}$ and $\overline{H}$
respectively by the
morphism
$\pi_{k} : \overline{G} \to G_{k}$,
$H_{k}$ is a finite index normal subgroup of
$G_{k}$ for any $k \in {\mathbb N}$.
%
%
\end{proof}
\subsection{Algebraic properties of the Zariski-closure}
The groups $G$ and $\overline{G}$ share many algebraic properties.
\begin{defi}
Let $G$ be a group. Given $f,g \in G$ we define by $[f,g]=fgf^{-1}g^{-1}$ the commutator of
$f$ and $g$.

Given subgroups $H, L$ of $G$ we define
$[H,L] = \langle [h,l] : h \in H,  \ l \in L \rangle$ as the subgroup generated by
the commutators of elements of $H$ and elements of $L$.
\end{defi}
\begin{defi}
Let $G$ be a group. By induction we define the subgroups
\[ G^{(0)}=G, \ G^{(1)} = [G^{(0)}, G^{(0)}], \ldots, \ G^{(\ell +1)} =  [G^{(\ell)}, G^{(\ell)}], \ldots \]
of the derived series of $G$.
We say that $G^{(\ell)}$ is the $\ell$-th derived group of $G$.
We use sometimes the notation $G'$ instead of $G^{(1)}$ for the derived group of $G$.

We say that $G$ is {\it solvable} if there exists $\ell \in {\mathbb N} \cup \{0\}$
such that $G^{(\ell)}=\{1\}$. We define the {\it derived length} of $G$ as the minimum
$\ell \in {\mathbb N} \cup \{0\}$ with such a property.
\end{defi}
\begin{defi}
\label{def:nil}
Let $G$ be a group. By induction we define the subgroups
\[ {\mathcal C}^{0} G =G, \ {\mathcal C}^{1} G = [{\mathcal C}^{0} G, G], \ldots,
\ {\mathcal C}^{\ell +1} G =  [{\mathcal C}^{\ell}, G], \ldots \]
of the descending central series of $G$.
We say that $G$ is {\it nilpotent} if there exists $\ell \in {\mathbb N} \cup \{0\}$
such that $ {\mathcal C}^{\ell} G =\{1\}$. We define the {\it nilpotence class} of $G$ as the minimum
$\ell \in {\mathbb N} \cup \{0\}$ with such a property.
\end{defi}
\begin{lem}
\label{lem:pap}
Let $G$ be a subgroup of $\diffh{}{n}$.  We have
\begin{itemize}
\item $G$ is abelian if and only if $\overline{G}$ is abelian.
\item $G$ is solvable if and only if $\overline{G}$ is solvable.
\item $G$ is nilpotent if and only if $\overline{G}$ is nilpotent.
\end{itemize}
\end{lem}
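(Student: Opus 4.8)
The plan is to prove each of the three equivalences by checking the easy inclusion directly and reducing the hard one to the algebraic groups $G_{k}$. Since $G \subseteq \overline{G}$ and the classes of abelian, solvable and nilpotent groups are closed under passing to subgroups, the implications ``$\overline{G}$ has the property $\Rightarrow$ $G$ has the property'' are immediate; the content lies in the converse. For the converse I would exploit that $\overline{G} = \varprojlim_{k} G_{k}$ together with the fact that each truncation $\pi_{k} : \overline{G} \to G_{k}$ is a surjective group homomorphism (Remark \ref{rem:pik2}). Because a surjective homomorphism carries the $\ell$-th derived group onto the $\ell$-th derived group and the $\ell$-th term of the descending central series onto the corresponding term of the target, one has $\pi_{k}(\overline{G}^{(\ell)}) = G_{k}^{(\ell)}$ and $\pi_{k}(\mathcal{C}^{\ell}\overline{G}) = \mathcal{C}^{\ell} G_{k}$. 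Since an element $\psi$ of $\overline{G}$ equals $Id$ if and only if $\psi_{k} = Id$ for every $k$, it follows that $\overline{G}$ is solvable of derived length $\le \ell$ (resp. nilpotent of class $\le \ell$) if and only if every $G_{k}$ is. Everything then reduces to a statement about the single algebraic group $G_{k}$.

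At level $k$ the group $G_{k}^{*} = \pi_{k}(G)$ is a homomorphic image of $G$, hence it inherits whichever of the three properties $G$ has, with derived length (resp. nilpotence class) no larger than that of $G$; and $G_{k}$ is by definition the Zariski-closure $\overline{G_{k}^{*}}$ inside $\mathrm{GL}(\hat{\mathfrak m}/\hat{\mathfrak m}^{k+1})$. So it suffices to show that taking the Zariski-closure of a subgroup of an algebraic group preserves solvability and nilpotency without increasing the invariants. The key lemma I would isolate is: for subgroups $A$ and $B$ of an algebraic group, $[\overline{A}, \overline{B}] \subseteq \overline{[A,B]}$, where the bars denote Zariski-closures. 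The abelian case is the instance $A = B = G_{k}^{*}$ with $[A,B] = \{Id\}$.

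To prove this lemma I would argue by a double application of continuity. Set $C = \overline{[A,B]}$, a closed subgroup. For fixed $b \in B$ the map $x \mapsto [x,b] = x b x^{-1} b^{-1}$ is a morphism of varieties, so the preimage of the closed set $C$ is closed; this preimage contains $A$ (as $[a,b] \in [A,B] \subseteq C$) and therefore contains $\overline{A}$. Next, for fixed $x \in \overline{A}$ the map $y \mapsto [x,y]$ is again a morphism, its preimage of $C$ is closed and, by the previous step, contains $B$, hence contains $\overline{B}$. Thus $[x,y] \in C$ for all $x \in \overline{A}$ and $y \in \overline{B}$, and since these commutators generate $[\overline{A}, \overline{B}]$ while $C$ is a group, we get $[\overline{A}, \overline{B}] \subseteq C$. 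Feeding this into the derived and central series by induction yields $\overline{\Gamma}^{(j)} \subseteq \overline{\Gamma^{(j)}}$ and $\mathcal{C}^{j}\overline{\Gamma} \subseteq \overline{\mathcal{C}^{j}\Gamma}$ for any subgroup $\Gamma$; since the closure of the trivial group is trivial, $\Gamma^{(\ell)} = \{Id\}$ forces $\overline{\Gamma}^{(\ell)} = \{Id\}$, and likewise for the central series. Applying this with $\Gamma = G_{k}^{*}$ and $\overline{\Gamma} = G_{k}$ completes the reduction.

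The main obstacle I anticipate is precisely this commutator lemma at the algebraic level, i.e.\ the interplay between the abstract commutator subgroup $[A,B]$ and its Zariski-closure: one must keep in mind that closures of subgroups are again closed subgroups and that the relevant commutator maps are morphisms of varieties, so that the closedness arguments apply. This fact is classical and could alternatively be cited from \cite{Borel}. The remainder, namely detecting the three properties level by level through the surjections $\pi_{k}$, is then bookkeeping made possible by the identity $\overline{G} = \varprojlim_{k} G_{k}$ and the normality transfer of Lemma \ref{lem:clnin}.
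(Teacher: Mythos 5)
Your proof is correct and follows essentially the same route the paper intends: the paper defers the first two items to a citation and notes that everything rests on the principle that derived length and nilpotence class are unchanged by taking the Zariski-closure of a matrix group, which is exactly what you establish via the commutator-closure lemma $[\overline{A},\overline{B}]\subseteq\overline{[A,B]}$ and then transfer to $\overline{G}=\varprojlim G_k$ through the surjections $\pi_k$.
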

The two first properties were proved in \cite[Lemma 1]{JR:arxivdl}.
The proof of the last one is completely analogous.
All these properties are a consequence of a simple principle: the derived length
(resp. the nilpotence class) does not change when we take the Zariski-closure of
a matrix group.
\begin{defi}
\label{def:vir}
Let $G$ be a group and $P$ a group property. We say that $G$ is
{\it virtually} $P$ if there exists a finite index subgroup $H$ of $G$ that satisfies $P$.
\end{defi}
\begin{rem}
If the property $P$ is subgroup-closed (for instance solubility or nilpotence) then
we can suppose that the group $H$ is a finite index normal subgroup of $G$
(cf. \cite[1.6.9, p. 36]{Robinson}).
\end{rem}
\begin{lem}
\label{lem:fiscc}
Let $G$ be a subgroup of $\diffh{}{n}$.
Then the following properties are equivalent:
\begin{enumerate}
\item $G$ is virtually nilpotent (resp. solvable).
\item $\overline{G}$ is virtually nilpotent (resp. solvable).
\item $\overline{G}_{0}$ is nilpotent (resp. solvable).
\end{enumerate}
\end{lem}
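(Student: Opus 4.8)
**The plan is to prove the chain of equivalences (1) $\Leftrightarrow$ (2) $\Leftrightarrow$ (3), exploiting the tools already established.** I would organize the argument around the finite-index normal subgroup that witnesses the virtual property, together with the connected component $\overline{G}_0$ and Lemmas \ref{lem:pap}, \ref{lem:lficl}, and \ref{lem:fiscg}.

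First I would prove (1) $\Rightarrow$ (2). Suppose $G$ is virtually nilpotent (resp. solvable). By the Remark following Definition \ref{def:vir}, since nilpotence and solvability are subgroup-closed, I may choose a finite index \emph{normal} subgroup $H$ of $G$ that is nilpotent (resp. solvable). By Lemma \ref{lem:pap}, its Zariski-closure $\overline{H}$ is nilpotent (resp. solvable). By Lemma \ref{lem:lficl}, $\overline{H}$ is a finite index normal subgroup of $\overline{G}$. Hence $\overline{G}$ has a finite index nilpotent (resp. solvable) subgroup, i.e. $\overline{G}$ is virtually nilpotent (resp. solvable).

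Next, (2) $\Rightarrow$ (3). Suppose $\overline{G}$ is virtually nilpotent (resp. solvable); pick a finite index subgroup $K$ of $\overline{G}$ satisfying the property. By Remark \ref{rem:ccia}, the connected component $\overline{G}_0$ is a finite index subgroup of $\overline{G}$, and by Lemma \ref{lem:fiscg} applied to the pro-algebraic group $\overline{G}$ (recall $\overline{G}$ is pro-algebraic by Remark \ref{rem:pik2}), the finite index subgroup $K$ must contain $(\overline{G})_0 = \overline{G}_0$. Since $\overline{G}_0$ is a subgroup of $K$ and nilpotence (resp. solvability) is subgroup-closed, $\overline{G}_0$ is nilpotent (resp. solvable). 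Finally, (3) $\Rightarrow$ (1) is immediate: by Remark \ref{rem:ccia} the group $\overline{G}_0$ is a finite index subgroup of $\overline{G}$, so $\overline{G}_0 \cap G$ is a finite index subgroup of $G$; if $\overline{G}_0$ is nilpotent (resp. solvable) then so is its subgroup $\overline{G}_0 \cap G$, exhibiting $G$ as virtually nilpotent (resp. solvable).

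\textbf{The main obstacle} I anticipate is the correct handling of the passage between a finite index subgroup and a finite index \emph{normal} subgroup, and making sure that Lemma \ref{lem:fiscg} is applicable at each stage (it requires a \emph{pro-algebraic} ambient group, which is why invoking Remark \ref{rem:pik2} to certify that $\overline{G}$ is pro-algebraic is essential). The delicate point is step (2) $\Rightarrow$ (3): one must use that $\overline{G}_0$ is contained in \emph{every} finite index subgroup of the pro-algebraic group $\overline{G}$, so that the witnessing virtually-$P$ subgroup automatically contains $\overline{G}_0$, forcing $\overline{G}_0$ itself to inherit the property. Everything else reduces to the subgroup-closedness of nilpotence and solvability together with the already-proved closure invariance in Lemma \ref{lem:pap}.
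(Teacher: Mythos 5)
Your proof is correct and follows essentially the same route as the paper: (1)$\Rightarrow$(2) via Lemmas \ref{lem:pap} and \ref{lem:lficl}, (2)$\Rightarrow$(3) via Lemma \ref{lem:fiscg} and subgroup-closedness, and (3)$\Rightarrow$(1) via Remark \ref{rem:ccia}. The only cosmetic difference is that in (3)$\Rightarrow$(1) you intersect $\overline{G}_0$ with $G$ explicitly, whereas the paper simply notes that a subgroup of a virtually nilpotent (resp.\ solvable) group inherits the property; both are fine.
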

\begin{proof}
Let us show the result in the virtually nilpotent case. The other case
is analogous.

Let us show $(1) \implies (2)$. 
Let $H$ be a finite index normal nilpotent subgroup of $G$.
Then $\overline{H}$ is a finite index normal nilpotent subgroup of
$\overline{G}$ by Lemmas \ref{lem:lficl} and \ref{lem:pap}.
Hence $\overline{G}$ is virtually nilpotent. 

Let us prove $(2) \implies (3)$. 
There exists a finite index normal nilpotent subgroup 
$J$ of  $\overline{G}$. The group $J$ contains 
$\overline{G}_{0}$ by Lemma \ref{lem:fiscg} and thus 
$\overline{G}_{0}$ is nilpotent. 

Let us see that $(3)$ implies $(1)$.
Since $\overline{G}_{0}$ is a finite index normal subgroup of 
$\overline{G}$ by Remark \ref{rem:ccia}, $\overline{G}$ is 
virtually nilpotent. 
The group $G$ is a subgroup of
$\overline{G}$ and hence also virtually nilpotent.
%
\end{proof}
\subsection{Jordan decomposition of formal diffeomorphisms}
Let us consider the multiplicative Jordan decomposition of formal diffeomorphisms
in commuting semisimple (or equivalently diagonalizable) and unipotent parts.
It was constructed by Martinet in \cite{MarJ}.
The analogous decomposition for algebraic matrix groups is called
Jordan-Chevalley decomposition since Chevalley showed
\begin{teo}[Chevalley, {cf. \cite[section I.4.4, p. 83]{Borel}}]
\label{teo:chevj}
Let $H$ be an algebraic matrix group. Then the
semisimple and unipotent parts of the elements of $H$ also belong to $H$.
\end{teo}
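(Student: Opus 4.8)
The plan is to deduce the statement from the \emph{functoriality} of the multiplicative Jordan decomposition in a general linear group, i.e. its compatibility with closed embeddings. Write $G = \mathrm{GL}(V)$ for the ambient group, so that $H$ is a Zariski-closed subgroup of $G$, and recall that every $g \in G$ admits a unique factorization $g = g_s g_u = g_u g_s$ with $g_s$ semisimple and $g_u$ unipotent. The goal is to show that if $g \in H$, then both $g_s$ and $g_u$ lie in $H$. First I would pass to the right-regular representation on the coordinate ring: let $A = \mathbb{C}[G]$ and, for $x \in G$, let $\rho_x \colon A \to A$ be the right-translation operator $(\rho_x f)(y) = f(yx)$.

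The essential point is that this representation is \emph{locally finite}: every $f \in A$ lies in a finite-dimensional $\rho(G)$-invariant subspace $W \subseteq A$. On each such $W$ the operator $\rho_x$ has an ordinary Jordan decomposition, and these are compatible as $W$ grows, so $\rho_x$ has a well-defined semisimple and unipotent part as a locally finite operator. The key step is then the functoriality identity
\[ (\rho_g)_s = \rho_{g_s}, \qquad (\rho_g)_u = \rho_{g_u}. \]
I would prove this by reducing to a single finite-dimensional invariant subspace $W$ containing the coordinate functions, where it becomes the statement that the right-translation representation $G \to \mathrm{GL}(W)$ is a morphism of algebraic groups and hence carries Jordan components to Jordan components; uniqueness of the Jordan decomposition in $\mathrm{GL}(W)$ forces the displayed equalities.

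Finally I would exploit the ideal $I = I(H) \subseteq A$ of functions vanishing on $H$. Because right translation by an element of $H$ preserves $H$ setwise, for $g \in H$ and $f \in I$ one has $(\rho_g f)(h) = f(hg) = 0$ for all $h \in H$, so $\rho_g(I) = I$; in particular $\rho_g$ stabilizes the finite-dimensional subspace $I \cap W$ inside each $W$. Since the semisimple and unipotent parts of a locally finite operator stabilize every subspace that the operator itself stabilizes, both $\rho_{g_s}$ and $\rho_{g_u}$ also stabilize $I$. Combining this with the standard description $x \in H \iff \rho_x(I) \subseteq I$ (evaluate $\rho_x f \in I$ at the identity to see $f(x)=0$ for all $f \in I$) yields $g_s, g_u \in H$.

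I expect the main obstacle to be the careful bookkeeping behind local finiteness and the functoriality identity $(\rho_g)_s = \rho_{g_s}$: one must verify that the abstract Jordan decomposition of the infinite-dimensional operator $\rho_g$ is computed compatibly on every finite-dimensional invariant subspace, and that the right-regular action is genuinely an algebraic-group morphism on each such piece. An alternative and perhaps more economical route would be to pass to the Zariski closure $\overline{\langle g \rangle}$, a commutative algebraic group contained in $H$, and invoke the structure theorem expressing a commutative algebraic group as the direct product of its closed subgroups of semisimple and of unipotent elements; then $g_s, g_u \in \overline{\langle g \rangle} \subseteq H$ follows at once. This shifts the work to the commutative case but avoids the coordinate-ring formalism.
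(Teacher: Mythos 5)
The paper does not prove this statement at all: it is quoted as a classical theorem of Chevalley, with a pointer to Borel's book (section I.4.4), and is used as a black box. Your argument is correct and is essentially the standard proof found in that reference and in Humphreys: local finiteness of the right-regular representation on $\mathbb{C}[\mathrm{GL}(V)]$, the functoriality identity $(\rho_g)_s=\rho_{g_s}$, $(\rho_g)_u=\rho_{g_u}$, stability of $I(H)$ under $\rho_g$ and hence under its Jordan components, and the criterion $x\in H\Leftrightarrow \rho_x(I(H))\subseteq I(H)$. Two small points of care. First, when you justify $(\rho_g)_s=\rho_{g_s}$ you should not appeal to the general principle that a morphism of algebraic groups carries Jordan components to Jordan components, since that principle is normally \emph{deduced} from the theorem you are proving; instead verify it directly on the invariant subspaces spanned by the matrix coordinates $T_{ij}$ and powers of $\det^{-1}$, where $\rho_g$ acts as a direct sum of copies of $g$ twisted by a power of $\det(g)$, so its semisimple and unipotent parts are visibly $\rho_{g_s}$ and $\rho_{g_u}$ by uniqueness. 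Second, the alternative route you sketch through $\overline{\langle g\rangle}$ is circular as stated: the decomposition of a commutative algebraic group into the product of its closed subgroups of semisimple and of unipotent elements is itself usually proved by first establishing that $g_s$ and $g_u$ lie in the group, so you would need an independent proof of the commutative case for that shortcut to be legitimate.
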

We will see that Chevalley's theorem also guarantees the closedness of
the Jordan decomposition for pro-algebraic subgroups of $\diffh{}{n}$.
\begin{defi}
We say that $\phi \in \diffh{}{n}$ is {\it semisimple} if $\phi_{k}$ is
semisimple   for any $k \in {\mathbb N}$.
\end{defi}
\begin{rem}
\label{rem:u}
By definition $\phi \in \diffh{}{n}$ is unipotent if and only if $\phi_{1}$ is unipotent.
It is not difficult to show that $\phi$ is unipotent if and only if $\phi_{k}$ is unipotent
for any $k \in {\mathbb N}$ (cf.  \cite[Proposition 3.12]{rib:cimpa}).
\end{rem}
\begin{rem}
It is well-known that $\phi$ is semisimple if and only if $\phi$ is formally conjugated
to a linear diagonal map (cf.
\cite[Lemma 2.9]{rib-embedding}  \cite[Proposition 3.13]{rib:cimpa}).
\end{rem}
Given $\phi \in \diffh{}{n}$ we consider the multiplicative Jordan decomposition of
$\phi_{k}$ for $k \in {\mathbb N}$.
The semisimple and unipotent parts $\phi_{k,s}$
and $\phi_{k,u}$ of $\phi_k$ belong to the algebraic group $D_{k}$ by
Chevalley's Theorem \ref{teo:chevj}.
Moreover since $\pi_{k,l}(\phi_{k,s})$ is semisimple,
$\pi_{k,l}(\phi_{k,u})$ is unipotent and
\[ \phi_{l} = \pi_{k,l}(\phi_{k}) = \pi_{k,l}(\phi_{k,s}) \pi_{k,l}(\phi_{k,u})
=   \pi_{k,l}(\phi_{k,u}) \pi_{k,l}(\phi_{k,s}), \]
we deduce
$\pi_{k,l}(\phi_{k,s}) = \phi_{l,s}$ and $\pi_{k,l}(\phi_{k,u}) = \phi_{l,u}$
for any $k \geq l \geq 1$ by uniqueness of the Jordan-Chevalley decomposition.
Hence $(\phi_{k,s})_{k \in {\mathbb N}}$ and $(\phi_{k,u})_{k \in {\mathbb N}}$
define elements $\phi_{s}$ and $\phi_{u}$ in $\diffh{}{n}= \varprojlim D_{k}$ respectively.
This leads to the next well-known result.
\begin{pro}
Let $\phi \in \diffh{}{n}$. There exist unique elements $\phi_{s}$ and $\phi_{u}$ in
$\diffh{}{n}$ such that
\[ \phi = \phi_{s} \circ \phi_{u} = \phi_u \circ \phi_s, \]
$\phi_s$ is semisimple and $\phi_u$ is unipotent.
\end{pro}
\begin{defi}
Let $G$ be a subgroup of $\diffh{}{n}$.
We say that $G$ is {\it splittable} if $\phi_s$, $\phi_u \in G$ for any $\phi \in G$.
\end{defi}
Chevalley's Theorem \ref{teo:chevj} implies
\begin{pro}
\label{pro:chevj}
Let $G$ be a subgroup of $\diffh{}{n}$.  Then $\overline{G}$ is splittable.
\end{pro}
Analogously there exists an additive Jordan decomposition for formal vector fields.
\begin{pro}
Let $X \in \hat{\mathfrak X} ({\mathbb C}^{n},0)$. There exist
unique elements $X_{s}$ and $X_{N}$ in
$\hat{\mathfrak X} ({\mathbb C}^{n},0)$ such that
\[ X = X_{s} + X_{N}  \ \ \mathrm{and} \ \ [X_s, X_N]=0, \]
$X_s$ is semisimple (i.e. formally conjugated to a linear diagonal vector field)
and $X_N$ is nilpotent.
\end{pro}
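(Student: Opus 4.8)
The plan is to construct the decomposition one level at a time through the projective limit $\hat{\mathfrak X}({\mathbb C}^{n},0) = \varprojlim_{k} L_{k}$ and then glue, mirroring exactly the multiplicative construction of $\phi_{s}$ and $\phi_{u}$ carried out just above. Fix $k \in {\mathbb N}$. The truncation $X_{k}$ is an element of $L_{k}$, the Lie algebra of the algebraic group $D_{k} \subset \mathrm{GL}(\hat{\mathfrak m}/ \hat{\mathfrak m}^{k+1})$; concretely $L_{k}$ is the space of derivations of the finite dimensional algebra $\hat{\mathfrak m}/ \hat{\mathfrak m}^{k+1}$. First I would take the additive Jordan decomposition of $X_{k}$ as a linear operator,
\[ X_{k} = X_{k,s} + X_{k,N}, \quad [X_{k,s}, X_{k,N}]=0, \]
with $X_{k,s}$ semisimple and $X_{k,N}$ nilpotent. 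The decisive point is that $X_{k,s}$ and $X_{k,N}$ again lie in $L_{k}$, i.e. they are again derivations: this is the Lie algebra analogue of Chevalley's Theorem \ref{teo:chevj}, namely that the semisimple and nilpotent parts of an element of the Lie algebra of an algebraic group belong to that Lie algebra (cf. \cite{Borel}). Equivalently, the semisimple and nilpotent parts of a derivation of a finite dimensional algebra are themselves derivations.

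Next I would check compatibility with the truncations $\pi_{k,l}$. For $k \geq l \geq 1$ the ideal $\hat{\mathfrak m}^{l+1}/ \hat{\mathfrak m}^{k+1}$ is invariant under the derivation $X_{k}$, and the operator induced by $X_{k}$ on the quotient $\hat{\mathfrak m}/ \hat{\mathfrak m}^{l+1}$ is precisely $X_{l}$. Since semisimplicity and nilpotency of an operator pass to invariant quotients and commutation is preserved, the operators induced by $X_{k,s}$ and $X_{k,N}$ give the Jordan decomposition of $X_{l}$; by uniqueness of the Jordan decomposition they must equal $X_{l,s}$ and $X_{l,N}$. Hence $(X_{k,s})_{k \in {\mathbb N}}$ and $(X_{k,N})_{k \in {\mathbb N}}$ are compatible families and define elements $X_{s}$ and $X_{N}$ of $\varprojlim_{k} L_{k} = \hat{\mathfrak X}({\mathbb C}^{n},0)$.

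The announced properties then follow level by level: $X = X_{s} + X_{N}$ and $[X_{s}, X_{N}]=0$ hold in every $L_{k}$ and hence in the limit; $X_{s}$ is semisimple because each $X_{k,s}$ is semisimple (which, by the characterization of semisimple vector fields recalled above, is equivalent to $X_{s}$ being formally conjugate to a linear diagonal vector field); and $X_{N}$ is nilpotent because its linear part $D_{0}X_{N}$ is the operator $X_{1,N}$, which is nilpotent by construction. For uniqueness, given another decomposition $X = Y_{s} + Y_{N}$ with the required properties, I would project to level $k$: semisimplicity of $Y_{s}$ gives that $(Y_{s})_{k}$ is semisimple, nilpotency of $Y_{N}$ gives that $(Y_{N})_{k}$ is nilpotent (the additive analogue of Remark \ref{rem:u}), and the two commute; uniqueness of the additive Jordan decomposition in $\mathrm{GL}(\hat{\mathfrak m}/ \hat{\mathfrak m}^{k+1})$ then forces $(Y_{s})_{k}=X_{k,s}$ and $(Y_{N})_{k}=X_{k,N}$ for every $k$, hence $Y_{s}=X_{s}$ and $Y_{N}=X_{N}$.

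The main obstacle is the step guaranteeing that the level-wise semisimple and nilpotent parts remain derivations, so that $X_{s}$ and $X_{N}$ are genuine formal vector fields rather than arbitrary linear operators on the truncated algebras; this is precisely where the Lie algebra version of Chevalley's theorem is essential. A secondary technical point to secure is the additive analogue of Remark \ref{rem:u}, namely that a nilpotent formal vector field has nilpotent truncation $X_{k}$ at every level $k$, which is what makes the uniqueness argument reduce cleanly to the finite dimensional case.
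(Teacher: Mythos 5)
Your proof is correct and follows essentially the same route the paper takes: the paper constructs the multiplicative decomposition level by level via the Jordan--Chevalley decomposition in the algebraic groups $D_k$ together with Chevalley's theorem, checks compatibility under the truncations $\pi_{k,l}$ by uniqueness, and then states the additive case for vector fields as the exact analogue --- which is precisely your level-wise construction in $L_k$ using the fact that the semisimple and nilpotent parts of a derivation of a finite dimensional algebra are again derivations. The one point worth flagging is that you implicitly invoke the vector-field analogue of the paper's characterization of semisimple diffeomorphisms (namely that $X_k$ semisimple for every $k$ implies $X$ is formally conjugate to a linear diagonal vector field); this is standard and parallel to the diffeomorphism case, so it does not affect the validity of the argument.
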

\begin{rem}
It is clear that if $X=X_{s} + X_{N}$ is the additive Jordan decomposition of
$X \in \hat{\mathfrak X} ({\mathbb C}^{n},0)$ then
$\mathrm{exp}(X) = \mathrm{exp}(X_s) \circ \mathrm{exp}(X_N)$ is the multiplicative
Jordan decomposition of $\mathrm{exp}(X)$.
\end{rem}
\begin{rem}
\label{rem:closs}
Given a semisimple $\phi \in \diffh{}{n}$ it is easy to calculate $\overline{\langle \phi \rangle}$.
Indeed $\phi$ is of the form $\phi(z_1,\ldots,z_n)= (\lambda_1 z_1, \ldots, \lambda_n z_n)$
in some formal system of coordinates. In such coordinates $\overline{\langle \phi \rangle}$
coincides with the Zariski-closure of the group
$\langle \mathrm{diag} (\lambda_1, \ldots, \lambda_n) \rangle$ in $\mathrm{GL}(n,{\mathbb C})$.
It can be described in terms of characters. We have
\[ \overline{\langle \phi \rangle} =
\{ \mathrm{diag} (\mu_1, \ldots, \mu_n) :
(\mu_1, \ldots, \mu_n) \in
\cap_{\underline{a} \in {\mathbb Z}^{n}, \ (\lambda_1, \ldots, \lambda_n) \in \ker (\chi_{\underline{a}})}
\ker (\chi_{\underline{a}}) \} \]
where given $\underline{a} = (a_1,\ldots,a_n) \in {\mathbb Z}^{n}$
we consider the character $\chi_{\underline{a}}: ({\mathbb C}^{*})^{n} \to {\mathbb C}^{*}$
defined by $\chi_{\underline{a}}(\mu_1, \ldots, \mu_{n})= \mu_{1}^{a_{1}}  \ldots \mu_{n}^{a_n}$.
\end{rem}
Let us calculate the algebraic closure of a cyclic subgroup of $\diffh{}{n}$.
\begin{defi}
Given a complex manifold $M$ we denote its dimension  by $\dim M$.
Given a complex vector space $V$ we denote its dimension by $\dim V$.
\end{defi}
\begin{lem}
\label{lem:clcg}
Let $\phi \in \diffh{}{n}$. Then $\overline{\langle \phi \rangle}$ is an abelian group
that is isomorphic to the product
$\overline{\langle \phi_s \rangle} \times \overline{\langle \phi_u \rangle}$.
Moreover ${\langle \phi \rangle}_k$ is isomorphic to the product
$\langle \phi_{s} \rangle_{k} \times \langle \phi_{u} \rangle_{k}$ and
$\dim {\langle \phi \rangle}_k = \dim \langle \phi_{s} \rangle_{k} + \dim \langle \phi_{u} \rangle_{k}$
for any $k \in {\mathbb N}$.
\end{lem}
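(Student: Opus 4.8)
The plan is to prove the statement first at each truncation level $k$, where $\langle\phi\rangle_k$, $\langle\phi_s\rangle_k$ and $\langle\phi_u\rangle_k$ are honest algebraic matrix groups, and then to pass to the projective limit to recover the assertion for the Zariski-closures. The abelian character of $\overline{\langle\phi\rangle}$ needs no work: $\langle\phi\rangle$ is cyclic, hence abelian, so $\overline{\langle\phi\rangle}$ is abelian by Lemma \ref{lem:pap}.

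Fix $k\in\mathbb{N}$. From the construction of the multiplicative Jordan decomposition, $(\phi_s)_k=\phi_{k,s}$ and $(\phi_u)_k=\phi_{k,u}$ are the semisimple and unipotent parts of $\phi_k$ in the algebraic group $D_k\subset \mathrm{GL}(\hat{\mathfrak m}/\hat{\mathfrak m}^{k+1})$; consequently $\langle\phi\rangle_k=\overline{\langle\phi_k\rangle}$, $\langle\phi_s\rangle_k=\overline{\langle\phi_{k,s}\rangle}$ and $\langle\phi_u\rangle_k=\overline{\langle\phi_{k,u}\rangle}$ are the Zariski-closures of the corresponding cyclic matrix groups. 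I would then establish the classical fact that $\langle\phi\rangle_k$ is the internal direct product of $\langle\phi_s\rangle_k$ and $\langle\phi_u\rangle_k$. Since $\langle\phi\rangle_k$ is algebraic and contains $\phi_k$, Chevalley's Theorem \ref{teo:chevj} gives $\phi_{k,s},\phi_{k,u}\in\langle\phi\rangle_k$, whence $\langle\phi_s\rangle_k$ and $\langle\phi_u\rangle_k$ are both contained in $\langle\phi\rangle_k$. These two subgroups commute (because $\phi_{k,s}$ and $\phi_{k,u}$ commute and centralizers are Zariski-closed) and meet only in the identity, as an element lying in both is at once semisimple, by Remark \ref{rem:closs}, and unipotent, by Remark \ref{rem:closu}, hence equal to $Id$. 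Thus the multiplication map $(x,y)\mapsto xy$ is an injective homomorphism of algebraic groups $\langle\phi_s\rangle_k\times\langle\phi_u\rangle_k\to\langle\phi\rangle_k$; its image is a closed subgroup containing $\phi_k=\phi_{k,s}\phi_{k,u}$, hence containing $\overline{\langle\phi_k\rangle}=\langle\phi\rangle_k$, so the map is onto. Over ${\mathbb C}$ a bijective morphism of algebraic groups is an isomorphism, which yields both the product decomposition of $\langle\phi\rangle_k$ and, because the kernel is trivial, the additivity $\dim\langle\phi\rangle_k=\dim\langle\phi_s\rangle_k+\dim\langle\phi_u\rangle_k$.

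Finally I would assemble the levels. Each $\pi_{k,l}$ is a homomorphism with $\pi_{k,l}((\phi_s)_k)=(\phi_s)_l$ and $\pi_{k,l}(\langle\phi_s\rangle_k)=\langle\phi_s\rangle_l$, and likewise for $\phi_u$, by Remark \ref{rem:pik}; hence the level-wise multiplication isomorphisms commute with the truncation maps and induce an isomorphism of projective limits. Since the projective limit commutes with finite products, this gives
\[ \overline{\langle\phi\rangle}=\varprojlim_{k}\langle\phi\rangle_k\;\cong\;\varprojlim_{k}\bigl(\langle\phi_s\rangle_k\times\langle\phi_u\rangle_k\bigr)\;\cong\;\overline{\langle\phi_s\rangle}\times\overline{\langle\phi_u\rangle}, \]
using Definition \ref{def:zarclos} for the outer factors. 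I expect the only delicate point to be the finite-level direct-product claim, specifically checking that the image of the multiplication map is Zariski-closed and that trivial intersection together with commutativity forces the map to be an isomorphism; the structural input (Jordan decomposition via Theorem \ref{teo:chevj}, and the shapes of $\langle\phi_s\rangle_k$ and $\langle\phi_u\rangle_k$ from Remarks \ref{rem:closs} and \ref{rem:closu}) feeds precisely into this. The passage to the limit is then purely formal.
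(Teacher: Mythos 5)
Your proof is correct and follows essentially the same route as the paper: the heart of both arguments is the finite-level claim that the multiplication morphism $\langle\phi_s\rangle_k \times \langle\phi_u\rangle_k \to \langle\phi\rangle_k$ is an isomorphism of algebraic groups, obtained from Chevalley's Theorem \ref{teo:chevj} (to place $\phi_{k,s}$ and $\phi_{k,u}$ in $\langle\phi\rangle_k$), the closedness of the image of a morphism of algebraic groups, and uniqueness of the Jordan decomposition for injectivity. The only real difference is in the assembly: you pass to the projective limit of the compatible level-wise isomorphisms (using Remark \ref{rem:pik}), whereas the paper instead proves that the subgroup $\langle \overline{\langle\phi_s\rangle}, \overline{\langle\phi_u\rangle}\rangle$ of $\diffh{}{n}$ is pro-algebraic via Proposition \ref{pro:char}, which forces an additional verification that this group is closed in the Krull topology (done by splitting a convergent sequence into its semisimple and unipotent parts); your limit formalism sidesteps that check.
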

\begin{proof}
The group $\overline{\langle \phi \rangle}$ is abelian by Lemma \ref{lem:pap}.
By Proposition \ref{pro:chevj} the formal diffeomorphisms
$\phi_s$ and $\phi_u$ belong to $\overline{\langle \phi \rangle}$
and then $\overline{\langle \phi \rangle}$ contains
the group
$H := \langle \overline{\langle \phi_s \rangle} , \overline{\langle \phi_u \rangle} \rangle$.
We claim $\overline{\langle \phi \rangle} = H$, it suffices to show that $H$
is pro-algebraic.

Remark
\ref{rem:closs} implies that
$\overline{\langle \phi_s \rangle}$ consists of semisimple elements
and is closed in the Krull topology. Hence $\langle \phi_{s} \rangle_{k} $ is
composed of semisimple elements for any $k \in {\mathbb N}$.
Analogously $\overline{\langle \phi_u \rangle}$ is contained in $\diffh{u}{n}$,
is closed in the Krull topology by Remark  \ref{rem:closu} and
$\langle \phi_{u} \rangle_{k} $ consists of unipotent elements for any $k \in {\mathbb N}$.

The group  $H_{k}^{*}$ is the image of the morphism
\[ \begin{array}{ccc}
\langle \phi_{s} \rangle_{k} \times \langle \phi_{u} \rangle_{k} & \stackrel{\iota}{\to} & D_{k}  \\
(\alpha, \beta) & \mapsto & \alpha \beta
\end{array} \]
of algebraic groups.
Hence  $H_{k}^{*}$ is an algebraic subgroup of $D_{k}$
for any $k \in {\mathbb N}$ (cf. \cite[2.1 (f), p. 57]{Borel}).
Since
$\langle {\langle \phi_s \rangle}_k , {\langle \phi_u \rangle}_k \rangle$ is abelian, the uniqueness
of the Jordan decomposition implies $\iota$ is injective.
Thus $H_k$ is isomorphic to
$\langle \phi_{s} \rangle_{k} \times \langle \phi_{u} \rangle_{k}$ and satisfies
\[ \dim H_k = \dim \langle \phi_{s} \rangle_{k} + \dim \langle \phi_{u} \rangle_{k} \]
for any $k \in {\mathbb N}$.
In order to conclude the proof
it suffices to show that
$H$ is closed in the Krull topology by Proposition \ref{pro:char}.

Every element $\eta$ of $H$ is of the form $\psi \circ \rho$ where
$\psi \in  \overline{\langle \phi_s \rangle}$ and
$\rho \in  \overline{\langle \phi_s \rangle}$.
Since $H$ is abelian, $\psi$ is semisimple and $\rho$ is unipotent,  $\psi \circ \rho$
is the multiplicative Jordan decomposition of $\eta$.
Moreover $H$ is isomorphic to 
$\overline{\langle \phi_s \rangle} \times \overline{\langle \phi_u \rangle}$
by uniqueness of the Jordan-Chevalley decomposition.
Thus $\overline{\langle \phi_s \rangle}$
(resp. $\overline{\langle \phi_u \rangle}$) is the set
of semisimple (resp. unipotent) elements of $H$.
Given a sequence $(\eta_{k})_{k \in {\mathbb N}}$ in $H$ that converges in the Krull topology,
the sequences $(\eta_{k,s})_{k \in {\mathbb N}}$ and $(\eta_{k,u})_{k \in {\mathbb N}}$
are contained in $\overline{\langle \phi_s \rangle}$ and $\overline{\langle \phi_u \rangle}$
respectively and
both converge in the Krull topology. Since
$\overline{\langle \phi_s \rangle}$ and $\overline{\langle \phi_u \rangle}$
are closed in the Krull topology so is $H$.
\end{proof}
\section{Finite dimensional groups of formal diffeomorphisms}
\label{sec:fdgfd}
Our main goal is characterizing the groups $G$ of local diffeomorphisms that can be embedded
in finite dimensional Lie groups. We approach this problem from a canonical point of view.
Indeed we provide an invariant $\dim G$ of $G$ such that $\dim G < \infty$ implies that
the Zariski-closure $\overline{G}$ of $G$ is algebraic or more precisely that
the map $\pi_{k} : \overline{G} \to G_{k}$ is an isomorphism of groups for some
$k \in {\mathbb N}$.
In such a case $\pi_k^{-1}: G_k \to \overline{G}$ can be interpreted as an algebraic
morphism  and $\overline{G}$ as a matrix algebraic group (in particular as
a complex Lie group with finitely many connected components).
On the other hand we will see that a Lie subgroup of $\diffh{}{n}$ with finitely many connected components
(cf. Definition \ref{def:lie}) is finite dimensional (Proposition \ref{pro:lieialg} and Lemma \ref{lem:fis}).
%

There are other advantages of working with the Zariski-closure of a group of local
diffeomorphisms. For instance given a normal subgroup $H$ of a group
$G \subset \diffh{}{n}$ we can naturally define whether
the extension is finite dimensional. A straightforward consequence of the definition is that
$G$ is finite dimensional if and only if it is a tower of finite dimensional extensions of
the trivial group. Hence it is natural to identify finite dimensional extensions. The following
kind of extensions are finite dimensional:
\begin{enumerate}
\item $H$ is a finite index subgroup of $G$. \\
\item $G/H$ is a finitely generated abelian group. \\
\item $G/H$ is a  connected Lie group.
\end{enumerate}
These items are generalizations of the cases treated in \cite{Seigal-Yakovenko:ldi, Binyamini:finite}
in the context of extensions of groups.
Thus a natural strategy to show $\dim G < \infty$ for a subgroup $G$ of $\diffh{}{n}$ is
decomposing it as a tower
of extensions of the types (1), (2) and (3) of the trivial group.
This method allows to generalize Theorem
\ref{teo:mainc} to much bigger classes of groups.
\subsection{Dimensional setting}
The first step of our program is defining the dimension of an extension of subgroups of
$\diffh{}{n}$.
\begin{lem}
\label{lem:dc}
Let $G$ be a subgroup of $\diffh{}{n}$.
Consider a subgroup $H$ of $G$. Then
$\dim G_{k} - \dim H_{k} \leq \dim G_{k+1} - \dim H_{k+1}$
for any $k \in {\mathbb N}$.
\end{lem}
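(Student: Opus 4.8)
The plan is to compare the two levels through the truncation morphism $\pi_{k+1,k}: D_{k+1} \to D_k$ and the standard dimension formula for surjective morphisms of algebraic groups. Fix $k \in {\mathbb N}$.

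First I would record that $G_{k+1}$, $G_k$, $H_{k+1}$, $H_k$ are algebraic subgroups of the relevant jet groups, and that by Remark \ref{rem:pik} the map $\pi_{k+1,k}$ restricts to \emph{surjective} morphisms of algebraic groups $G_{k+1} \to G_k$ and $H_{k+1} \to H_k$. Writing $K$ for the kernel of $\pi_{k+1,k}: D_{k+1} \to D_k$, the kernels of these two restrictions are $N_G := G_{k+1} \cap K$ and $N_H := H_{k+1} \cap K$ respectively. Since $H_{k+1}^{*} \subseteq G_{k+1}^{*}$, taking Zariski-closures gives $H_{k+1} \subseteq G_{k+1}$, and therefore $N_H = H_{k+1} \cap K \subseteq G_{k+1} \cap K = N_G$; in particular $\dim N_H \leq \dim N_G$.

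The main computation is then the dimension formula for a surjective morphism of algebraic groups: the dimension of the source equals the dimension of the image plus the dimension of the kernel, since the fibers are translates of the kernel and hence all of dimension equal to $\dim(\ker)$. Applying it to both restrictions yields $\dim G_{k+1} = \dim G_k + \dim N_G$ and $\dim H_{k+1} = \dim H_k + \dim N_H$. Subtracting gives
\[
\dim G_{k+1} - \dim H_{k+1} = (\dim G_k - \dim H_k) + (\dim N_G - \dim N_H) \geq \dim G_k - \dim H_k ,
\]
where the inequality uses $\dim N_G - \dim N_H \geq 0$. This is exactly the asserted estimate.

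The only delicate point is guaranteeing that the truncation restricts to morphisms that are genuinely surjective onto the Zariski-closures $G_k$ and $H_k$ at level $k$, rather than merely onto the closures of the images of $G_{k+1}^{*}$ and $H_{k+1}^{*}$; this compatibility is precisely the content of Remark \ref{rem:pik} (the image of the algebraic closure equals the algebraic closure of the image). Once that is in hand, the argument is routine dimension bookkeeping for algebraic groups, with the containment $N_H \subseteq N_G$ supplying the monotonicity.
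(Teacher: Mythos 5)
Your proof is correct. It differs from the paper's argument only in the bookkeeping device: the paper passes to Lie algebras, noting that surjectivity of $\pi_{k+1,k}:G_{k+1}\to G_k$ in characteristic $0$ forces surjectivity of $(d\pi_{k+1,k})_{Id}:{\mathfrak g}_{k+1}\to{\mathfrak g}_{k}$ with $(d\pi_{k+1,k})_{Id}({\mathfrak h}_{k+1})={\mathfrak h}_{k}$, so that the induced linear map ${\mathfrak g}_{k+1}/{\mathfrak h}_{k+1}\to{\mathfrak g}_{k}/{\mathfrak h}_{k}$ is surjective and the dimension inequality drops out; you instead stay at the group level and use the fibre-dimension formula $\dim A=\dim B+\dim\ker$ for surjective morphisms of algebraic groups, together with the containment of kernels $H_{k+1}\cap K\subseteq G_{k+1}\cap K$. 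The two are essentially equivalent (in characteristic $0$ the Lie algebra of the kernel is the kernel of the differential), and both hinge on the same nontrivial input, namely the surjectivity of the truncations onto the Zariski-closures supplied by Remark \ref{rem:pik}, which you correctly single out as the delicate point. One advantage of the paper's Lie-algebra formulation is that it is quoted verbatim later, in the proof of $(4)\implies(3)$ of Proposition \ref{pro:elem}, where the surjective map on quotient spaces ${\mathfrak g}_{k+1}/{\mathfrak h}_{k+1}\to{\mathfrak g}_{k}/{\mathfrak h}_{k}$ is needed again; your kernel-counting version would have to be re-linearized there. A minor stylistic advantage of your version is that it avoids forming the quotients ${\mathfrak g}_k/{\mathfrak h}_k$, which are only vector spaces (not Lie algebras) since $H$ is not assumed normal.
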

\begin{proof}
Let ${\mathfrak g}_{k}$ and ${\mathfrak h}_{k}$
be the Lie algebra of $G_k$ and $H_{k}$ respectively for $k \in {\mathbb N}$.
Since $\pi_{k+1,k}:G_{k+1} \to G_k$ is surjective and we are working in
characteristic $0$, we obtain $(d \pi_{k+1,k})_{Id} : {\mathfrak g}_{k+1} \to {\mathfrak g}_{k}$
is surjective for any $k \in {\mathbb N}$ (cf. \cite[Chapter II.7, p. 105]{Borel}).
Moreover $(d \pi_{k+1,k})_{Id} ( {\mathfrak h}_{k+1})$ is equal to $ {\mathfrak h}_{k}$
for $k \in {\mathbb N}$. Therefore the linear map
$(d \pi_{k+1,k})_{Id} : {\mathfrak g}_{k+1}/{\mathfrak h}_{k+1} \to {\mathfrak g}_{k}/{\mathfrak h}_{k}$
is surjective. Since
\[   \dim {\mathfrak g}_{k} - \dim {\mathfrak h}_{k} \leq
\dim {\mathfrak g}_{k+1} - \dim {\mathfrak h}_{k+1}   \]
we deduce
$\dim G_{k} - \dim H_{k} \leq \dim G_{k+1} - \dim H_{k+1}$
for any $k \in {\mathbb N}$.
\end{proof}
Since $(\dim G_{k} - \dim H_{k})_{k \geq 1}$ is increasing we can define the codimension of
$H$ in $G$.
\begin{defi}
\label{def:dim}
Consider a subgroup $H$ of a subgroup $G$ of $\diffh{}{n}$.
We define
$\dim G/H  \in {\mathbb Z}_{\geq 0} \cup \{ \infty \}$ as
\[ \dim G/H = \lim_{k \to \infty} \dim G_{k} - \dim H_{k}. \]
We say that $G/H$ is {\it finite dimensional} or that $H$ has finite codimension in
$G$ if $\dim G/H < \infty$.
Notice that we can define $\dim G$ for any subgroup $G$ of $\diffh{}{n}$
by considering $H= \{Id\}$.
\end{defi}
\begin{rem}
Notice that the definition does not distinguish between a subgroup of $\diffh{}{n}$
and its Zariski-closure. More precisely, if $H$ is a subgroup of a group
$G \subset \diffh{}{n}$, we have $\dim \overline{G}= \dim G$ and
$\dim \overline{G}/\overline{H} = \dim G/H$.
\end{rem}
The following result is an immediate consequence of the definition.
It will be useful to know whether or not a subgroup of $\diffh{}{n}$
is finite dimensional in practical applications since it allows to divide the problem
in simpler ones.
\begin{pro}
\label{pro:elems}
Consider a sequence
$G^{1} \subset G^{2} \subset \ldots \subset G^{m}$ of subgroups
of $\diffh{}{n}$. Then we obtain
\[ \dim G^{m}/G^{1} = \dim G^{m}/G^{m-1} + \ldots + \dim G^{3}/G^{2} + \dim G^{2}/G^{1} . \]
In particular
$G^{m}/G^{1}$ is finite dimensional if and only if
$G^{j+1}/G^{j}$ is finite dimensional for any $1 \leq j < m$.
\end{pro}
Next proposition provides several characterizations of finite dimensional extensions.
\begin{pro}
\label{pro:elem}
Let $G$ be a subgroup of $\diffh{}{n}$.
Let $H$ be a subgroup of $G$.
The following properties are equivalent:
\begin{enumerate}
\item There exists $k_{0} \in {\mathbb N}$ such that $\phi \in \overline{G}$ and
$\phi_{k_{0}} \in H_{k_{0}}$ imply $\phi \in \overline{H}$.
\item There exists $k_{0} \in {\mathbb N}$ such that the map
$\hat{\pi}_{k_{0}}: \overline{G}/\overline{H} \to G_{k_{0}}/H_{k_{0}}$,
induced by $\pi_{k_{0}}: \overline{G} \to G_{k_{0}}$, is injective.
\item There exists $k_{0} \in {\mathbb N}$ such that the map
$\hat{\pi}_{k+1,k}: G_{k+1}/H_{k+1} \to G_{k}/H_{k}$ (induced by ${\pi}_{k+1,k}$) is
injective for any $k \geq k_{0}$.
\item $G/H$ is finite dimensional.
\end{enumerate}
\end{pro}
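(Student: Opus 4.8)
The plan is to establish the cycle $(1) \Rightarrow (2) \Rightarrow (3) \Rightarrow (4) \Rightarrow (1)$, treating $(1) \Leftrightarrow (2)$ as a formal translation and concentrating the real work in $(4) \Rightarrow (1)$. The equivalence $(1) \Leftrightarrow (2)$ is immediate once one unwinds injectivity on coset spaces: the map $\hat{\pi}_{k_0}$ sends $\phi \overline{H}$ to $\phi_{k_0} H_{k_0}$, and since $\pi_{k_0}|_{\overline{G}} : \overline{G} \to G_{k_0}$ is a surjective homomorphism with $\pi_{k_0}(\overline{H}) = H_{k_0}$ by Remark \ref{rem:pik2}, the map $\hat{\pi}_{k_0}$ is injective precisely when $(\psi^{-1}\phi)_{k_0} \in H_{k_0}$ forces $\psi^{-1}\phi \in \overline{H}$ for $\phi, \psi \in \overline{G}$; taking $\psi = Id$ and using that $\overline{G}$ is a group shows this is exactly condition $(1)$.

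For $(2) \Rightarrow (3)$, fix $k \geq k_0$ and take $g \in G_{k+1}$ with $\pi_{k+1,k}(g) \in H_k$. Lifting $g$ to some $\phi \in \overline{G}$ with $\phi_{k+1} = g$ via surjectivity of $\pi_{k+1}|_{\overline{G}}$, I get $\phi_{k_0} = \pi_{k,k_0}(\phi_k) \in \pi_{k,k_0}(H_k) = H_{k_0}$ by Remark \ref{rem:pik}, so $(1)$ yields $\phi \in \overline{H}$ and hence $g = \phi_{k+1} \in H_{k+1}$. Thus the fibre of $\hat{\pi}_{k+1,k}$ over the base coset is trivial, and by equivariance of the morphism of homogeneous spaces $\hat{\pi}_{k+1,k}$ is injective for every $k \geq k_0$.

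For $(3) \Rightarrow (4)$, I interpret $G_k/H_k$ as a homogeneous space of dimension $\dim G_k - \dim H_k$ and $\hat{\pi}_{k+1,k}$ as a surjective morphism of such spaces. Injectivity forces the generic fibre to be a point, so $\dim G_{k+1} - \dim H_{k+1} = \dim G_k - \dim H_k$ for all $k \geq k_0$; since this sequence is nondecreasing by Lemma \ref{lem:dc}, it is eventually constant, whence $\dim G/H = \dim G_{k_0} - \dim H_{k_0} < \infty$.

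The crux is $(4) \Rightarrow (1)$, where one must upgrade an equality of dimensions to an equality of groups. Set $\tilde{H}_k = \{\phi \in \overline{G} : \phi_k \in H_k\} = (\pi_k|_{\overline{G}})^{-1}(H_k)$. Each $\tilde{H}_k$ is pro-algebraic by Proposition \ref{pro:char}, since its truncations are $\pi_{l,k}^{-1}(H_k) \cap G_l$ (algebraic) and it is Krull-closed; moreover the chain $\tilde{H}_{k_0} \supseteq \tilde{H}_{k_0+1} \supseteq \cdots$ has intersection $\overline{H}$, and a level-by-level count gives $\dim \overline{G}/\tilde{H}_k = \dim G_k - \dim H_k$. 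By $(4)$ this codimension is constant for $k \geq k_0$, so at each level the nested groups $(\tilde{H}_{k+1})_l \subseteq (\tilde{H}_k)_l$ have equal dimension; hence $\tilde{H}_{k+1}$ contains the identity component of $\tilde{H}_k$, and the components $(\tilde{H}_k)_0$ stabilise to a common pro-algebraic group $C$ for $k \geq k_0$. Since $C$ has finite index in $\tilde{H}_{k_0}$ by Remark \ref{rem:ccia}, the groups $\tilde{H}_k/C$ form a decreasing chain of subgroups of a finite group, which stabilises at some $k_1$; then $\tilde{H}_{k_1} = \bigcap_k \tilde{H}_k = \overline{H}$, which is exactly $(1)$ with $k_0 = k_1$. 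The main obstacle lies precisely here: dimension stabilisation controls only identity components, and ruling out an infinite tower of shrinking finite component groups is what forces the descending-chain argument.
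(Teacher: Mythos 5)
Your proposal is correct, and the hard implication is handled by a genuinely different argument than the paper's. The paper proves $(4)\Rightarrow(3)$ infinitesimally: it shows $(d\hat{\pi}_{k+1,k})_{Id}:{\mathfrak g}_{k+1}/{\mathfrak h}_{k+1}\to{\mathfrak g}_{k}/{\mathfrak h}_{k}$ is a surjection between spaces of equal dimension, hence an isomorphism, and then transports this to the group level by lifting an element of the kernel of $\hat{\pi}_{k+1,k}$ to a unipotent $\phi\in\overline{G}$ with $\phi_k\equiv Id$ and applying $\exp/\log$ (via Remark \ref{rem:closu}) to conclude $(\log\phi)_{k+1}\in{\mathfrak h}_{k+1}$ and hence $\phi_{k+1}\in H_{k+1}$. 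You instead prove $(4)\Rightarrow(1)$ directly by a descending-chain argument on the pro-algebraic preimage groups $\tilde{H}_k=(\pi_k|_{\overline{G}})^{-1}(H_k)$: stabilization of codimensions forces the identity components to stabilize, and the residual ambiguity lives in a decreasing chain of subgroups of the finite component group $\tilde{H}_{k_0}/C$, which must terminate. Both arguments are sound (your level-by-level count $\dim\overline{G}/\tilde{H}_k=\dim G_k-\dim H_k$ checks out, as does the identification $\bigcap_k\tilde{H}_k=\overline{H}$ and the final descent). The trade-off: the paper's Lie-algebra route yields injectivity of $\hat{\pi}_{k+1,k}$ at exactly the level $k_0$ where the dimension stabilizes (which feeds Remark \ref{rem:linjdim}), whereas your $k_1$ may be larger because of the finite component-group descent; in exchange, your argument avoids the exponential map and the unipotent logarithm entirely, resting only on general structure theory of (pro-)algebraic groups — arguably a more robust and self-contained mechanism for the same conclusion. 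Your treatment of $(1)\Leftrightarrow(2)$, $(2)\Rightarrow(3)$ (via lifting and invoking $(1)$ rather than the paper's factorization of surjections) and $(3)\Rightarrow(4)$ are all minor variants of the paper's and are fine.
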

%
%
\begin{rem}
\label{rem:injiso}
Since we are not supposing that $H$ is normal we consider left cosets.
The proposition can be strengthened if $H$ is normal. Then $\overline{H}$
is normal in $\overline{G}$ and $H_{k}$ is normal in $G_{k}$ for
any $k \in {\mathbb N}$ by Lemma \ref{lem:clnin}.
Notice that
$G_{k}/H_k$  is an algebraic group for any $k \in {\mathbb N}$
(cf. \cite[section II.6.8, p. 98]{Borel}).
Hence $\hat{\pi}_{k+1,k}$ is a morphism of
algebraic groups.
Since $\hat{\pi}_{k}$ is always surjective for $k \in {\mathbb N}$,
condition (2) is equivalent to
$\hat{\pi}_{k_{0}}$ being an isomorphisms of groups
from $\overline{G}/\overline{H}$ onto the algebraic matrix group
$G_{k_{0}}/H_{k_{0}}$.
Condition (3) is equivalent
to $\hat{\pi}_{k+1,k}$ being a bijective morphism of algebraic matrix groups
and then an isomorphism of algebraic groups
(cf. \cite[Theorem 6, Chapter 3.1.4]{alg.lie:Onis-Vinb}).
\end{rem}
\begin{proof}
The first two properties are clearly equivalent.

Let us show $(2) \implies (3)$.  We have
$\hat{\pi}_{k_{0}} = \hat{\pi}_{k,k_{0}} \circ \hat{\pi}_{k+1, k} \circ \hat{\pi}_{k+1}$
for $k \geq k_{0}$. Since the maps  $\hat{\pi}_{k,k_{0}}$,  $\hat{\pi}_{k+1, k}$, $\hat{\pi}_{k+1}$
are surjective and $\hat{\pi}_{k_{0}}$ is injective,
$\hat{\pi}_{k,k_{0}}$,  $\hat{\pi}_{k+1, k}$, $\hat{\pi}_{k+1}$
are also injective for any $k \geq k_{0}$.

Let us show $(3) \implies (2)$. The map $\hat{\pi}_{k_{0}}$ is equal to
$\hat{\pi}_{k,k_{0}} \circ \hat{\pi}_{k}$ for $k \geq k_{0}$.
Since $\hat{\pi}_{k,k_{0}}=  \hat{\pi}_{k_{0}+1,k_{0}} \circ \ldots  \circ \hat{\pi}_{k,k-1}$
is a composition of injective maps by hypothesis, $\hat{\pi}_{k,k_{0}}$ is injective for $k \geq k_{0}$.
Given left cosets $\phi \overline{H}$ and $\eta \overline{H}$ such that
$\hat{\pi}_{k_{0}}(\phi \overline{H}) = \hat{\pi}_{k_{0}}(\eta \overline{H})$ we
obtain
$\hat{\pi}_{k}(\phi \overline{H}) = \hat{\pi}_{k}(\eta \overline{H})$
for any $k \geq k_{0}$.
We deduce $(\eta^{-1} \phi)_{k} \in H_{k}$ for any $k \geq k_{0}$.
In particular we have $\eta^{-1} \phi \in \overline{H}$ and hence
$\phi \overline{H} = \eta \overline{H}$. Thus $\hat{\pi}_{k_{0}}$ is
injective.

Let us show $(3) \implies (4)$. The map $\hat{\pi}_{k+1,k}$ is an isomorphism
of algebraic manifolds for any $k \geq k_{0}$ by the universal mapping property
of quotient morphisms (cf. \cite[Chapter II.6]{Borel}).
We deduce 
\[ \dim G_{k+1} - \dim H_{k+1} = \dim G_{k} - \dim H_{k} \]
for any $k \geq k_{0}$.


Let us show $(4) \implies (3)$.
Let ${\mathfrak g}_{k}$ and ${\mathfrak h}_{k}$
be the Lie algebra of $G_k$ and $H_{k}$ respectively for $k \in {\mathbb N}$.
There exists $k_{0} \in {\mathbb N}$ such that
\[ \dim G_{k} - \dim H_{k} = \dim G_{k_{0}} - \dim H_{k_{0}} \]
for any $k \geq k_{0}$.
The linear map
$(d \pi_{k+1,k})_{Id} : {\mathfrak g}_{k+1} \to {\mathfrak g}_{k}$
is surjective for any $k \in {\mathbb N}$ by the proof of Lemma \ref{lem:dc}.
Since $(d \pi_{k+1,k})_{Id}({\mathfrak h}_{k+1})={\mathfrak h}_{k}$ for any
$k \in {\mathbb N}$ the linear map
$(d \hat{\pi}_{k+1,k})_{Id} : {\mathfrak g}_{k+1}/{\mathfrak h}_{k+1} \to
{\mathfrak g}_{k}/{\mathfrak h}_{k}$ is well-defined and surjective for any $k \in {\mathbb N}$.
Since both complex vector spaces ${\mathfrak g}_{k+1}/{\mathfrak h}_{k+1}$ and
${\mathfrak g}_{k}/{\mathfrak h}_{k}$ have the same dimension, the map
$(d \hat{\pi}_{k+1,k})_{Id}$ is a linear isomorphism for any $k \geq k_{0}$.

Fix $k \geq k_{0}$. Let us show that $\hat{\pi}_{k+1,k}$ is injective.
Let $A \in G_{k+1}$ such that $\hat{\pi}_{k+1,k} (A H_{k+1}) = H_{k}$.
We have $\pi_{k+1,k}(A) \in H_{k}$. The restriction
$(\pi_{k+1,k})_{|H_{k+1}} : H_{k+1} \to H_{k}$ is surjective by Remark \ref{rem:pik},
hence there exists $B \in H_{k+1}$ such that $\pi_{k+1,k}(A)=\pi_{k+1,k}(B)$.
We obtain $\pi_{k+1,k}(B^{-1} A) = Id$.
There exists  $\phi \in \overline{G}$ such that $\phi_{k+1} = B^{-1} A$
since $\pi_{k+1}: \overline{G} \to G_{k+1}$ is surjective by Remark \ref{rem:pik2}.
Since $\phi_{k} \equiv Id$ the linear part $D_{0} \phi$ of $\phi$ at $0$ is equal to $Id$
and thus $\log \phi$ belongs to the Lie algebra
${\mathfrak g}$ of $\overline{G}$ (Remark \ref{rem:closu}) and satisfies
$(\log \phi)_{k} \equiv 0$. The property $(d \pi_{k+1,k})_{Id} ((\log \phi)_{k+1})=0$ and
the injective nature of $(d \hat{\pi}_{k+1,k})_{Id}$ imply $(\log \phi)_{k+1} \in {\mathfrak h}_{k+1}$.
Since $B^{-1} A = \mathrm{exp} ((\log \phi)_{k+1} )$ we obtain $ B^{-1} A \in H_{k+1}$
and then $A \in H_{k+1}$.
Hence $\pi_{k+1,k}$ is injective for any $k \geq k_{0}$.
\end{proof}
\begin{rem}
\label{rem:linjdim}
Notice that the proof of $(3) \Leftrightarrow (4)$
in Proposition \ref{pro:elem} implies that $\hat{\pi}_{k+1,k}: G_{k+1}/H_{k+1} \to G_{k}/H_{k}$ is
bijective if and only if
$\dim G_{k} - \dim H_{k} = \dim G_{k+1} - \dim H_{k+1}$.
Such a property validates our point of view
since the dimension determines an extension of the form $G_k/H_k$
for $k \in {\mathbb N}$ modulo isomorphism.

Moreover if $(3)$ holds then $\dim G/H= \dim G_{k_{0}} - \dim H_{k_{0}}$.
We have $\dim G/H= \dim G_{k_{0}} - \dim H_{k_{0}}$ if $(2)$ holds by the proof
of $(2) \implies (3)$.
\end{rem}
\begin{rem}
\label{rem:hotlot}
Let $G$ be a finite dimensional subgroup of $\diffh{}{n}$.
There exists $k_{0} \in {\mathbb N}$ such that $\pi_{k,k_{0}}: G_{k} \to G_{k_{0}}$ is
an isomorphism of algebraic matrix groups for any $k \geq k_{0}$.
Consider the Taylor series expansion
\[ \phi (z_{1}, \ldots, z_{n})= (\sum_{i_{1}+\ldots+i_{n} \geq 1} a_{i_{1} \ldots i_{n}}^{1}
z_{1}^{i_{1}} \ldots z_{n}^{i_{n}}, \ldots, \sum_{i_{1}+\ldots+i_{n} \geq 1} a_{i_{1} \ldots i_{n}}^{n}
z_{1}^{i_{1}} \ldots z_{n}^{i_{n}} ) \]
of $\phi \in G$.
Given $(i_{1}, \ldots, i_{n};j)$ a multi-index such that $i_{1}+ \ldots +i_{n} > k_{0}$ and
$1 \leq j \leq n$
the function $a_{i_{1} \ldots i_{n}}^{j}: G \to {\mathbb C}$ belongs to the affine
coordinate ring ${\mathbb C}[G_{k_{0}}]$ of $G_{k_{0}}$.
In other words every coefficient
in the Taylor expansion, of an element of $G$,
of degree greater than $k_{0}$ is a regular function $P_{i_{1} \ldots i_{n}}^{j}$
on the coefficients
of degree less or equal than $k_{0}$.

Reciprocally suppose that  $a_{i_{1} \ldots i_{n}}^{j}: G \to {\mathbb C}$ is a polynomial function of 
the coefficients of degree less or equal than $k_0$ for any 
multi-index $(i_{1}, \ldots, i_{n};j)$ such that $i_{1}+ \ldots +i_{n} > k_{0}$ and
$1 \leq j \leq n$ (meaning that there exists $P_{i_{1} \ldots i_{n}}^{j} \in {\mathbb C}[D_{k_{0}}]$
such that 
\[ a_{i_{1} \ldots i_{n}}^{j}(\phi) = 
P_{i_{1} \ldots i_{n}}^{j} ((a_{l_{1} \ldots l_{n}}^{m}(\phi))_{l_1 +\cdots + l_n \leq k_{0}, \ 1 \leq m \leq n}) \]
for any $\phi \in G$). Since all these equations hold true in the Zariski-closure $G_{i_{1} + \cdots + i_{n}}$
of $G_{i_{1} + \cdots + i_{n}}^{*}$, we have that 
$\pi_{k,k_{0}}^{*} : {\mathbb C}[G_{k_{0}}] \to {\mathbb C}[G_{k}]$ is an isomorphism of 
${\mathbb C}$-algebras and in particular $\pi_{k,k_{0}}: G_{k} \to G_{k_{0}}$ 
is an isomorphism of algebraic groups for any $k \geq k_0$.
Thus $G$ is finite dimensional.
\end{rem}
Let us relate the finite dimension property with a much simpler one, namely the finite determination
property.
\begin{defi}
\label{def:findet}
Let $G$ be a subgroup of $\diffh{}{n}$. We say that $G$ has the
{\it finite determination property} if there exists $k \in {\mathbb N}$ such
that $\phi \in G$ and $\phi_{k} \equiv Id$ imply $\phi \equiv Id$.
\end{defi}
\begin{rem}
\label{rem:fdc}
Let us compare the finite determination and the finite dimension properties. On the one hand
a subgroup $G$ of $\diffh{}{n}$ has the
finite determination property if there exists  $k \in {\mathbb N}$ such
that the projection $\pi_k : G \to D_{k}$ is injective. On the other hand $G$ is finite
dimensional if there exists  $k \in {\mathbb N}$ such
that $\pi_k :\overline{G} \to D_{k}$ is injective.
\end{rem}
\begin{rem}
\label{rem:fdefd}
Notice that a subgroup $G$ of $\diffh{}{n}$ is finite dimensional if and only if
$\overline{G}$ has the finite determination property.
\end{rem}
\begin{rem}
\label{rem:fdnifd}
Every finite dimensional group has finite determination but in general the reciprocal does
not hold true.
We define
\[ \phi(j)(x,y)= (x , y + d_{j} x^{2} + x^{j+2}) \in \diff{}{2} \]
for $j \in {\mathbb N}$.
Suppose that the subset $S:=\{ d_{1}, d_{2}, \ldots \}$ of
${\mathbb C}$ is linearly independent
over ${\mathbb Q}$.
We have $\log \phi(j) = (d_{j} x^{2} + x^{j+2}) \partial / \partial{y}$ for $j \in {\mathbb N}$.
We denote by $G$ the group generated by $\{\phi(1), \phi(2), \ldots \}$.
It is an abelian group.
Moreover since $S$ is linearly independent over ${\mathbb Q}$, the property
$\phi \neq Id$ implies $\phi_{2} \neq Id$ for any $\phi \in G$.
In particular $G$ has the finite determination property.

By choice the complex Lie algebra generated by $\{ \log \phi(1), \log \phi(2), \ldots \}$ is infinite
dimensional as a complex vector space.
This implies that $\overline{G}$ contains non-trivial elements whose
order of contact with the identity is arbitrarily high or in other words that the
map $\pi_{k}: \overline{G} \to D_{k}$ is not injective for any $k \in {\mathbb N}$.
Hence $\overline{G}$ does not
have the finite determination property and $G$ is not finite dimensional.

An example of finite determination group that is not finite dimensional does not exist
in dimension $1$ (Proposition \ref{pro:fd1fd}).
\end{rem}
\begin{rem}
We define
\[ \phi(j)(x,y)= (x , y + d_{j} x^{2} + z^{j+2},z) \in \diff{}{3} \]
for $j \in {\mathbb N}$ where the subset
 $S:=\{ d_{1}, d_{2}, \ldots \}$ of
${\mathbb C}$ is linearly independent
over ${\mathbb Q}$. Let $G$ be the group generated by $\{\phi(1), \phi(2), \ldots \}$.
Analogously as in the previous example $G$ is finitely determined but it is not finite dimensional.
We have 
\[ (\phi(j)^{-1} \{ x=y=0\} , \{y=0\}) = \dim \frac{{\mathcal O}_{3}}{ (x, y + d_{j} x^{2} + z^{j+2}, y)}  
= j+2 \]
for any $j \in {\mathbb N}$. As a consequence finite determination does not suffice to guarantee 
the uniform intersection property (cf. Theorem \ref{teo:main}).
\end{rem}
\begin{defi}
We say that a subgroup $G$ of $\diffh{}{n}$ is {\it algebraic} if
$G$ is pro-algebraic and $\dim G < \infty$.
\end{defi}
\begin{rem}
\label{rem:eae}
An algebraic subgroup $G$ of $\diffh{}{n}$ is the image by an algebraic monomorphism of
an algebraic matrix group.
Given $k_{0} \in {\mathbb N}$ such that
${\pi}_{k_{0}}: \overline{G} \to G_{k_{0}}$ is injective, the map
${\pi}_{k_{0}}^{-1}: G_{k_{0}} \to G$ is an isomorphism of groups (Remark \ref{rem:injiso}).
Moreover, it is algebraic in every jet space since
${\pi}_{k} \circ {\pi}_{k_{0}}^{-1}: G_{k_{0}} \to G_{k}$ is the inverse of
the algebraic isomorphism $\pi_{k,k_{0}}: G_{k} \to G_{k_{0}}$ for any
$k \geq k_{0}$ (Remark \ref{rem:injiso}).
\end{rem}
The characterization of pro-algebraic groups given by Proposition \ref{pro:char}
provides a characterization of algebraic subgroups of $\diffh{}{n}$.
\begin{lem}
\label{lem:chalg}
Let $G$ be a subgroup of $\diffh{}{n}$.
Then $G$ is algebraic if and only if $G_{k}^{*}$ is algebraic
for any $k \in {\mathbb N}$ and the sequence $(\dim G_{k}^{*})_{k \geq 1}$
is bounded.
\end{lem}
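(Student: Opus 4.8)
The plan is to unwind the definition of an algebraic subgroup—pro-algebraic together with $\dim G < \infty$—and match each half to the two conditions on the jet groups $G_k^*$. The key elementary observation, used in both directions, is that whenever $G_k^*$ is algebraic it equals its own Zariski-closure, so $G_k = G_k^*$ and in particular $\dim G_k = \dim G_k^*$.

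For the forward implication, I would start from the assumption that $G$ is algebraic. Since $G$ is pro-algebraic, Proposition \ref{pro:char} gives directly that $G_k^*$ is an algebraic subgroup of $\mathrm{GL}(\hat{\mathfrak m}/\hat{\mathfrak m}^{k+1})$ for every $k$, which is the first required condition, and moreover $\dim G_k^* = \dim G_k$. By Lemma \ref{lem:dc} applied with $H = \{Id\}$ the sequence $(\dim G_k)_{k \geq 1}$ is nondecreasing, and by Definition \ref{def:dim} its limit is exactly $\dim G$, which is finite. An increasing sequence of nonnegative integers with finite limit is eventually constant, hence bounded, so $(\dim G_k^*)_{k \geq 1}$ is bounded.

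For the converse, I assume $G_k^*$ is algebraic for all $k$ and $(\dim G_k^*)_{k \geq 1}$ is bounded. As above $G_k^* = G_k$, so $\dim G = \lim_k \dim G_k < \infty$, which already secures the dimension half of the definition. It then remains to show that $G$ is pro-algebraic, i.e. that $G = \overline{G}$. Because $\dim G < \infty$, I would invoke Proposition \ref{pro:elem} with $H = \{Id\}$ (so that $H_k = \{Id\}$ and $\overline{H} = \{Id\}$): condition (4) holds, and condition (3) then furnishes a $k_0 \in {\mathbb N}$ such that $\pi_{k+1,k} : G_{k+1} \to G_k$ is injective for all $k \geq k_0$. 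Combined with the surjectivity of $\pi_{k+1,k}$ recorded in Remark \ref{rem:pik}, each such map is an isomorphism, and hence $\pi_{k_0} : \overline{G} \to G_{k_0}$ is injective.

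Finally I would close the gap between $G$ and $\overline{G}$. The map $\pi_{k_0}$ is a bijection of $\overline{G}$ onto $G_{k_0}$: injectivity was just established, and surjectivity is Remark \ref{rem:pik2}. Since $G \subseteq \overline{G}$ and $\pi_{k_0}(G) = G_{k_0}^* = G_{k_0}$, the subset $G$ is already mapped onto the full image $\pi_{k_0}(\overline{G})$ by an injective map, forcing $G = \overline{G}$. Thus $G$ is pro-algebraic, and together with $\dim G < \infty$ it is algebraic. I expect the genuine obstacle to be this last point—proving that $G$ is closed in the Krull topology—since the boundedness hypothesis enters precisely by forcing finite determination on $\overline{G}$ through Proposition \ref{pro:elem}, and that finite determination is what lets the surjectivity of $\pi_{k_0}|_G$ collapse $\overline{G}$ back onto $G$.
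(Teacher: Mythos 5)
Your proof is correct, and its engine is the same as the paper's: both directions hinge on the fact that, once every $G_k^*$ is algebraic, $G_k=G_k^*$ and the boundedness of $(\dim G_k)_{k\geq 1}$ forces the truncation maps $\pi_{k+1,k}:G_{k+1}\to G_k$ to be injective from some $k_0$ on (you cite Proposition \ref{pro:elem} $(4)\Rightarrow(3)$, the paper cites the equivalent Remark \ref{rem:linjdim}). The only real divergence is the final step establishing pro-algebraicity: the paper checks the Krull-closedness criterion of Proposition \ref{pro:char} by noting that a convergent sequence in $G$ has eventually constant $k_0$-jets and hence is eventually constant by injectivity of $\pi_{k_0}|_G$, whereas you conclude $G=\overline{G}$ directly from the injectivity of $\pi_{k_0}$ on all of $\overline{G}$, its surjectivity onto $G_{k_0}$ (Remark \ref{rem:pik2}), and the identity $\pi_{k_0}(G)=G_{k_0}^*=G_{k_0}$. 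Both finishes are valid; yours bypasses the topological characterization entirely at the cost of invoking surjectivity of $\pi_{k_0}|_{\overline{G}}$, while the paper's stays inside $G$ and never needs to compare it with $\overline{G}$. Your identification of the Krull-closedness of $G$ as the genuine content of the nontrivial direction matches the paper's own emphasis.
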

\begin{proof}
The group $G$ is pro-algebraic if and only if $G_{k}^{*}$ is algebraic
for any $k \in {\mathbb N}$ and $G$ is closed in the Krull topology
by Proposition \ref{pro:char}.

The sufficient condition is obvious. Let us show the necessary condition.
It suffices to show that $G$ is closed in the Krull topology.
There exists $k_{0} \in {\mathbb N}$ such that $\pi_{k+1,k}:G_{k+1} \to G_{k}$ is injective for
any $k \geq k_{0}$ by Remark \ref{rem:linjdim}.
We deduce that the map $\pi_{k_{0}}: G \to G_{k_{0}}$ is injective.
Consider a sequence $(\eta_{m})_{m \geq 1}$ of elements of $G$ that converge in the Krull topology.
Then there exists $m_{0} \in {\mathbb N}$ such that $(\eta_{m})_{k_{0}} \equiv (\eta_{m_{0}})_{k_{0}}$
if $m \geq m_{0}$. Therefore $\eta_{m} \equiv \eta_{m_{0}}$ for any $m \geq m_{0}$ and the
sequence converges to $\eta_{m_{0}} \in G$. We obtain that $G$ is closed in the Krull topology.
\end{proof}
Let us provide the first examples of finite dimensional groups. Indeed we will see
that cyclic groups and one-parameter groups are always finite dimensional.
\begin{pro}
\label{pro:cloif}
Let $\phi \in \diffh{}{n}$. We have $\dim \langle \phi \rangle \leq n$.
\end{pro}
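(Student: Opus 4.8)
The plan is to reduce the statement $\dim \langle \phi \rangle \le n$ to the two extreme cases of the Jordan decomposition and then combine them using Lemma~\ref{lem:clcg}. By that lemma we have $\dim \langle \phi \rangle_k = \dim \langle \phi_s \rangle_k + \dim \langle \phi_u \rangle_k$ for every $k \in {\mathbb N}$, hence passing to the limit gives $\dim \langle \phi \rangle = \dim \langle \phi_s \rangle + \dim \langle \phi_u \rangle$. So it suffices to bound the semisimple and unipotent contributions separately, and the natural target is to show that $\dim \langle \phi_s \rangle$ plus $\dim \langle \phi_u \rangle$ does not exceed $n$.

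For the unipotent part I would invoke Remark~\ref{rem:closu}: since $\phi_u$ is unipotent, $\overline{\langle \phi_u \rangle} = \{ \phi_u^t : t \in {\mathbb C}\}$ is a one-parameter group whose Lie algebra is the one-dimensional space generated by $\log \phi_u$ (unless $\phi_u = Id$, in which case it is zero-dimensional). Thus $\dim \langle \phi_u \rangle \le 1$. For the semisimple part I would use Remark~\ref{rem:closs}: after a formal change of coordinates $\phi_s$ is linear and diagonal, $\phi_s = \mathrm{diag}(\lambda_1,\ldots,\lambda_n)$, and $\overline{\langle \phi_s \rangle}$ coincides with the Zariski-closure of the cyclic group generated by this diagonal matrix inside $\mathrm{GL}(n,{\mathbb C})$. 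That closure is a diagonalizable algebraic subgroup of the maximal torus $({\mathbb C}^*)^n$, cut out by the characters $\chi_{\underline a}$ that vanish on $(\lambda_1,\ldots,\lambda_n)$; being a closed subgroup of an $n$-dimensional torus, its dimension is at most $n$.

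The remaining point, and the place where a little care is needed, is that simply adding the two bounds $1$ and $n$ gives $n+1$, not $n$. So the main obstacle is to improve the estimate in the case where both parts are nontrivial. The key observation is that if $\phi_u \ne Id$ then $\phi$ is not semisimple, so $\phi_s$ cannot have full-dimensional closure: the presence of a nontrivial unipotent part forces $\phi$ to have at least one repeated eigenvalue structure, equivalently $\phi_s$ must fix some coordinate direction with a generalized eigenspace of dimension at least two, so the torus generated by the $\lambda_i$ has dimension at most $n-1$. Concretely, whenever $\log \phi_u \ne 0$ there exist indices with coincident eigenvalues of $D_0\phi$, which imposes at least one multiplicative relation $\chi_{\underline a}(\lambda_1,\ldots,\lambda_n)=1$ with $\underline a \ne 0$, forcing $\dim \langle \phi_s \rangle \le n-1$. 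Combining, either $\phi_u = Id$ and $\dim \langle \phi \rangle = \dim \langle \phi_s \rangle \le n$, or $\phi_u \ne Id$ and $\dim \langle \phi \rangle \le (n-1) + 1 = n$.

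I would organize the write-up as: first apply Lemma~\ref{lem:clcg} to split the dimension; then dispatch the unipotent bound via Remark~\ref{rem:closu}; then handle the semisimple bound via Remark~\ref{rem:closs} and the torus dimension count; and finally reconcile the $n+1$ naive sum down to $n$ through the eigenvalue-coincidence argument in the mixed case. The torus dimension estimate and the eigenvalue-coincidence step are the substantive parts; everything else is assembling results already established in the excerpt.
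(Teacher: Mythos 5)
Your overall strategy is the same as the paper's: split $\dim \langle \phi \rangle$ as $\dim \langle \phi_s \rangle + \dim \langle \phi_u \rangle$ via Lemma \ref{lem:clcg}, bound the unipotent contribution by $1$ and the semisimple one by $n$, and then improve the naive total $n+1$ to $n$ by showing the two parts cannot both be maximal. The paper handles this last point in the contrapositive direction: if $\dim \langle \phi_s \rangle = n$ then $\overline{\langle \phi_s \rangle}$ is the full diagonal torus, $\phi_u$ commutes with all of it (since $\overline{\langle \phi \rangle}$ is abelian), hence $\phi_u$ is linear diagonal, hence semisimple as well as unipotent, hence the identity.

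The gap is in your justification of that last step. You claim that $\phi_u \neq Id$ forces ``indices with coincident eigenvalues of $D_{0}\phi$.'' That is only true when $D_{0}\phi_u \neq Id$, i.e.\ when the linear part of $\phi$ is itself non-semisimple. For a formal diffeomorphism the unipotent part can be nonlinear while its linear part is trivial: take $\phi(x,y)=(\lambda x, \lambda^{2}y + x^{2})$ with $\lambda$ not a root of unity; then $\phi_s = \mathrm{diag}(\lambda,\lambda^{2})$, $\phi_u(x,y)=(x, y+\lambda^{-2}x^{2}) \neq Id$, yet the eigenvalues $\lambda, \lambda^{2}$ of $D_{0}\phi$ are distinct, so your stated mechanism produces no relation. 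The conclusion $\dim \langle \phi_s \rangle \leq n-1$ is still true here, but for a different reason: the nonzero quadratic coefficient of $\phi_u$ together with $\phi_u \circ \phi_s = \phi_s \circ \phi_u$ forces the resonance $\lambda_1^{2}\lambda_2^{-1}=1$. The correct general argument is exactly this. Writing $\phi_s = \mathrm{diag}(\lambda_1,\ldots,\lambda_n)$ and $\phi_u = (g_1,\ldots,g_n)$ in the diagonalizing coordinates, commutation gives $\lambda^{\underline{m}} = \lambda_j$ for every monomial $z^{\underline{m}}$ occurring in $g_j$ with nonzero coefficient; if $\phi_u \neq Id$ some such $\underline{m}$ differs from the $j$-th standard basis vector $e_j$, so $\chi_{\underline{m}-e_j}$ is a nontrivial character annihilating $(\lambda_1,\ldots,\lambda_n)$, and by Remark \ref{rem:closs} the group $\overline{\langle \phi_s \rangle}$ lies in $\ker \chi_{\underline{m}-e_j}$, which has dimension $n-1$. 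With this repair your resonance-side argument is correct and is essentially the dual of the paper's commutant argument; as written, however, the step fails for any $\phi$ whose unipotent part is purely of higher order.
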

\begin{proof}
We have
%
\[ \dim {\langle \phi \rangle}_{k}  =
\dim  {\langle \phi_{s} \rangle}_{k} + \dim  {\langle \phi_{u} \rangle}_{k} \]
for any $k \in {\mathbb N}$ by Lemma \ref{lem:clcg}.
It suffices to show that $\langle \phi_{s} \rangle$ and $\langle \phi_{u} \rangle$
are finite dimensional.

Since $\phi_{s}$ is semisimple, we can suppose up to a formal change of coordinates
that
$\overline{\langle \phi_{s} \rangle}$ is contained in the group of
diagonal matrices (Remark \ref{rem:closs}). We deduce $\dim \langle \phi_{s} \rangle \leq n$.

Since $\phi_{u}$ is unipotent, we obtain
$\overline{\langle \phi_{u} \rangle}=
\{ \phi_{u}^{t} : t \in {\mathbb C} \}$
and in particular
${\langle \phi_{u} \rangle}_{k}= \{ \mathrm{exp} (t \log \phi_{u,k}) : t \in {\mathbb C} \}$
for any $k \in {\mathbb N}$ by Remark \ref{rem:closu}. We deduce
$\dim \langle \phi_{u} \rangle = 1$ if $\phi_{u} \not \equiv Id$ and
$\dim \langle Id \rangle = 0$.
We get
\[ \dim \langle \phi \rangle = \dim\langle \phi_{s} \rangle + \dim \langle \phi_{u} \rangle \leq n+1. \]
In order to show $\dim \langle \phi \rangle \leq n$
let us prove that $\dim \langle \phi_{s} \rangle =n$
implies $\phi_{u} \equiv Id$.
Indeed in such a case $\overline{\langle \phi_{s} \rangle}$ is the linear group of diagonal
trasformations. Every element of such group commutes with $\phi_{u}$
by Lemma \ref{lem:clcg} and hence $\phi_{u}$ is linear and diagonal.
Since $\phi_{u}$ is both semisimple and unipotent, it is equal to the identity map.
\end{proof}
The finite dimension of one-parameter groups can be obtained by reduction to the cyclic case.
More precisely, we will use that every one-parameter group $G$ of formal diffeomorphisms has
cyclic subgroups whose Zariski-closure coincides with $\overline{G}$.
\begin{pro}
\label{pro:fginfp}
Let $X \in \hat{\mathfrak X} \cn{n}$. Then there exists $t_0 \in {\mathbb R}$
such that
$\{ \mathrm{exp} (t X) : t \in {\mathbb C} \} \subset \overline{\langle \mathrm{exp} (t_0 X) \rangle}$.
In particular we obtain 
\[ \dim \{ \mathrm{exp} (t X) : t \in {\mathbb C} \} \leq n. \]
\end{pro}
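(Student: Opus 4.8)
The plan is to split $X$ via its additive Jordan decomposition and treat the two factors separately. Write $X = X_s + X_N$ with $[X_s,X_N]=0$, $X_s$ semisimple and $X_N$ nilpotent. For every $t\in\mathbb{C}$ we then have $\mathrm{exp}(tX)=\mathrm{exp}(tX_s)\circ\mathrm{exp}(tX_N)$, and since $\mathrm{exp}(tX_s)$ is semisimple and $\mathrm{exp}(tX_N)$ is unipotent this is the multiplicative Jordan decomposition of $\mathrm{exp}(tX)$; in particular $\mathrm{exp}(t_0X)_s=\mathrm{exp}(t_0X_s)$ and $\mathrm{exp}(t_0X)_u=\mathrm{exp}(t_0X_N)$. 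By Lemma \ref{lem:clcg} the group $\overline{\langle\mathrm{exp}(t_0X)\rangle}$ contains both $\overline{\langle\mathrm{exp}(t_0X_s)\rangle}$ and $\overline{\langle\mathrm{exp}(t_0X_N)\rangle}$, hence all their products. So it suffices to produce a single $t_0\in\mathbb{R}\setminus\{0\}$ with $\{\mathrm{exp}(tX_s):t\in\mathbb{C}\}\subseteq\overline{\langle\mathrm{exp}(t_0X_s)\rangle}$ and $\{\mathrm{exp}(tX_N):t\in\mathbb{C}\}\subseteq\overline{\langle\mathrm{exp}(t_0X_N)\rangle}$; then $\mathrm{exp}(tX)=\mathrm{exp}(tX_s)\circ\mathrm{exp}(tX_N)\in\overline{\langle\mathrm{exp}(t_0X)\rangle}$ for all $t$.

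The nilpotent factor is handled by any nonzero $t_0$. Since $X_N$ is nilpotent, $\mathrm{exp}(t_0X_N)$ is unipotent with infinitesimal generator $t_0X_N$, so by Remark \ref{rem:closu} we get $\overline{\langle\mathrm{exp}(t_0X_N)\rangle}=\{\mathrm{exp}(t_0X_N)^t:t\in\mathbb{C}\}=\{\mathrm{exp}((tt_0)X_N):t\in\mathbb{C}\}$, and as $t\mapsto tt_0$ is a bijection of $\mathbb{C}$ for $t_0\neq0$ this is exactly the whole one-parameter group $\{\mathrm{exp}(sX_N):s\in\mathbb{C}\}$.

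The semisimple factor is the crux and is where the real parameter $t_0$ must be chosen with care. After a formal change of coordinates we may assume $X_s=\sum_{i=1}^n\lambda_iz_i\partial/\partial z_i$, so that $\mathrm{exp}(tX_s)=\mathrm{diag}(e^{\lambda_1t},\ldots,e^{\lambda_nt})$ for every $t$. By Remark \ref{rem:closs} the closure $\overline{\langle\mathrm{exp}(t_0X_s)\rangle}$ is cut out by the characters $\chi_{\underline{a}}$ vanishing on $(e^{\lambda_1t_0},\ldots,e^{\lambda_nt_0})$, that is by the lattice $\Lambda_{t_0}=\{\underline{a}\in\mathbb{Z}^n:t_0\sum_i a_i\lambda_i\in2\pi i\mathbb{Z}\}$, whereas the closure of the full one-parameter group is cut out by $\Lambda_0=\{\underline{a}\in\mathbb{Z}^n:\sum_i a_i\lambda_i=0\}$. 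One always has $\Lambda_0\subseteq\Lambda_{t_0}$, and the desired inclusion holds exactly when $\Lambda_{t_0}=\Lambda_0$. Now $\Lambda_{t_0}\neq\Lambda_0$ forces $t_0=2\pi i\,m/v$ for some nonzero $m\in\mathbb{Z}$ and some nonzero $v$ in the countable subgroup of $(\mathbb{C},+)$ generated by $\lambda_1,\ldots,\lambda_n$; the set of such exceptional values of $t_0$ is therefore countable. The main obstacle is precisely this point: verifying that only countably many $t_0$ enlarge the character lattice. Granting it, since $\mathbb{R}$ is uncountable we may choose $t_0\in\mathbb{R}\setminus\{0\}$ avoiding this countable set, and the same $t_0$ also works for the nilpotent factor.

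With such a $t_0$ fixed, the inclusion $\{\mathrm{exp}(tX):t\in\mathbb{C}\}\subseteq\overline{\langle\mathrm{exp}(t_0X)\rangle}$ follows as in the first paragraph. Finally, $\overline{\langle\mathrm{exp}(t_0X)\rangle}$ is pro-algebraic and contains the one-parameter group, hence contains its Zariski-closure; therefore $\dim\{\mathrm{exp}(tX):t\in\mathbb{C}\}\leq\dim\langle\mathrm{exp}(t_0X)\rangle\leq n$, the last inequality being Proposition \ref{pro:cloif}.
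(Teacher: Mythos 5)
Your proof is correct and follows essentially the same route as the paper: split $X$ by its additive Jordan decomposition, handle the nilpotent part via Remark \ref{rem:closu}, choose a real $t_0$ outside the countable set of values that enlarge the character lattice of the semisimple part (the paper's sets $C_{\underline{a}}$), and conclude with Proposition \ref{pro:cloif}. The step you flag as the ``main obstacle'' is in fact fully justified by your own observation that any exceptional $t_0$ must equal $2\pi i m/v$ with $m\in{\mathbb Z}\setminus\{0\}$ and $v$ in the countable group generated by the eigenvalues.
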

\begin{proof}
Consider the Jordan decomposition $X = X_{s} + X_{N}$ as
a sum of commuting formal vector fields such that $X_{s}$ is formally
diagonalizable and $X_{N}$ is nilpotent.
We can suppose $X_{s} = \sum_{k=1}^{n} \mu_{k} z_{k} \partial / \partial z_{k}$
where $\mu_{1}, \ldots, \mu_{n} \in {\mathbb C}$ up to a formal
change of coordinates.
We denote 
\[ D = \{ \underline{a}  \in {\mathbb Z}^{n} : \sum_{k=1}^{n} a_{k} \mu_{k} \neq 0 \} \] 
where $\underline{a} =(a_{1},\ldots,a_{n})$. 
Given $\underline{a} \in D$ we define
\[ C_{\underline{a}} =
\{ t \in {\mathbb C} : t  \sum_{k=1}^{n} a_{k} \mu_{k} \in 2 \pi i {\mathbb Z} \}; \]
it is a countable set. Consider an element
$t_{0}$ in the complementary of the countable set
$\cup_{ \underline{a} \in D}  C_{\underline{a}}$ in ${\mathbb R}^{*}$.
We define
\[ \eta  (z_{1},\ldots,z_{n})= \mathrm{exp} (t_{0} X_{s})=
(e^{t_{0} \mu_{1}} z_{1}, \ldots, e^{t_{0} \mu_{n}} z_{n}) \in \diffh{}{n} . \]
The group ${\mathcal C}$ of characters
$\chi_{\underline{a}}(w_{1},\ldots,w_{n}) = w_{1}^{a_{1}} \ldots w_{n}^{a_{n}}$
with $\underline{a} \in {\mathbb Z}^{n}$ defined by
\[ {\mathcal C} =
\{ \chi_{\underline{a}} : (e^{t_{0} \mu_{1}}, \ldots, e^{t_{0} \mu_{n}}) \in \ker (\chi_{\underline{a}}) \} \]
satisfies
${\mathcal C} = \{ \chi_{\underline{a}} : \sum_{k=1}^{n} a_{k} \mu_{k} =0 \}$ by our choice of
$t_{0}$. The group $\overline{\langle \eta \rangle}$
consists of the linear diagonal maps
$\mathrm{diag} (\lambda_1, \ldots, \lambda_n)$ such that
$\chi_{\underline{a}} (\lambda_1, \ldots, \lambda_n) = 1$ for any
$\chi_{\underline{a}} \in {\mathcal C}$.
Since $(e^{t \mu_{1}}, \ldots, e^{t \mu_{n}}) \in  \ker (\chi_{\underline{a}})$
for all $\chi_{\underline{a}} \in {\mathcal C}$
and $t \in {\mathbb C}$, the one parameter group
$\{ \mathrm{exp} (t X_{s}) : t \in {\mathbb C} \}$ is contained in
$\overline{\langle \eta \rangle}$.
We denote $\rho = \mathrm{exp} (t_{0} X_{N})$, it
satisfies 
\[ \overline{\langle \rho \rangle} = \{ \mathrm{exp} (t X_{N}) : t \in {\mathbb C}\} \]
by Remark \ref{rem:closu}.  We denote
$\phi = \mathrm{exp} (t_{0} X)= \eta \circ \rho$.
Since
$\overline{\langle \phi \rangle}$ contains
\[ \overline{\langle \phi_{s} \rangle} \cup \overline{\langle \phi_{u} \rangle} =
\overline{\langle \eta \rangle} \cup \overline{\langle \rho \rangle}, \]
it also contains $\{ \mathrm{exp} (t X) : t \in {\mathbb C} \}$. Hence
$\dim \{ \mathrm{exp} (t X) : t \in {\mathbb C} \} \leq n$ is a consequence of
Proposition \ref{pro:cloif}.
\end{proof}
The finite dimensional nature of a subgroup of $\diffh{}{n}$ is related to properties of finite
decomposition
of the elements of the group in terms of generators. The following results illustrates
how a finite writing property allows to decide whether or not $\dim G < \infty$ by solving
simpler problems.
\begin{pro}
\label{pro:fw}
Let $H_{1}, \ldots, H_{m}$ and $G$ be subgroups of $\diffh{}{n}$.
Suppose $G \subset H_{1} \ldots H_{m}$.
Then we have
\[ \dim G \leq \sum_{1 \leq j \leq m} \dim H_{j}. \]
Moreover given $1 \leq j \leq m$ such that $H_{j} \subset G$ we obtain
\[ \dim G/H_{j} \leq  \sum_{k \neq j} \dim H_{k}. \]
\end{pro}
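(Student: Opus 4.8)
The plan is to reduce both inequalities to a single statement at each finite jet level, namely that $\dim G_{k} \leq \sum_{l=1}^{m} \dim H_{l,k}$ for every $k \in {\mathbb N}$, and then to pass to the limit $k \to \infty$ via Definition \ref{def:dim}. First I would observe that the truncation map $\pi_{k} : \diffh{}{n} \to D_{k}$ is a homomorphism of groups, so applying it to the inclusion $G \subset H_{1} \ldots H_{m}$ yields $G_{k}^{*} \subset H_{1,k}^{*} \ldots H_{m,k}^{*}$ as subsets of $D_{k}$. Since $H_{l,k}^{*} \subset H_{l,k}$ for each $l$, this gives $G_{k}^{*} \subset H_{1,k} \ldots H_{m,k}$, and taking Zariski closures produces $G_{k} = \overline{G_{k}^{*}} \subset \overline{H_{1,k} \ldots H_{m,k}}$.

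Next I would bound the dimension of the right-hand side. The product set $H_{1,k} \ldots H_{m,k}$ is exactly the image of the multiplication morphism
\[ \mu : H_{1,k} \times \ldots \times H_{m,k} \to D_{k}, \qquad (\alpha_{1}, \ldots, \alpha_{m}) \mapsto \alpha_{1} \ldots \alpha_{m}, \]
which is a morphism of algebraic varieties because the multiplication of the algebraic group $D_{k}$ is. Hence $\mu(H_{1,k} \times \ldots \times H_{m,k})$ is a constructible set whose closure has dimension at most that of the source (cf. \cite[2.1 (f), p. 57]{Borel}), so
\[ \dim \overline{H_{1,k} \ldots H_{m,k}} = \dim \mu(H_{1,k} \times \ldots \times H_{m,k}) \leq \dim (H_{1,k} \times \ldots \times H_{m,k}) = \sum_{l=1}^{m} \dim H_{l,k}. \]
Combined with the inclusion above, this gives the per-level bound $\dim G_{k} \leq \sum_{l=1}^{m} \dim H_{l,k}$.

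Finally I would take limits. For the first inequality, each sequence $(\dim H_{l,k})_{k}$ is increasing with limit $\dim H_{l}$ (Lemma \ref{lem:dc} applied to $\{Id\} \subset H_{l}$), so $\dim G_{k} \leq \sum_{l} \dim H_{l,k} \leq \sum_{l} \dim H_{l}$ for every $k$, whence $\dim G = \lim_{k} \dim G_{k} \leq \sum_{l} \dim H_{l}$. For the second inequality, the hypothesis $H_{j} \subset G$ makes $\dim G/H_{j}$ well defined; subtracting $\dim H_{j,k}$ from the per-level bound gives $\dim G_{k} - \dim H_{j,k} \leq \sum_{l \neq j} \dim H_{l,k}$, and letting $k \to \infty$ (the right side being a finite sum of convergent sequences, its limit is the sum of the limits) yields $\dim G/H_{j} \leq \sum_{l \neq j} \dim H_{l}$.

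The argument is essentially dimension counting, so the only point requiring genuine care is the passage to algebraic geometry at each level: recognizing $H_{1,k} \ldots H_{m,k}$ as the image of a morphism of varieties and invoking the bound that the closure of such an image has dimension no larger than the source. Everything else is bookkeeping with the closures and the monotone limits; in particular the reasoning is uniform in $k$ and accommodates infinite dimensions in the extended sense without modification.
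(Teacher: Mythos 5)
Your proposal is correct and follows essentially the same route as the paper: at each jet level $k$ you view the product set as the image of the algebraic multiplication map $H_{1,k}\times\cdots\times H_{m,k}\to D_{k}$ (the paper's $\tau_{k}$, written with the factors in the opposite order to account for the pullback convention defining $\pi_{k}$, which is immaterial for the dimension count), bound the constructible image by the dimension of the source, and pass to the limit via Definition \ref{def:dim}. No gaps.
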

We denote $H_{1} \ldots H_{m} =
\{ h_{1} \circ \ldots \circ h_{m}: h_{j} \in H_{j} \  \forall 1 \leq j \leq m\}$.
\begin{proof}
Fix $k \in {\mathbb N}$. Consider the map
\[
\begin{array}{ccccccccccc}
\tau_k & : & H_{m,k} & \times &  H_{m-1,k} & \times & \ldots & \times & H_{1,k}
&  \to & D_{k} \\
& & (B_{m} & , & B_{m-1} & , & \ldots & , & B_{1}) & \mapsto & B_{m} B_{m-1} \ldots B_{1}.
\end{array}
\]
The map $\tau_k$ is algebraic even if it is not in general a morphism of groups.
As a consequence $\mathrm{Im} (\tau_k)$ is a constructible set whose dimension is less or
equal than $\sum_{1 \leq j \leq m} \dim H_{j,k}$. The Zariski-closure of
$\mathrm{Im} (\tau_k)$ is an algebraic set containing $G_{k}^{*}$ and then $G_{k}$.
We deduce $\dim G_{k} \leq \sum_{1 \leq j \leq m} \dim H_{j,k}$ for any $k \in {\mathbb N}$.
The results are a direct consequence of Definition \ref{def:dim}.
\end{proof}
\begin{teo}
\label{teo:fw}
Let $G$ be a subgroup of $\diffh{}{n}$.
Suppose there exist $\psi_{1}, \ldots, \psi_{l} \in \diffh{}{n}$,
$X_1, \ldots, X_m \in \hat{\mathfrak X} \cn{n}$ and $p \in {\mathbb N}$ such that
every $\phi \in G$ is of the form
$\phi = \phi_{1} \circ \ldots \circ \phi_{q}$ where
$q \leq p$ and
$\phi_{r} \in \cup_{j=1}^{l} \langle \psi_{j} \rangle \cup \cup_{k=1}^{m}
\{ \mathrm{exp} (t X_{j}) : t \in {\mathbb C} \}$ for any $1 \leq r \leq q$.
Then $G$ is finite dimensional.
\end{teo}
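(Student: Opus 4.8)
The plan is to establish the uniform bound $\dim G_k \le pn$ and then pass to the limit. Collect the finitely many given subgroups into a single list: write $K_1 = \langle \psi_1 \rangle, \ldots, K_l = \langle \psi_l \rangle$ and $K_{l+1} = \{ \mathrm{exp}(tX_1) : t \in {\mathbb C}\}, \ldots, K_{l+m} = \{\mathrm{exp}(tX_m) : t \in {\mathbb C}\}$, so that $N := l+m$ subgroups are involved. By Proposition \ref{pro:cloif} each cyclic group $K_j = \langle \psi_j \rangle$ satisfies $\dim K_j \le n$, and by Proposition \ref{pro:fginfp} each one-parameter group $K_{l+k}$ also satisfies $\dim K_{l+k} \le n$. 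Since $(\dim (K_i)_k)_{k \ge 1}$ is nondecreasing by Lemma \ref{lem:dc} (applied with the trivial subgroup), this yields the uniform estimate $\dim (K_i)_k \le \dim K_i \le n$ for every $1 \le i \le N$ and every $k \in {\mathbb N}$.

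The hypothesis says precisely that each $\phi \in G$ lies in some product $K_{i_1} \cdots K_{i_q}$ with $q \le p$ and $1 \le i_r \le N$. Since there are only finitely many words $W = (i_1, \ldots, i_q)$ of length $q \le p$ over the alphabet $\{1, \ldots, N\}$, the group $G$ is contained in the \emph{finite} union $\bigcup_W P_W$ where $P_W := K_{i_1} \cdots K_{i_q}$. Fix $k \in {\mathbb N}$; applying $\pi_k$ gives $G_k^* = \pi_k(G) \subset \bigcup_W \pi_k(P_W)$. Exactly as in the proof of Proposition \ref{pro:fw}, for each $W$ the set $\pi_k(P_W)$ is contained in the image of the algebraic product map on $(K_{i_q})_k \times \cdots \times (K_{i_1})_k$ into $D_k$, hence in a constructible subset of $D_k$ whose dimension is at most $\sum_{r=1}^{q} \dim (K_{i_r})_k \le qn \le pn$.

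Thus $G_k^*$ is contained in a finite union of constructible sets, each of dimension at most $pn$. Its Zariski-closure $G_k$ is then contained in the Zariski-closure of this finite union, which is the union of the closures of the individual pieces; as the dimension of a finite union equals the maximum of the dimensions of its members, we obtain $\dim G_k \le pn$ for every $k \in {\mathbb N}$. Finally $(\dim G_k)_{k \ge 1}$ is nondecreasing by Lemma \ref{lem:dc}, so $\dim G = \lim_{k \to \infty} \dim G_k \le pn < \infty$ by Definition \ref{def:dim}, and $G$ is finite dimensional. The only delicate point is that $G$ need not sit inside a single product of the $K_i$ but merely inside a finite union of such products; this is harmless precisely because the word length is uniformly bounded by $p$ and the alphabet $\{K_1, \ldots, K_N\}$ is finite, so the union stays finite and the dimension bound survives passage to the Zariski-closure.
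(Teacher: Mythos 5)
Your proof is correct and follows essentially the same route as the paper, which simply invokes Propositions \ref{pro:fw}, \ref{pro:cloif} and \ref{pro:fginfp}. The only difference is cosmetic: the ``delicate point'' you flag can be dispatched by noting that since every $K_i$ contains the identity, $G$ sits inside the single product $(K_1 \cdots K_N)^p$ of $pN$ subgroups, so Proposition \ref{pro:fw} applies verbatim; your finite-union bookkeeping, re-running the constructible-image dimension count word by word and taking a maximum, reaches the same uniform bound.
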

\begin{proof}
The result is a straightforward consequence of Propositions \ref{pro:fw},
\ref{pro:cloif} and \ref{pro:fginfp}.
\end{proof}
\begin{rem}
\label{rem:fw}
The elements of a connected Lie group admit a decomposition in words of uniform length
whose letters are taken from the elements of  a finite set of cyclic and one-parameter subgroups.
This can be seen as a consequence of the Iwasawa-Malcev decomposition of
a connected Lie group (cf. \cite[Theorem 6]{Iwasawa:top} \cite{Malcev:large}).
Such a property generalizes inmediately to Lie groups with finitely
many connected components and then to algebraic matrix groups.
Since every algebraic group of formal diffeomorphisms is isomorphic
to an algebraic matrix group,  finite dimension can be interpreted
as a finite decomposition property.
%
\end{rem}
\subsection{Extensions of groups of formal diffeomorphisms}
In this section we study extensions that are always finite dimensional.
First, we deal with  finite extensions.
\begin{lem}
\label{lem:fis}
Let $G$ be a subgroup of $\diffh{}{n}$. Consider a finite index subgroup $H$ of
$G$. Then we obtain $\dim G/H =0$.
\end{lem}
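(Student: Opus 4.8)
The plan is to reduce everything to the level of $k$-jets and show directly that $\dim G_{k} - \dim H_{k} = 0$ for every $k \in {\mathbb N}$; the conclusion $\dim G/H = 0$ then follows at once from Definition \ref{def:dim}, since the limit defining $\dim G/H$ is a limit of zeros.

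First I would record that $H_{k}^{*}$ is a finite index subgroup of $G_{k}^{*}$ for every $k$. This is exactly the observation made at the beginning of the proof of Lemma \ref{lem:profis}: the truncation $\pi_{k}: \diffh{}{n} \to D_{k}$ is a homomorphism of groups carrying $G$ onto $G_{k}^{*}$ and $H$ onto $H_{k}^{*}$, so a finite family of left coset representatives for $H$ in $G$ maps to a covering of $G_{k}^{*}$ by finitely many left cosets of $H_{k}^{*}$. Hence $[G_{k}^{*}:H_{k}^{*}] \leq [G:H] < \infty$.

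Next I would pass to Zariski closures. Writing $G_{k}^{*} = \bigcup_{i=1}^{m} (g_{i})_{k} H_{k}^{*}$ as a finite union of left cosets and using that left translation by $(g_{i})_{k}$ is a homeomorphism of $D_{k}$ in the Zariski topology, the closure of each coset is $(g_{i})_{k} \overline{H_{k}^{*}} = (g_{i})_{k} H_{k}$. Since Zariski closure commutes with finite unions, $G_{k} = \overline{G_{k}^{*}} = \bigcup_{i=1}^{m} (g_{i})_{k} H_{k}$, so $H_{k}$ is a closed subgroup of finite index in the algebraic matrix group $G_{k}$.

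Finally, a closed finite index subgroup of an algebraic group contains its identity component: the intersection $H_{k} \cap G_{k,0}$ is a closed finite index subgroup of the connected group $G_{k,0}$, hence equals $G_{k,0}$ (an open subgroup of a connected group is the whole group), so $G_{k,0} \subseteq H_{k}$. Therefore $\dim H_{k} \geq \dim G_{k,0} = \dim G_{k}$, while $\dim H_{k} \leq \dim G_{k}$ because $H_{k} \subseteq G_{k}$; thus $\dim G_{k} = \dim H_{k}$ for all $k$ and $\dim G/H = 0$. The only delicate point is the transition to closures, which rests on the elementary facts that translations are Zariski homeomorphisms and that closure commutes with finite unions; once $H_{k}$ is known to be a closed finite index subgroup of $G_{k}$, the equality of dimensions is automatic since the identity component controls dimension.
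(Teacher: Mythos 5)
Your proof is correct, but it takes a different route from the paper's. The paper first replaces $H$ by its normal core $J$ (a finite index normal subgroup of $G$ contained in $H$) and then invokes the already-established fact that for a finite index \emph{normal} subgroup the truncations $J_{k}$ are finite index normal subgroups of $G_{k}$ (Lemma \ref{lem:lficl}, whose proof goes through Lemmas \ref{lem:fiscg} and \ref{lem:profis} and the pro-algebraic machinery); equality of dimensions and $\dim G/H \leq \dim G/J = 0$ follow. You instead work directly with $H$, with no normality assumption, and argue entirely at the level of $k$-jets: the image $H_{k}^{*}$ has finite index in $G_{k}^{*}$, the coset decomposition passes to Zariski closures because translations are homeomorphisms and closure commutes with finite unions, so $H_{k}$ is a closed finite index subgroup of $G_{k}$ and therefore contains $G_{k,0}$, forcing $\dim H_{k} = \dim G_{k}$. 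Each step of yours is sound (in particular the identification $G_{k} = \bigcup_{i}(g_{i})_{k}H_{k}$ holds because the right-hand side is a closed set containing $G_{k}^{*}$ and is contained in the group $G_{k}$). What your argument buys is self-containedness and the avoidance of the normal core and of Lemma \ref{lem:lficl}; what the paper's version buys is brevity, since it reuses lemmas that are needed elsewhere anyway. Note also that your intermediate conclusion, namely that $H_{k}$ has finite index in $G_{k}$ even when $H$ is not normal, is slightly more information than the paper extracts at this point.
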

\begin{proof}
There exists a subgroup $J$ of $H$ such that $J$ is a finite index normal
subgroup of $G$. Since $J_{k}$ is a finite index normal subgroup of $G_{k}$
by Lemma \ref{lem:clnin}, we obtain $\dim J_{k} = \dim G_{k}$ for any $k \in {\mathbb N}$.
We deduce $\dim G/J=0$ and then $\dim G/H=0$ since $J \subset H$.
\end{proof}
\begin{cor}
\label{cor:fis}
Let $G$ be a subgroup of $\diffh{}{n}$.
Consider subgroups $H,K$ of $G$ such that $H \subset K$ and
$K$ is a a finite index subgroup of $G$. Then
$\dim G/H = \dim K/H$.
\end{cor}
\begin{proof}
We have $\dim G/H = \dim G/K + \dim K/H = \dim K/H$ by Lemma \ref{lem:fis}.
%
%
\end{proof}
Next, we will consider finitely generated abelian extensions.
First let us discuss the finite generation hypothesis.
A positive dimensional connected Lie group is not finitely generated since it is not countable.
Anyway, it is finitely generated by a finite number of one-parameter groups
whose infinitesimal generators
are the elements of a basis of the Lie algebra. This idea inspires an
alternative definition of finitely generated subgroup of $\diffh{}{n}$   
in which generators can be elements of the group
or one-parameter flows.
\begin{defi}
Let $H$ be a subgroup of a subgroup $G$ of $\diffh{}{n}$.
We say that $G$ is {\it finitely generated in the extended sense} over $H$
if there exist elements $\phi_{1},\ldots,\phi_{l} \in G$
and formal vector fields $X_{1}, \ldots, X_{m} \in \hat{\mathfrak X} \cn{n}$ such
that
\[ G = \langle H, \phi_{1}, \ldots, \phi_{l},
\cup_{j=1}^{m} \{ \mathrm{exp} (t X_{j}) : t \in {\mathbb R} \} \rangle. \]
We say that $G/H$ is finitely generated in the extended sense if $H$ is normal
in $G$. If $G$ is of the form $\langle H, \phi_{1}, \ldots, \phi_{l} \rangle$ we say that
$G$ is finitely generated over $H$.
\end{defi}
We are interested in calculating the dimension of subgroups of $\diffh{}{n}$.
In this context the
new definition of finitely generated group
can be reduced to the usual one via the next lemma.
\begin{lem}
\label{lem:fginf}
Let $H$ be a subgroup of a subgroup $G$ of $\diffh{}{n}$.
Suppose that $G$ is finitely generated over $H$ in the extended sense.
Then there exists a subgroup $G_{+}$ of $\diffh{}{n}$ such that
$H \subset G_{+} \subset G$,
$\overline{G}_{+} = \overline{G}$ and
$G_{+}$ is finitely generated over $H$.
\end{lem}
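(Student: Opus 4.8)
The plan is to replace each one-parameter generator of $G$ by a single cyclic generator, using Proposition \ref{pro:fginfp} to guarantee that the Zariski-closure absorbs the whole one-parameter flow. Write $G = \langle H, \phi_{1}, \ldots, \phi_{l}, \cup_{j=1}^{m} \{ \mathrm{exp} (t X_{j}) : t \in {\mathbb R} \} \rangle$ as in the hypothesis. For each $1 \leq j \leq m$, Proposition \ref{pro:fginfp} provides $t_{j} \in {\mathbb R}$ such that
\[ \{ \mathrm{exp} (t X_{j}) : t \in {\mathbb C} \} \subset \overline{\langle \mathrm{exp} (t_{j} X_{j}) \rangle}. \]
I would then set $\psi_{j} = \mathrm{exp} (t_{j} X_{j})$ for $1 \leq j \leq m$ and define $G_{+} = \langle H, \phi_{1}, \ldots, \phi_{l}, \psi_{1}, \ldots, \psi_{m} \rangle$.

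First I would verify the easy containments. Since $t_{j} \in {\mathbb R}$, each $\psi_{j}$ lies in the generating set $\{ \mathrm{exp} (t X_{j}) : t \in {\mathbb R} \}$ of $G$, so every generator of $G_{+}$ belongs to $G$ and hence $H \subset G_{+} \subset G$. By construction $G_{+}$ is generated over $H$ by the finitely many elements $\phi_{1}, \ldots, \phi_{l}, \psi_{1}, \ldots, \psi_{m}$, so it is finitely generated over $H$ in the usual sense.

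Next I would prove $\overline{G}_{+} = \overline{G}$. The inclusion $\overline{G}_{+} \subset \overline{G}$ is immediate from $G_{+} \subset G$. For the reverse inclusion, note that $\overline{G}_{+}$ is pro-algebraic and is the minimal pro-algebraic group containing $G_{+}$ (Remark \ref{rem:pik2}), so it suffices to show $G \subset \overline{G}_{+}$, since $\overline{G}$ is in turn the minimal pro-algebraic group containing $G$. The generators $H, \phi_{1}, \ldots, \phi_{l}$ already lie in $G_{+} \subset \overline{G}_{+}$. For the one-parameter generators, since $\psi_{j} \in G_{+}$ we have $\overline{\langle \psi_{j} \rangle} \subset \overline{G}_{+}$, and hence by the choice of $t_{j}$ the whole flow satisfies
\[ \{ \mathrm{exp} (t X_{j}) : t \in {\mathbb C} \} \subset \overline{\langle \psi_{j} \rangle} \subset \overline{G}_{+}; \]
in particular the real flow $\{ \mathrm{exp} (t X_{j}) : t \in {\mathbb R} \}$ is contained in $\overline{G}_{+}$. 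Thus every generator of $G$ lies in $\overline{G}_{+}$, giving $G \subset \overline{G}_{+}$ and therefore $\overline{G} \subset \overline{G}_{+}$. Combining the two inclusions yields $\overline{G}_{+} = \overline{G}$.

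I do not expect a serious obstacle here: Proposition \ref{pro:fginfp} does all the work of turning a one-parameter flow into a single cyclic generator, and the remainder is bookkeeping with the minimality of the Zariski-closure. The only point requiring a moment of care is that each $\psi_{j}$ must genuinely lie in $G$, so that $G_{+} \subset G$ holds; this is precisely why it is essential that Proposition \ref{pro:fginfp} produces a \emph{real} parameter $t_{j} \in {\mathbb R}$ rather than an arbitrary complex one.
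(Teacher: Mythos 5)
Your proof is correct and follows essentially the same route as the paper: both replace each one-parameter generator $\{\mathrm{exp}(tX_j): t\in{\mathbb R}\}$ by the single element $\psi_j=\mathrm{exp}(t_j X_j)$ supplied by Proposition \ref{pro:fginfp}, define $G_+=\langle H,\phi_1,\ldots,\phi_l,\psi_1,\ldots,\psi_m\rangle$, and conclude $\overline{G}_+=\overline{G}$ from $G\subset\overline{G}_+$ together with $G_+\subset G$. Your extra remark on the importance of $t_j$ being real is a correct observation that the paper leaves implicit.
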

\begin{proof}
Suppose $G = \langle H, \phi_{1}, \ldots, \phi_{l},
\cup_{j=1}^{m} \{ \mathrm{exp} (t X_{j}) : t \in {\mathbb R} \} \rangle$.
Given $1 \leq j \leq m$ there exists $t_{j} \in {\mathbb R}$ such that
$\psi_{j} :=  \mathrm{exp} (t_{j} X_{j})$ satisfies
$\{ \mathrm{exp} (t X_{j}) : t \in {\mathbb C}\} \subset \overline{\langle \psi_{j} \rangle}$
by Proposition \ref{pro:fginfp}.  We define
\[ G_{+} = \langle H, \phi_{1}, \ldots, \phi_{l}, \psi_{1}, \ldots, \psi_{m} \rangle . \]
It is clear that $H \subset G_{+} \subset G$. The choice of $\psi_j$ for $1 \leq j \leq m$
implies $G \subset \overline{G}_{+}$.
Since $G_{+} \subset G$, we obtain
$\overline{G} = \overline{G}_{+}$.
\end{proof}
\begin{pro}
\label{pro:fgaifd}
Let $H$ be a normal subgroup of a subgroup $G$ of $\diffh{}{n}$.
Suppose $G/H$ is abelian and $G/H$ is finitely generated in the
extended sense.
Then $G/H$ is finite dimensional.
\end{pro}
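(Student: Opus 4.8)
The plan is to reduce to genuine finite generation and then use commutativity of the quotient to express $G$ as a bounded product of cyclic subgroups, after which Propositions \ref{pro:fw} and \ref{pro:cloif} close the argument.

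First I would strip off the one-parameter generators using Lemma \ref{lem:fginf}. Since $G$ is finitely generated over $H$ in the extended sense, there is a subgroup $G_{+}$ with $H \subset G_{+} \subset G$, $\overline{G}_{+} = \overline{G}$, and $G_{+} = \langle H, \phi_{1}, \ldots, \phi_{l} \rangle$ for some $\phi_{1}, \ldots, \phi_{l} \in G_{+}$. Applying $\pi_{k}$ to the equality $\overline{G}_{+} = \overline{G}$ and using surjectivity of $\pi_{k}$ on the Zariski-closures (Remark \ref{rem:pik2}) gives $(G_{+})_{k} = G_{k}$ for every $k \in {\mathbb N}$, hence $\dim G_{+}/H = \dim G/H$ by Definition \ref{def:dim}. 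Moreover $H$ is normal in $G_{+}$, and $G_{+}/H$ is a subgroup of the abelian group $G/H$ and so is itself abelian. Thus it suffices to treat $G_{+}$, and I may assume outright that $G = \langle H, \phi_{1}, \ldots, \phi_{l} \rangle$ with $G/H$ abelian.

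Next I would derive a normal form for the elements of $G$. Because $H$ is normal, every element of $G$ can be written as $h \circ w$ with $h \in H$ and $w$ a word in $\phi_{1}^{\pm 1}, \ldots, \phi_{l}^{\pm 1}$. Since the images $\overline{\phi_{1}}, \ldots, \overline{\phi_{l}}$ generate the \emph{abelian} group $G/H$, the coset of $w$ equals $\overline{\phi_{1}}^{\,a_{1}} \cdots \overline{\phi_{l}}^{\,a_{l}}$ for suitable integers $a_{1}, \ldots, a_{l}$, so that $w = h' \circ \phi_{1}^{a_{1}} \circ \cdots \circ \phi_{l}^{a_{l}}$ with $h' \in H$. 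Consequently every element of $G$ has the form $\tilde{h} \circ \phi_{1}^{a_{1}} \circ \cdots \circ \phi_{l}^{a_{l}}$ with $\tilde{h} \in H$; in terms of products of subsets this is exactly the containment $G \subset H \, \langle \phi_{1} \rangle \cdots \langle \phi_{l} \rangle$. This rearrangement is the only place commutativity of $G/H$ is used, and it is the step to carry out with care.

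Finally I would apply the ``moreover'' part of Proposition \ref{pro:fw} to the factorization $G \subset H_{1} H_{2} \cdots H_{l+1}$ with $H_{1} = H$ and $H_{j+1} = \langle \phi_{j} \rangle$ for $1 \leq j \leq l$. Since $H_{1} = H \subset G$, this yields $\dim G/H \leq \sum_{j=1}^{l} \dim \langle \phi_{j} \rangle$, and each term is bounded by $n$ thanks to Proposition \ref{pro:cloif}, so $\dim G/H \leq l n < \infty$. The argument is otherwise routine bookkeeping; the genuinely delicate points are checking that the passage to $G_{+}$ preserves both $\dim G/H$ and the abelian/normal structure, and establishing the product-set containment above.
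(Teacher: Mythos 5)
Your proof is correct and takes essentially the same route as the paper: the paper also writes $G$ as the bounded product $H_{1}\cdots H_{l}J_{1}\cdots J_{m}H$ using normality of $H$ and commutativity of $G/H$, and then concludes by Propositions \ref{pro:fw}, \ref{pro:cloif} and \ref{pro:fginfp}. The only (harmless) difference is that the paper keeps the one-parameter factors $J_{k}=\{\mathrm{exp}(tX_{k}):t\in{\mathbb R}\}$ in the product and bounds their dimension directly via Proposition \ref{pro:fginfp}, whereas you first replace them by cyclic groups through Lemma \ref{lem:fginf}; both variants are valid and give the same bound $\dim G/H\leq (l+m)n$.
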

\begin{proof}
We have
$G = \langle H, \phi_{1}, \ldots, \phi_{l},
\cup_{j=1}^{m} \{ \mathrm{exp} (t X_{j}) : t \in {\mathbb R} \} \rangle$.
We denote $H_{j}= \langle \phi_{j} \rangle$ for $1 \leq j \leq l$
and $J_{k}=\{ \mathrm{exp} (t X_{k}) : t \in {\mathbb R} \}$ for $1 \leq k \leq m$.
Since $H$ is normal in $G$ and $G/H$ is abelian, we obtain
$G=H_1 \ldots H_{l} J_1 \ldots J_m H$. This implies
\[ \dim G/H \leq \sum_{1 \leq j \leq l} \dim H_{j} + \sum_{1 \leq k \leq m} \dim J_{k}  \leq (l+m) n \]
by Propositions \ref{pro:fw}, \ref{pro:cloif} and \ref{pro:fginfp}.
\end{proof}
\begin{rem}
Proposition \ref{pro:fgaifd} implies in particular
that Theorem \ref{teo:main} can be applied to
a finitely generated
(in the extended sense) abelian subgroup of $\diffh{}{n}$.
So Seigal-Yakovenko's Theorem \ref{teo:sy} can be understood as a consequence of the finite dimension of
finitely generated abelian subgroups of formal diffeomorphisms.
\end{rem}
Next, let us consider the third type of extensions, namely extensions that are
real connected Lie groups. Of course
the first task is finding a proper definition of Lie group for an extension since it is
not clear a priori.
\begin{defi}
\label{def:lie}
Let $H$ be a normal subgroup of a subgroup $G$ of $\diffh{}{n}$.
We say that $G/H$ is a (connected) Lie group
if there exists a (connected) Lie group $L$ and
a surjective morphism of groups $\sigma: L \to G/H$ such that the map
$\sigma_{k}: L \to D_{k}/H_k$
induced by $\sigma$
is a morphism of differentiable manifolds for any $k \in {\mathbb N}$.
\end{defi}
Notice that $D_{k}/H_{k}$ is a smooth algebraic manifold.
The map $\pi_{k}:G \to D_{k}$ induces a map $\hat{\pi}_{k}': G/H \to D_{k}/H_{k}$.
The map $\sigma_{k}$ is equal to $\hat{\pi}_{k}' \circ \sigma$.
\begin{rem}
The previous definition of Lie group coincides with the usual one for a subgroup
$G$ of $\diffh{}{n}$, i.e. in the case $H=\{Id\}$ (cf. \cite{Binyamini:finite}).
\end{rem}
Let us see that connected Lie group extensions
are finite dimensional (Proposition \ref{pro:lieialge}). Proposition \ref{pro:lieialg}
is a corollary of
Proposition \ref{pro:lieialge}.
\begin{pro}
\label{pro:lieialg}
Let $G$ be a subgroup of $\diffh{}{n}$. Suppose that $G$ is a connected Lie group.
Then $G$ is finite dimensional.
%
\end{pro}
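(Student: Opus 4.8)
The plan is to reduce the statement to the finite writing criterion of Theorem \ref{teo:fw}. By Definition \ref{def:lie} applied with $H=\{Id\}$ (so that $D_k/H_k=D_k$), there exist a connected Lie group $L$ and a surjective morphism $\sigma:L\to G$ such that $\sigma_k:=\pi_k\circ\sigma:L\to D_k$ is a smooth morphism of Lie groups for every $k\in{\mathbb N}$. The idea is to transport a bounded length decomposition of the elements of $L$ through $\sigma$, obtaining a bounded length decomposition of the elements of $G$ in terms of one-parameter flows of formal vector fields.

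First I would invoke the Iwasawa--Malcev decomposition recalled in Remark \ref{rem:fw}. Since $L$ is a connected Lie group, there exist finitely many one-parameter subgroups $\{\mathrm{exp}_L(t Y_1):t\in{\mathbb R}\},\ldots,\{\mathrm{exp}_L(t Y_m):t\in{\mathbb R}\}$ of $L$ and an integer $p\in{\mathbb N}$ such that every element of $L$ is a product of at most $p$ elements taken from the union of these one-parameter subgroups (no cyclic factors are needed, as $L$ is connected).

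The key step is to show that $\sigma$ carries each one-parameter subgroup $\{\mathrm{exp}_L(t Y_j):t\in{\mathbb R}\}$ to a formal flow $\{\mathrm{exp}(t X_j):t\in{\mathbb R}\}$ of some $X_j\in\hat{\mathfrak X}\cn{n}$. For fixed $j$ and $k$, the map $t\mapsto\sigma_k(\mathrm{exp}_L(tY_j))$ is a smooth one-parameter subgroup of the Lie group $D_k$, hence equals $\mathrm{exp}(t Z_{j,k})$ for the unique element $Z_{j,k}=(d\sigma_k)(Y_j)$ of the Lie algebra $L_k$ of $D_k$. The compatibility $\pi_{k,l}\circ\sigma_k=\sigma_l$ for $k\geq l$ gives $(d\pi_{k,l})(Z_{j,k})=Z_{j,l}$, so the family $(Z_{j,k})_k$ defines an element $X_j$ of $\hat{\mathfrak X}\cn{n}=\varprojlim_k L_k$. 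Since the $k$-jet of $\sigma(\mathrm{exp}_L(tY_j))$ equals $\mathrm{exp}(t Z_{j,k})$, which is the $k$-jet of $\mathrm{exp}(tX_j)$, for every $k$, and since an element of $\diffh{}{n}=\varprojlim_k D_k$ is determined by its jets, we conclude $\sigma(\mathrm{exp}_L(t Y_j))=\mathrm{exp}(t X_j)$ for all $t\in{\mathbb R}$.

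Finally I would combine the two ingredients. Because $\sigma$ is surjective, every $\phi\in G$ equals $\sigma(\ell)$ for some $\ell\in L$; writing $\ell=\mathrm{exp}_L(t_1 Y_{j_1})\circ\cdots\circ\mathrm{exp}_L(t_q Y_{j_q})$ with $q\leq p$ and applying the morphism $\sigma$ yields $\phi=\mathrm{exp}(t_1 X_{j_1})\circ\cdots\circ\mathrm{exp}(t_q X_{j_q})$, a product of at most $p$ elements of $\cup_{j=1}^{m}\{\mathrm{exp}(tX_j):t\in{\mathbb C}\}$. Thus $G$ satisfies the hypotheses of Theorem \ref{teo:fw} with no cyclic factors, and therefore $G$ is finite dimensional. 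I expect the main obstacle to be precisely the key step: justifying that the infinitesimal generators $Z_{j,k}$ at the successive jet levels are compatible and assemble into a genuine formal vector field $X_j\in\hat{\mathfrak X}\cn{n}$, so that $\sigma$ sends one-parameter subgroups of $L$ exactly to formal flows. This rests on the smoothness of each $\sigma_k$, which produces $Z_{j,k}$ as a Lie algebra element, together with the projective limit descriptions $\hat{\mathfrak X}\cn{n}=\varprojlim_k L_k$ and $\diffh{}{n}=\varprojlim_k D_k$.
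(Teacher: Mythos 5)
Your proof is correct, but it follows a genuinely different route from the paper's. The paper derives Proposition \ref{pro:lieialg} as the special case $H=\{Id\}$ of the extension statement (Proposition \ref{pro:lieialge}), whose proof complexifies the Lie algebra of $L$, passes to a simply connected complex group $\tilde{L}$, uses that the derived group of a connected complex linear Lie group is algebraic to bound $\dim \langle G',H\rangle_k - \dim H_k$, and then treats the remaining quotient as a finitely generated (in the extended sense) abelian extension via Proposition \ref{pro:fgaifd}. You instead go through the finite-writing criterion of Theorem \ref{teo:fw}: bounded-length words in finitely many one-parameter subgroups of $L$ (Iwasawa--Malcev), pushed forward by $\sigma$ to formal flows. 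This is precisely the alternative the author flags in the remark following Proposition \ref{pro:lieialge} (``it is possible to use such decomposition to show Proposition \ref{pro:lieialg}''), and your key step --- assembling the compatible infinitesimal generators $Z_{j,k}=(d\sigma_k)(Y_j)\in L_k$ into a genuine $X_j\in\hat{\mathfrak X}\cn{n}=\varprojlim_k L_k$ so that $\sigma(\mathrm{exp}_L(tY_j))=\mathrm{exp}(tX_j)$ --- is carried out correctly and is not circular, since Theorem \ref{teo:fw} rests only on Propositions \ref{pro:fw}, \ref{pro:cloif} and \ref{pro:fginfp}. The trade-off: your argument is more direct and self-contained for the connected-Lie-subgroup case, but it leans on the uniform bounded-length generation of a connected Lie group by one-parameter subgroups (which needs the compactness of the maximal compact factor in the Iwasawa--Malcev decomposition and is only sketched in Remark \ref{rem:fw}), and it does not obviously extend to the quotient setting $G/H$ of Proposition \ref{pro:lieialge}, which the paper needs elsewhere (e.g.\ in Theorem \ref{teo:ext}); the paper's tower-of-extensions proof handles that general case and better illustrates the method the author is advocating.
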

\begin{pro}
\label{pro:lieialge}
Let $H$ be a normal subgroup of a subgroup $G$ of $\diffh{}{n}$.
Suppose that $G/H$ is a connected Lie group.
Then $G/H$ is finite dimensional.
\end{pro}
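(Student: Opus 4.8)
The plan is to realize $G/H$ as covered by boundedly many products of one-parameter subgroups, each of which comes from a genuine one-parameter group of formal diffeomorphisms, and then to bound $\dim G_{k}-\dim H_{k}$ by a constructible-set dimension count in the spirit of Proposition \ref{pro:fw}. Write $\mathfrak{g}_{k}$ and $\mathfrak{h}_{k}$ for the Lie algebras of $G_{k}$ and $H_{k}$, and let $\mathfrak{l}$ be the Lie algebra of $L$. Since $H$ is normal, $H_{k}$ is normal in $G_{k}$ (Lemma \ref{lem:clnin}) and $\mathfrak{g}_{k}/\mathfrak{h}_{k}$ is the Lie algebra of the algebraic group $G_{k}/H_{k}$. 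As $\sigma_{k}\colon L\to D_{k}/H_{k}$ is a smooth homomorphism, its differential $d\sigma_{k}\colon\mathfrak{l}\to\mathrm{Lie}(D_{k}/H_{k})$ takes values in $\mathfrak{g}_{k}/\mathfrak{h}_{k}$, and the maps $d\sigma_{k}$ are compatible with the transition maps of the system.

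First I would lift infinitesimal generators through all jet levels simultaneously. By the proof of Lemma \ref{lem:dc} the maps $(d\pi_{k+1,k})_{Id}\colon\mathfrak{g}_{k+1}\to\mathfrak{g}_{k}$ are surjective and carry $\mathfrak{h}_{k+1}$ onto $\mathfrak{h}_{k}$, so the short exact sequences $0\to\mathfrak{h}_{k}\to\mathfrak{g}_{k}\to\mathfrak{g}_{k}/\mathfrak{h}_{k}\to 0$ form an inverse system with surjective transition maps in every term. Because these are finite dimensional vector spaces the Mittag--Leffler condition holds and $\varprojlim_{k}\mathfrak{g}_{k}\to\varprojlim_{k}(\mathfrak{g}_{k}/\mathfrak{h}_{k})$ is surjective. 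Given $Y\in\mathfrak{l}$, the family $(d\sigma_{k}(Y))_{k}$ lies in $\varprojlim_{k}(\mathfrak{g}_{k}/\mathfrak{h}_{k})$, hence it lifts to some $\tilde{X}=(\tilde{W}_{k})_{k}\in\varprojlim_{k}\mathfrak{g}_{k}\subset\hat{\mathfrak X}({\mathbb C}^{n},0)$. Since the quotient morphism $G_{k}\to G_{k}/H_{k}$ intertwines the exponentials, $\exp(t\tilde{W}_{k})$ projects to $\exp(t\,d\sigma_{k}(Y))=\sigma_{k}(\exp_{L}(tY))$ in $G_{k}/H_{k}$ for every $t$ and $k$; moreover $\tilde{W}_{k}\in\mathfrak{g}_{k}$ gives $\exp(t\tilde{X})\in\overline{G}$, so $\tilde{X}$ lies in the Lie algebra of $\overline{G}$. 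By Proposition \ref{pro:fginfp} the one-parameter group $\{\exp(t\tilde{X}):t\in{\mathbb C}\}$ has dimension at most $n$.

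Next I would use a bounded-length decomposition in $L$. By the Iwasawa--Malcev decomposition of a connected Lie group (cf. \cite{Iwasawa:top,Malcev:large}, as recalled in Remark \ref{rem:fw}) there exist $Y_{1},\ldots,Y_{d}\in\mathfrak{l}$ and $p\in{\mathbb N}$ such that every element of $L$ is a product of at most $p$ elements taken from the one-parameter subgroups $\{\exp_{L}(tY_{j}):t\in{\mathbb R}\}$, $1\le j\le d$. Lifting each $Y_{j}$ as above yields $\tilde{X}_{1},\ldots,\tilde{X}_{d}$; set $E_{j}=\{\exp(t\tilde{X}_{j}):t\in{\mathbb C}\}$, so that $\dim (E_{j})_{k}\le n$ for all $k$. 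Fix $g\in G$. Applying $\sigma$ to a word of length $\le p$ representing $gH=\sigma(\ell)$ and projecting to level $k$, the second paragraph produces indices $j_{1},\ldots,j_{p}$ with $\pi_{k}(g)\in (E_{j_{1}})_{k}\cdots(E_{j_{p}})_{k}\,H_{k}$. Hence $G_{k}^{*}\subseteq\bigcup_{(j_{1},\ldots,j_{p})}(E_{j_{1}})_{k}\cdots(E_{j_{p}})_{k}\,H_{k}$, a finite union over the $d^{p}$ words.

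Finally I would count dimensions. Each set $(E_{j_{1}})_{k}\cdots(E_{j_{p}})_{k}\,H_{k}$ is the image of the multiplication morphism $(E_{j_{1}})_{k}\times\cdots\times(E_{j_{p}})_{k}\times H_{k}\to D_{k}$, so it is constructible of dimension at most $pn+\dim H_{k}$. Taking Zariski-closures, every irreducible component of $G_{k}=\overline{G_{k}^{*}}$ is contained in one of these finitely many constructible sets, whence $\dim G_{k}\le\dim H_{k}+pn$. Therefore $\dim G_{k}-\dim H_{k}\le pn$ for all $k$ and $\dim G/H=\lim_{k}(\dim G_{k}-\dim H_{k})\le pn<\infty$, which is the assertion. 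The main obstacle is precisely the simultaneous lifting of the infinitesimal generators through all the jet levels at once: it is what forces the Mittag--Leffler analysis of the inverse systems of Lie algebras, and it is the step where the normality of $H$ and the smoothness of the maps $\sigma_{k}$ are essential; once the $\tilde{X}_{j}$ are in hand, the remaining bound is the routine constructible-set argument already used in Proposition \ref{pro:fw}.
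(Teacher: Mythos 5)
Your argument is correct, but it follows a genuinely different route from the paper's. The paper proves this proposition by a tower of extensions: it complexifies the Lie algebra of $L$, passes to a simply connected complex Lie group $\tilde{L}$, uses the algebraicity of the derived group of a connected complex Lie matrix group to show that $\langle G', H\rangle/H$ is finite dimensional, and then handles $G/\langle G', H\rangle$ as a finitely generated (in the extended sense) abelian extension via Proposition \ref{pro:fgaifd}. You instead take the bounded-word route that the paper explicitly mentions (Remark \ref{rem:fw} and the remark following the proof) as an alternative for the case $H=\{Id\}$ but deliberately avoids: the Iwasawa--Malcev decomposition of $L$ into words of uniform length in finitely many one-parameter subgroups, combined with the constructible-set dimension count of Proposition \ref{pro:fw}. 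The genuinely new ingredient you supply, which makes this route work for a general normal $H$ and not just $H=\{Id\}$, is the simultaneous lifting of each $d\sigma_{k}(Y)\in\mathfrak{g}_{k}/\mathfrak{h}_{k}$ to a coherent family in $\varprojlim\mathfrak{g}_{k}$ via the vanishing of $\varprojlim^{1}\mathfrak{h}_{k}$ (the transition maps $\mathfrak{h}_{k+1}\to\mathfrak{h}_{k}$ being surjective by the proof of Lemma \ref{lem:dc}); this produces honest one-parameter subgroups of $\overline{G}$ projecting onto the images of the chosen one-parameter subgroups of $L$ in every jet space, which is exactly what Proposition \ref{pro:fginfp} and the multiplication-map dimension count require. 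Your approach yields the explicit uniform bound $\dim G/H\le pn$ and avoids both the complexification and the algebraicity of derived groups, at the cost of invoking the Iwasawa--Malcev decomposition and the Mittag--Leffler analysis; the paper's approach is designed to illustrate the extension-tower machinery it has already built. Two cosmetic points: the identity $\exp(t\,d\sigma_{k}(Y))=\sigma_{k}(\exp_{L}(tY))$ only makes sense for $t\in{\mathbb R}$ (which is all you use), and in the last step $G_{k}$ is contained in the union of the \emph{closures} of the finitely many constructible sets, which is what gives the dimension bound.
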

\begin{proof}
Let us explain the idea of the proof. Suppose $H=\{Id\}$ and $L$
(cf. Definition \ref{def:lie}) is a connected complex Lie subgroup of
$\mathrm{GL}(n,{\mathbb C})$.
Then the derived group $L'$ is  algebraic
(cf. \cite[Chapter 3.3.3]{alg.lie:Onis-Vinb}).
We can think of $L$ as an algebraic-by-finitely generated commutative  group
since $L / L'$ is abelian and finitely generated in the extended sense. Since all these
extensions are finite dimensional in a natural way, hence the image of $L$ is finite dimensional.

Consider the real Lie group $L$ and the surjective morphism of groups $\sigma: L \to G/H$
provided by Definition \ref{def:lie}.
Fix $k \in {\mathbb N}$.
Let ${\mathfrak g}_{k}$ and ${\mathfrak h}_{k}$
be the Lie algebras of $G_k$ and $H_{k}$.
The set $G_{k}/H_{k}$  is an algebraic group for any $k \in {\mathbb N}$
(cf. \cite[section II.6.8, p. 98]{Borel}).
Let ${\mathfrak g}$ be the Lie algebra of $L$.
Consider the map
$(d \sigma_{k})_{Id} : {\mathfrak g} \to {\mathfrak g}_{k}/{\mathfrak h}_k$ induced by $\sigma_{k}$
for $k \in {\mathbb N}$. We define by
$\tilde{\mathfrak g} ={\mathfrak g} \otimes_{\mathbb R} {\mathbb C}$
the complexified  of the Lie algebra ${\mathfrak g}$.
We denote by $(d \overline{\sigma}_{k})_{Id} : \tilde{\mathfrak g} \to {\mathfrak g}_{k}/{\mathfrak h}_k$
the morphism of complex Lie algebras induced by $(d \sigma_{k})_{Id}$.
Let $\tilde{L}$ be a connected simply connected complex Lie group whose Lie algebra is
equal to $\tilde{\mathfrak g}$.
Then there exists a unique morphism
$\tilde{\sigma}_k: \tilde{L} \to G_{k}/H_k$ of complex Lie groups
such that $(d \tilde{\sigma}_{k})_{Id} = (d \overline{\sigma}_{k})_{Id}$
for any $k \in {\mathbb N}$ (cf. \cite[Chapter 1.2.8]{alg.lie:Onis-Vinb}).
Notice that $\tilde{\sigma}_k (\tilde{L})$ contains $\sigma_k (L)$.

We denote by $\rho_{k} : G_{k} \to G_{k}/H_{k}$ the morphism of algebraic groups given
by the projection.
Since $\tilde{\sigma}_{k}(\tilde{L})'$ is the derived group
of the connected complex Lie group of matrices $ \tilde{\sigma}_{k}(\tilde{L})$,
it is an algebraic subgroup of $G_{k}/H_{k}$. Hence
$\rho_{k}^{-1}( \tilde{\sigma}_{k}(\tilde{L})' )$ is an algebraic subgroup of $G_{k}$
such that
\[ \dim \rho_{k}^{-1}( \tilde{\sigma}_{k}(\tilde{L})' )
- \dim H_{k} \leq \dim \tilde{L}' \leq \dim  \tilde{L}
\leq  \dim_{\mathbb R} L  \]
where $\dim_{\mathbb R} L $ is the dimension of the real Lie group $L$.
Since ${\langle G', H \rangle}_{k}^{*}$ is contained in
$ \rho_{k}^{-1}( \tilde{\sigma}_{k}(\tilde{L}') )$ and the latter group is algebraic,
we obtain ${\langle G', H \rangle}_{k} \subset \rho_{k}^{-1}( \tilde{\sigma}_{k}(\tilde{L}') )$ and then
\[ \dim {\langle G', H \rangle}_{k} - \dim H_{k} \leq \dim_{\mathbb R} L   \]
for any $k \in {\mathbb N}$. Therefore
the extension $\langle G', H \rangle/H$ is finite dimensional.
It suffices to show that $G/\langle G', H \rangle$ is finite
dimensional by Proposition \ref{pro:elems}.

Let $X_{1}, \ldots, X_{m}$ be a basis of  ${\mathfrak g}$ (as a real Lie algebra).
Fix $k \in {\mathbb N}$. Analogously as in the proof of Proposition \ref{pro:fginfp}
there exists a countable subset $A_{k}$ of ${\mathbb R}$ such that
the algebraic closure of $\langle \sigma_{k} (\mathrm{exp}(t X_{1})) \rangle$
in $G_k / H_k$ contains
the one-parameter group $\sigma_{k} \{ \mathrm{exp}(s X_{1}) : s \in {\mathbb C} \}$ for any
$t \in {\mathbb R}^{*} \setminus A_{k}$. We choose
$t \in {\mathbb R}^{*} \setminus \cup_{k \geq 1} A_{k}$
and a representative $\psi_{1} \in G$ of the class in $G/H$ defined by
$\sigma (\mathrm{exp}(t X_{1}))$. Analogously we define $\psi_{2}, \ldots, \psi_{m}$.
It is clear that the extension $\langle G',H, \psi_{1}, \ldots, \psi_{m} \rangle/ \langle G', H \rangle$
is abelian
and finitely generated. It suffices to show
$\overline{\langle G',H, \psi_{1}, \ldots, \psi_{m} \rangle} = \overline{G}$ by
Proposition \ref{pro:fgaifd}. We will prove
$\overline{\langle H, \psi_{1}, \ldots, \psi_{m} \rangle} = \overline{G}$.

Fix $k \in {\mathbb N}$.
We denote $J=\langle H, \psi_{1}, \ldots, \psi_{m} \rangle$.
The image of the Zariski-closure $J_{k}$
of $J_{k}^{*}$ by the morphism $\rho_{k}$
is equal to the closure of $\langle J_{k}^{*}, H_k \rangle/H_{k}$ in $G_{k}/H_{k}$.
The Zariski-closure of $\langle \rho_{k} (\psi_{j,k}) \rangle$ contains
$\sigma_{k} (\{ \mathrm{exp} (s X_{j}) : s \in {\mathbb C} \})$ for any $1 \leq j \leq m$
by the choice of $\psi_{j}$.
Hence the closure of $\langle J_{k}^{*}, H_k \rangle/H_k$ in $G_{k}/H_{k}$ contains
$\langle G_{k}^{*}, H_k \rangle / H_k$.
We deduce that the closure of  $\rho_{k}(J_{k}^{*})$ in $G_{k}/H_{k}$
is equal to $G_k / H_k$.
Thus $\rho_{k} (J_{k})$ is equal to $G_{k}/H_{k}$.
Since $J_{k}$ contains $H_{k}$,   we get $J_{k} = G_{k}$ for
any $k \in {\mathbb N}$. Hence we obtain
$\overline{\langle H, \psi_{1}, \ldots, \psi_{m} \rangle} = \overline{G}$.
\end{proof}
\begin{rem}
We can recover Binyamini's Theorem \ref{teo:b} in the context of the theory of
finite dimensional groups of formal diffeomorphisms by applying Theorem
\ref{teo:main},  Proposition \ref{pro:lieialg} and Lemma \ref{lem:fis}.

Binyamini proves that the matrix coefficients of $G$ belong to a noetherian ring
by using the Iwasawa-Malcev decomposition of a connected Lie group
(cf. \cite[Theorem 6]{Iwasawa:top}). Let us remark that
it is possible to use such decomposition to show Proposition  \ref{pro:lieialg}
(cf. Remark \ref{rem:fw}). Our choice of
proof is intended to stress the efficacy of the approach through towers of extensions.
Indeed given a connected Lie group $G \subset \diffh{}{n}$, we showed that its derived group $G'$
is finite dimensional by using classical results of Lie group theory. This allowed us
to reduce the problem to treat the finitely generated (in the
extended sense) and abelian extension $G/G'$.
\end{rem}
\begin{rem}
In the proof of Proposition \ref{pro:lieialge} it suffices to consider a linear independent
subset $\{X_{1}, \ldots, X_{m}\}$ of ${\mathfrak g}$ whose image in
${\mathfrak g}/{\mathfrak g}'$ is a basis.  As a consequence
we obtain
\[ \dim \sigma (L) \leq \dim \tilde{L}' + (\dim \tilde{L} - \dim  \tilde{L}' ) n =
\dim  \tilde{L} + (\dim \tilde{L} - \dim  \tilde{L}' ) (n-1)  .\]
This formula reminds the formula in Theorem 4 of \cite{Binyamini:finite} in which the role
of a maximal connected compact subgroup  of $L$ is replaced with the derived group.
\end{rem}
\begin{rem}
Subgroups of Lie groups (with finitely many connected components)
of formal diffeomorphisms are always subgroups of algebraic
groups of formal diffeomorphisms by Proposition \ref{pro:lieialg} and Lemma \ref{lem:fis}.
 Given a subgroup $G$ of $\diffh{}{n}$ the existence 
of an embedding of $G$ in a Lie group of formal diffeomorphisms
implies that $G$ is contained in the image by an algebraic monomorphism of
an algebraic matrix group (Remark \ref{rem:eae}).
\end{rem}
The following theorem about finite determination properties
of Lie subgroups of $\diffh{}{n}$  is due to Baouendi et al.
\begin{teo}[{\cite[Proposition 5.1]{Baou-Roth-Win-Zai}}]
\label{teo:Baou}
Let $G$ be a subgroup of $\diffh{}{n}$. Suppose that $G$ is a Lie group
with a finite number of connected components. Then $G$ has the finite determination
property.
\end{teo}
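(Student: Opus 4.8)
The plan is to deduce the finite determination property of $G$ from the stronger fact that $G$ is finite dimensional. Recall from Remark \ref{rem:fdefd} that a subgroup $G$ of $\diffh{}{n}$ is finite dimensional if and only if its Zariski-closure $\overline{G}$ has the finite determination property. Thus, once $\dim G < \infty$ is known, there exists $k \in {\mathbb N}$ such that $\pi_{k}: \overline{G} \to D_{k}$ is injective; since $G \subset \overline{G}$, the restriction $\pi_{k}: G \to D_{k}$ is injective as well, and $G$ has the finite determination property by Remark \ref{rem:fdc}. The entire argument therefore reduces to proving that a Lie group with finitely many connected components is finite dimensional.

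First I would isolate the identity component. Let $L$ be the Lie group with finitely many connected components and $\sigma: L \to G$ the surjective morphism supplied by Definition \ref{def:lie}, so that every $k$-jet map $\sigma_{k}: L \to D_{k}$ is a morphism of differentiable manifolds. Denote by $L^{0}$ the identity component of $L$; it is a connected Lie group and a normal subgroup of finite index in $L$. The restriction $\sigma|_{L^{0}}: L^{0} \to \sigma(L^{0})$ is then a surjective morphism whose induced $k$-jet maps $\sigma_{k}|_{L^{0}}$ are smooth, so $\sigma(L^{0})$ is a connected Lie subgroup of $\diffh{}{n}$ in the sense of Definition \ref{def:lie}. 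Proposition \ref{pro:lieialg} now yields that $\sigma(L^{0})$ is finite dimensional.

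It remains to pass from $\sigma(L^{0})$ to $G$ across a finite index. Because $L^{0}$ is normal of finite index in $L$ and $\sigma$ is surjective, $\sigma(L^{0})$ is a subgroup of $G = \sigma(L)$ whose number of cosets is at most $[L : L^{0}] < \infty$. Lemma \ref{lem:fis} gives $\dim G/\sigma(L^{0}) = 0$, and applying Proposition \ref{pro:elems} to the chain $\{Id\} \subset \sigma(L^{0}) \subset G$ produces
\[ \dim G = \dim G/\sigma(L^{0}) + \dim \sigma(L^{0}) = \dim \sigma(L^{0}) < \infty . \]
Hence $G$ is finite dimensional, which by the reduction of the first paragraph finishes the proof. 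This matches the observation in the remark preceding the statement, namely that subgroups of Lie groups with finitely many connected components are finite dimensional by Proposition \ref{pro:lieialg} and Lemma \ref{lem:fis}.

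The step requiring the most care is conceptual rather than computational: finite determination of $G$ only asks that $\pi_{k}$ be injective on $G$, whereas finite dimensionality asks for injectivity of $\pi_{k}$ on the a priori larger group $\overline{G}$. By Remark \ref{rem:fdnifd} the former is strictly weaker in general, so the genuine content lies in showing that the Lie-group hypothesis forces the stronger conclusion. The point that must be checked honestly is that $\sigma(L^{0})$ really satisfies Definition \ref{def:lie} as a connected Lie group and that restricting to the identity component costs only a finite index, so that the whole reduction stays inside the algebraic-group framework developed above rather than relying on external Lie-theoretic facts.
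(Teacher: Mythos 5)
Your proof is correct and follows essentially the same route as the paper: the paper's own proof is exactly the one-line observation that $G$ is finite dimensional by Proposition \ref{pro:lieialg} and Lemma \ref{lem:fis}, hence finitely determined. You have merely spelled out the intermediate steps (restriction to the identity component, Definition \ref{def:lie} for $\sigma(L^{0})$, and the additivity of dimension from Proposition \ref{pro:elems}), all of which are accurate.
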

We include a proof since it is an extremely easy application of our techniques.
\begin{proof}
The group $G$ is finite dimensional by Proposition  \ref{pro:lieialg} and Lemma \ref{lem:fis}.
Hence it has the finite determination property.
\end{proof}
In general the implication finite determination $\implies$ finite dimension does not
hold for subgroups $G$ of $\diffh{}{n}$ (cf. Remark \ref{rem:fdnifd}).
Anyway, it is interesting to explore in which conditions it is true since
the former property is much easier to verify.
Next, we see that the implication is satisfied, even for extensions, under
a property of algebraic closedness for cyclic subgroups.
\begin{teo}
\label{teo:charfdifd}
Let $H$ be a normal subgroup of a subgroup $G$ of $\diffh{}{n}$.
Suppose that the map $\hat{\pi}_{k_{0}}: \overline{G}/\overline{H} \to G_{k_{0}}/H_{k_{0}}$
satisfies $(\hat{\pi}_{k_{0}})_{|\langle G, \overline{H} \rangle / \overline{H}}$
is injective for some $k_{0} \in {\mathbb N}$.
Furthermore suppose  $\overline{\langle \phi \rangle} \subset \langle G, \overline{H} \rangle$
for any $\phi \in G$. Then $G/H$  is finite dimensional.
\end{teo}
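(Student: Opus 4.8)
The plan is to reduce everything to the injectivity statement (2) of Proposition \ref{pro:elem} for the pair $(\overline{G},\overline{H})$, and to interpose a pro-algebraic group between $\overline{H}$ and $\overline{G}$ on which the finite determination hypothesis already forces finite dimension. First I would replace $G$ by $\widetilde{G}:=\langle G,\overline{H}\rangle$. Since $G\subset\widetilde{G}\subset\overline{G}$ one gets $\overline{\widetilde{G}}=\overline{G}$ and $\widetilde{G}_{k}=G_{k}$ for every $k$, so by Definition \ref{def:dim} $\dim G/H=\dim\widetilde{G}/\overline{H}$ and it suffices to prove $\dim\widetilde{G}/\overline{H}<\infty$. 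Here $\overline{H}$ is pro-algebraic (Remark \ref{rem:pik2}) and normal in $\widetilde{G}$ (Lemma \ref{lem:clnin}), and the hypotheses now read: $\hat{\pi}_{k_{0}}$ is injective on $\widetilde{G}/\overline{H}$, and $\overline{\langle\phi\rangle}\subset\widetilde{G}$ for every $\phi\in G$.

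The core construction exploits the cyclic-closure hypothesis. For each $\phi\in G$ Lemma \ref{lem:clcg} gives $\overline{\langle\phi\rangle}=\overline{\langle\phi_{s}\rangle}\times\overline{\langle\phi_{u}\rangle}$, and its identity component $\overline{\langle\phi\rangle}_{0}$ is a connected pro-algebraic subgroup of $\widetilde{G}$ containing the unipotent one-parameter group $\{\phi_{u}^{t}\}$ (Remark \ref{rem:closu}). Together with $\overline{H}_{0}$ these are connected pro-algebraic groups, so by Chevalley's Theorem \ref{teo:chevc}, applied at every jet level, the group $C_{0}$ they generate is connected pro-algebraic with $C_{0}\subset\widetilde{G}$; since $G$ normalizes each generator (as $g\overline{\langle\phi\rangle}_{0}g^{-1}=\overline{\langle g\phi g^{-1}\rangle}_{0}$ and $\overline{H}_{0}$ is characteristic in the normal subgroup $\overline{H}$), the group $C_{0}$ is normal in $\overline{G}$, and I set $C:=\langle\overline{H},C_{0}\rangle$, a pro-algebraic normal subgroup of $\overline{G}$ with $\overline{H}\subset C\subset\widetilde{G}$.

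Claim I is that $C/\overline{H}$ is finite dimensional. Because $C$ is pro-algebraic, $\overline{C}=C$, so condition (2) of Proposition \ref{pro:elem} for $(C,\overline{H})$ is precisely the injectivity of $\hat{\pi}_{k_{0}}:C/\overline{H}\to C_{k_{0}}/H_{k_{0}}$, and this is immediate: if $\psi\in C\subset\widetilde{G}$ and $\psi_{k_{0}}\in H_{k_{0}}$, then $\psi\in\overline{H}$ by the finite determination hypothesis. Hence $\dim C/\overline{H}=\dim C_{k_{0}}-\dim H_{k_{0}}<\infty$. By the additivity of codimension (Proposition \ref{pro:elems}) applied to the tower $\overline{H}\subset C\subset\overline{G}$, it only remains to bound $\dim\overline{G}/C$.

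Claim II, which I expect to be the main obstacle, is that $\dim\overline{G}/C<\infty$. The quotient $\overline{G}/C$ is pro-algebraic and is generated by the images of the elements of $G$; since $\phi_{u}^{t}\in C_{0}\subset C$, the image of $\phi$ coincides with that of $\phi_{s}$ and has finite order modulo $C$ (its closure $\overline{\langle\phi_{s}\rangle}$ has connected component inside $C_{0}$). Thus $\overline{G}/C$ is densely generated by semisimple elements whose individual closures have already been absorbed into $C$, so only toral directions generated collectively survive. The target is to show that this residual quotient is diagonalizable, whence its dimension is at most $n$ by the character description of closures of semisimple diffeomorphisms (Remark \ref{rem:closs}) together with the bound $\dim\langle\phi_{s}\rangle\le n$ of Proposition \ref{pro:cloif}; the subtle point, and the reason the finite determination hypothesis is indispensable, is ruling out the unbounded unipotent directions of the kind exhibited in Remark \ref{rem:fdnifd}, which the cyclic-closure hypothesis is exactly designed to suppress by forcing every $\{\phi_{u}^{t}\}$ into $\widetilde{G}$. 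Combining $\dim\overline{G}/C\le n$ with Claim I through Proposition \ref{pro:elems} yields $\dim G/H=\dim\overline{G}/\overline{H}<\infty$, which is the assertion.
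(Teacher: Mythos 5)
Your construction of the intermediate group generated by the identity components $\overline{\langle\phi\rangle}_{0}$ is exactly the right move and matches the paper's proof, but both of your claims have gaps. For Claim I, you assert that $C_{0}$ (hence $C$) is pro-algebraic because Chevalley's Theorem \ref{teo:chevc} applies at every jet level. Chevalley only gives that each truncation $C_{0,k}^{*}$ is algebraic; by Proposition \ref{pro:char} pro-algebraicity also requires $C_{0}$ to be closed in the Krull topology, and a group generated by infinitely many one-parameter subgroups need not be closed (the group generated by the flows of the $\phi(j)$ of Remark \ref{rem:fdnifd} is a counterexample). Without $\overline{C}=C$, condition (2) of Proposition \ref{pro:elem} for the pair $(C,\overline{H})$ requires injectivity of $\hat{\pi}_{k_{0}}$ on $\overline{C}/\overline{H}$, whereas your argument only covers elements of $C$ itself, which are the only ones guaranteed to lie in $\langle G,\overline{H}\rangle$ where the hypothesis applies. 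The paper avoids this by never leaving the jet spaces: it works directly with the algebraic groups $\langle T_{k}^{*},H_{k}\rangle$, which are algebraic by Chevalley and normal in $G_{k}$, and needs no pro-algebraicity of the formal group $T$.

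The more serious problem is Claim II, which you flag but do not prove, and it is the heart of the theorem. The missing ingredient is Lemma \ref{lem:foiss}: since every element of $\langle G_{k}^{*},H_{k}\rangle$ has finite order modulo $\langle T_{k}^{*},H_{k}\rangle$ (because $\langle\phi\rangle_{k,0}$ has finite index in $\langle\phi\rangle_{k}$), the Tits alternative plus Lie--Kolchin show that the Zariski closure $G_{k}/\langle T_{k}^{*},H_{k}\rangle$ consists entirely of semisimple elements. The paper then uses this not to bound a dimension but to prove injectivity of $\hat{\pi}_{k,k_{0}}$ on $G_{k}/H_{k}$ directly: a kernel element $\phi_{k}H_{k}$ with $\phi\in\overline{G}$ can be corrected by some $\eta\in\overline{H}$ with $\eta_{k_{0}}=\phi_{k_{0}}$ so that $\phi\circ\eta^{-1}$ is unipotent; its class in the semisimple quotient is then both unipotent and semisimple, hence trivial, so $\phi_{k}\in\langle T_{k}^{*},H_{k}\rangle\subset\langle G_{k}^{*},H_{k}\rangle$, and the injectivity hypothesis finishes via Proposition \ref{pro:elem}(3). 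Your proposed alternative --- showing $\dim\overline{G}/C\le n$ via the character description of semisimple closures --- is neither needed (finiteness would suffice) nor established: a group densely generated by semisimple elements can be far from diagonalizable, and it is precisely the finite-order observation fed into Lemma \ref{lem:foiss} that rules this out. As written the proof does not close.
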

\begin{proof}
Let $T$ be the subgroup of $\langle G, \overline{H} \rangle$ generated by
$\cup_{\phi \in G} \overline{\langle \phi \rangle}_{0}$. The property 
$\overline{\langle \phi \rangle} \subset \langle G, \overline{H} \rangle$ implies that 
the group $T_{k}^{*} = \langle \cup_{\phi \in G} {\langle \phi \rangle}_{k,0} \rangle$
is contained in $\langle G_{k}^{*}, H_{k} \rangle$. Moreover it is a (connected) algebraic group
since it is generated by a family of connected algebraic matrix groups
(Theorem \ref{teo:chevc}) for any $k \in {\mathbb N}$.
Theorem \ref{teo:chevc} also implies that $\langle T_{k}^{*}, H_{k,0} \rangle$ is algebraic.
Since $H_{k}$ is normal in $G_{k}$, we deduce that
$\langle T_{k}^{*}, H_{k,0} \rangle$  is a finite index subgroup of
$\langle T_{k}^{*}, H_{k} \rangle$ and in particular $\langle T_{k}^{*}, H_{k} \rangle$
is algebraic for any $k \in {\mathbb N}$.
The group $G_{k}^{*}$ normalizes the algebraic group $T_{k}^{*}$.
Thus $T_{k}^{*}$ is a normal subgroup of $G_{k}$.
As a consequence $\langle T_{k}^{*}, H_{k} \rangle$ is a subgroup of
$\langle G_{k}^{*}, H_{k} \rangle$ that is normal in $G_{k}$
for any $k \in {\mathbb N}$.

The group $\overline{\langle \phi \rangle}_{0}$ is a finite index normal subgroup of
$\overline{\langle \phi \rangle}$ and ${\langle \phi \rangle}_{k,0}$ is a finite index
normal subgroup of  ${\langle \phi \rangle}_{k}$ for all $\phi \in G$ and $k \in {\mathbb N}$.
We obtain that all elements of the subgroup
$\langle G_{k}^{*}, H_{k} \rangle / \langle T_{k}^{*}, H_{k} \rangle$
of the algebraic matrix group $G_{k}  / \langle T_{k}^{*}, H_{k} \rangle$
have finite order.
Suppose that the Zariski-closure of a group of matrices of elements of finite order consists
only of semisimple elements; we will prove this later on (Lemma \ref{lem:foiss}).
Since $G_{k}  / \langle T_{k}^{*}, H_{k} \rangle$ is the Zariski-closure of
$\langle G_{k}^{*}, H_{k} \rangle / \langle T_{k}^{*}, H_{k} \rangle$,
the group $G_{k}  / \langle T_{k}^{*}, H_{k} \rangle$ consists of semisimple elements.

Let us show that $\hat{\pi}_{k,k_{0}} :  G_{k}/H_{k} \to G_{k_{0}}/H_{k_{0}}$ is injective
for any $k \geq k_{0}$. Such property implies that $G/H$ is finite dimensional by
Proposition \ref{pro:elem}.
The hypothesis implies that
$(\hat{\pi}_{k,k_{0}})_{|\langle G_{k}^{*}, H_{k} \rangle / H_{k}}$ is injective.
Let $\phi_{k} H_{k}$ be an element of the kernel of $\hat{\pi}_{k,k_{0}}$ where
$\phi \in \overline{G}$. Since $\phi_{k_{0}} \in H_{k_{0}}$, there exists $\eta \in \overline{H}$
such that $\phi_{k_{0}} \equiv \eta_{k_{0}}$ and in particular $(\phi \circ \eta^{-1})_{k_{0}} \equiv Id$.
The formal diffeomorphism $\phi \circ \eta^{-1}$ is unipotent and so is
$(\phi \circ \eta^{-1})_{k}$.
Thus the class of $(\phi \circ \eta^{-1})_{k}$ in $G_{k}  / \langle T_{k}^{*}, H_{k} \rangle$
is unipotent. Since it is also semisimple by the previous discussion, we obtain
$(\phi \circ \eta^{-1})_{k} \in  \langle T_{k}^{*}, H_{k} \rangle$ and then
$\phi_{k} \in  \langle T_{k}^{*}, H_{k} \rangle \subset \langle G_{k}^{*}, H_{k} \rangle$.
Since $(\hat{\pi}_{k,k_{0}})_{|\langle G_{k}^{*}, H_{k} \rangle / H_{k}}$ is injective,
we obtain $\phi_{k} \in H_{k}$. In particular
$\hat{\pi}_{k,k_{0}} :  G_{k}/H_{k} \to G_{k_{0}}/H_{k_{0}}$ is injective for any $k \geq k_{0}$.
\end{proof}
\begin{cor}
\label{cor:charfdifd}
Let $G$ be a subgroup of $\diffh{}{n}$ that has the finite determination property.
Suppose that $\overline{\langle \phi \rangle}$ is contained in $G$ for any $\phi \in G$.
Then $G$ is finite dimensional.
\end{cor}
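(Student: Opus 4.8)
The plan is to obtain this corollary as the special case $H = \{Id\}$ of Theorem \ref{teo:charfdifd}. First I would set $H = \{Id\}$, so that $\overline{H} = \{Id\}$ and $H_{k} = \{Id\}$ for every $k \in {\mathbb N}$. Consequently $\langle G, \overline{H} \rangle = G$, we have $G_{k}/H_{k} = G_{k}$ and $\overline{G}/\overline{H} = \overline{G}$, and under these identifications the map $\hat{\pi}_{k_{0}}$ appearing in the theorem becomes simply the projection $\pi_{k_{0}}: \overline{G} \to G_{k_{0}}$. In particular the subgroup $\langle G, \overline{H} \rangle / \overline{H}$ to which the injectivity hypothesis of the theorem is applied is just $G$ itself.

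Next I would verify that the two hypotheses of Theorem \ref{teo:charfdifd} are met. The finite determination property supplies, by Definition \ref{def:findet} (cf. Remark \ref{rem:fdc}), some $k_{0} \in {\mathbb N}$ such that $\phi \in G$ and $\phi_{k_{0}} \equiv Id$ imply $\phi \equiv Id$; equivalently $(\pi_{k_{0}})_{|G}: G \to G_{k_{0}}$ is injective. After the identifications above this is precisely the statement that $(\hat{\pi}_{k_{0}})_{|\langle G, \overline{H} \rangle / \overline{H}}$ is injective, which is the first hypothesis of the theorem. The second hypothesis, namely $\overline{\langle \phi \rangle} \subset \langle G, \overline{H} \rangle$ for every $\phi \in G$, reads $\overline{\langle \phi \rangle} \subset G$ for every $\phi \in G$ once $H = \{Id\}$, and this is exactly the standing assumption of the corollary.

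With both hypotheses verified, Theorem \ref{teo:charfdifd} yields that $G/\{Id\}$ is finite dimensional. Since $\dim G$ is by definition $\dim G/\{Id\}$ (Definition \ref{def:dim}), this is exactly the assertion that $G$ is finite dimensional. There is essentially no obstacle beyond bookkeeping here: the whole substance of the argument resides in Theorem \ref{teo:charfdifd}, and the only point requiring care is the translation between the finite determination property and the injectivity of the restricted projection $(\pi_{k_{0}})_{|G}$, which is immediate from Definition \ref{def:findet} and Remark \ref{rem:fdc}.
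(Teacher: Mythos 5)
Your proposal is correct and matches the paper's intent exactly: the corollary is stated immediately after Theorem \ref{teo:charfdifd} precisely as its specialization to $H=\{Id\}$, and your verification that finite determination gives the injectivity of $(\hat{\pi}_{k_0})_{|G}$ while the hypothesis $\overline{\langle\phi\rangle}\subset G$ gives the closedness condition is the whole argument.
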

Let us remark, regarding Corollary \ref{cor:charfdifd}, that
the condition $\overline{\langle \phi \rangle} \subset G$ is easy to verify if
we know the Jordan-Chevalley decomposition of the elements of $G$.
\begin{lem}
\label{lem:foiss}
Let $G$ be a subgroup of $\mathrm{GL}(n,{\mathbb C})$ such that all its elements have finite order.
Then all elements of $\overline{G}$ are semisimple.
\end{lem}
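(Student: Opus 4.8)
The plan is to reduce everything, via the Jordan--Chevalley decomposition, to a statement about the identity component, and then to exploit that a torsion linear group is virtually abelian. First I would record the elementary input: every $g\in G$ has finite order, so its minimal polynomial divides some $X^{m}-1$, which has distinct roots over $\mathbb C$; hence $g$ is diagonalizable (semisimple) and all its eigenvalues are roots of unity. Writing the multiplicative Jordan decomposition $g=g_{s}g_{u}$ and invoking Chevalley's Theorem \ref{teo:chevj} (so that $g_{s},g_{u}\in\overline{G}$ whenever $g\in\overline{G}$), the assertion that every element of $\overline{G}$ is semisimple is equivalent to $g_{u}=Id$ for all $g\in\overline{G}$, i.e.\ $\overline{G}$ has no nontrivial unipotent element.

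Next I would reduce to the identity component $\overline{G}^{0}$, which has finite index in $\overline{G}$. Given $g\in\overline{G}$, its class in the finite group $\overline{G}/\overline{G}^{0}$ has some order $m$, so $g^{m}\in\overline{G}^{0}$; since $g_{u}^{m}=(g^{m})_{u}$ and unipotent matrices form a torsion-free set in characteristic zero (if $g_{u}^{m}=Id$ then $m\log g_{u}=0$, forcing $\log g_{u}=0$ and $g_{u}=Id$), it suffices to prove that every element of $\overline{G}^{0}$ is semisimple.

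The heart of the argument is to show $\overline{G}^{0}$ is abelian and consists of semisimple elements. Here I would use the Jordan--Schur theorem: a torsion subgroup of $\mathrm{GL}(n,\mathbb C)$ has an abelian subgroup $A$ of finite index. As a subgroup of $G$, this $A$ is a commuting family of semisimple matrices, hence simultaneously diagonalizable; its Zariski closure $\overline{A}$ is therefore conjugate into the diagonal torus and all of its elements are semisimple. Because $[G:A]<\infty$, writing $G=\bigcup_{i}g_{i}A$ as a finite union of cosets and using that Zariski closure commutes with finite unions and with left translation gives $\overline{G}=\bigcup_{i}g_{i}\overline{A}$; thus $\overline{A}$ has finite index in $\overline{G}$ and, being closed of finite index, it is also open and so contains the connected subgroup $\overline{G}^{0}$. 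Consequently every element of $\overline{G}^{0}$ is semisimple, which closes the reduction of the previous paragraph.

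The step I expect to be the main obstacle is precisely ruling out a nonsolvable (e.g.\ nearly simple) closure: a priori nothing in the hypotheses forbids a torsion group from being Zariski dense in a group like $\mathrm{SL}_{2}(\mathbb C)$, which is riddled with unipotents, and the purely local/structural facts quoted earlier do not by themselves exclude this. The Jordan--Schur theorem is exactly the external input that rules it out. I would note that if one only knew $\overline{G}^{0}$ to be solvable, one could avoid Jordan--Schur: by Lie--Kolchin $\overline{G}^{0}$ would be triangularizable, the commutators of the torsion, finite-index subgroup $G\cap\overline{G}^{0}$ would be torsion unipotent elements and hence trivial, so $\overline{G}^{0}$ would be abelian, and the simultaneous-diagonalization argument above would finish it. The genuine difficulty is therefore the passage from ``torsion'' to ``virtually abelian,'' and I would flag this reliance on Jordan--Schur as the crucial ingredient of the proof.
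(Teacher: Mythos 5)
Your proof is correct, but it takes a genuinely different route to the one decisive step. The paper gets the needed structure from the Tits alternative: a torsion group cannot contain a nonabelian free group, so $G$ is virtually solvable; then $\overline{G}_{0}$ is solvable and connected, hence triangularizable by Lie--Kolchin, the commutators of $H=G\cap\overline{G}_{0}$ are torsion unipotents and therefore trivial, so $H$ is abelian and diagonalizable, and the finite-index power argument ($A^{k}\in\overline{H}$ semisimple forces $A$ semisimple) finishes. You instead import the Jordan--Schur theorem to pass directly from ``torsion'' to ``virtually abelian,'' simultaneously diagonalize the abelian finite-index subgroup $A$, observe that $\overline{G}=\bigcup_{i}g_{i}\overline{A}$ so that the closed subgroup $\overline{A}$ has finite index and contains $\overline{G}^{0}$, and conclude by the same power argument. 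Both proofs hinge on exactly one substantial external theorem; Jordan--Schur is the older, tailor-made tool for torsion linear groups and makes the abelian structure immediate, whereas the Tits alternative is a heavier input but plugs straight into the solvable-group machinery (Lie--Kolchin, $\overline{G}_{0}$, closures of finite-index subgroups) that the paper has already set up. Your closing remark --- that solvability of $\overline{G}^{0}$ plus Lie--Kolchin would let one bypass Jordan--Schur --- essentially reconstructs the paper's argument, modulo the observation that solvability comes for free from Tits. One small economy available to you: since every element of $\overline{G}$ already has a power in $\overline{A}$, the intermediate reduction to $\overline{G}^{0}$ is not needed.
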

\begin{proof}
The Tits alternative \cite{Tits}
implies that either $G$ is virtually solvable or it contains a non-abelian free group.
Since clearly the second option is impossible, $G$ is virtually solvable.
Hence $\overline{G}_{0}$ is solvable. Since it is also connected, the group
$\overline{G}_{0}$ is upper triangular up to a linear change of coordinates by Lie-Kolchin's
theorem (cf. \cite[section 17.6, p. 113]{Humphreys})

We denote $H = G \cap \overline{G}_{0}$; it is a finite index normal subgroup of $G$.
The derived group $H'$ of $H$ consists of unipotent
upper triangular matrices. They are also semisimple by hypothesis and as a consequence
$H'$ is the trivial group and
$H$ is abelian. Moreover $H$ consists of semisimple elements; hence
$H$ and then
$\overline{H}$ are diagonalizable. Since $H$ is a finite index normal subgroup of $G$,
$\overline{H}$ is a finite index normal subgroup of $\overline{G}$.
An element $A$ of $\overline{G}$ satisfies $A^{k} \in \overline{H}$ for some
$k \in {\mathbb N}$. Since $A^{k}$ is semisimple, $A$ is semisimple for any $A \in \overline{G}$.
\end{proof}
\section{Families of finite dimensional groups}
\label{sec:ffdg}
The finite dimensional subgroups of $\diffh{}{}$ can be characterized, they are the solvable groups.
\begin{pro}
\label{pro:fd1fd}
Let $G$ be a subgroup of $\diffh{}{}$. Then the following conditions are equivalent:
\begin{enumerate}
\item $G$ is solvable.
\item $G$ has the finite determination property.
\item $G$ is finite dimensional.
\end{enumerate}
\end{pro}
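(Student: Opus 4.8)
The plan is to establish the cycle $(3)\Rightarrow(2)\Rightarrow(1)\Rightarrow(3)$. The first implication $(3)\Rightarrow(2)$ is immediate: every finite dimensional subgroup of $\diffh{}{}$ has the finite determination property by Remark \ref{rem:fdnifd}.

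For $(2)\Rightarrow(1)$ I would use that, in one variable, the jet groups $D_{k}$ are solvable. In the monomial basis $x, x^{2}, \ldots, x^{k}$ of $\hat{\mathfrak m}/\hat{\mathfrak m}^{k+1}$ the operator $\phi_{k}$ is triangular, because $\phi(x)^{j}$ has lowest order term $a_{1}^{j}x^{j}$ whenever $\phi(x)=a_{1}x+a_{2}x^{2}+\cdots$. Thus $D_{k}$ is a subgroup of the solvable group of invertible triangular matrices. If $G$ has the finite determination property, then $\pi_{k}\colon G\to D_{k}$ is injective for some $k$ (Remark \ref{rem:fdc}), so $G$ is isomorphic to the subgroup $G_{k}^{*}$ of the solvable group $D_{k}$ and is therefore solvable.

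The implication $(1)\Rightarrow(3)$ is the core of the statement. Assume $G$ solvable; then $\overline{G}$ is solvable by Lemma \ref{lem:pap}, and so is its connected component $\overline{G}_{0}$, of some derived length $\ell$. Let $\mathfrak g$ be the Lie algebra of $\overline{G}$ (Proposition \ref{pro:liecorr}) and let $\mathfrak g_{k}$ be its image in $L_{k}$, i.e. the Lie algebra of the connected solvable algebraic group $G_{k,0}$. Each $\mathfrak g_{k}$ is then solvable of derived length at most $\ell$, and since the projections $\mathfrak g\to\mathfrak g_{k}$ are surjective Lie algebra homomorphisms with trivial common kernel, the $\ell$-th derived algebra $\mathfrak g^{(\ell)}$ vanishes; hence $\mathfrak g$ is a solvable Lie subalgebra of $\hat{\mathfrak X}({\mathbb C},0)$. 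I will prove below that such a subalgebra has dimension at most $2$. Granting this, $\dim G_{k}=\dim G_{k,0}=\dim\mathfrak g_{k}\leq\dim\mathfrak g\leq 2$ for every $k$, so $\dim G=\lim_{k}\dim G_{k}\leq 2<\infty$, which is $(3)$.

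The dimension bound is the step I expect to be the main obstacle, and it rests on the rigidity of vector fields in one variable. The key elementary fact is that the centralizer of any nonzero $X=f\,\partial/\partial x$ in $\hat{\mathfrak X}({\mathbb C},0)$ is one dimensional: if $g\,\partial/\partial x$ commutes with $X$ then $fg'-f'g=0$, so $(g/f)'=0$ and $g\in{\mathbb C}f$. In particular any abelian subalgebra is at most one dimensional. Now let $\mathfrak g\neq 0$ be solvable and let $\mathfrak a=\mathfrak g^{(\ell-1)}$ be its last nonzero derived ideal; being abelian it equals $\langle A\rangle$, and being an ideal it yields a linear form $\chi\colon\mathfrak g\to{\mathbb C}$ with $[X,A]=\chi(X)A$ for all $X\in\mathfrak g$. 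Since $[X,A]=0$ forces $X\in\langle A\rangle$ by the centralizer computation, the kernel of $\chi$ is exactly $\langle A\rangle$, whence $\dim\mathfrak g=\dim\ker\chi+\dim\chi(\mathfrak g)\leq 1+1=2$. (A comparison of leading orders shows that a two dimensional $\mathfrak g$ must be formally conjugate to $\langle x\,\partial/\partial x,\ x^{2}\,\partial/\partial x\rangle$.) This completes $(1)\Rightarrow(3)$ and closes the cycle.
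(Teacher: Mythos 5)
Your proof is correct, but it takes a genuinely different and more self-contained route than the paper. The paper disposes of $(1)\Leftrightarrow(2)$ by citing Loray's Th\'{e}or\`{e}me 1.4.1 and then gets $(3)$ into the loop purely formally, using that $G$ is solvable iff $\overline{G}$ is (Lemma \ref{lem:pap}) and that $G$ is finite dimensional iff $\overline{G}$ is finitely determined (Remark \ref{rem:fdefd}); no new one-dimensional geometry appears in its proof. You instead prove the cycle $(3)\Rightarrow(2)\Rightarrow(1)\Rightarrow(3)$ directly: your $(2)\Rightarrow(1)$ via triangularity of $D_k$ in the monomial basis is an elementary proof of the relevant half of the cited result, and your $(1)\Rightarrow(3)$ replaces the detour through finite determination of $\overline{G}$ by the rigidity statement that a solvable subalgebra of $\hat{\mathfrak X}({\mathbb C},0)$ has dimension at most $2$ (centralizers of nonzero formal vector fields in one variable are lines, so the last nonzero derived ideal is a line and the character $\chi$ has one-dimensional kernel). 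What your approach buys is independence from the external reference and, as a bonus, the quantitative bound $\dim G\leq 2$ that the paper only records in the remark following the proposition; what the paper's approach buys is brevity and the reuse of its general $G$-versus-$\overline{G}$ dictionary. One point you should make explicit: you use that ${\mathfrak g}_k:=\mathrm{Lie}(G_{k,0})$ equals the image of ${\mathfrak g}$ under $\pi_k$ (you need surjectivity, not just containment, to get $\dim {\mathfrak g}_k\leq\dim{\mathfrak g}$); this follows from the surjectivity of $(d\pi_{k+1,k})_{Id}:{\mathfrak g}_{k+1}\to{\mathfrak g}_k$ established in the proof of Lemma \ref{lem:dc} together with a projective-limit lifting of any element of ${\mathfrak g}_k$ to an element of ${\mathfrak g}$, and likewise your reduction of solvability of ${\mathfrak g}$ to that of the ${\mathfrak g}_k$ is fine, but neither step is completely automatic and both deserve a sentence.
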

\begin{proof}
The items $(1)$ and $(2)$ are equivalent
(cf. \cite[Th\'{e}or\`{e}me 1.4.1]{Loray5b}).
%
%
%
%
%
%

Let us show $(1) \implies (3)$. The group $\overline{G}$ is solvable by
Lemma \ref{lem:pap}. Since $(1) \implies (2)$, $\overline{G}$ has the finite
determination property and then $G$ is finite dimensional.

Let us prove $(3) \implies (1)$. Since $G$ is finite dimensional, $\overline{G}$
has the finite determination property. The property $(2) \implies (1)$
implies that $\overline{G}$ and then $G$ are solvable.
\end{proof}
\begin{rem}
Any solvable subgroup $G$ of $\diffh{}{}$ satisfies 
\[ \dim G = \dim \overline{G} \leq 2 \] 
by the formal classification of such groups
(cf. \cite[Theorem 5.3]{rib:cimpa}  \cite[section $6B_{3}$]{Ilya-Yako}).
\end{rem}
\begin{rem}
Notice that solvable subgroups of $\diffh{}{n}$ are not in general finite dimensional
for $n \geq 2$ (cf. Remark \ref{rem:fdefd}).
\end{rem}

It is very easy to use the extension theorems in section \ref{sec:fdgfd} to find a big class of examples
of finite dimensional subgroups of formal diffeomorphisms.
\begin{teo}
\label{teo:ext}
Let $G$ be a subgroup of $\diffh{}{n}$. Suppose that $G$ has a subnormal series
\begin{equation}
\label{equ:norp}
 \{Id\} =G^{0} \lhd G^{1} \lhd \ldots \lhd G^{m} = H
\end{equation}
such that $G^{j+1}/G^{j}$ is either
\begin{itemize}
\item finite or
\item abelian and finitely generated in the extended sense or
\item a connected Lie group (cf. Definition \ref{def:lie})
\end{itemize}
for any $0 \leq j < m$. Then $G$ is finite dimensional.
\end{teo}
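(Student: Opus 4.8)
The plan is to deduce the theorem directly from the additivity of dimension along a tower of subgroups, so that all that remains is a case-by-case check that each single step of the series is finite dimensional; the genuine content is already contained in the extension results proved above.

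First I would apply Proposition \ref{pro:elems} to the chain $\{Id\} = G^{0} \subset G^{1} \subset \ldots \subset G^{m} = G$, obtaining
\[ \dim G = \dim G^{m}/G^{0} = \sum_{j=0}^{m-1} \dim G^{j+1}/G^{j} . \]
More importantly, the same proposition tells us that $G = G^{m}/G^{0}$ is finite dimensional if and only if $G^{j+1}/G^{j}$ is finite dimensional for every $0 \leq j < m$. Since the series is subnormal, each $G^{j}$ is normal in $G^{j+1}$, which is precisely the hypothesis under which the three quotient types in the statement are defined and under which the relevant propositions apply.

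Then I would dispose of the three cases one at a time. If $G^{j+1}/G^{j}$ is finite, so that $G^{j}$ has finite index in $G^{j+1}$, then $\dim G^{j+1}/G^{j} = 0$ by Lemma \ref{lem:fis}. If $G^{j+1}/G^{j}$ is abelian and finitely generated in the extended sense, then $\dim G^{j+1}/G^{j} < \infty$ by Proposition \ref{pro:fgaifd}. If $G^{j+1}/G^{j}$ is a connected Lie group, then $\dim G^{j+1}/G^{j} < \infty$ by Proposition \ref{pro:lieialge}. Hence each step of the series is finite dimensional.

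Combining the two observations finishes the proof: every summand above is finite, so $\dim G < \infty$. I do not expect any real obstacle in this argument, since the hard analytic and algebraic-group work is carried out in the extension propositions themselves (most notably Proposition \ref{pro:lieialge}, which reduces a connected Lie extension to its algebraic derived group together with an abelian finitely generated quotient). The present statement merely packages those results through the tower decomposition, and the only point deserving a moment's care is that the notion of finite-dimensional extension appearing in Proposition \ref{pro:elems} agrees with the one used in Lemma \ref{lem:fis}, Proposition \ref{pro:fgaifd} and Proposition \ref{pro:lieialge} --- which it does, both being governed by Definition \ref{def:dim}.
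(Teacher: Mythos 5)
Your proof is correct and follows exactly the route of the paper: Proposition \ref{pro:elems} to reduce to the individual steps of the subnormal series, then Lemma \ref{lem:fis}, Proposition \ref{pro:fgaifd} and Proposition \ref{pro:lieialge} for the three types of quotient. The paper states this in one line; you have merely spelled out the same argument.
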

\begin{proof}
The result is a straightforward consequence of Proposition \ref{pro:elems}, Lemma
\ref{lem:fis} and Propositions \ref{pro:fgaifd} and \ref{pro:lieialge}.
\end{proof}
For instance a finite-by-cyclic-by-cyclic-by-finite-by-cyclic group of formal diffeomorphisms
is finite dimensional. Anyway we think that it is interesting to apply the extension
approach laid out in section \ref{sec:fdgfd} to show
that several distinguished classes of groups are always finite dimensional.
Our first targets are polycyclic groups.
\begin{defi}
\label{def:norp}
Let $G$ be a group. We say that $G$ is {\it polycyclic} if it has a subnormal series
as in Equation (\ref{equ:norp})
such that $G^{j+1}/G^{j}$ is cyclic for any $0 \leq j < m$.
\end{defi}
\begin{rem}
A group $G$ is polycyclic if and only if is solvable and every subgroup of $G$ is finitely generated
(cf. \cite[Theorem 5.4.12]{Robinson}).
\end{rem}
\begin{teo}
\label{teo:vp}
Let $G$ be a virtually polycyclic subgroup of $\diffh{}{n}$. Then
$G$ is finite dimensional.
\end{teo}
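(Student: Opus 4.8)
The plan is to exhibit $G$ as a finite tower of cyclic and finite extensions of the trivial group, and then to invoke Theorem \ref{teo:ext}, which is tailor-made for exactly this situation. First I would reduce to the polycyclic case. By definition of virtual polycyclicity, $G$ has a finite index polycyclic subgroup; since polycyclicity is inherited by subgroups (a group is polycyclic if and only if it is solvable with every subgroup finitely generated), the remark following Definition \ref{def:vir} allows me to choose a finite index \emph{normal} subgroup $H \lhd G$ that is polycyclic. Making $H$ normal is the point I would be most careful about, since it is what guarantees that the enlarged chain built below is genuinely subnormal.

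Next I would write down a subnormal series witnessing the polycyclicity of $H$, namely a chain $\{Id\} = G^{0} \lhd G^{1} \lhd \ldots \lhd G^{m} = H$ in which each quotient $G^{j+1}/G^{j}$ is cyclic. A cyclic quotient is in particular abelian and is generated, modulo $G^{j}$, by a single element, so $G^{j+1}/G^{j}$ is abelian and finitely generated in the extended sense (no one-parameter generators are even needed). Appending $G$ on top of this chain, which is legitimate because $H \lhd G$, produces a subnormal series $\{Id\} = G^{0} \lhd G^{1} \lhd \ldots \lhd G^{m} = H \lhd G$ in which every quotient below $H$ is cyclic and the top quotient $G/H$ is finite. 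Each quotient is therefore of one of the types permitted in Theorem \ref{teo:ext}.

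Theorem \ref{teo:ext} then yields $\dim G < \infty$ immediately. Alternatively, to make the argument self-contained I would assemble the three ingredients that underlie that theorem: Proposition \ref{pro:elems} (finite dimensionality is additive along a tower, so $\dim G = \sum_{j} \dim G^{j+1}/G^{j} + \dim G/H$), Lemma \ref{lem:fis} (the finite extension $G/H$ contributes $0$), and Proposition \ref{pro:fgaifd} (each finitely generated abelian extension $G^{j+1}/G^{j}$ is finite dimensional). Combining these gives $\dim G = \sum_{j} \dim G^{j+1}/G^{j} < \infty$, as desired.

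I do not anticipate a serious obstacle, since all of the analytic and algebraic-group content has already been packaged into the extension machinery of Section \ref{sec:fdgfd}. The only genuine points of care are the reduction to a normal polycyclic subgroup (so that $H \lhd G$ and the spliced series is subnormal) and the elementary observation that a cyclic quotient counts as abelian and finitely generated in the extended sense. With these in hand the conclusion follows directly from Theorem \ref{teo:ext}.
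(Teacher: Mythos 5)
Your proof is correct and follows essentially the same route as the paper, whose entire argument is the one-line observation that a virtually polycyclic group is cyclic-by-$\cdots$-by-cyclic-by-finite together with an appeal to Theorem \ref{teo:ext}. Your version merely makes explicit the two points the paper leaves implicit, namely the passage to a finite index \emph{normal} polycyclic subgroup and the fact that a cyclic quotient is an abelian, finitely generated (in the extended sense) extension.
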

\begin{proof}
A virtually polycyclic group is a cyclic-by-$\ldots$-by-cyclic-by-finite group.
Therefore $G$ is finite dimensional by Theorem \ref{teo:ext}.
\end{proof}
\begin{defi}
We say that a group $G$ is {\it supersolvable} if  it has a normal series in which all the
factors are cyclic groups.
\end{defi}
\begin{rem}
The definition is very similar to Definition \ref{def:norp}.
Anyway supersolubility is stronger since we require the groups $G^j$
in Equation (\ref{equ:norp}) to be normal in $G$ for $0 \leq j \leq m$.
\end{rem}
\begin{cor}
\label{cor:sup}
Let $G$ be a virtually supersolvable subgroup of $\diffh{}{n}$.
Then $G$ is finite dimensional.
\end{cor}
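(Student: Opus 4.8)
The plan is to reduce the statement to Theorem \ref{teo:vp}, which already establishes that virtually polycyclic subgroups of $\diffh{}{n}$ are finite dimensional. The only point to verify is a purely group-theoretic inclusion of classes: every supersolvable group is polycyclic, and consequently every virtually supersolvable group is virtually polycyclic. Once this is in place, the corollary is immediate.

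First I would compare the two relevant definitions. A supersolvable group admits, by definition, a \emph{normal} series with cyclic factors, i.e. a series $\{Id\}=G^{0} \lhd G^{1} \lhd \ldots \lhd G^{m}=H$ in which each $G^{j}$ is normal in the whole group $H$ and every quotient $G^{j+1}/G^{j}$ is cyclic. A polycyclic group (Definition \ref{def:norp}) requires only the weaker condition of a \emph{subnormal} series with cyclic factors, that is, $G^{j} \lhd G^{j+1}$ together with $G^{j+1}/G^{j}$ cyclic. Since a normal series is in particular a subnormal series, any supersolvable group is polycyclic.

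Next I would transfer this inclusion to the virtual setting. If $G$ is virtually supersolvable, then by Definition \ref{def:vir} it contains a finite index subgroup $H$ that is supersolvable, hence polycyclic by the previous step. Thus $H$ is a finite index polycyclic subgroup of $G$, so $G$ is virtually polycyclic. Applying Theorem \ref{teo:vp} to $G$ then yields that $G$ is finite dimensional.

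I do not expect any genuine obstacle: the corollary is a specialization of Theorem \ref{teo:vp}, and all the analytic and algebraic-group content (the behaviour under Zariski-closure, the finite dimensionality of cyclic and finite extensions) is already contained in that theorem and ultimately in the extension machinery of Theorem \ref{teo:ext}. The present statement merely records that the supersolvable hypothesis, being stronger than polycyclicity, is covered as well.
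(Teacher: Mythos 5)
Your proposal is correct and follows exactly the paper's own argument: the paper proves Corollary \ref{cor:sup} by noting that a virtually supersolvable group is virtually polycyclic and then invoking Theorem \ref{teo:vp}. You merely spell out the elementary observation that a normal series with cyclic factors is in particular a subnormal series with cyclic factors, which the paper leaves implicit.
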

\begin{proof}
A virtually supersolvable group is virtually polycyclic. The
result is a consequence of Theorem \ref{teo:vp}.
\end{proof}
Let us focus now on nilpotent groups.
\begin{teo}
\label{teo:vnfgifd}
Let $G$ be a virtually nilpotent subgroup of $\diffh{}{n}$. Suppose
that $G$ is finitely generated in the extended sense.
Then $G$ is finite dimensional. In particular subgroups of $\diffh{}{n}$
of polynomial growth are finite dimensional.
\end{teo}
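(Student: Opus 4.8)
The plan is to trade the extended finite generation for ordinary finite generation and then reduce to the polycyclic case already handled in Theorem \ref{teo:vp}. First I would apply Lemma \ref{lem:fginf} with $H=\{Id\}$: since $G$ is finitely generated in the extended sense, there is a subgroup $G_{+}$ with $G_{+}\subset G$, $\overline{G}_{+}=\overline{G}$, and $G_{+}$ finitely generated in the ordinary sense. Because the dimension does not distinguish a subgroup of $\diffh{}{n}$ from its Zariski-closure, we get $\dim G=\dim \overline{G}=\dim \overline{G}_{+}=\dim G_{+}$, so it suffices to prove that $G_{+}$ is finite dimensional.

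Next I would determine the group-theoretic type of $G_{+}$. As a subgroup of the virtually nilpotent group $G$, the group $G_{+}$ is itself virtually nilpotent: if $N$ is a finite index nilpotent normal subgroup of $G$, then $N\cap G_{+}$ is a finite index nilpotent subgroup of $G_{+}$. Since $G_{+}$ is finitely generated, every finite index subgroup of it is again finitely generated, so $N\cap G_{+}$ is a finitely generated nilpotent group, hence polycyclic (cf. \cite{Robinson}). Therefore $G_{+}$ is virtually polycyclic, and Theorem \ref{teo:vp} yields $\dim G_{+}<\infty$, whence $\dim G<\infty$.

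For the final assertion about polynomial growth, a subgroup of $\diffh{}{n}$ of polynomial growth is finitely generated by definition and virtually nilpotent by Gromov's theorem on groups of polynomial growth; it thus satisfies the hypotheses of the theorem and is finite dimensional.

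The only genuinely substantive point is the reduction step through Lemma \ref{lem:fginf}, which replaces the uncountable one-parameter flows $\{\mathrm{exp}(tX_{j}):t\in{\mathbb C}\}$ by finitely many of their elements while preserving the Zariski-closure, and hence the dimension; once extended generation has been converted into ordinary finite generation, the classical fact that finitely generated virtually nilpotent groups are virtually polycyclic makes the statement fall out of Theorem \ref{teo:vp}. The detail to check carefully is that $N\cap G_{+}$ is finitely generated, which rests on the standard fact that finite index subgroups of finitely generated groups are finitely generated.
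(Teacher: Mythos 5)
Your argument is correct and follows essentially the same route as the paper: use Lemma \ref{lem:fginf} to replace extended finite generation by ordinary finite generation without changing the Zariski-closure, observe that a finitely generated virtually nilpotent group is virtually polycyclic (finite index subgroups of finitely generated groups are finitely generated, and finitely generated nilpotent groups are polycyclic), and conclude via Theorem \ref{teo:vp}, with Gromov's theorem handling the polynomial growth case. The only cosmetic difference is that you keep $G_{+}$ explicit and intersect it with $N$, whereas the paper simply replaces $G$ by $G_{+}$ from the outset.
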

\begin{proof}
There exists a subgroup $G_{+}$ of $G$ such that
$G_{+}$ is finitely generated and $\overline{G}_{+} = \overline{G}$
by Lemma \ref{lem:fginf}. Since $G_{+}$ is a subgroup of $G$, it
is virtually nilpotent. Thus up to replace $G$ with $G_{+}$ we can suppose
that $G$ is finitely generated.

Let $H$ be a finite index normal nilpotent subgroup of $G$.
Since $H$ is a finite index subgroup of a finitely generated group,
it is finitely generated (cf. \cite[Theorem 1.6.11]{Robinson}).
Any finitely generated nilpotent group is polycyclic (cf. \cite[Theorem 17.2.2]{karga}).
Therefore $G$ is virtually polycyclic and then finite dimensional by Theorem \ref{teo:vp}.
%
%

By a theorem of Gromov \cite{Gromov:growth}, the groups of polynomial growth are exactly
the finitely generated virtually nilpotent groups. Hence every subgroup of $\diffh{}{n}$
of polynomial growth is finite dimensional.
\end{proof}
We provide a sort of reciprocal of the previous theorem in the setting of unipotent
subgroups of formal diffeomorphisms.
\begin{lem}
\label{lem:fduin}
Let $G$ be a unipotent subgroup of $\diffh{}{n}$. Suppose
that $G$ has the finite determination property. Then $G$ is nilpotent.
\end{lem}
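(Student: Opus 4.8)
The plan is to exploit the finite determination hypothesis to transfer the problem from the infinite-dimensional group $\diffh{u}{n}$, which is genuinely far from nilpotent, to a single jet level $D_{k}$ where the classical structure theory of unipotent matrix groups applies. Concretely, finite determination lets us realize $G$ as a group of unipotent matrices in a fixed finite-dimensional space, and then Kolchin's theorem forces triangularizability, hence nilpotence.

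First I would invoke Remark \ref{rem:fdc}: the finite determination property supplies $k \in {\mathbb N}$ such that the projection $\pi_{k}: G \to D_{k}$ is injective. Thus $\pi_{k}$ is an isomorphism of groups from $G$ onto its image $G_{k}^{*} = \{ \phi_{k} : \phi \in G \}$, and it suffices to prove that $G_{k}^{*}$ is nilpotent. Since $G$ is a unipotent subgroup of $\diffh{}{n}$, every $\phi \in G$ is unipotent, so by Remark \ref{rem:u} each $\phi_{k}$ is a unipotent element of $D_{k} \subset \mathrm{GL}(\hat{\mathfrak m}/ \hat{\mathfrak m}^{k+1})$. Hence $G_{k}^{*}$ is a subgroup of $\mathrm{GL}(\hat{\mathfrak m}/ \hat{\mathfrak m}^{k+1})$ all of whose elements are unipotent. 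At this point I would apply Kolchin's theorem (cf. \cite{Humphreys}): a subgroup of $\mathrm{GL}(N,{\mathbb C})$ consisting of unipotent matrices is simultaneously conjugate to a subgroup of the upper unitriangular matrices. As the group of $N \times N$ upper unitriangular matrices is nilpotent (of class $N-1$, where $N = \dim (\hat{\mathfrak m}/ \hat{\mathfrak m}^{k+1})$), its subgroup $G_{k}^{*}$ is nilpotent, and therefore $G \cong G_{k}^{*}$ is nilpotent.

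There is no genuine obstacle once this reduction is in place; the entire content of the argument is the passage to a finite jet level, and everything else is a direct citation of Kolchin's theorem. It is worth emphasizing that the finite determination hypothesis is indispensable: already for $n=1$ the group of formal diffeomorphisms tangent to the identity is unipotent but not nilpotent, since its associated graded Lie algebra is the non-nilpotent Lie algebra spanned by the vector fields $z_{1}^{j+1} \partial / \partial z_{1}$ with $j \geq 1$. The function of finite determination is precisely to bound the nilpotence class by a jet-level quantity such as $\dim (\hat{\mathfrak m}/ \hat{\mathfrak m}^{k+1})$. Finally, note that the conclusion asserts only nilpotence and not finite dimension; indeed the abelian example of Remark \ref{rem:fdnifd} is unipotent and finitely determined yet not finite dimensional, which is consistent with the present statement.
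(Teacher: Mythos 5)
Your proof is correct and follows essentially the same route as the paper: use finite determination to identify $G$ with its image $G_{k}^{*}$ at a single jet level, observe that this image consists of unipotent matrices, and conclude nilpotence from Kolchin's triangularization theorem. The additional remarks on the necessity of the hypothesis are accurate but not needed for the argument.
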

\begin{proof}
Since $G$ has the finite determination property,  there exists $k \in {\mathbb N}$
such that $\pi_k : G \to G_{k}^{*}$ is an isomorphism of groups.
Moreover $G_{k}^{*}$ is a unipotent algebraic matrix group.
 Unipotent groups of matrices are always triangularizable and then
nilpotent by Kolchin's theorem (cf. \cite[chapter V, p. 35]{Serre.Lie}).
Hence $G$ is nilpotent.
\end{proof}
\begin{cor}
Let $G$ be a unipotent subgroup of $\diffh{}{n}$. Suppose
that $G$ is finitely generated in the extended sense 
and finitely determined. Then $G$ is finite dimensional.
\end{cor}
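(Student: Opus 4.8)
The plan is to combine the two immediately preceding results. Lemma \ref{lem:fduin} upgrades the finite determination property to outright nilpotence for unipotent groups, and Theorem \ref{teo:vnfgifd} then delivers finite dimensionality from nilpotence together with the finite generation hypothesis. The corollary is essentially the composition of these two statements, so the work reduces to checking that the hypotheses line up.

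First I would invoke Lemma \ref{lem:fduin}. Since $G$ is a unipotent subgroup of $\diffh{}{n}$ and has the finite determination property, that lemma gives immediately that $G$ is nilpotent. This is the step that consumes both the unipotence and the finite determination assumptions.

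Next, a nilpotent group is in particular virtually nilpotent: in the sense of Definition \ref{def:vir} one simply takes $H = G$ as the (index-one) finite index nilpotent subgroup. Since $G$ is moreover assumed finitely generated in the extended sense, Theorem \ref{teo:vnfgifd} applies verbatim and yields that $G$ is finite dimensional.

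There is no genuine obstacle here; the substance lies entirely in the two results being chained together, and the only thing to verify is that the hypotheses match. The unipotence and finite determination feed Lemma \ref{lem:fduin}, whose output (nilpotence, hence virtual nilpotence) together with the finite generation in the extended sense (carried over unchanged) is exactly what Theorem \ref{teo:vnfgifd} requires. Thus the proof is a two-line application of the preceding machinery.
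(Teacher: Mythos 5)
Your proposal is correct and follows exactly the paper's own argument: apply Lemma \ref{lem:fduin} to obtain nilpotence from unipotence and finite determination, then invoke Theorem \ref{teo:vnfgifd} using the finite generation in the extended sense. No further comment is needed.
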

\begin{proof}
The group $G$ is nilpotent by Lemma \ref{lem:fduin}. Thus it is finite dimensional 
by Theorem \ref{teo:vnfgifd}.
\end{proof}
\begin{rem}
Theorems \ref{teo:vp}, \ref{teo:vnfgifd} and Corollary \ref{cor:sup} admit straightforward
generalizations to extensions. For instance a finitely generated (in the extended sense)
virtually nilpotent extension of groups of formal diffeomorphisms is finite dimensional.
\end{rem}
Let us focus on solvable subgroups of $\diffh{}{n}$ that are not necessarily polycyclic.
In order to show $\dim G < \infty$ for a solvable subgroup of $\diffh{}{n}$ it suffices
to consider finite generation properties on the derived groups of $G$.
\begin{pro}
\label{pro:solcha}
Let $G$ be a solvable subgroup of $\diffh{}{n}$. Suppose that
$G^{(\ell)}/G^{(\ell +1)}$ is finitely generated in the extended sense for any
$\ell \in {\mathbb Z}_{\geq 0}$. Then $G$ is finite dimensional.
\end{pro}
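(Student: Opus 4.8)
The plan is to realize the reversed derived series of $G$ as a finite subnormal series whose successive quotients are of the types allowed by Theorem~\ref{teo:ext}, and then invoke that theorem. Since $G$ is solvable there is an integer $m \in {\mathbb Z}_{\geq 0}$, the derived length of $G$, with $G^{(m)} = \{Id\}$; reading the derived series backwards yields the finite chain
\[ \{Id\} = G^{(m)} \lhd G^{(m-1)} \lhd \ldots \lhd G^{(1)} \lhd G^{(0)} = G. \]
This finiteness is the essential use of the solvability hypothesis, since it is what permits the finite-length extension machinery of Theorem~\ref{teo:ext} to apply at all.

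Next I would verify that this chain satisfies the hypotheses of Theorem~\ref{teo:ext}. Each $G^{(\ell+1)} = [G^{(\ell)}, G^{(\ell)}]$ is the derived group of $G^{(\ell)}$, hence a normal subgroup of $G^{(\ell)}$, so the chain is genuinely subnormal. Moreover $G^{(\ell)}/G^{(\ell+1)}$ is abelian by the defining property of the derived series, and by hypothesis it is finitely generated in the extended sense for every $\ell \in {\mathbb Z}_{\geq 0}$. Thus every factor falls under the second of the three allowed types of extension in Theorem~\ref{teo:ext}, namely abelian and finitely generated in the extended sense.

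Theorem~\ref{teo:ext} then yields immediately that $G$ is finite dimensional. If one prefers to avoid citing the packaged extension theorem, the same conclusion follows by combining Proposition~\ref{pro:fgaifd}, which shows that each factor $G^{(\ell)}/G^{(\ell+1)}$ is finite dimensional, with the additivity of codimension along the increasing chain above provided by Proposition~\ref{pro:elems}, giving $\dim G = \sum_{\ell=0}^{m-1} \dim G^{(\ell)}/G^{(\ell+1)} < \infty$. I do not anticipate any genuine obstacle: the substance has been front-loaded into the earlier extension results, and the argument reduces to the observation that a solvable group is a finite tower of abelian extensions, together with the standard normality and abelianity of consecutive derived quotients.
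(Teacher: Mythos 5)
Your proof is correct and follows essentially the same route as the paper: the derived series of a solvable group is a finite subnormal chain with abelian, finitely generated (in the extended sense) quotients, each finite dimensional by Proposition~\ref{pro:fgaifd}, and the conclusion follows from Theorem~\ref{teo:ext} (equivalently, the additivity in Proposition~\ref{pro:elems}). No issues.
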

\begin{proof}
Every extension of the form $G^{(\ell)}/G^{(\ell +1)}$ is abelian and then finite dimensional
by Proposition \ref{pro:fgaifd}. Since there exists $\ell$ such that $G^{(\ell)}=\{1\}$, $G$
is finite dimensional by Theorem \ref{teo:ext}.
\end{proof}
In the following our  goal is  substantially  weakening the finite generation hypotheses in
Proposition \ref{pro:solcha}.
\begin{defi}
We denote
\[ G_{s} = \{ \phi \in G : \phi = \phi_{s} \}, \ G^{s} = \{ \phi_{s} : \phi \in G \}, \]
\[ G_{u} = \{ \phi \in G : \phi = \phi_{u} \}, \ G^{u} = \{ \phi_{u} : \phi \in G \}, \]
$\overline{G}_{s} = (\overline{G})_{s}$
and $\overline{G}_{u} = (\overline{G})_{u}$.
\end{defi}
Notice that $G_s$ (resp. $G_u$)
is the set of semisimple (resp. unipotent) elements of $G$ whereas
$G^{s}$ (resp. $G^u$)  is the set of semisimple (resp. unipotent) parts of elements of $G$.
We can have $G^s \subsetneq G$ (resp. $G^u \subsetneq G$)
if the group $G$ is not splittable.

The next results are intended to show that under certain hypotheses a virtually solvable
subgroup $G$ of $\diffh{}{n}$ is finite dimensional if and only if $G_u$ is finite dimensional.
\begin{lem}
\label{lem:iatfis}
Let $H$ be a finite index normal subgroup of a subgroup $G$ of $\diffh{}{n}$.
Then $\overline{G}_{u} = \overline{H}_{u}$ and
$\overline{\langle G^{u} \rangle} = \overline{\langle H^{u} \rangle}$.
Moreover if $G$ is virtually solvable then $G_{u}$ and $\overline{G}_{u}$
are solvable groups.
\end{lem}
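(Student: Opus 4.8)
The plan is to handle the three assertions separately, using throughout that $\overline{H}$ is a finite index normal subgroup of $\overline{G}$ (Lemma \ref{lem:lficl}) and that the Zariski-closure of a cyclic group generated by a unipotent element is a connected one-parameter group (Remark \ref{rem:closu}).

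For the equality $\overline{G}_{u} = \overline{H}_{u}$, the inclusion $\overline{H}_{u} \subseteq \overline{G}_{u}$ is immediate from $\overline{H} \subseteq \overline{G}$. For the reverse inclusion I would take a unipotent $\phi \in \overline{G}$ and note that $\overline{\langle \phi \rangle} = \{ \phi^{t} : t \in {\mathbb C} \}$ is connected; at every jet level $\langle \phi \rangle_{k}$ is a connected subgroup of $G_{k}$ containing the identity, hence lies in $G_{k,0}$, so $\overline{\langle \phi \rangle} \subseteq \overline{G}_{0}$. Since $\overline{H}$ has finite index in $\overline{G}$, Lemma \ref{lem:fiscg} gives $\overline{G}_{0} \subseteq \overline{H}$, whence $\phi \in \overline{H}$ and therefore $\phi \in \overline{H}_{u}$.

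For $\overline{\langle G^{u} \rangle} = \overline{\langle H^{u} \rangle}$, the inclusion $\supseteq$ follows from $H^{u} \subseteq G^{u}$. For $\subseteq$ it suffices to show $G^{u} \subseteq \overline{\langle H^{u} \rangle}$. Given $\phi \in G$, finite index provides $m \geq 1$ with $\phi^{m} \in H$; uniqueness of the Jordan decomposition yields $(\phi^{m})_{u} = (\phi_{u})^{m}$, so $(\phi_{u})^{m} \in H^{u}$. Since $\phi_{u}$ is unipotent, $\overline{\langle \phi_{u} \rangle}$ and $\overline{\langle (\phi_{u})^{m} \rangle}$ both equal $\{ \phi_{u}^{t} : t \in {\mathbb C} \}$ by Remark \ref{rem:closu}, hence $\phi_{u} \in \overline{\langle (\phi_{u})^{m} \rangle} \subseteq \overline{\langle H^{u} \rangle}$.

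Finally, suppose $G$ is virtually solvable and let $K$ be a finite index normal solvable subgroup (such a normal $K$ exists since solubility is subgroup-closed). Applying the first part to $K \lhd G$ gives $\overline{G}_{u} = \overline{K}_{u}$, and $\overline{K}$ is solvable by Lemma \ref{lem:pap}. Exactly as in the first paragraph, every unipotent element of $\overline{K}$ lies in $\overline{K}_{0}$, so $\overline{K}_{u} = (\overline{K}_{0})_{u} = \varprojlim_{k \in {\mathbb N}} (K_{k,0})_{u}$. Each $K_{k,0}$ is a connected solvable algebraic matrix group, so by the Lie--Kolchin theorem its unipotent elements form a (closed, normal) subgroup; hence $\overline{K}_{u}$ is a subgroup of the solvable group $\overline{K}$ and is therefore itself solvable. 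Consequently $\overline{G}_{u} = \overline{K}_{u}$ is solvable, and $G_{u} = G \cap \overline{G}_{u}$ is a subgroup of a solvable group, hence solvable. The main obstacle is precisely this last point: the product of two unipotent formal diffeomorphisms need not be unipotent, so the fact that $\overline{G}_{u}$ is a \emph{group} (and not merely a set) genuinely relies on reducing to a connected solvable situation in which Lie--Kolchin applies.
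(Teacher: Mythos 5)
Your proof is correct and follows essentially the same route as the paper: unipotent elements lie in the connected component of the identity (Remark \ref{rem:closu}), which is shared by $\overline{G}$ and $\overline{H}$ via Lemma \ref{lem:fiscg}; powers reduce $G^{u}$ to $H^{u}$; and Lie--Kolchin on the connected solvable jet groups shows the unipotent elements form a (hence solvable) subgroup. The only difference is that you spell out the step $(\phi^{m})_{u}=(\phi_{u})^{m}$ justifying $\alpha^{m}\in H^{u}$, which the paper leaves implicit.
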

\begin{proof}
Since $\overline{H}$ is a finite index normal solvable subgroup of
$\overline{G}$ by Lemma \ref{lem:lficl},
we obtain $\overline{H}_{0} = \overline{G}_{0}$ by Lemma \ref{lem:fiscg}.
The unipotent elements of $\overline{G}$ (resp. $\overline{H}$) are
contained in  $\overline{G}_{0}$ (resp. $\overline{H}_{0}$) by Remark \ref{rem:closu}.
Hence we obtain $\overline{G}_{u} = \overline{H}_{u}$.

Given $\alpha \in G^{u}$, there exists $k \in {\mathbb N}$ such that
$\alpha^{k} \in H^{u}$.
Since
\[ \alpha \in \overline{\langle \alpha^{k} \rangle}  =
\{ \alpha^{t} : t \in {\mathbb C} \}, \]
we obtain
$\alpha \in  \overline{\langle \alpha^{k} \rangle} \subset  \overline{\langle H^{u} \rangle}$.
We deduce
$\overline{\langle G^{u} \rangle} \subset \overline{\langle H^{u} \rangle}$.
It is clear that $\overline{\langle H^{u} \rangle} \subset \overline{\langle G^{u} \rangle}$.
Thus  $\overline{\langle G^{u} \rangle} = \overline{\langle H^{u} \rangle}$ holds.

Let us show that $\overline{G}_{u}$ is a group if $G$ is virtually solvable.
By the first part of the proof we can suppose that $G$ is solvable.
The group $\overline{G}_{0}$ is solvable by Lemma \ref{lem:fiscc} and
$\overline{G}_{u} \subset \overline{G}_{0}$ by Remark \ref{rem:closu}.
Moreover since $G_{1,u}  \subset G_{1,0}$ and the latter group is solvable and connected,
we can suppose, up to a linear change of coordinates,
that all elements of $\overline{G}_{u}$ have linear parts that
are unipotent upper triangular matrices by Lie-Kolchin's theorem. Since the set of
unipotent upper triangular matrices is a group we deduce that $\overline{G}_{u}$
and then $G_{u}$ are groups.
\end{proof}
\begin{teo}
\label{teo:vsfg}
Let $G$ be a virtually solvable subgroup of $\diffh{}{n}$
such that $G$ is finitely generated over $G_{u}$ in the extended sense.
Then $G/G_u$ is finite dimensional.
In particular $G$ is finite dimensional if and only if $G_{u}$ is finite dimensional.
%
\end{teo}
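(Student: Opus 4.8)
The plan is to prove $\dim G/G_u < \infty$; the final ``in particular'' is then immediate, since applying Proposition \ref{pro:elems} to the tower $\{Id\} \subset G_u \subset G$ gives $\dim G = \dim G/G_u + \dim G_u$, so $\dim G < \infty$ if and only if $\dim G_u < \infty$. Recall that $G_u$ is a group by Lemma \ref{lem:iatfis} (here virtual solvability is used), and it is normal in $G$ because unipotency of the linear part is preserved under conjugation. First I would reduce to ordinary finite generation: by Lemma \ref{lem:fginf} applied with $H = G_u$ there is a subgroup $G_{+}$ with $G_u \subset G_{+} \subset G$, $\overline{G_{+}} = \overline{G}$ and $G_{+}$ finitely generated over $G_u$. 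Since $G_{+} \subset G$ we have $(G_{+})_u = G_u$, and since $\overline{G_{+}} = \overline{G}$ implies $(G_{+})_k = G_k$ for every $k$, Proposition \ref{pro:elems} gives $\dim G/G_{+} = 0$ and hence $\dim G/G_u = \dim G_{+}/G_u$. Thus I may assume henceforth that $G = \langle G_u, \phi_1, \ldots, \phi_l \rangle$ is finitely generated over $G_u$.

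The core step is to prove that $G/G_u$ is virtually abelian and finitely generated. It is finitely generated, being generated by the classes of $\phi_1, \ldots, \phi_l$. For virtual commutativity, note that $\overline{G}_0$ is solvable by Lemma \ref{lem:fiscc}, so its derived group is unipotent: concretely $[G_{k,0}, G_{k,0}]$ is the derived group of a connected solvable linear algebraic group and hence consists of unipotent matrices for every $k$, and since $\pi_k([\overline{G}_0, \overline{G}_0]) = [\pi_k(\overline{G}_0), \pi_k(\overline{G}_0)] \subseteq [G_{k,0}, G_{k,0}]$ we obtain $[\overline{G}_0, \overline{G}_0] \subseteq \overline{G}_u$. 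Moreover any element of $\overline{G}_u$ lying in $G$ is a unipotent element of $G$, so $\overline{G}_u \cap G \subseteq G_u$. Setting $K = (G \cap \overline{G}_0)\, G_u$, the subgroup $G \cap \overline{G}_0$ is a finite index normal subgroup of $G$ (Remark \ref{rem:ccia}), so $K$ is a normal subgroup of $G$ with $G_u \subseteq K \subseteq G$ and $[G : K] < \infty$. Finally $[K, K] \subseteq [G \cap \overline{G}_0, G \cap \overline{G}_0]\, G_u \subseteq (\overline{G}_u \cap G)\, G_u \subseteq G_u$, so $K/G_u$ is abelian.

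It remains to assemble the extension. The quotient $K/G_u$ is a finite index subgroup of the finitely generated group $G/G_u$, hence finitely generated, and it is abelian; therefore $\dim K/G_u < \infty$ by Proposition \ref{pro:fgaifd}. The quotient $G/K$ is finite, so $\dim G/K = 0$ by Lemma \ref{lem:fis}. Applying Proposition \ref{pro:elems} to the tower $G_u \subset K \subset G$ yields $\dim G/G_u = \dim G/K + \dim K/G_u < \infty$, as desired. The main obstacle is precisely the structural input that $G/G_u$ is virtually abelian: everything hinges on the fact that the derived group of the connected solvable group $\overline{G}_0$ is unipotent and is therefore absorbed into $G_u$, which is what collapses the a priori complicated finitely generated virtually solvable quotient into a finitely generated abelian-by-finite one amenable to Proposition \ref{pro:fgaifd} and Lemma \ref{lem:fis}.
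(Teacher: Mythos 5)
Your proposal is correct and follows essentially the same route as the paper: reduce to ordinary finite generation via Lemma \ref{lem:fginf}, pass to the finite index subgroup cut out by $\overline{G}_{0}$, use solvability of $\overline{G}_{0}$ (Lie--Kolchin) to see that its derived group is unipotent and hence absorbed into $G_{u}$, and conclude with Proposition \ref{pro:fgaifd} for the resulting finitely generated abelian extension together with Lemma \ref{lem:fis} and Proposition \ref{pro:elems}. The only cosmetic differences are that you take $K=(G\cap\overline{G}_{0})G_{u}$ rather than $J\cap\overline{G}_{0}$ and argue unipotence of the commutators jet by jet instead of triangularizing the linear parts once.
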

\begin{cor}
\label{cor:vsfg}
Let $G$ be a finitely generated virtually solvable subgroup of $\diffh{}{n}$
such that $G_{u}$ is finitely generated and nilpotent.
Then $G$ is finite dimensional.
\end{cor}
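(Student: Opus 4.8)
The plan is to reduce the statement to the two extension theorems already at our disposal, Theorems \ref{teo:vsfg} and \ref{teo:vnfgifd}, by splitting $G$ through its unipotent part $G_u$. The very first thing I would record is that $G_u$ is genuinely a subgroup of $G$: since $G$ is virtually solvable, Lemma \ref{lem:iatfis} guarantees that $G_u$ is a group (and in fact solvable), so that both the extension $G/G_u$ and the notion of finite dimensionality of $G_u$ make sense.

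The heart of the argument is to apply Theorem \ref{teo:vsfg} in order to transfer the question from $G$ to the more tractable unipotent subgroup $G_u$. Because $G$ is finitely generated, say by $\phi_1, \ldots, \phi_r$, we have $G = \langle G_u, \phi_1, \ldots, \phi_r \rangle$, so $G$ is finitely generated over $G_u$ with no need for one-parameter flows, and in particular finitely generated over $G_u$ in the extended sense. Theorem \ref{teo:vsfg} then yields that the extension $G/G_u$ is finite dimensional, and, more importantly for us, that $G$ is finite dimensional if and only if $G_u$ is finite dimensional.

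It then remains only to verify that $G_u$ itself is finite dimensional. By hypothesis $G_u$ is finitely generated and nilpotent; a nilpotent group is trivially virtually nilpotent (it is a finite index subgroup of itself), and finite generation in the usual sense is the special case of finite generation in the extended sense with no flows. Hence Theorem \ref{teo:vnfgifd} applies directly to $G_u$ and shows it is finite dimensional. Combining this with the equivalence furnished by Theorem \ref{teo:vsfg}, we conclude that $G$ is finite dimensional.

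I do not expect a serious obstacle: the result is essentially a bookkeeping combination of the two extension theorems, and the proof in the excerpt is indeed a one-line invocation of them. The only points requiring genuine care are the hypothesis matching — confirming via Lemma \ref{lem:iatfis} that $G_u$ is a group before we quotient by it, and checking that ``finitely generated'' feeds correctly into the extended-sense hypotheses of both \ref{teo:vsfg} and \ref{teo:vnfgifd}.
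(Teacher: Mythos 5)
Your proposal is correct and follows the same route as the paper: verify via Lemma \ref{lem:iatfis} that $G_{u}$ is a (normal) subgroup, use Theorem \ref{teo:vsfg} (whose finite-generation-over-$G_u$ hypothesis follows from the finite generation of $G$) together with Proposition \ref{pro:elems} to reduce to $\dim G_u < \infty$, and then apply Theorem \ref{teo:vnfgifd} to the finitely generated nilpotent group $G_u$. Your explicit hypothesis-checking is slightly more detailed than the paper's one-line argument, but the content is identical.
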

\begin{rem}
Notice that the nilpotence of $G_u$ is necessary in Corollary \ref{cor:vsfg}
by Lemma \ref{lem:fduin}.
The main advantage of Corollary \ref{cor:vsfg} is that we are replacing a property
of finite generation for every derived subgroup of $G$ by the analogous property
for just $G$ and $G_u$. Both conditions of finite generation can be replaced by
finite generation in the extended sense.
\end{rem}
\begin{proof}[Proof of Theorem \ref{teo:vsfg}]
The set $G_{u}$ is a subgroup of $G$ by Lemma \ref{lem:iatfis}.
Thus $G_{u}$ is a normal subgroup of $G$.

There exists a subgroup $J$ of $G$ such that
$G_{u} \subset J$, $\overline{J} = \overline{G}$ and
$J$ is finitely generated over $G_{u}$ by Lemma \ref{lem:fginf}.
It suffices to show $\dim (J/G_u) < \infty$
since $\dim (G/G_u) = \dim (J/G_u)$.

We denote $K= J \cap \overline{G}_{0}$.
The group $K$ is a finite index normal subgroup of $J$.
Since $\overline{G}_{0}$ is solvable (by Lemma \ref{lem:fiscc}) and
$G_{1,0}$ is connected,
we can suppose that all elements of $\overline{G}_{0}$ have linear parts
that belong to the group of upper triangular matrices.
Notice that $G_{u}$ is contained in $\overline{G}_{0}$.
Since $K/G_{u}$ is a finite index normal subgroup of $J/G_{u}$,
the group $K/G_{u}$ is finitely generated.
The elements of the derived group $K'$ have linear parts that are unipotent upper
triangular matrices. Thus $K'$ is contained in $G_{u}$ and
$K/G_{u}$ is abelian. We deduce $\dim (K/G_{u}) < \infty$ by
Proposition \ref{pro:fgaifd} and then
$\dim (J/G_u) < \infty$ by Corollary \ref{cor:fis}.
\end{proof}
\begin{proof}[Proof of Corollary \ref{cor:vsfg}]
Since $G_u$ is a normal subgroup of $G$ by Lemma \ref{lem:iatfis}, it suffices to show
$\dim G_u < \infty$ by Theorem \ref{teo:vsfg} and
Proposition \ref{pro:elems}. The group $G_u$ is finite dimensional by
Theorem \ref{teo:vnfgifd}.
\end{proof}
Let us see that the finite generation of $G$ can be dropped in Theorem \ref{teo:vsfg}
if we consider a splittable group.
\begin{teo}
\label{teo:vssp}
Let $G$ be a splittable virtually solvable subgroup of $\diffh{}{n}$.
Then $G/G_{u}$ is finite dimensional and $\overline{G}_{u} = \overline{(G_{u})}$ .
In particular $G$ is finite dimensional if and only if $G_{u}$ is finite dimensional.
\end{teo}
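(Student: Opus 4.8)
The plan is to reduce to the connected solvable case and then to identify the Zariski closure $\overline{(G_{u})}$ with the unipotent radical at every jet level. By Lemma \ref{lem:iatfis} the set $G_{u}$ is a solvable normal subgroup of $G$, so by Proposition \ref{pro:elems} (applied to $\{Id\} \subset G_{u} \subset G$) the last assertion follows once we know $\dim G/G_{u} < \infty$; thus it suffices to establish the first two claims. To reduce to the connected case I would set $H = G \cap \overline{G}_{0}$. It is a finite index normal subgroup of $G$ by Remark \ref{rem:ccia}, it satisfies $H_{u} = G_{u}$ (unipotent elements already lie in $\overline{G}_{0}$ by Remark \ref{rem:closu}), and it is splittable because $\overline{G}_{0}$ is splittable by Proposition \ref{pro:chevj}. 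Moreover $\overline{H} = \overline{G}_{0}$ by Lemmas \ref{lem:fiscg} and \ref{lem:lficl}, so $\overline{H}$ is connected and solvable (Lemma \ref{lem:fiscc}). Since $\dim G/G_{u} = \dim H/G_{u}$ by Corollary \ref{cor:fis} and $\overline{G}_{u} = \overline{H}_{u}$ by Lemma \ref{lem:iatfis}, both claims for $G$ follow from the corresponding claims for $H$. Hence I may assume from now on that $G$ is solvable and $\overline{G} = \overline{G}_{0}$ is connected.

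Fix $k \in {\mathbb N}$ and let $U_{k}$ denote the unipotent radical of the connected solvable group $G_{k}$, that is, the set of its unipotent elements. Put $V_{k} = (G_{u})_{k}$, the Zariski closure of $(G_{u})^{*}_{k}$. Since $G_{u}$ is normal in $G$, the group $V_{k}$ is normal in $G_{k}$ by Lemma \ref{lem:clnin}; it is unipotent, so $V_{k} \subset U_{k}$. Consider the quotient morphism $q_{k} \colon G_{k} \to B := G_{k}/V_{k}$ onto the connected solvable group $B$, and set $\Gamma = q_{k}(G_{k}^{*})$. The point is that splittability makes $\Gamma$ semisimple: for $\phi \in G$ one has $\phi_{k} = (\phi_{s})_{k} (\phi_{u})_{k}$ with $(\phi_{u})_{k} \in (G_{u})^{*}_{k} \subset V_{k}$, whence $q_{k}(\phi_{k}) = q_{k}((\phi_{s})_{k})$ is the image of a semisimple element and therefore semisimple in $B$. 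Moreover $\Gamma$ is Zariski dense in $B$ because $G_{k}^{*}$ is dense in $G_{k}$.

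The key step is to deduce that $B$ is a torus. Given $\gamma_{1}, \gamma_{2} \in \Gamma$ the commutator $[\gamma_{1},\gamma_{2}]$ lies in $[B,B]$, which is unipotent since $B$ is connected and solvable (Lie--Kolchin, cf. \cite[section 17.6, p. 113]{Humphreys}); as it also lies in the semisimple set $\Gamma$ it must equal the identity. Thus $\Gamma$ is abelian and consists of semisimple elements, hence it is simultaneously diagonalizable and its closure $B = \overline{\Gamma}$ is diagonalizable, i.e. a torus. Consequently the image of $U_{k}$ in $B$ is a unipotent subgroup of a torus and is therefore trivial, which gives $U_{k} \subset V_{k}$ and so $(G_{u})_{k} = U_{k}$ for every $k \in {\mathbb N}$. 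Passing to the projective limit, and using that $(\overline{G}_{u})_{k} = U_{k}$ (every unipotent element of $G_{k}$ lifts, via surjectivity of $\pi_{k}$ in Remark \ref{rem:pik2} and splittability of $\overline{G}$, to the unipotent part of some element of $\overline{G}$), I obtain $\overline{(G_{u})} = \varprojlim U_{k} = \overline{G}_{u}$, which is the second claim. Finally $\dim G/G_{u} = \lim_{k} (\dim G_{k} - \dim U_{k}) = \lim_{k} \dim (G_{k}/U_{k})$, and each $G_{k}/U_{k}$ is a torus isomorphic to a maximal torus of $G_{k}$, of dimension at most $n$ by Remark \ref{rem:closs} and Proposition \ref{pro:cloif}; hence $\dim G/G_{u} \leq n < \infty$. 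The main obstacle is precisely the key step above: one must exploit splittability to see that, after quotienting by $(G_{u})_{k}$, the image of $G$ becomes a group of semisimple elements, so that the commutator trick forces it to be abelian and the quotient to be a torus.
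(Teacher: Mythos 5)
Your proposal is correct and follows the same skeleton as the paper's proof --- reduce to the case where $G$ is solvable, splittable and $\overline{G}=\overline{G}_{0}$ via $H=G\cap\overline{G}_{0}$, then prove $G_{k,u}=(G_{u})_{k}$ at every jet level and pass to the projective limit --- but the engine you use for the central identity is genuinely different. The paper simply observes that $G_{k}^{*}$ is a splittable Zariski-connected solvable linear group and invokes Togo's Theorem \ref{teo:togo} to get $(\overline{G_{k}^{*}})_{u}=\overline{(G_{k}^{*})_{u}}$. You instead reprove the case you need from scratch: quotient $G_{k}$ by $V_{k}=(G_{u})_{k}$, use splittability to see that the image of $G_{k}^{*}$ consists of semisimple elements, kill its commutators with Lie--Kolchin, and conclude that the quotient is a torus, hence contains no nontrivial unipotents, hence $G_{k,u}\subset V_{k}$. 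This is a clean, self-contained substitute for the citation of \cite{togo}, and is essentially a proof of the relevant case of that theorem. The two arguments also diverge at the very end: the paper gets $\dim G/G_{u}<\infty$ directly from the injectivity of $\hat{\pi}_{1}:\overline{G}/\overline{G}_{u}\to G_{1}/G_{1,u}$ (a formal diffeomorphism is unipotent iff its $1$-jet is, Remark \ref{rem:u}), which yields the explicit value $\dim G_{1}-\dim G_{u,1}$; you instead bound $\dim(G_{k}/U_{k})\le n$ for every $k$ via the torus structure. That bound is true, but your citation of Remark \ref{rem:closs} and Proposition \ref{pro:cloif} does not quite deliver it as stated: those results concern Zariski closures of cyclic groups, so to apply them you must first choose a topological generator $t$ of the torus $G_{k}/U_{k}$ (or of a maximal torus of $G_{k}$), lift it to a semisimple element of $\overline{G}$ using surjectivity of $\pi_{k}$ and splittability of $\overline{G}$, and only then invoke Remark \ref{rem:closs}. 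This is a small, repairable gap; the paper's route through $\hat{\pi}_{1}$ avoids the issue entirely.
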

\begin{cor}
\label{cor:vssp}
Let $G$ be a splittable virtually solvable subgroup of $\diffh{}{n}$.
Suppose that either
\begin{itemize}
\item $G_u$ is nilpotent and finitely generated in the extended sense or
\item $\{ \phi^{t} : t \in {\mathbb C} \} \subset G$ for any $\phi \in G_u$ and
$G_u$ has the finite determination property.
\end{itemize}
Then $G$ is finite dimensional.
\end{cor}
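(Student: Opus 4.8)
The plan is to reduce everything to the finite dimensionality of $G_{u}$ and then settle the two cases with the tools already at hand. In both cases $G$ is splittable and virtually solvable, so Theorem \ref{teo:vssp} applies: it gives that $G$ is finite dimensional if and only if $G_{u}$ is finite dimensional. Moreover, since $G$ is virtually solvable, Lemma \ref{lem:iatfis} guarantees that $G_{u}$ is a (solvable) subgroup of $G$, so that the finite dimensionality criteria below may legitimately be applied to it. Thus the whole proof amounts to showing that $G_{u}$ is finite dimensional under each of the two alternative hypotheses.

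For the first alternative, $G_{u}$ is nilpotent and finitely generated in the extended sense. A nilpotent group is in particular virtually nilpotent, so $G_{u}$ is finite dimensional by a direct application of Theorem \ref{teo:vnfgifd}.

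For the second alternative, I would invoke Corollary \ref{cor:charfdifd} with the group $G_{u}$ in the role of $G$. This requires checking its two hypotheses. The finite determination property of $G_{u}$ is assumed outright. For the closedness condition, I must verify $\overline{\langle \phi \rangle} \subset G_{u}$ for every $\phi \in G_{u}$: since such a $\phi$ is unipotent, Remark \ref{rem:closu} identifies $\overline{\langle \phi \rangle}$ with the one-parameter group $\{ \phi^{t} : t \in {\mathbb C} \}$, which is contained in $G$ by hypothesis; as each $\phi^{t} = \mathrm{exp}(t \log \phi)$ is again unipotent, it actually lies in $G_{u}$, giving the required inclusion. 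Corollary \ref{cor:charfdifd} then yields that $G_{u}$ is finite dimensional.

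In either case $G_{u}$ is finite dimensional, whence $G$ is finite dimensional by Theorem \ref{teo:vssp}, completing the argument. The only point requiring a little care is the closedness hypothesis of Corollary \ref{cor:charfdifd} in the second case: one must observe that the orbit $\{ \phi^{t} : t \in {\mathbb C} \}$ lands inside $G_{u}$ and not merely inside $G$, which is exactly the preservation of unipotence under $\phi \mapsto \phi^{t}$. No genuine obstacle arises, as the corollary is essentially a repackaging of Theorem \ref{teo:vssp} together with the two finite dimensionality criteria.
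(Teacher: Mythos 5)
Your proposal is correct and follows essentially the same route as the paper: reduce to the finite dimensionality of $G_{u}$ via Theorem \ref{teo:vssp}, then apply Theorem \ref{teo:vnfgifd} in the first case and Corollary \ref{cor:charfdifd} (with Remark \ref{rem:closu}) in the second. Your added observations — that $G_{u}$ is indeed a group by Lemma \ref{lem:iatfis}, and that the one-parameter orbit lands in $G_{u}$ rather than merely in $G$ — are correct refinements of details the paper leaves implicit.
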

We will use the following theorem by Togo.
\begin{teo}[\cite{togo}]
\label{teo:togo}
Let $L$ be a Zariski-connected solvable subgroup of
$\mathrm{GL}(n, k)$. Then $L_{u}$ is a Zariski-closed normal subgroup of
$L$. Moreover if $L$ is splittable then
$\overline{L}_{u} = \overline{(L_{u})}$.
\end{teo}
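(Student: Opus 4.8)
The plan is to reduce the whole statement to the Lie--Kolchin theorem together with the elementary fact that commuting semisimple matrices are simultaneously diagonalizable. Throughout I work over $\mathbb{C}$ (or any algebraically closed field of characteristic zero, which is the case of interest), and I write $B_{n}$ for the group of invertible upper triangular matrices and $U_{n}$ for its normal subgroup of unitriangular matrices. Since $L$ is Zariski-connected and solvable, its Zariski-closure $\overline{L}$ is a connected solvable algebraic group, solvability being preserved by taking closures as in the principle underlying Lemma \ref{lem:pap}. By the Lie--Kolchin theorem (cf. \cite[section 17.6, p. 113]{Humphreys}) I may assume, after a linear change of coordinates, that $\overline{L}\subseteq B_{n}$, hence $L\subseteq B_{n}$. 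An upper triangular matrix is unipotent if and only if all its diagonal entries equal $1$, i.e. if and only if it lies in $U_{n}$; therefore $L_{u}=L\cap U_{n}$. As $U_{n}$ is a Zariski-closed subgroup of $B_{n}$ that is normal in $B_{n}$, this exhibits $L_{u}=L\cap U_{n}$ as a Zariski-closed subgroup of $L$ that is normal in $L$, settling the first assertion.

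For the splittable case I want to prove $\overline{L}_{u}=\overline{(L_{u})}$. Applying the previous paragraph to the connected solvable group $\overline{L}$ shows that $U:=\overline{L}_{u}=\overline{L}\cap U_{n}$ is a closed normal unipotent subgroup of $\overline{L}$, its unipotent radical. Every element of $L_{u}$ is unipotent in $\overline{L}$, so $L_{u}\subseteq U$ and hence $\overline{(L_{u})}\subseteq U$; this is the easy inclusion. Since $L_{u}$ is normal in $L$, its closure $\overline{(L_{u})}$ is normal in $\overline{L}$ (the normalizer of $\overline{(L_{u})}$ is closed and contains the dense subgroup $L$, exactly as in Lemma \ref{lem:clnin}). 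I then pass to the connected solvable quotient $Q:=\overline{L}/\overline{(L_{u})}$ with projection $\rho:\overline{L}\to Q$, and I set $S:=\rho(L)$, a Zariski-dense subgroup of $Q$.

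The crux is to show that $Q$ is a torus, for then $\rho(U)$, being the image of the unipotent normal subgroup $U$, is a unipotent normal subgroup of the torus $Q$ and hence trivial, so $U\subseteq\ker\rho=\overline{(L_{u})}$; combined with $\overline{(L_{u})}\subseteq U$ this gives $U=\overline{(L_{u})}$, as wanted. Here splittability is used: for $\phi\in L$ the unipotent part $\phi_{u}$ lies in $L$, hence in $L_{u}\subseteq\overline{(L_{u})}$, so $\rho(\phi)=\rho(\phi_{s})$ is the image of a semisimple element and is therefore semisimple in $Q$. Thus every element of $S$ is semisimple. Consequently each commutator in $[S,S]$ is simultaneously semisimple (it lies in $S$) and unipotent (it lies in $[Q,Q]$, which is contained in the unipotent radical of the connected solvable group $Q$), hence trivial; so $S$ is abelian. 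An abelian group of semisimple matrices is a set of commuting diagonalizable matrices, so it is simultaneously diagonalizable and its closure $\overline{S}=Q$ is diagonalizable; being connected, $Q$ is a torus.

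I expect the main obstacle to be precisely this last step, i.e. the reverse inclusion $\overline{L}_{u}\subseteq\overline{(L_{u})}$: the closed-normal assertion and the easy inclusion are routine, whereas collapsing the unipotent part of $Q$ rests on the observation that splittability forces the dense image $S$ to consist of semisimple elements and thereby to be abelian. This is also where the hypothesis is genuinely needed, since without it one may have $\phi_{u}\notin L$, the image $S$ need not be semisimple, and the desired equality can fail.
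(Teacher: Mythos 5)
Your proof is correct, but there is nothing in the paper to compare it against: Theorem \ref{teo:togo} is quoted from Togo's paper \cite{togo} and used as a black box, so you have supplied a self-contained derivation of a cited result rather than an alternative to an argument in the text. Your route is the natural one and is sound at every step: Lie--Kolchin puts $\overline{L}$ (connected because $L$ is Zariski-connected, solvable because solvability passes to Zariski-closures as in Lemma \ref{lem:pap}) inside the upper triangular group, whence $L_{u}=L\cap U_{n}$ is closed and normal in $L$; for the splittable case you correctly isolate the only nontrivial inclusion $\overline{L}_{u}\subseteq\overline{(L_{u})}$ and prove it by showing that $Q=\overline{L}/\overline{(L_{u})}$ is a torus, using that quotient morphisms preserve Jordan decomposition, that splittability forces $\rho(\phi)=\rho(\phi_{s})$ to be semisimple, that a dense subgroup of semisimple elements of a connected solvable group must be abelian (semisimple $\cap$ unipotent $=\{1\}$ applied to commutators lying in $[Q,Q]\subseteq Q_{u}$), and that a connected group with a dense commuting diagonalizable subgroup is a torus. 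Two minor remarks: you silently assume $k$ algebraically closed of characteristic zero (needed both for Lie--Kolchin and for the good behaviour of the Jordan decomposition), which is weaker than the statement over general $k$ but covers the only case the paper uses ($k=\mathbb{C}$, in the proofs of Theorem \ref{teo:vssp} and Proposition \ref{pro:togo}); and the aside identifying $\overline{L}\cap U_{n}$ with the unipotent radical quietly presumes its connectedness, which you never need, so it could simply be dropped.
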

\begin{proof}[Proof of Theorem \ref{teo:vssp}]
The group $H := G \cap \overline{G}_{0}$ is a finite index normal subgroup
of $G$.
Thus $\overline{H}$ is a finite index normal subgroup of $\overline{G}$
by Lemma \ref{lem:lficl}. In particular $\overline{H}$ contains $\overline{G}_{0}$
by  Lemma \ref{lem:fiscg}. Since $\overline{H} \subset \overline{G}_{0}$,
we obtain $\overline{H} = \overline{G}_{0}$.
Moreover $\overline{G}_{0}$ is solvable by Lemma \ref{lem:fiscc} and then $H$ is solvable.
Since $G$ is splittable and $G_u \subset G \cap \overline{G}_{0}$, we obtain
$G_u \subset H$. We deduce that $H$ is splittable and $H_u=G_u$.
Up to replace $G$ with $H$ if necessary we can suppose
$\overline{G} = \overline{G}_{0}$ and $G$ is solvable
by Lemma \ref{lem:iatfis}.


Fix $k \in {\mathbb N}$.
Since $G$ is splittable, the group $G_{k}^{*}$ is splittable.
The group $\overline{G}$ is solvable
(Lemma \ref{lem:pap}) and then $G_{k}$ is solvable.
The property $\overline{G}=\overline{G}_{0}$ implies $G_{k}=G_{k,0}$.
Since the Zariski-closure of $G_{k}^{*}$ is solvable and
a (connected) smooth irreducible algebraic set,
$G_{k}^{*}$ is a splittable Zariski-connected solvable group.
We can apply Togo's Theorem \ref{teo:togo} to the subgroup $G_{k}^{*}$ of $G_{k}$.
We obtain
\[ G_{k,u} = (\overline{G_{k}^{*}})_{u} = \overline{(G_{k}^{*})_{u}} =
\overline{(G_{u})_{k}^{*}} = G_{u,k} \]
for any $k \in {\mathbb N}$. Since $\overline{G}_{u} = \varprojlim  G_{k,u} $
by Remark  \ref{rem:u} and $\overline{(G_{u})} = \varprojlim  G_{u,k}$ by definition, the
equality $\overline{G}_{u} =\overline{(G_{u})}$ holds.
We have
\[ \dim G/G_u = \dim \overline{G}/ \overline{(G_{u})} =
\dim \overline{G}/ \overline{G}_{u} . \]
The map
$\hat{\pi}_{1} :  \overline{G}/ \overline{G}_{u}  \to G_{1}/ G_{1,u}$ is injective
and thus an isomorphism of groups
by Remark  \ref{rem:u}. Therefore $\overline{G}/ \overline{G}_{u}$
is finite dimensional by Proposition \ref{pro:elem} and
$\dim \overline{G}/ \overline{G}_{u} = \dim G_{1} - \dim G_{u,1}$.
We obtain
$\dim G/ {G}_{u} = \dim G_{1} - \dim G_{u,1} < \infty$.
\end{proof}
\begin{proof}[Proof of Corollary \ref{cor:vssp}]
It suffices to show that $G_u$ is finite dimensional by Theorem \ref{teo:vssp}.
In the former case it is a consequence of Theorem \ref{teo:vnfgifd}
In the later case we can apply Corollary \ref{cor:charfdifd} since
$\overline{\langle \phi \rangle}= \{  \phi^{t} : t \in {\mathbb C} \}$
for any $\phi \in \diff{u}{n}$ (cf. Remark \ref{rem:closu}).
\end{proof}
\subsection{Examples of infinitely dimensional groups}
So far we exhibited distinguished families of virtually solvable groups whose members
are finite dimensional. Now let us consider the problem of finding
infinite dimensional families
of solvable subgroups of $\diffh{}{n}$.
\begin{rem}
\label{rem:eosnfd}
Consider the subgroup $\langle \phi, \eta \rangle$ of $\diff{}{2}$ generated by 
$\phi(x,y) = (x, y(1+x))$ and $\eta (x,y) = (x, y+ x^{2})$. Since ${\langle \phi, \eta \rangle}'$  is contained in the 
group $H_{1}:=\{ (x, y+ b (x)) : b \in {\mathbb C}\{x\} \cap (x^{2}) \}$, we get
${\langle \phi, \eta \rangle}^{(2)} = \{Id\}$.
In particular $\langle \phi, \eta \rangle$ is a finitely generated unipotent solvable subgroup of $\diff{}{2}$.
Since $[(x, y - x^{k}), \phi] = (x, y + x^{k+1})$
for any $k \in {\mathbb N}$, the group $\langle \phi, \eta \rangle$ is not nilpotent. Thus 
$\langle \phi, \eta \rangle$ is neither finitely determined nor finite dimensional by Lemma \ref{lem:fduin}.
\end{rem}
Next we  see that solvable subgroups of $\diffh{}{n}$ of
high derived length are never finite dimensional.
\begin{pro}
\label{pro:infdim}
Let $G$ be a solvable group contained in $\diffh{u}{n}$ whose derived length is greater
that $n$. Then $G$ does not have the finite determination property.
In particular $G$ is not finite dimensional.
\end{pro}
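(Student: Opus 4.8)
The natural approach is to prove the contrapositive: assuming that $G$ has the finite determination property, I would show that its derived length is at most $n$. Write $A_m = \{\phi \in \diffh{}{n} : \phi \equiv \mathrm{Id} \bmod \hat{\mathfrak m}^{m+1}\}$ for the subgroup of formal diffeomorphisms tangent to the identity up to order $m$, and $A_0 = \diffh{u}{n}$. The finite determination hypothesis says precisely that $G \cap A_{k_0} = \{\mathrm{Id}\}$ for some $k_0 \in {\mathbb N}$. I would first record the elementary estimates $[A_a, A_b] \subseteq A_{a+b}$ for $a,b \geq 1$ and $[A_0, A_m] \subseteq A_m$, obtained by expanding compositions in the Krull topology; together with $G \cap A_{k_0} = \{\mathrm{Id}\}$ they already force the derived length of $G$ to be finite, but only through the crude bound $\mathrm{dl}(G) \leq \log_2 k_0$. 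Obtaining the sharp bound $\mathrm{dl}(G) \leq n$ is the real content, and since the derived length is unchanged under Zariski closure (Lemma \ref{lem:pap}) I am free to work with $\overline{G}$, which is connected (every unipotent element lies in $\{\phi^{t} : t \in {\mathbb C}\} \subseteq \overline{G}_0$ by Remark \ref{rem:closu}).

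The heart of the argument is to prove $\mathrm{dl}(G) \leq n$ by induction on $n$, the inductive step reducing the problem through a $G$-invariant formal fibration with one-dimensional fibres. Given a $G$-invariant fibration $\pi : ({\mathbb C}^{n},0) \to ({\mathbb C}^{n-1},0)$, the induced action on the base yields a homomorphism into $\diffh{u}{n-1}$ whose image has derived length $\leq n-1$ by the inductive hypothesis, while its kernel consists of elements preserving every fibre and therefore acting on the one-dimensional fibres; by the base case such a kernel is abelian, so $\mathrm{dl}(G) \leq (n-1) + 1 = n$. The base case $n=1$ is the assertion that a solvable subgroup of $\diffh{u}{}$ is abelian: passing to the Lie algebra of $\overline{G}$ and using $[x^{a}\frac{\partial}{\partial x}, x^{b}\frac{\partial}{\partial x}] = (b-a)\,x^{a+b-1}\frac{\partial}{\partial x}$, two elements of distinct orders generate a non-solvable Lie algebra, whereas elements sharing a leading order are forced to be proportional, so the algebra is at most one-dimensional.

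The main obstacle is the construction of the $G$-invariant one-dimensional fibration and the verification that the induced base and fibre actions remain finitely determined so that the induction closes. Because $G$ is unipotent and solvable, its linear parts $D_0\phi$ are simultaneously strictly upper triangularizable (Lie--Kolchin together with Engel for the nilpotent Lie algebra of linear parts), giving a complete flag $0 = V_0 \subset V_1 \subset \dots \subset V_n = {\mathbb C}^{n}$ of length exactly $n$; the delicate step is that this flag provides invariance only to leading order, so I expect to have to upgrade it to a genuine formal invariant fibration, for instance via a common relative first integral (eigen-ideal) of the nilpotent action, and to control the kernel of the restriction homomorphism. I would stress that finite determination is indispensable here and cannot be replaced by solvability alone: solvable unipotent subgroups of $\diffh{u}{n}$ of arbitrarily large derived length do exist for $n \geq 2$, and the (abelian, hence derived length $1$) example of Remark \ref{rem:fdnifd} shows that finite determination is strictly weaker than finite dimensionality. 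Once the dimension reduction is established, the hypothesis $\mathrm{dl}(G) > n$ contradicts the bound, so $G$ cannot have the finite determination property, and in particular it is not finite dimensional.
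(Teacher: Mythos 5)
Your proposal is incomplete, and the gap you flag yourself as ``the main obstacle'' is in fact fatal to the strategy as written. The inductive step claims that the kernel of the restriction homomorphism to the base of a $G$-invariant fibration ``acts on the one-dimensional fibres'' and is therefore abelian by the base case. But that kernel is a group of \emph{fibered} formal diffeomorphisms $(x_1,\ldots,x_{n-1},y)\mapsto(x_1,\ldots,x_{n-1},f(x,y))$, not a subgroup of $\diffh{u}{1}$, and such groups are not abelian in general: the group generated by $\phi(x,y)=(x,y(1+x))$ and $\eta(x,y)=(x,y+x^2)$ of Remark \ref{rem:eosnfd} preserves every fibre of $x=\mathrm{const}$ and is solvable non-nilpotent of derived length $2$. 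More globally, the naive dimension-count $\mathrm{dl}\le (n-1)+1$ cannot be correct without injecting the finite determination hypothesis into both the quotient and the kernel, because solvable unipotent subgroups of $\diffh{u}{n}$ attain derived length $2n-1$; you acknowledge that the descent of finite determination to the induced actions is unverified, so the induction does not close. (The claimed existence of the invariant formal fibration is likewise only sketched.)

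The paper's argument is entirely different and much shorter: by Lemma \ref{lem:fduin}, finite determination plus unipotence forces $G$ to be \emph{nilpotent} (project injectively to a jet group $G_{k}^{*}$, which is a unipotent matrix group, hence triangularizable and nilpotent by Kolchin's theorem), and then it invokes the external result \cite[Theorem 5]{JR:arxivdl} that nilpotent unipotent subgroups of $\diffh{}{n}$ have derived length at most $n$. Nilpotence is the pivot your proposal never exploits; if you want to salvage your approach, the honest comparison is that you are essentially trying to reprove that external theorem from scratch, which is a substantial piece of work and not achieved here.
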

\begin{rem}
Such groups always exists if $n \geq 2$. Indeed
the maximum of the derived lengths of the solvable unipotent subgroups of $\diffh{}{n}$
is $2n-1$ \cite{JR:arxivdl}.
\end{rem}
\begin{rem}
Notice that given a solvable group $G$, its derived length
is the supremum of the derived lengths of all its finitely generated subgroups.
Hence there exists a finitely generated
subgroup $H$ of $G$ with the same derived length. In particular we can suppose that
the examples provided by Proposition \ref{pro:infdim} for $n \geq 2$ are finitely generated.
\end{rem}
\begin{rem}
Let us provide an example of a group that satisfies the hypotheses of Proposition \ref{pro:infdim}
\cite{JR:arxivdl}. We denote 
$\phi(x,y) = (x, y(1+x))$, $\eta (x,y) = (x, y+ x^{2})$ and $\psi (x,y) = \left( \frac{x}{1-x},y \right)$.
Consider the subgroup $G:= \langle \phi, \eta, \psi \rangle$ of $\diff{}{2}$. 
We define the subgroup 
\[ H_0 = \{ x, y (1+a(x)) + x b (x)) : a, b  \in {\mathbb C}\{x\} \cap (x)  \}  \]
of $\diff{}{2}$. It is clear that $G'$ is contained in $H_0$ and $G^{(2)}$ is contained in the abelian group 
$H_1$ defined in Remark \ref{rem:eosnfd}. Hence $G$ is a finitely generated unipotent 
solvable subgroup of $\diff{}{2}$ whose derived length is at most $3$.  Since
\[ [\psi, \phi] = \left( x, y \frac{1+2x}{(1+x)^{2}} \right), \ \ 
[\eta^{-1}, \phi] = \left( x, y  + x^{3} \right) \ \mathrm{and} \]
\[ [[\psi, \phi] , [\eta^{-1}, \phi] ] = \left( x , y - \frac{x^5}{(1+x)^{2}} \right), \]
the diffeomorphism $ [[\psi, \phi] , [\eta^{-1}, \phi] ]$ belongs to $G^{(2)} \setminus \{ Id\}$ 
and hence the derived length of $G$ is equal to $3$. 
\end{rem}
\begin{proof}[Proof of Proposition \ref{pro:infdim}]
Suppose that $G$ has the finite determination property.
Hence $G$ is nilpotent by Lemma \ref{lem:fduin}. Therefore
the derived length of $G$ is less or equal than $n$ \cite[Theorem 5]{JR:arxivdl},
obtaining a contradiction.
\end{proof}
\section{Local intersection theory}
\label{sec:locint}
Let us explain in this section why Theorem \ref{teo:main} holds. A priori we could use
directly Binyamini's theorem \cite{Binyamini:finite} since an algebraic group is a Lie group
with finitely many connected components. Anyway we think that it is instructive to apply
our canonical approach to the ideas introduced by Seigal-Yakovenko in
\cite{Seigal-Yakovenko:ldi} (to show Theorem \ref{teo:main} for finitely generated
in the extended sense abelian subgroups of formal diffeomorphisms).

Consider two formal subschemes $I$ and $J$ of the scheme
$\mathrm{spec} \ \hat{\mathcal O}_{n}$. We can identify $I$ and $J$ with two
ideals of the ring $\hat{\mathcal O}_{n}$ of formal power series.
\begin{defi}
\label{def:intmul}
We define the {\it intersection multiplicity} $(I,J)$ as
\[ (I,J) = \dim_{\mathbb C} \hat{\mathcal O}_{n}/(I+J) . \]
\end{defi}
\begin{rem}
This definition of intersection multiplicity coincides with the usual one  if
$I$ and $J$ are complete intersections of complementary dimension. 
It is finite if and only if
the usual intersection multiplicity is finite. Moreover it provides un upper bound
for the usual intersection multiplicity (cf. \cite[Proposition 8.2]{Fulton:inter}).
Therefore by showing Theorem \ref{teo:main} with Definition \ref{def:intmul}
it will be automatically satisfied for the usual intersection multiplicity.
\end{rem}
Next let us show Theorem \ref{teo:main}. Since we follow Seigal-Yakovenko's ideas we
refer to their paper \cite{Seigal-Yakovenko:ldi} for details. We are interested in stressing
how their point of view fits in the context of the theory of finite dimensional groups of formal
diffeomorphisms.
\begin{proof}[Proof of Theorem \ref{teo:main}]
Let $V$ and $W$ be formal subschemes of $\mathrm{spec} \ \hat{\mathcal O}_{n}$.
Suppose that $V$ is given by the ideal $K$ of $\hat{\mathcal O}_{n}$.
Given $\phi \in \diff{}{n}$ the subscheme $\phi^{-1}(V)$ is given by the ideal
$\phi^{*} K = \{ f \circ \phi : f \in K \}$.

There exists $k \in {\mathbb N}$ such that $\hat{\pi}_{k}: \overline{G} \to G_{k}$
is an isomorphism of groups by Proposition \ref{pro:elem}.
The map $\pi_{m,k}: G_m \to G_k$ is an isomorphism of algebraic groups for any
$m \geq k$. In particular the affine coordinate rings ${\mathbb C}[G_{k}]$ and
${\mathbb C}[G_{m}]$ are isomorphic as ${\mathbb C}$-algebras for any
$m \geq k$.

Given an ideal $J$ of $\hat{\mathcal O}_{n}$ the property
$\dim_{\mathbb C} \hat{\mathcal O}_{n} / J > m$ is equivalent to
a system of algebraic equations on the coefficients of the $m$-th jets of the
generators of $J$ \cite[Lemma 3]{Seigal-Yakovenko:ldi}.
In particular $(\phi^{-1}(V), W) > m$ holds for $\phi \in \diffh{}{n}$
if and only if the coefficients of the $m$-th jet of
$\phi$ satisfy a certain system of algebraic equations.
More intrinsically we can say that $S_{m} := \{\phi \in \overline{G} : (\phi^{-1} (V), W) > m\}$
defines an ideal $I_{m}$ of the affine coordinate ring ${\mathbb C}[G_{m}]$.
It also defines an ideal, that we denote also by $I_{m}$, in ${\mathbb C}[G_{k}]$
for any $m \geq k$. We can suppose $I_{m} \subset I_{m'}$ for all $m' \geq m \geq k$
by replacing $I_{m}$ with $I_{k}+ \ldots + I_{m}$ for $m \geq k$.
Since ${\mathbb C}[G_{k}]$ is noetherian, there exists $m_{0} \geq k$ such that
$I_{m} = I_{m_{0}}$ for any $m \geq m_{0}$.
In particular  $(\phi^{-1} (V), W) > m_{0}$ implies $(\phi^{-1} (V), W) = \infty$
for any $\phi \in \overline{G}$.
\end{proof}
\begin{rem}
The key point of the proof is showing that the increasing sequence of ideals
$I_{1} \subset I_{2} \subset \ldots$ is contained in a noetherian ring.
Seigal and Yakovenko show that it is contained in a ring of quasipolynomials  in their setting
\cite{Seigal-Yakovenko:ldi} whereas Binyamini includes them in a noetherian subring of
continuous functions of $G$ \cite{Binyamini:finite}.
We use that the coefficients of degree greater than $k$
of the Taylor expansion of the elements of $G$ are regular functions on the coefficients of
degree less or equal than $k$ if $G$ is finite dimensional (Remark \ref{rem:hotlot}).
This allows to write all equations defining the ideals $I_{m}$
in terms of the coefficients of $\phi \in G$ of degree less or equal than $k$.
In particular Theorem \ref{teo:main}
is an immediate  consequence of the noetherianity of polynomial rings in finitely many complex variables.
More precisely, in the finite dimensional setting the noetherian ring ${\mathbb C}[G_{k}]$ containing al the ideals
$I_{m}$ for $m \in {\mathbb N}$ is an affine coordinate ring of an algebraic matrix group
canonically associated to $G$.
\end{rem}
\begin{proof}[Proof of Theorem \ref{teo:mainc}]
The hypothesis implies that $G$ is finite dimensional by Theorems \ref{teo:vp} and \ref{teo:vnfgifd}.
Hence the conclusion is a consequence of Theorem \ref{teo:main}.
\end{proof}
%
%
%
%
\appendix
\section{Solvable groups of formal diffeomorphisms}
In this section we expand  the study of virtually solvable subgroups of formal
diffeomorphisms of section \ref{sec:ffdg}. Our approach is based on considering
extensions of groups and hence it makes sense to generalize the results in
section \ref{sec:ffdg} to extensions. But we also would like to understand better
the phenomenon described in Theorem \ref{teo:vssp}: A splittable virtually solvable
subgroup $G$ of $\diffh{}{n}$ satisfies $\dim G < \infty$ if and only if $\dim G_u < \infty$.
A natural question is whether or not this ``reduction to unipotent" is possible
somehow in general or it is peculiar of the splittable case.
We will find analogues for the general case (Corollaries \ref{cor:zuuz} and \ref{cor:gfdgufd})
and even for extensions (Theorem \ref{teo:gfdgufd}).
%
%
%
%
%
%
%
%
%
\subsection{Finite dimensional subextensions}
Solvable groups in dimension $1$ are finite dimensional by Proposition \ref{pro:fd1fd}.
It is natural to study solvable groups and solvable extensions in higher dimensions
with the purpose of understanding how far they are of being finite dimensional or
also where it is concentrated the possible infinite dimensional nature of such groups.
These topics are the subjects of this section.

Let us explain our goal   a little bit more precisely.
Let $H$ be a normal subgroup of a solvable subgroup $G$ of $\diffh{}{n}$.
Suppose that $G/H$ is virtually solvable.
We already know that in general the extension $G/H$ is not finite dimensional
but anyway we want to find ``unipotent" extensions
that are finite dimensional if and only if $G/H$ is.
Indeed we will see that $\dim G/H < \infty$ is equivalent to
$\dim \langle G^{u}, H \rangle/H < \infty$ (Theorem \ref{teo:gfdgufd}).
We will also show analogues of Theorems \ref{teo:vsfg} and \ref{teo:vssp} for
extensions.

Analogously as in Corollary \ref{cor:vssp} we want to use Togo's theorem for virtually
solvable extensions even in the
non-splittable case.
%
The next linear results
are intended to address this issue.
\begin{lem}
\label{lem:gungs}
Let $G$ be a subgroup of $\mathrm{GL}(n,{\mathbb C})$.
Then $\langle G^{u} \rangle$ is normal in $\langle G^{s}, G^{u} \rangle$.
\end{lem}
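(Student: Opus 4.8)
The plan is to show directly that every generator of $\langle G^{s}, G^{u} \rangle$ normalizes $\langle G^{u} \rangle$; since the generators are precisely the elements of $G^{s} \cup G^{u}$, this yields the claim. I will lean on two elementary consequences of the uniqueness of the multiplicative Jordan decomposition in $\mathrm{GL}(n,{\mathbb C})$. First, conjugation by an invertible matrix is compatible with the decomposition: for $A, B \in \mathrm{GL}(n,{\mathbb C})$ one has $(A B A^{-1})_{u} = A\, B_{u}\, A^{-1}$ and $(A B A^{-1})_{s} = A\, B_{s}\, A^{-1}$, because $A B_{s} A^{-1}$ is semisimple, $A B_{u} A^{-1}$ is unipotent, they commute, and their product is $A B A^{-1}$. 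Second, $G^{u}$ is closed under inverses, since $g^{-1} = (g_{s})^{-1}(g_{u})^{-1}$ exhibits $(g^{-1})_{u} = (g_{u})^{-1}$; hence $\langle G^{u} \rangle$ is just the set of finite products of elements of $G^{u}$.

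The first step is to record that $\langle G^{u} \rangle$ is invariant under conjugation by $G$ itself. Indeed, for $g, h \in G$ the compatibility above gives $g\, h_{u}\, g^{-1} = (g h g^{-1})_{u} \in G^{u}$, because $g h g^{-1} \in G$. Thus $g\, G^{u}\, g^{-1} \subseteq G^{u}$; applying the same to $g^{-1} \in G$ gives the reverse inclusion, so $g\, G^{u}\, g^{-1} = G^{u}$ and therefore $g\, \langle G^{u} \rangle\, g^{-1} = \langle G^{u} \rangle$ for every $g \in G$.

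The heart of the argument is conjugation by an element of $G^{s}$, and the main obstacle is exactly that $G^{s}$ need not be contained in $G$. This is circumvented by the identity $g_{s} = g\, g_{u}^{-1}$ (valid since $g_{s}$ and $g_{u}$ commute). Given $s = g_{s}$ with $g \in G$ and any $u \in G^{u}$, I would write
\[ s\, u\, s^{-1} = g\, g_{u}^{-1} \, u \, g_{u}\, g^{-1} = g\,\bigl( g_{u}^{-1}\, u\, g_{u} \bigr)\, g^{-1}. \]
The inner factor $g_{u}^{-1}\, u\, g_{u}$ is a product of three elements of $G^{u}$ (using that $g_{u}, g_{u}^{-1} \in G^{u}$), hence lies in $\langle G^{u} \rangle$; conjugating by $g \in G$ keeps it in $\langle G^{u} \rangle$ by the previous step. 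So $g_{s}\, \langle G^{u} \rangle\, g_{s}^{-1} \subseteq \langle G^{u} \rangle$, and since $g_{s}^{-1} = (g^{-1})_{s} \in G^{s}$ the same reasoning gives the reverse inclusion, whence equality. Finally, elements of $G^{u} \subseteq \langle G^{u} \rangle$ normalize $\langle G^{u} \rangle$ trivially. Combining the two cases, each element of the generating set $G^{s} \cup G^{u}$ normalizes $\langle G^{u} \rangle$, so $\langle G^{u} \rangle$ is normal in $\langle G^{s}, G^{u} \rangle$.
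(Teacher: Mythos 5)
Your proof is correct and follows essentially the same route as the paper's: both arguments rest on the conjugation-equivariance of the Jordan decomposition (so that $G$ normalizes $G^{u}$) together with the identity $g_{s} = g\,g_{u}^{-1}$, which reduces conjugation by a semisimple part to conjugation by $g \in G$ and by $g_{u} \in G^{u}$. The only difference is cosmetic — you group the factors as $g\,(g_{u}^{-1} u\, g_{u})\,g^{-1}$ while the paper writes $B_{u}^{-1}(B A_{u} B^{-1})B_{u}$ — and your explicit check that $G^{u}$ is closed under inverses is a detail the paper leaves implicit.
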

\begin{proof}
Every element of $G^{u}$ is of the form $A_{u}$ for some $A \in G$.
Given $B \in G$ the property $B A B^{-1} \in G$ implies
$B A_{u} B^{-1} = (B A B^{-1})_{u} \in G^{u}$.
It is clear that $B_{u} A_{u} B_{u}^{-1} \in \langle G^{u} \rangle$ and that
\[ B_{s} A_{u} B_{s}^{-1} = B_{u}^{-1} (B A_{u} B^{-1}) B_{u} \in \langle G^{u} \rangle \]
for any $B \in G$. We deduce $C A_{u} C^{-1} \in \langle G^{u} \rangle$ for all
$A_{u} \in G^{u}$ and $C \in \langle G^{s}, G^{u} \rangle$.
Hence we get $C \langle G^{u} \rangle C^{-1} \subset \langle G^{u} \rangle$
for any $C \in \langle G^{s}, G^{u} \rangle$.
\end{proof}
\begin{lem}
\label{lem:uwgsgu}
Let $G$ be a subgroup of $\mathrm{GL}(n,{\mathbb C})$.
Then any element of $\langle G^{s} , \overline{\langle G^{u} \rangle} \rangle$ is
of the form $\alpha \beta$ for some $\alpha \in G^{s}$ and
$\beta \in \overline{\langle G^{u} \rangle}$.
\end{lem}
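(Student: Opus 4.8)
The plan is to set $N = \overline{\langle G^{u} \rangle}$ and to prove the sharper statement that $N$ is a normal subgroup of $\Gamma := \langle G^{s}, N \rangle$ and that the quotient $\Gamma/N$ is exactly the image of the set $G^{s}$. Once this is known, every $g \in \Gamma$ satisfies $gN = \alpha N$ for some $\alpha \in G^{s}$, which is precisely the asserted decomposition $g = \alpha \beta$ with $\beta = \alpha^{-1} g \in N$.

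First I would establish normality. By Lemma \ref{lem:gungs} the group $\langle G^{u} \rangle$ is normal in $\langle G^{s}, G^{u} \rangle$, so conjugation by any $\alpha \in G^{s}$ preserves $\langle G^{u} \rangle$, i.e.\ $\alpha \langle G^{u} \rangle \alpha^{-1} = \langle G^{u} \rangle$. Since conjugation by $\alpha$ is an isomorphism of algebraic varieties, it is a homeomorphism for the Zariski topology and therefore preserves the Zariski-closure, giving $\alpha N \alpha^{-1} = N$ for every $\alpha \in G^{s}$. As $N$ is a group it also normalizes itself, and since $\Gamma$ is generated by $G^{s} \cup N$ we conclude $N \trianglelefteq \Gamma$.

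The core of the argument is the quotient step. Note first that $G \subseteq \Gamma$: each $A \in G$ factors as $A = A_{s} A_{u}$ with $A_{s} \in G^{s}$ and $A_{u} \in G^{u} \subseteq \langle G^{u} \rangle \subseteq N$. Let $q \colon \Gamma \to \Gamma/N$ be the projection. For $A \in G$ we have $q(A) = q(A_{s}) q(A_{u}) = q(A_{s})$ because $A_{u} \in N = \ker q$; hence $q(G) = q(G^{s})$. Being the image of the subgroup $G$ under a homomorphism, $q(G^{s})$ is a subgroup of $\Gamma/N$. Finally $\Gamma/N$ is generated by $q(G^{s})$ together with $q(N) = \{ \mathrm{Id} \}$, so $\Gamma/N = q(G^{s})$, which yields the claim. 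The main obstacle is exactly this last reduction: a general element of $\Gamma$ is a priori a long word alternating letters from $G^{s}$ and from $N$, and since $G^{s}$ is only a subset (not a subgroup) of $\mathrm{GL}(n,{\mathbb C})$ one cannot naively collapse the $G^{s}$-letters. The point is that passing to $\Gamma/N$ turns the multiplicative relation $A_{s} B_{s} \equiv (AB)_{s} \pmod{N}$ (valid because $A, B, AB \in G$) into genuine closure of $q(G^{s})$ under products, so that the image of $G^{s}$ is already a subgroup and fills the whole quotient.
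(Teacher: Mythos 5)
Your proof is correct and follows essentially the same route as the paper: both rest on Lemma \ref{lem:gungs} to show that $G^{s}$ normalizes $\overline{\langle G^{u}\rangle}$ and on the congruence $A_{s}\equiv A \pmod{\overline{\langle G^{u}\rangle}}$ for $A\in G$ to collapse products of letters from $G^{s}$. The paper carries out the word manipulation explicitly (rewriting $\alpha_{1}\cdots\alpha_{m}$ as $\gamma_{1}\cdots\gamma_{m}$ times a unipotent correction and then taking the Jordan decomposition of $\gamma_{1}\cdots\gamma_{m}\in G$), whereas your quotient-group phrasing packages the same computation a bit more cleanly.
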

\begin{proof}
We denote $J = \langle G^{s} , \overline{\langle G^{u} \rangle} \rangle$.
Since $G^{s}$ normalizes $\langle G^{u} \rangle$ by Lemma \ref{lem:gungs},
$G^{s}$ normalizes $\overline{\langle G^{u} \rangle}$. Thus
$\overline{\langle G^{u} \rangle}$ is a normal subgroup of $J$. We deduce that
any element of $J$ is of the form  $\alpha_{1} \ldots \alpha_{m} \beta'$ where
$\alpha_{1}, \ldots, \alpha_{m} \in G^{s}$
and $\beta' \in \overline{\langle G^{u} \rangle}$. The element $\alpha_{j}$ is equal to
$(\gamma_{j})_{s}$ for some $\gamma_{j} \in G$ and any $1 \leq j \leq m$.
Since $\langle G^{u} \rangle$ is normal in $\langle G^{s}, G^{u} \rangle$
by Lemma \ref{lem:gungs} we obtain
\[ \alpha_{1} \ldots \alpha_{m} = \gamma_{1} \ldots \gamma_{m} \beta'' \]
for some $\beta'' \in \langle G^{u} \rangle$.
We define $\alpha = (\gamma_{1} \ldots \gamma_{m})_{s}$ and
$\beta = (\gamma_{1} \ldots \gamma_{m})_{u} \beta'' \beta'$.
It is clear that $\alpha$ belongs to $G^{s}$ and $\beta$ belongs to
$\overline{\langle G^{u} \rangle}$.
\end{proof}
The Zariski-closure of a virtually solvable matrix group $G$ is splittable by Chevalley's
Theorem \ref{teo:chevj}. Anyway in the following lemma we provide another
extension of $G$ that is splittable and contained in $\overline{G}$.
The main advantage is that in the new extension we can characterize
its unipotent elements in terms of $G^{u}$.
\begin{lem}
\label{lem:jvs}
Let $G$ be a virtually solvable subgroup of $\mathrm{GL}(n,{\mathbb C})$.
Then
$\langle G^{s} , \overline{\langle G^{u} \rangle} \rangle^{u} = \overline{\langle G^{u} \rangle}$.
In particular
$\langle G^{s} , \overline{\langle G^{u} \rangle} \rangle$ is splittable.
\end{lem}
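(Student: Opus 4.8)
The plan is to write $J=\langle G^{s},\overline{\langle G^{u}\rangle}\rangle$ and $U_{0}=\overline{\langle G^{u}\rangle}$, and to prove the identity $J^{u}=U_{0}$ by a double inclusion; the splittability of $J$ will then follow formally. First I would establish that $U_{0}$ is a \emph{unipotent} Zariski-closed group. Since $G$ is virtually solvable, so is $\overline{G}$, and its identity component $\overline{G}_{0}$ is connected and solvable. For each $\phi\in G$ the unipotent part $\phi_{u}$ lies in $\overline{\langle\phi\rangle}\subset\overline{G}$, and the one-parameter group $\{\phi_{u}^{t}:t\in{\mathbb C}\}=\overline{\langle\phi_{u}\rangle}$ is connected (Remark \ref{rem:closu}), so $\phi_{u}\in\overline{G}_{0}$. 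By Togo's Theorem \ref{teo:togo} applied to $\overline{G}_{0}$, the unipotent elements of $\overline{G}_{0}$ form a Zariski-closed subgroup; since it contains $G^{u}$ it contains $U_{0}$, which is therefore unipotent.

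The inclusion $U_{0}\subseteq J^{u}$ is immediate, since every element of $U_{0}\subset J$ is unipotent and hence equal to its own unipotent part. For the reverse inclusion $J^{u}\subseteq U_{0}$, which I expect to be the crux, I would take $C\in J$ and use Lemma \ref{lem:uwgsgu} to write $C=\alpha\beta$ with $\alpha\in G^{s}$ semisimple and $\beta\in U_{0}$. Set $D=\overline{\langle\alpha\rangle}$, which consists of semisimple elements (Remark \ref{rem:closs}). By Lemma \ref{lem:gungs} the element $\alpha$ normalizes $\langle G^{u}\rangle$ and hence $U_{0}$; since the normalizer of $U_{0}$ is Zariski-closed, $D$ normalizes $U_{0}$ as well. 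Consequently $P:=D\cdot U_{0}$ is a subgroup, and being the image of $D\times U_{0}$ under multiplication it is constructible, hence a closed subgroup. Because $D$ meets the unipotent group $U_{0}$ only in the identity, $P=D\ltimes U_{0}$ is a semidirect product and the projection $p:P\to P/U_{0}\cong D$ is a morphism of algebraic groups.

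Finally I would exploit that morphisms of algebraic groups preserve the Jordan--Chevalley decomposition (Theorem \ref{teo:chevj}). Since $p(C)=p(\alpha)\in D$ is semisimple, its unipotent part is trivial, so $p(C_{u})=p(C)_{u}=\mathrm{Id}$ and therefore $C_{u}\in\ker p=U_{0}$. This proves $J^{u}\subseteq U_{0}$ and hence $J^{u}=U_{0}=\overline{\langle G^{u}\rangle}$. The splittability of $J$ then follows: as $J^{u}=U_{0}\subseteq J$, every $C\in J$ has $C_{u}\in J$ and thus $C_{s}=C\,C_{u}^{-1}\in J$.

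The main obstacle is the reverse inclusion, and specifically checking that $P=D\cdot U_{0}$ is a genuine closed semidirect-product subgroup on which the Jordan decomposition can be read off from the projection $p$. Once this algebraic-group structure is in place the conclusion is formal, since reading $C_{u}$ through the morphism $p$ immediately places it in $\ker p=U_{0}$.
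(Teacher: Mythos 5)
Your proof is correct, and it takes a genuinely different route from the paper's. Both arguments start the same way: write $C=\alpha\beta$ with $\alpha\in G^{s}$ and $\beta\in\overline{\langle G^{u}\rangle}$ via Lemma \ref{lem:uwgsgu}, using Lemma \ref{lem:gungs} for the normalization. From there the paper replaces $\gamma$ by a power $\gamma^{k}$ so that $\overline{\langle\alpha\rangle}$ becomes connected, forms the connected solvable group $L=\langle\overline{\langle\alpha\rangle},\overline{\langle G^{u}\rangle}\rangle$, triangularizes it by Lie--Kolchin, and then kills the semisimple factor of $\gamma_{u}$ by the ``semisimple and unipotent implies identity'' argument inside the upper triangular group. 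You instead build the closed subgroup $P=D\cdot U_{0}$ with $D=\overline{\langle\alpha\rangle}$ normalizing the unipotent closed group $U_{0}$, and read off $C_{u}$ through the quotient morphism $p:P\to P/U_{0}$ using the functoriality of the Jordan--Chevalley decomposition: $p(C)=p(\alpha)$ is semisimple, so $p(C_{u})=\mathrm{Id}$ and $C_{u}\in U_{0}$. This buys you a cleaner argument that avoids both the passage to powers (and the connectedness of $\overline{\langle\alpha\rangle}$) and the Lie--Kolchin triangularization of $L$; the price is that you must invoke preservation of the Jordan decomposition under morphisms of algebraic groups, a standard fact but not the content of Theorem \ref{teo:chevj} as stated (that theorem only asserts $\phi_{s},\phi_{u}\in H$); cite instead, e.g., \cite[section I.4.4]{Borel}. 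Two further small citation points: your preliminary step that $U_{0}$ is unipotent is sound (Togo applied to the connected solvable group $\overline{G}_{0}$, or directly Lie--Kolchin, after noting $G^{u}\subset\overline{G}_{0}$ via Remark \ref{rem:closu}), and Remark \ref{rem:closs} is stated for formal diffeomorphisms, so for the semisimplicity of $\overline{\langle\alpha\rangle}$ you really mean its matrix analogue (the closure of a cyclic group of diagonalizable commuting matrices is diagonalizable). Neither issue affects the validity of the argument.
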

\begin{proof}
The group $\overline{G}$ is virtually solvable by Lemma \ref{lem:fiscc}.
Since $G$ normalizes $\langle G^{u} \rangle$, so it normalizes
$\overline{\langle G^{u} \rangle}$. The normalizer of an algebraic
group is algebraic and thus $\overline{\langle G^{u} \rangle}$ is a normal
subgroup of $\overline{G}$.

We denote $J = \langle G^{s} , \overline{\langle G^{u} \rangle} \rangle$.
Fix $\gamma \in J$. Let us show $\gamma_{u} \in  \overline{\langle G^{u} \rangle}$.
 We can suppose that $\gamma$ is of the form
$\alpha \beta$ for some $\alpha \in G^{s}$ and
$\beta \in  \overline{\langle G^{u} \rangle}$ by Lemma \ref{lem:uwgsgu}.
There exists $k \in {\mathbb N}$ such that
$\alpha^{k} \in \overline{\langle \alpha \rangle}_{0}$ where
$\overline{\langle \alpha \rangle}_{0}$ is the connected component of
$Id$ of $\overline{\langle \alpha \rangle}$.
Since $\overline{\langle \alpha^{k} \rangle}$ is a finite index normal subgroup of 
$\overline{\langle \alpha \rangle}$ by Lemma \ref{lem:lficl}, it contains $\overline{\langle \alpha \rangle}_{0}$
by Lemma \ref{lem:fiscg}.
We obtain $\overline{\langle \alpha^{k} \rangle} =\overline{\langle \alpha \rangle}_{0}$. 
It suffices to show $(\gamma^{k})_{u} \in  \overline{\langle G^{u} \rangle}$.
Indeed since  $\overline{ \langle \eta \rangle} =  \{  \mathrm{exp} (t \log \eta) : t \in {\mathbb C} \}$
for any unipotent element $\eta$ of $\mathrm{GL}(n,{\mathbb C})$, we obtain
\[ \overline{\langle \gamma_{u}^{k}  \rangle} = \overline{\langle \gamma_{u}  \rangle}
= \{ \mathrm{exp} (t \log \gamma_{u}) : t \in {\mathbb C} \}. \]
Thus the property
$\overline{\langle \gamma_{u}^{k}  \rangle} \subset \overline{\langle G^{u} \rangle}$
implies $\gamma_{u} \in \overline{\langle G^{u} \rangle}$.
Hence up to replace $\gamma$ with $\gamma^{k}$ (and as a consequence $\alpha$
with $\alpha^{k}$) we can suppose
$\overline{\langle \alpha \rangle} =\overline{\langle \alpha \rangle}_{0}$.

The group $\overline{\langle G^{u} \rangle}$ coincides with the (connected
algebraic) group generated by
$\cup_{\eta \in G^{u}} \{ \mathrm{exp}  (t \log \eta) : t \in {\mathbb C} \}$ by Chevalley's
Theorem \ref{teo:chevc}. Another application of Theorem \ref{teo:chevc} implies that
$L:=\langle \overline{\langle \alpha \rangle}, \overline{\langle G^{u} \rangle} \rangle$
is a connected algebraic group.
The group $L$
is contained in $\overline{G}$ and hence it is virtually solvable.
Since it is connected,
$L$ is solvable. The Lie-Kolchin theorem
implies that up to a change
of basis, the group $L$ is upper triangular.
Thus $G^{u}$ and then $\overline{\langle G^{u} \rangle} $
are contained in the group of unipotent upper triangular matrices. In
particular $\overline{\langle G^{u} \rangle}$ is contained in $J_{u}$.
The group $\overline{\langle G^{u} \rangle} $ is normal in $\overline{G}$
and then in $L$ since $L \subset \overline{G}$. We deduce that
every element $\eta$ of $L$ is of the form
$\eta_{1} \eta_{2}$ where $\eta_{1} \in  \overline{\langle \alpha \rangle}$ and
$\eta_{2} \in  \overline{\langle G^{u} \rangle}$.
Since algebraic matrix groups are splittable by Chevalley's Theorem \ref{teo:chevj},
we obtain that $\gamma_{u}$ is of the previous form $\eta_{1} \eta_{2}$
where $\eta_{1} \in  \overline{\langle \alpha \rangle}$ and
$\eta_{2} \in  \overline{\langle G^{u} \rangle}$.
Since $L$ is upper triangular and $\gamma_{u}$ and $\eta_{2}$ are unipotent,
$\eta_{1}$ is unipotent. It is also semisimple since
$\eta_{1} \in \overline{\langle \alpha \rangle}$ and $\alpha$ is semisimple.
We deduce $\eta_{1} \equiv Id$ and then $\gamma_{u} \in \overline{\langle G^{u} \rangle}$.
We obtain $J^{u} \subset  \overline{\langle G^{u} \rangle}$ and since
$ \overline{\langle G^{u} \rangle} \subset J^{u}$ we get
 $J^{u} = J_{u} =  \overline{\langle G^{u} \rangle}$.
\end{proof}
Now let us identify the unipotent elements of the Zariski-closure of a
virtually solvable linear group.
%
The next proposition is a consequence of the aforementioned Togo's Theorem \ref{teo:togo}
in the setting of virtually solvable (non-necessarily splittable) subgroups of
$\mathrm{GL}(n,{\mathbb C})$.
\begin{pro}
\label{pro:togo}
Let $G$ be a virtually solvable subgroup of $\mathrm{GL}(n,{\mathbb C})$.
Then $\overline{G}_{u} = \overline{\langle G^{u} \rangle}$.
Moreover $\overline{G}_{u}$ is solvable.
\end{pro}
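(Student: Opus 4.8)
The plan is to reduce the statement to the connected solvable splittable case and then invoke Togo's Theorem \ref{teo:togo}, using Lemma \ref{lem:jvs} to pin down the unipotent part. First I would set $J = \langle G^{s}, \overline{\langle G^{u} \rangle} \rangle$. Since $\overline{G}$ is algebraic it is splittable by Chevalley's Theorem \ref{teo:chevj}, so $\phi_{s}, \phi_{u} \in \overline{G}$ for every $\phi \in \overline{G} \supset G$; in particular $G^{s} \subset \overline{G}$ and $\overline{\langle G^{u} \rangle} \subset \overline{G}$, whence $J \subset \overline{G}$. Conversely every $\phi \in G$ factors as $\phi = \phi_{s} \phi_{u}$ with $\phi_{s} \in G^{s}$ and $\phi_{u} \in G^{u} \subset J$, so $G \subset J$. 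Thus $G \subset J \subset \overline{G}$, and passing to Zariski-closures gives $\overline{J} = \overline{G}$. By Lemma \ref{lem:jvs} the group $J$ is splittable and $J_{u} = J^{u} = \overline{\langle G^{u} \rangle}$.

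Next I would pass to the identity component. Since every unipotent $u \in \overline{G}$ generates the connected group $\overline{\langle u \rangle} = \{ \mathrm{exp}(t \log u) : t \in {\mathbb C} \} \ni u$ (cf. Remark \ref{rem:closu}), all unipotent elements of $\overline{G}$ lie in $\overline{G}_{0}$, so that $\overline{G}_{u} = (\overline{G}_{0})_{u}$. I then set $K = J \cap \overline{G}_{0}$. As $\overline{G}_{0} = \overline{J}_{0}$ has finite index in $\overline{J} = \overline{G}$ (cf. \cite[I.1.2]{Borel}), the group $K$ has finite index in $J$ and $\overline{K} = \overline{G}_{0}$, so $K$ is Zariski-connected; it is solvable because $\overline{G}_{0}$ is. For $\gamma \in K$ both $\gamma_{s}, \gamma_{u}$ lie in $J$ by the splittability of $J$, and they lie in $\overline{G}_{0}$ as well (the unipotent $\gamma_{u}$ is in the identity component and $\gamma_{s} = \gamma \gamma_{u}^{-1}$), hence $\gamma_{s}, \gamma_{u} \in K$ and $K$ is splittable. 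Moreover $K_{u} = J_{u} \cap \overline{G}_{0} = \overline{\langle G^{u} \rangle}$, since $\overline{\langle G^{u} \rangle}$ is already a connected subgroup of $\overline{G}_{0}$.

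Finally, applying Togo's Theorem \ref{teo:togo} to the Zariski-connected splittable solvable group $K$ yields $(\overline{G}_{0})_{u} = \overline{K}_{u} = \overline{(K_{u})} = \overline{\langle G^{u} \rangle}$, where the last equality holds because $\overline{\langle G^{u} \rangle}$ is Zariski-closed. Combined with $\overline{G}_{u} = (\overline{G}_{0})_{u}$ this gives $\overline{G}_{u} = \overline{\langle G^{u} \rangle}$. The solvability of $\overline{G}_{u}$ then follows from the inclusion $\overline{\langle G^{u} \rangle} \subset \overline{G}_{0}$ together with the solvability of $\overline{G}_{0}$.

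The main obstacle I anticipate is the bookkeeping that makes the auxiliary group $K$ simultaneously Zariski-connected, splittable, and equipped with $K_{u} = \overline{\langle G^{u} \rangle}$, so that Togo's theorem can be applied in its connected form. The decisive input here is Lemma \ref{lem:jvs}: it is what identifies the unipotent elements of $J$ (and hence of $K$) with $\overline{\langle G^{u} \rangle}$, converting the a priori nonlinear identity $\overline{G}_{u} = \overline{\langle G^{u} \rangle}$ into a direct consequence of the splittable, connected case handled by Togo.
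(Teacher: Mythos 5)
Your proof is correct and follows essentially the same route as the paper: both form $J=\langle G^{s},\overline{\langle G^{u}\rangle}\rangle$, use Lemma \ref{lem:jvs} to identify $J_{u}=\overline{\langle G^{u}\rangle}$ and obtain splittability, and then apply Togo's Theorem \ref{teo:togo}. The only difference is cosmetic: the paper arranges Zariski-connectedness and solvability up front by replacing $G$ with a finite index normal solvable subgroup via Lemma \ref{lem:iatfis}, whereas you achieve the same thing at the end by intersecting $J$ with $\overline{G}_{0}$; both are valid.
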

\begin{proof}
There exists a finite index normal subgroup $H$ of $G$ that is solvable and such
that $\overline{H}$ is connected. Up to replace $G$ with $H$ we can suppose
that these properties are satisfied by $G$ by Lemma \ref{lem:iatfis}.

We denote $J = \langle G^{s} , \overline{\langle G^{u} \rangle} \rangle$.
We have $G \subset J \subset  \overline{G}$ and then
$\overline{J}=\overline{G}$.
Since $\overline{G}$ is solvable, $J$ is solvable.
The group $\overline{J}$ is connected and hence  $J$ is Zariski-connected.
Since $J$ is splittable by Lemma \ref{lem:jvs}, we obtain
\[ \overline{G}_{u} = \overline{J}_{u} = \overline{( J_{u} )} =
\overline{\overline{\langle G^{u} \rangle}} = \overline{\langle G^{u} \rangle}  \]
by applying Togo's Theorem \ref{teo:togo} to $J$.
\end{proof}
Let us generalize Proposition \ref{pro:togo} to the setting of extensions of groups of
formal diffeomorphisms.
\begin{pro}
\label{pro:zuuz}
Let $H$ be a normal subgroup of a subgroup $G$ of $\diffh{}{n}$.
Suppose that $G/H$ is virtually solvable.
Then $\overline{\langle G^{u}, H \rangle}$ is the closure of
$\langle  \overline{G}_{u}, \overline{H} \rangle$ in the Krull topology.
\end{pro}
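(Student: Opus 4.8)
The plan is to prove the equality jet by jet and then pass to the projective limit, reducing the whole statement to a finite-dimensional assertion about the algebraic groups $G_k$ and $H_k$. First I would record two reductions. By Definition \ref{def:zarclos} we have $\overline{\langle G^u, H\rangle} = \varprojlim_k \langle G^u, H\rangle_k$, where $\langle G^u, H\rangle_k$ is the Zariski-closure of $\pi_k(\langle G^u, H\rangle) = \langle (G_k^*)^u, H_k^*\rangle$ (using $\pi_k(\phi_u) = (\phi_k)_u$, so that $\pi_k(G^u) = (G_k^*)^u$). On the other hand, since $\hat{\mathfrak m}^{k+1}/\hat{\mathfrak m}^{k+1} = \{0\}$ is an open set, the Krull topology on each jet space is discrete; hence the Krull closure of $M := \langle \overline{G}_u, \overline{H}\rangle$ is exactly $\{\phi : \pi_k(\phi) \in \pi_k(M)\ \forall k\} = \varprojlim_k \pi_k(M)$, and $\pi_k(M) = \langle (G_k)_u, H_k\rangle$ because $\pi_k(\overline{G}_u) = (G_k)_u$ by Remark \ref{rem:u} and $\pi_k(\overline{H}) = H_k$. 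So it suffices to prove, for each $k$, that the Zariski-closure of $\langle (G_k^*)^u, H_k^*\rangle$ equals $\langle (G_k)_u, H_k\rangle$.

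The main obstacle is that $G$, and therefore $G_k^*$, need not be virtually solvable: only the quotient $G/H$ is. Thus Proposition \ref{pro:togo} cannot be applied to $G_k^*$ directly. I would circumvent this by passing to the algebraic quotient $\rho_k : G_k \to \bar G_k := G_k/H_k$, which is well defined since $H_k$ is a normal algebraic subgroup of $G_k$ by Lemma \ref{lem:clnin}. Its Zariski-dense subgroup $\bar G_k^* := \rho_k(G_k^*)$ is a quotient of $G_k^*/H_k^*$, which in turn is a quotient of $G/H$; hence $\bar G_k^*$ is virtually solvable. Since $\bar G_k = \overline{\bar G_k^*}$, Proposition \ref{pro:togo} yields that $(\bar G_k)_u = \overline{\langle (\bar G_k^*)^u\rangle}$ is a connected algebraic subgroup of $\bar G_k$.

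Next I would identify the candidate group. Using functoriality of the Jordan decomposition under the surjective morphism $\rho_k$ one gets $\rho_k((G_k)_u) = (\bar G_k)_u$, while $\ker \rho_k = H_k$. Since $(G_k)_u$ is in general \emph{not} a subgroup (the ambient $G_k$ being non-solvable), I would not treat $\langle (G_k)_u, H_k\rangle$ as a product; instead I would show $\langle (G_k)_u, H_k\rangle = \rho_k^{-1}((\bar G_k)_u)$ — each element of the preimage is of the form $h\,u$ with $h \in H_k$ and $u \in (G_k)_u$ — which exhibits it as an algebraic subgroup of $G_k$. The inclusion $\overline{\langle (G_k^*)^u, H_k^*\rangle} \subseteq \langle (G_k)_u, H_k\rangle$ is then immediate, as $\langle (G_k^*)^u, H_k^*\rangle \subseteq \langle (G_k)_u, H_k\rangle$ and the right-hand side is closed. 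For the reverse inclusion, writing $P_k := \langle (G_k^*)^u, H_k^*\rangle$, I would note that $H_k = \overline{H_k^*} \subseteq \overline{P_k}$, and that $\rho_k(\overline{P_k})$ is a closed subgroup containing $\rho_k(P_k) = \langle (\bar G_k^*)^u\rangle$, hence containing $(\bar G_k)_u = \overline{\langle (\bar G_k^*)^u\rangle}$; lifting through $\ker\rho_k = H_k \subseteq \overline{P_k}$ then gives $(G_k)_u \subseteq \overline{P_k}$, and therefore $\langle (G_k)_u, H_k\rangle \subseteq \overline{P_k}$.

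Combining the two inclusions gives $\overline{P_k} = \langle (G_k)_u, H_k\rangle$ for every $k$; passing to the projective limit identifies $\overline{\langle G^u, H\rangle} = \varprojlim_k \overline{P_k}$ with $\varprojlim_k \langle (G_k)_u, H_k\rangle$, which by the first paragraph is precisely the Krull closure of $\langle \overline{G}_u, \overline{H}\rangle$. The delicate points I expect to spend care on are the functoriality $\rho_k((G_k)_u)=(\bar G_k)_u$ and the preimage description of $\langle(G_k)_u,H_k\rangle$, since $(G_k)_u$ fails to be a subgroup in the non-solvable ambient group $G_k$; this is exactly where the hypothesis that $G/H$ (rather than $G$ itself) is virtually solvable has to be exploited, by transporting solvability to the quotient $\bar G_k$ before invoking Togo's theorem.
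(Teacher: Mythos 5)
Your proposal is correct and follows essentially the same route as the paper's proof: reduce to each jet level, observe that the image of $G$ in the algebraic quotient $G_k/H_k$ is virtually solvable (being a quotient of $G/H$), apply Proposition \ref{pro:togo} there to identify $(G_k/H_k)_u$ with the Zariski-closure of $\langle (G_k^*)^u, H_k\rangle/H_k$, translate this back to $\overline{\langle (G_k^*)^u, H_k\rangle} = \langle G_{k,u}, H_k\rangle$, and pass to the projective limit. Your extra care with the preimage description $\langle (G_k)_u, H_k\rangle = \rho_k^{-1}\bigl((G_k/H_k)_u\bigr)$ and the functoriality of the Jordan decomposition under $\rho_k$ only makes explicit what the paper's chain of equalities of quotient groups asserts implicitly.
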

\begin{proof}
Since  $G_{k}^{*}/H_{k}^{*}$ is virtually solvable, $\langle G_{k}^{*} , H_k \rangle / H_k$
is virtually solvable. The group $G_{k}/H_{k}$ is the Zariski-closure of
$\langle G_{k}^{*} , H_k \rangle / H_k$ and hence
$G_{k}/H_{k}$ is a virtually solvable algebraic matrix group
for any $k \in {\mathbb N}$ by Lemma \ref{lem:fiscc}.

Consider $\langle G_{k}^{*}, H_k \rangle /H_{k}$ as a subgroup of the algebraic matrix group
$G_{k}/H_k$. Then $\langle (\langle G_{k}^{*}, H_k \rangle /H_{k})^{u} \rangle$ is equal to
$\langle (G_{k}^{*})^{u}, H_k \rangle/H_{k}$.
Let us apply Proposition \ref{pro:togo} to  $\langle G_{k}^{*}, H_k \rangle /H_{k}$.
We deduce
\[ \frac{\overline{\langle (G_{k}^{*})^{u} , H_{k}\rangle }}{H_k} =
{\left( \overline{\left( \frac{\langle G_{k}^{*}, H_k \rangle}{H_{k}} \right)} \right)}_{u} =
{\left( \frac{G_{k}}{H_k} \right)}_{u} = \frac{\langle G_{k,u}, H_k \rangle }{H_k} \]
for any $k \in {\mathbb N}$. We obtain
$\overline{\langle (G_{k}^{*})^{u} , H_{k}\rangle } = \langle G_{k,u}, H_k \rangle$
for any $k \in {\mathbb N}$. It follows that
\[ {\langle H, G^{u} \rangle}_{k} =
\overline{\langle  H_{k}^{*} , (G_{k}^{*})^{u} \rangle}
= \overline{\langle H_k , (G_{k}^{*})^{u} \rangle} = \langle G_{k,u}, H_k \rangle  \]
for any $k \in {\mathbb N}$.
Since
${\langle \overline{G}_{u}, \overline{H} \rangle}_{k}^{*} = \langle G_{k,u}, H_k \rangle
={\langle H, G^{u} \rangle}_{k}$ for any $k \in {\mathbb N}$, the group
$\overline{\langle G^{u}, H \rangle}$ is the closure of
$\langle \overline{G}_{u}, \overline{H} \rangle$ in the Krull topology.
\end{proof}
We can identify the unipotent elements of the Zariski-closure of a virtually solvable
group of formal diffeomorphisms.
\begin{cor}
\label{cor:zuuz}
Let $G$ be a virtually solvable subgroup of $\diffh{}{n}$.
Then $\overline{G}_{u} = \overline{\langle G^{u} \rangle}$.
Moreover $\overline{G}_{u}$ is a solvable group.
\end{cor}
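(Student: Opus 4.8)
The plan is to obtain Corollary \ref{cor:zuuz} as the special case $H = \{Id\}$ of Proposition \ref{pro:zuuz}. With this choice $G/H$ is just $G$, which is virtually solvable by hypothesis, while $\langle G^{u}, H \rangle = \langle G^{u} \rangle$ and $\langle \overline{G}_{u}, \overline{H} \rangle = \langle \overline{G}_{u} \rangle$. Thus the proposition tells me that $\overline{\langle G^{u} \rangle}$ equals the closure of $\langle \overline{G}_{u} \rangle$ in the Krull topology, and the whole task reduces to showing that this Krull-closure is $\overline{G}_{u}$ itself.

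First I would invoke the ``moreover'' clause of Lemma \ref{lem:iatfis}: since $G$ is virtually solvable, $\overline{G}_{u}$ is a solvable group. This immediately settles the second assertion of the corollary, and it also gives $\langle \overline{G}_{u} \rangle = \overline{G}_{u}$, so the Krull-closure in question is simply the Krull-closure of the set $\overline{G}_{u}$.

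It then remains to check that $\overline{G}_{u}$ is already closed in the Krull topology, so that taking its closure changes nothing and $\overline{\langle G^{u} \rangle} = \overline{G}_{u}$ follows. For this I would use Remark \ref{rem:u}, according to which a formal diffeomorphism $\phi \in \overline{G}$ is unipotent precisely when every jet $\phi_{k}$ is unipotent. Hence $\overline{G}_{u} = \{ \phi \in \overline{G} : \phi_{k} \in G_{k,u} \ \forall k \in {\mathbb N} \} = \varprojlim_{k} G_{k,u}$ is an intersection of preimages, under the truncation maps $\pi_{k}$, of the Zariski-closed sets of unipotent elements of the algebraic groups $G_{k}$. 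Each such preimage is closed in the Krull topology because Krull convergence forces eventual stabilization of every fixed jet, so the intersection is Krull-closed as well.

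The genuinely substantial part of the argument --- applying Togo's Theorem \ref{teo:togo} jet-by-jet through the auxiliary splittable group and identifying $\langle G_{k,u}, H_{k} \rangle$ with the unipotent part of $G_{k}/H_{k}$ --- is already packaged inside Proposition \ref{pro:zuuz}, so here I expect no real obstacle beyond the bookkeeping of the specialization. The only point demanding a moment's care is confirming that no extra closure is introduced, i.e. that $\overline{G}_{u}$ is Krull-closed; that is exactly what the description $\overline{G}_{u} = \varprojlim_{k} G_{k,u}$ supplies.
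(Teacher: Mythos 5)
Your proposal is correct and follows essentially the same route as the paper: specialize Proposition \ref{pro:zuuz} to $H=\{Id\}$, invoke Lemma \ref{lem:iatfis} for solvability (hence that $\overline{G}_{u}$ is a group), and observe that $\overline{G}_{u}=\varprojlim_{k}G_{k,u}$ is already closed in the Krull topology so no extra closure is introduced. Your justification of the Krull-closedness via Remark \ref{rem:u} is slightly more detailed than the paper's one-line remark, but the argument is the same.
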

%
\begin{proof}
We apply Proposition \ref{pro:zuuz} with $H= \{Id\}$.
We obtain that
$\overline{\langle G^{u} \rangle}$ is equal to the closure of
$\langle  \overline{G}_{u} \rangle$ in the Krull topology.
Notice that $\overline{G}_{u}$ is a solvable group by Lemma \ref{lem:iatfis}.
%
%
Since $\overline{G}_{u} =\varprojlim G_{k,u}$, it is closed in the Krull topology.
As a consequence we obtain
 $\overline{G}_{u} = \overline{\langle G^{u} \rangle}$.
\end{proof}
As a corollary of the previous result we show that in order to
determine the virtually solvable subgroups of $\diffh{}{n}$ that are finite
dimensional, it suffices to consider only groups of unipotent elements.
\begin{cor}
\label{cor:gfdgufd}
Let $G$ be a virtually solvable subgroup of $\diffh{}{n}$.
Then $G$ is finite dimensional if and only of $\langle G^{u} \rangle$
is finite dimensional.
\end{cor}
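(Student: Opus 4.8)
The plan is to reduce this to the splittable case already settled in Theorem \ref{teo:vssp}, by passing to the Zariski-closure $\overline{G}$, which is always splittable. The crucial observation is that the dimension invariant is insensitive to Zariski-closures: by the remark following Definition \ref{def:dim} we have $\dim G = \dim \overline{G}$ and $\dim \langle G^{u} \rangle = \dim \overline{\langle G^{u} \rangle}$. Hence it suffices to compare the finite dimensionality of $\overline{G}$ with that of $\overline{\langle G^{u} \rangle}$.

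First I would note that $\overline{G}$ is splittable by Proposition \ref{pro:chevj} and virtually solvable by Lemma \ref{lem:fiscc}. Therefore Theorem \ref{teo:vssp} applies directly to $\overline{G}$ in place of $G$, and it gives that $\overline{G}$ is finite dimensional if and only if its group of unipotent elements $(\overline{G})_{u} = \overline{G}_{u}$ is finite dimensional. (By Lemma \ref{lem:iatfis} the set $\overline{G}_{u}$ is genuinely a subgroup, since $G$, and hence $\overline{G}$, is virtually solvable, so this statement is meaningful.) Then I would invoke Corollary \ref{cor:zuuz}, which identifies $\overline{G}_{u}$ with $\overline{\langle G^{u} \rangle}$, so that $\dim \overline{G}_{u} = \dim \overline{\langle G^{u} \rangle} = \dim \langle G^{u} \rangle$.

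Chaining these equivalences yields the result: $G$ is finite dimensional $\iff$ $\overline{G}$ is finite dimensional $\iff$ $\overline{G}_{u}$ is finite dimensional $\iff$ $\overline{\langle G^{u}\rangle}$ is finite dimensional $\iff$ $\langle G^{u}\rangle$ is finite dimensional. The only step with genuine content is the identification $\overline{G}_{u} = \overline{\langle G^{u} \rangle}$, which is precisely where Togo's Theorem \ref{teo:togo} and the splittability construction of Lemma \ref{lem:jvs} were needed; since that work is already encapsulated in Corollary \ref{cor:zuuz}, the present corollary is essentially a bookkeeping argument combining it with Theorem \ref{teo:vssp}. Thus I expect no serious obstacle here, the difficulty having been absorbed into the preceding results.
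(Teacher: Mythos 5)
Your argument is correct, but the middle step is handled differently from the paper. Both proofs share the same essential input, namely the identification $\overline{G}_{u}=\overline{\langle G^{u}\rangle}$ from Corollary \ref{cor:zuuz} together with the invariance of $\dim$ under Zariski-closure; where you diverge is in passing from ``$\overline{G}$ finite dimensional'' to ``$\overline{G}_{u}$ finite dimensional''. You do this by observing that $\overline{G}$ is splittable (Proposition \ref{pro:chevj}) and virtually solvable (Lemma \ref{lem:fiscc}) and then applying Theorem \ref{teo:vssp} to $\overline{G}$, which is legitimate but imports the full strength of that theorem (and hence another pass through Togo's theorem). The paper instead uses the elementary equivalence of Remark \ref{rem:fdefd}: $G$ is finite dimensional iff $\overline{G}$ has the finite determination property, and since any $\phi\in\overline{G}$ with $\phi_{k}\equiv Id$ for some $k\geq 1$ has identity linear part and is therefore unipotent, i.e.\ lies in $\overline{G}_{u}$, finite determination of $\overline{G}$ is equivalent to finite determination of $\overline{G}_{u}$; combined with Corollary \ref{cor:zuuz} and Remark \ref{rem:fdefd} again this closes the loop without invoking Theorem \ref{teo:vssp}. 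Your route buys nothing extra here (Theorem \ref{teo:vssp} does give the stronger statement that $\overline{G}/\overline{G}_{u}$ is finite dimensional, but that is not needed for the corollary), while the paper's route is lighter and makes transparent that the only nontrivial content is the equality $\overline{G}_{u}=\overline{\langle G^{u}\rangle}$. Both are valid.
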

\begin{proof}
The group $G$ is finite dimensional if and only if $\overline{G}$
has the finite determination property.
Since $\overline{G}_{u}$ contains all the
elements of $\overline{G}$ with identity linear part,
the group $\overline{G}$ has the finite
determination property if and only if $\overline{G}_{u}$ has the
finite determination property.
Since $\overline{G}_{u} = \overline{\langle G^{u} \rangle}$
by Corollary \ref{cor:zuuz}, we deduce that $\overline{G}_{u}$ has
the finite determination property if and only if
$\langle G^{u} \rangle$ is finite dimensional.
\end{proof}
Next we generalize Corollary \ref{cor:gfdgufd} to extensions.
\begin{teo}
\label{teo:gfdgufd}
Let $H$ be a normal subgroup of a subgroup $G$ of $\diffh{}{n}$.
Suppose that $G/H$ is virtually solvable.
Then $\dim G/H < \infty$ if and only if $\dim \langle H, G^{u} \rangle/H < \infty$.
\end{teo}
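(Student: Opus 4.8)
The plan is to show that the intermediate group $U := \langle H, G^{u} \rangle$ sits between $\overline{H}$ and $\overline{G}$ and that the outer layer $\overline{G}/\overline{U}$ is automatically finite dimensional (in fact of dimension at most $n$), so that all the content of the statement is concentrated in the inner layer $\overline{U}/\overline{H}$. First I would observe that $G^{u} \subset \overline{G}$, since each $\phi_{u}$ lies in $\overline{\langle \phi \rangle}$ by Lemma \ref{lem:clcg} and Proposition \ref{pro:chevj}; hence $\overline{H} \subset \overline{U} \subset \overline{G}$ is a genuine chain of subgroups of $\diffh{}{n}$ even when $G$ fails to be splittable and $U \not\subset G$. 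Using that the codimension is insensitive to Zariski-closures together with the additivity of Proposition \ref{pro:elems} along this chain, I obtain $\dim G/H = \dim \overline{G}/\overline{U} + \dim U/H$. Both directions of the equivalence then follow once $\dim \overline{G}/\overline{U}$ is bounded: if $\dim U/H < \infty$ and $\dim \overline{G}/\overline{U} < \infty$ then $\dim G/H < \infty$, and conversely $\dim U/H \le \dim G/H$ because $\dim \overline{G}/\overline{U} \ge 0$.

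To bound $\dim \overline{G}/\overline{U} = \lim_{k} (\dim G_{k} - \dim U_{k})$ I would first identify $U_{k}$. Since $G/H$ is virtually solvable, the computation carried out inside the proof of Proposition \ref{pro:zuuz} gives $U_{k} = \langle H, G^{u} \rangle_{k} = \langle G_{k,u}, H_{k} \rangle$ and moreover $U_{k}/H_{k} = (G_{k}/H_{k})_{u}$, the group of unipotent elements of the virtually solvable algebraic matrix group $A_{k} := G_{k}/H_{k}$, which is Zariski-closed by Togo's Theorem \ref{teo:togo}. Since $H_{k}$ is normal in $G_{k}$ by Lemma \ref{lem:clnin}, this yields $\dim G_{k} - \dim U_{k} = \dim A_{k} - \dim (A_{k})_{u}$.

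The heart of the argument is to show $\dim A_{k} - \dim (A_{k})_{u} \le n$ for every $k$. Every unipotent element of $A_{k}$ generates a connected one-parameter subgroup and hence lies in the identity component $A_{k,0}$, so $(A_{k})_{u}$ coincides with the unipotent radical of the connected solvable group $A_{k,0}$; writing $A_{k,0} = T_{k} \ltimes R_{u}(A_{k,0})$ with $T_{k}$ a maximal torus, I get $\dim A_{k} - \dim (A_{k})_{u} = \dim T_{k}$. The projection $\rho_{k} \colon G_{k,0} \to A_{k,0}$ is a surjection of connected algebraic groups, so $T_{k}$ is the image of a maximal torus $\tilde{T}_{k}$ of $G_{k,0}$ and $\dim T_{k} \le \dim \tilde{T}_{k}$. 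Finally, any torus $T \subset D_{k}$ acts faithfully on $\hat{\mathfrak m}/\hat{\mathfrak m}^{2}$: an element of $T$ with trivial linear part is simultaneously semisimple and unipotent, hence equal to the identity, so $T$ embeds in $\mathrm{GL}(\hat{\mathfrak m}/\hat{\mathfrak m}^{2}) = \mathrm{GL}(n,{\mathbb C})$ and $\dim \tilde{T}_{k} \le n$. Thus $\dim T_{k} \le n$, and since $(\dim G_{k} - \dim U_{k})_{k}$ is non-decreasing by Lemma \ref{lem:dc}, it converges to a limit $\le n$; that is, $\dim \overline{G}/\overline{U} \le n$.

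The steps that require the most care are the reduction through closures, making sure Proposition \ref{pro:elems} is applied to the chain $\overline{H} \subset \overline{U} \subset \overline{G}$ rather than to $U$ itself, which need not be contained in $G$, and the correct identification $U_{k}/H_{k} = (A_{k})_{u}$. The virtually solvable hypothesis is indispensable precisely here, since it is what makes $(A_{k})_{u}$ a group and makes Proposition \ref{pro:zuuz} applicable; once these structural facts are in place I expect the torus bound to be routine.
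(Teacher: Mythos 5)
Your proof is correct, but it follows a genuinely different route from the paper's. The paper proves the nontrivial direction by transferring injectivity: it takes $k$ with $\hat{\pi}_{k}'$ injective on $\overline{\langle H, G^{u} \rangle}/\overline{H}$ and shows $\hat{\pi}_{k}$ is then injective on $\overline{G}/\overline{H}$, using that $H_{k}$ is splittable (Chevalley's Theorem \ref{teo:chevj}) to factor $\phi_{k}=\phi_{k,s}\phi_{k,u}$ inside $H_{k}$ and then invoking $\overline{G}_{u}\subset\overline{\langle G^{u},H\rangle}$ from Proposition \ref{pro:zuuz}; finite dimension follows from Proposition \ref{pro:elem}. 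You instead split the codimension along $\overline{H}\subset\overline{\langle H,G^{u}\rangle}\subset\overline{G}$ via Proposition \ref{pro:elems} and bound the outer layer directly: using the identification $\langle H,G^{u}\rangle_{k}/H_{k}=(G_{k}/H_{k})_{u}$ (again from the proof of Proposition \ref{pro:zuuz}, which is the common crux of both arguments), you reduce to the dimension of a maximal torus of the connected solvable group $(G_{k}/H_{k})_{0}$, pull it back to a maximal torus of $G_{k,0}$, and observe that any torus in $D_{k}$ injects into $\mathrm{GL}(\hat{\mathfrak m}/\hat{\mathfrak m}^{2})$ because the kernel of $\pi_{k,1}$ is unipotent. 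Your route costs the structure theory of connected solvable algebraic groups (semidirect decomposition $T\ltimes R_{u}$, surjections carrying maximal tori onto maximal tori), which the paper avoids, but it buys an explicit quantitative refinement the paper does not state: $\dim G/H\leq \dim \langle H,G^{u}\rangle/H+n$. You were also right to flag the delicate points: $\langle H,G^{u}\rangle$ need not be contained in $G$, so the additivity must be applied to the chain of Zariski-closures inside $\overline{G}$, and the virtually solvable hypothesis is exactly what makes $(G_{k}/H_{k})_{u}$ a closed subgroup.
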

\begin{proof}
The sufficient condition is clear since $\overline{\langle H, G^{u} \rangle}$
is contained in $\overline{G}$.
Suppose that $\dim \langle H, G^{u} \rangle/H < \infty$.
There exists $k \in {\mathbb N}$ such that the natural map
\[ \hat{\pi}_{k}': \overline{\langle H, G^{u} \rangle} / \overline{H} \to
{\langle H, G^{u} \rangle}_{k} / H_{k} \]
is injective by Proposition \ref{pro:elem}.
Let us show that $\hat{\pi}_{k}: \overline{G} / \overline{H} \to G_{k} / H_{k}$
is injective. This implies that $G/H$ is finite dimensional by Proposition \ref{pro:elem}.

Let $\phi \in \overline{G}$ such that $\hat{\pi}_{k}(\phi \overline{H}) = 1$
or equivalently $\phi_{k} \in H_{k}$.
Since $H_k$ is algebraic, it is splittable by Chevalley's theorem \ref{teo:chevj}.
In particular we get
$\phi_{k,s} \in H_{k}$ and $\phi_{k,u} \in H_k$.
Let $\alpha$ be an element of $\overline{H}$ such that $\alpha_k = \phi_{k,s}$.
The formal diffeomorphism $\alpha^{-1} \circ \phi$
satisfies $(\alpha^{-1} \circ \phi)_k = \phi_{k,u}$ and in particular is unipotent.
Since $\alpha^{-1} \circ \phi$ belongs to $\overline{G}_{u}$ and this group is contained in
$\overline{\langle G^{u}, H \rangle}$ by Proposition \ref{pro:zuuz},
we deduce  $\alpha^{-1} \circ \phi \in \overline{\langle G^{u}, H \rangle}$.
Moreover $\hat{\pi}_{k}' (\alpha^{-1} \circ \phi)=1$ implies
$\alpha^{-1} \circ \phi \in \overline{H}$ by the injective nature of $\hat{\pi}_{k}'$.
We obtain $\phi \in \overline{H}$. Thus $\hat{\pi}_{k}$ is injective.
\end{proof}
Since $\langle H, G_{u} \rangle \subset G$, we want to replace $\langle H, G^{u} \rangle$ with
$\langle H, G_{u} \rangle$ in Theorem \ref{teo:gfdgufd}
to obtain a result analogous to Theorem \ref{teo:vssp}
for extensions.
We impose two conditions in order to accomplish such a task,
namely a ``splitting" property (Theorem \ref{teo:subus}) and
a finite generation one (Theorem \ref{teo:rgwgu}).
\begin{teo}
\label{teo:subus}
Let $H$ be a normal subgroup of a subgroup $G$ of $\diffh{}{n}$.
Suppose that $G/H$ is virtually solvable and
$G^{u} \subset \langle G_u, \overline{H} \rangle$.
Then $G/\langle G_{u}, H \rangle$ is finite dimensional.
In particular $G/H$ is finite
dimensional if and only if $\langle G_{u}, H \rangle/H$ is finite dimensional.
\end{teo}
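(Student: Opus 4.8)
The plan is to apply Theorem \ref{teo:gfdgufd} not to the pair $(G,H)$ directly but to the pair $(G,K)$ where $K := \langle G_{u}, H \rangle$. First I would verify the hypotheses of that theorem for this new pair. The subgroup $K$ is normal in $G$: conjugation by any $g \in G$ preserves unipotency, so $g G_{u} g^{-1} \subset G_{u} \subset K$, while $g H g^{-1} = H \subset K$ since $H$ is normal; hence $g K g^{-1} \subset K$. Moreover $G/K$ is a quotient of the virtually solvable group $G/H$ and is therefore virtually solvable. Thus Theorem \ref{teo:gfdgufd} applies and yields that $\dim G/K < \infty$ if and only if $\dim \langle K, G^{u} \rangle / K < \infty$.

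The heart of the argument is to show that this last extension is actually zero-dimensional, and this is precisely where the splitting hypothesis $G^{u} \subset \langle G_{u}, \overline{H} \rangle$ is used. Since $G_{u} \subset K \subset \overline{K}$ and the inclusion $H \subset K$ forces $\overline{H} \subset \overline{K}$, I obtain $\langle G_{u}, \overline{H} \rangle \subset \overline{K}$ because $\overline{K}$ is a group, and consequently $G^{u} \subset \overline{K}$. Therefore $K \subset \langle K, G^{u} \rangle \subset \overline{K}$, and taking Zariski-closures gives $\overline{\langle K, G^{u} \rangle} = \overline{K}$. Because the codimension of an extension depends only on the Zariski-closures of the two groups (the remark following Definition \ref{def:dim}), I conclude $\dim \langle K, G^{u} \rangle / K = 0$. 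By Theorem \ref{teo:gfdgufd} this gives $\dim G / K = \dim G / \langle G_{u}, H \rangle < \infty$, which is the first assertion.

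For the concluding equivalence I would invoke the additivity of codimension along the tower $H \subset K \subset G$ furnished by Proposition \ref{pro:elems}, namely $\dim G/H = \dim G/K + \dim K/H$. Since the term $\dim G/K$ has just been shown to be finite, finiteness of $\dim G/H$ is equivalent to finiteness of $\dim K/H = \dim \langle G_{u}, H \rangle / H$, which is exactly the stated ``if and only if''.

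The only delicate point I anticipate is the bookkeeping around the hypothesis being phrased with $\overline{H}$ rather than $H$: one must notice that $\overline{H} \subset \overline{K}$ so that the whole of $\langle G_{u}, \overline{H} \rangle$, and hence $G^{u}$, is absorbed into $\overline{K}$. Once that inclusion is in place the conclusion follows formally from the extension machinery already developed, with no further computation required.
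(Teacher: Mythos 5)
Your proof is correct, and it takes a genuinely shorter route than the paper's. You derive the first assertion as a formal consequence of Theorem \ref{teo:gfdgufd} applied to the pair $(G,K)$ with $K=\langle G_u,H\rangle$: the normality and virtual solvability checks for $(G,K)$ are right, and your key observation that the hypothesis $G^{u}\subset\langle G_u,\overline{H}\rangle$ forces $G^{u}\subset\overline{K}$, hence $\overline{\langle K,G^{u}\rangle}=\overline{K}$ and $\dim\langle K,G^{u}\rangle/K=0$, is exactly the content of the paper's Equation (\ref{equ:forudua}) (note $G_u\subset G^{u}$, so $\langle K,G^{u}\rangle=\langle G^{u},H\rangle$). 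The paper instead proceeds directly: after establishing that same equality of closures, it uses Proposition \ref{pro:zuuz} to get ${\langle G_u,H\rangle}_1=\langle G_{1,u},H_1\rangle$ and then proves by hand that $\hat{\pi}_1:\overline{G}/\overline{\langle G_u,H\rangle}\to G_1/{\langle G_u,H\rangle}_1$ is injective, invoking Proposition \ref{pro:elem}. Your reduction outsources that injectivity argument to the proof of Theorem \ref{teo:gfdgufd} (which is established earlier and independently, so there is no circularity), buying brevity and modularity; what the paper's direct argument buys in exchange is the sharper quantitative conclusion $\dim G/\langle G_u,H\rangle=\dim G_1-\dim{\langle G_u,H\rangle}_1$, i.e.\ that the codimension is already detected at the level of $1$-jets, which your black-box application of Theorem \ref{teo:gfdgufd} does not recover. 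The final equivalence via the additivity of codimension (Proposition \ref{pro:elems}) along $H\subset K\subset G$ is handled identically in both arguments.
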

\begin{proof}
Since
$G^{u} \subset \langle G_u, \overline{H} \rangle$, we obtain
\begin{equation}
\label{equ:forudua}
 \overline{\langle G_{u}, H \rangle} = \overline{\langle G_{u}, \overline{H} \rangle}
= \overline{\langle G^{u}, \overline{H} \rangle} =
\overline{\langle G^{u}, H \rangle}.
\end{equation}
The group $\overline{\langle G^{u}, H \rangle}$ is  the closure of
$\langle  \overline{G}_{u}, \overline{H} \rangle$ in the Krull topology
by Proposition \ref{pro:zuuz}. This implies
${\langle G_{u}, H \rangle}_{1}= \langle G_{1,u}, H_{1} \rangle$.

Let us show that
$\hat{\pi}_{1} : \overline{G}/\overline{\langle G_{u}, H \rangle} \to
G_{1}/ {\langle G_u, H \rangle}_{1}$ is injective.
Let $\phi \in \overline{G}$ such that
$\hat{\pi}_{1} (\phi \overline{\langle G_{u}, H \rangle})=1$.
We have $\phi_{1} \in  {\langle G_u, H \rangle}_{1}$.
Since ${\langle G_{u}, H \rangle}_{1}= \langle G_{1,u}, H_{1} \rangle$ and
$H_{1}$ is normal in $G_1$, $\phi_1$ is of the form $\alpha \beta$ where
$\alpha \in G_{1,u}$ and $\beta \in H_1$. Let $\psi$ be an element of
$\overline{H}$ such that $\psi_{1} = \beta$.
The formal diffeomorphism $\psi^{-1} \circ \phi$ is unipotent and hence
contained in $\overline{G}_{u}$. Since
$\overline{G}_{u} \subset \overline{\langle G^{u}, H \rangle} =
\overline{\langle G_{u}, H \rangle}$ by Proposition \ref{pro:zuuz} and Equation
(\ref{equ:forudua}), we obtain $\psi^{-1} \circ \phi \in \overline{\langle G_{u}, H \rangle}$.
It is clear that $\phi$ belongs to $\overline{\langle G_{u}, H \rangle}$
since $\psi \in \overline{H}$. Hence $\hat{\pi}_{1}$ is injective.
In particular $G/\langle G_{u}, H \rangle$ is finite dimensional by Proposition
\ref{pro:elem}.
More precisely we get $\dim G/\langle G_{u}, H \rangle =
\dim G_1 - \dim {\langle G_{u}, H \rangle}_1$.
\end{proof}
\begin{teo}
\label{teo:rgwgu}
Let $H$ be a normal subgroup of a subgroup $G$ of $\diffh{}{n}$.
Suppose that
\begin{itemize}
\item $G/H$ is virtually solvable,
\item $G/ \langle H, G_u \rangle$ is finitely generated in the extended sense and
\item either $G_{1}^{*}$ or $H_{1}^{*}$ is algebraic.
\end{itemize}
Then $G/\langle G_{u}, H \rangle$ is finite dimensional.
\end{teo}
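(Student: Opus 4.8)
The plan is to split off a ``free'' semisimple direction and concentrate all the difficulty in a purely unipotent comparison, which is then attacked jetwise with Togo's theorem. Since $\overline{\langle G_{u},H\rangle}\subset\overline{\langle G^{u},H\rangle}\subset\overline{G}$, where by Proposition \ref{pro:zuuz} the middle group $\overline{\langle G^{u},H\rangle}$ is the Krull closure of $\langle\overline{G}_{u},\overline{H}\rangle$, Proposition \ref{pro:elems} yields
\[ \dim G/\langle G_{u},H\rangle = \dim G/\langle G^{u},H\rangle + \dim \langle G^{u},H\rangle/\langle G_{u},H\rangle . \]
The first summand is always finite: if $\phi\in\overline{G}$ has $\phi_{1}\in\langle G^{u},H\rangle_{1}=\langle G_{1,u},H_{1}\rangle$, then after multiplying by a suitable element of $\overline{H}$ it becomes unipotent, hence lands in $\overline{G}_{u}\subset\overline{\langle G^{u},H\rangle}$; thus $\hat{\pi}_{1}$ is injective on $\overline{G}/\overline{\langle G^{u},H\rangle}$ and Proposition \ref{pro:elem} applies. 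This is the mechanism already used in the proof of Theorem \ref{teo:subus}, and it needs only the virtual solvability of $G/H$. So everything reduces to bounding the second summand $\dim\langle G^{u},H\rangle/\langle G_{u},H\rangle$.

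Next I would normalise the situation. By Lemma \ref{lem:fginf} I may replace $G$ by a subgroup finitely generated over $\langle H,G_{u}\rangle$ with the same Zariski-closure (this changes neither $G_{u}$ nor the closures of $\langle G^{u},H\rangle$ and $\langle G_{u},H\rangle$), so that $G=\langle H,G_{u},\psi_{1},\ldots,\psi_{r}\rangle$. Passing to $G\cap\overline{G}_{0}$, which has finite index and does not alter codimensions by Corollary \ref{cor:fis}, and using that $\overline{G}_{0}$ is solvable (Lemma \ref{lem:fiscc}), Lie--Kolchin triangularises all the matrix groups $G_{k}$ simultaneously; since commutators of triangular matrices are unipotent, $G/\langle H,G_{u}\rangle$ is then abelian and finitely generated. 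The hypothesis that $G_{1}^{*}$ (or $H_{1}^{*}$) be algebraic enters at the linear level: a formal diffeomorphism is unipotent if and only if its linear part is (Remark \ref{rem:u}), so the linear parts of $G_{u}$ are exactly the unipotent elements of $G_{1}^{*}$, while those of $G^{u}$ are the unipotent parts $A_{u}$ with $A\in G_{1}^{*}$. When $G_{1}^{*}$ is algebraic it is splittable by Chevalley (Theorem \ref{teo:chevj}), so these two sets coincide and $\langle G^{u},H\rangle_{1}=\langle G_{u},H\rangle_{1}$; the case $H_{1}^{*}$ algebraic follows by the same reasoning applied inside $G_{1}^{*}/H_{1}^{*}$.

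Finally I would promote this level-$1$ coincidence to all jet levels. For fixed $k$ the triangularisation makes the diagonal-part map multiplicative, so for $\phi=g\,\psi_{1}^{a_{1}}\cdots\psi_{r}^{a_{r}}$ with $g\in\langle H,G_{u}\rangle$ the unipotent jet $\phi_{k,u}$ is, modulo $\langle G_{u},H\rangle_{k}$, a product of diagonal conjugates of the \emph{fixed} unipotent jets $(\psi_{i})_{u,k}$; the factor $g$ contributes nothing new because $g_{u}\in\overline{\langle G_{u},H\rangle}$ automatically. Writing $\Gamma_{k}:=G_{k}^{*}/H_{k}$ as a virtually solvable linear group, Proposition \ref{pro:togo} gives $(\overline{\Gamma_{k}})_{u}=\overline{\langle\Gamma_{k}^{u}\rangle}$, which is just $\langle G^{u},H\rangle_{k}=\langle G_{k,u},H_{k}\rangle$, whereas the analogue built from unipotent \emph{elements} is the image of $\langle G_{u},H\rangle_{k}$, namely $\overline{\langle(\Gamma_{k})_{u}\rangle}$. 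The whole difficulty is therefore concentrated in the gap between the groups generated by the unipotent parts $\Gamma_{k}^{u}$ and by the unipotent elements $(\Gamma_{k})_{u}$, that is, in the failure of $\Gamma_{k}$ to be splittable. The main obstacle is to show this gap stays bounded as $k\to\infty$: I expect to control it by feeding the level-$1$ coincidence into Togo's Theorem \ref{teo:togo} jetwise and using the finite abelian generation to express every $\phi_{k,u}$ as a word of length $\le r$ in the one-parameter unipotent groups $\overline{\langle(\psi_{i})_{u}\rangle}$ over $\langle G_{u},H\rangle$, so that $\dim\langle G^{u},H\rangle/\langle G_{u},H\rangle\le rn<\infty$ by Propositions \ref{pro:fw} and \ref{pro:cloif}. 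This splittability gap is exactly what forced the explicit hypothesis $G^{u}\subset\langle G_{u},\overline{H}\rangle$ in Theorem \ref{teo:subus}, and trading it for finite generation together with linear algebraicity is the technical heart of the argument.
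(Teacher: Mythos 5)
There is a genuine gap, concentrated in two places. First, your normalisation step invokes Lemma \ref{lem:fiscc} to conclude that $\overline{G}_{0}$ is solvable and then applies Lie--Kolchin to the groups $G_{k}$ themselves. But Lemma \ref{lem:fiscc} gives solvability of $\overline{G}_{0}$ only when $G$ is virtually solvable, whereas the hypothesis is merely that $G/H$ is virtually solvable; $H$ (hence $G$) can be far from solvable, in which case $G_{k,0}$ is not triangularisable and commutators of elements of $G\cap\overline{G}_{0}$ need not be unipotent. (If $G$ itself were virtually solvable, your argument would prove the theorem with no algebraicity hypothesis at all, which is a sign that something is off.) The paper avoids this by triangularising only in the quotient: it chooses a normal subgroup $J\subset H$ of $G$ with $J_{1}^{*}$ algebraic, takes a connected finite index normal subgroup $S/J_{1}$ of the virtually solvable group $G_{1}/J_{1}$, pulls it back to a finite index normal subgroup $T$ of $G$, and Lie--Kolchin then only asserts that the image of $T'$ in $G_{1}/J_{1}$ is unipotent.

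Second, and more seriously, the step carrying all the weight in your plan --- promoting the level-$1$ identity $\langle G^{u},H\rangle_{1}=\langle G_{u},H\rangle_{1}$ to a bound on $\dim\langle G^{u},H\rangle/\langle G_{u},H\rangle$ at every jet level --- is announced (``I expect to control it by feeding the level-$1$ coincidence into Togo's theorem jetwise'') but not carried out. Splittability of $G_{1}^{*}$ does not propagate to $G_{k}^{*}$, which need not be algebraic for $k\geq 2$, so the gap between $(G_{k}^{*})^{u}$ and $(G_{k}^{*})_{u}$ is precisely the unresolved difficulty; and the proposed expression of $\phi_{k,u}$ as a bounded word in conjugates of the $(\psi_{i})_{u,k}$ again presupposes the unavailable triangularisation of $G_{k}$. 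The paper's actual use of the algebraicity hypothesis is different and bypasses $\langle G^{u},H\rangle$ altogether: since $J_{1}^{*}=J_{1}$, the $J_{1}$-component $\beta$ of the linear part of an element $\phi\in T'$ is the $1$-jet of an honest element $\psi\in J$, so that $\psi^{-1}\circ\phi$ is an honest unipotent element of $G$, i.e.\ lies in $G_{u}$ and not merely in $\overline{G}_{u}$. This yields $T'\subset\langle G_{u},H\rangle$, so $G/\langle G_{u},H\rangle$ is finite-by-(finitely generated abelian) and one concludes with Lemma \ref{lem:fis}, Proposition \ref{pro:fgaifd} and Proposition \ref{pro:elems}. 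If you wish to keep your decomposition through $\langle G^{u},H\rangle$, you still need a lifting argument of this type for the second summand; as it stands the heart of the proof is missing.
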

\begin{proof}
Let us show $\dim G/\langle G_{u}, H \rangle < \infty$ under the following hypotheses:
\begin{itemize}
\item There exists a normal subgroup $J$ of $G$ such that $J \subset H$ and $G/J$ is virtually
solvable,
\item $G/ \langle H, G_u \rangle$ is finitely generated in the extended sense and
\item $J_{1}^{*}$ is algebraic.
\end{itemize}
There exists a subgroup $G_{+}$ of $G$ such that $ \langle H, G_u \rangle \subset G_{+}$,
$\overline{G}_{+}=\overline{G}$ and $G_{+}/ \langle H, G_u \rangle$ is finitely generated.
Up to replace $G$ with $G_{+}$ we can suppose that
$G / \langle H, G_u \rangle$ is finitely generated.

Since $G_{1}^{*}/J_{1}^{*}$ and $\langle G_{1}^{*}, J_{1} \rangle /J_{1}$ are virtually solvable,
$G_{1}/J_{1}$ is also virtually solvable.
Consider the natural maps $\hat{\tau}_{1}: \overline{G} \to G_{1}/J_{1}$ and
$\hat{\pi}_{1}: \overline{G}/\overline{J} \to G_{1}/J_{1}$.
There exists a connected finite index normal subgroup $S/J_{1}$ of $G_{1}/J_{1}$.
We define $T= (\hat{\tau}_{1})^{-1}(S/J_{1}) \cap G$. It is a finite index normal subgroup of $G$
containing $J$.
Since $S/J_{1}$ is connected and virtually solvable, it is solvable.
In particular  $S/J_{1}$ is triangularizable by Lie-Kolchin's theorem.
We deduce that the elements of
the derived group $(S/J_{1})'$ are unipotent.
Therefore $\hat{\tau}_{1}(T')$ is a subgroup of $G_{1}/J_{1}$ of unipotent elements.
Given any $\phi \in T'$ we have that $\phi_{1}$ is of the form $\alpha \beta$ with
$\alpha \in G_{1,u}$ and $\beta \in J_{1}$.
Since $J_{1} = J_{1}^{*}$, there exists $\psi \in J$ such that $\psi_{1} = \beta$.
The formal diffeomorphism $\psi^{-1} \circ \phi$ belongs to $T_u$.
We deduce
\begin{equation}
\label{equ:dtgu}
T' \subset \langle T_{u}, J \rangle \subset \langle G_u, H \rangle. 
\end{equation}
The extension $G/\langle T, G_{u}, H \rangle$ is finite.
It is finite dimensional by Lemma \ref{lem:fis}.
Since $\langle T, G_{u}, H \rangle/ \langle G_u, H \rangle$ is a finite index subgroup
of the finitely generated group $G/ \langle G_u, H \rangle$, it is finitely generated
(cf.  \cite[Theorem 1.6.11]{Robinson}).
Moreover $\langle T, G_{u}, H \rangle/ \langle G_u, H \rangle$ is abelian by 
Property (\ref{equ:dtgu}). Proposition \ref{pro:fgaifd} implies that
$\langle T, G_{u}, H \rangle/ \langle G_u, H \rangle$
is finite dimensional. Thus $G/ \langle G_u, H \rangle$
 is finite dimensional by Proposition \ref{pro:elems}.

By defining $J=H$ the case where $H_{1}^{*}$ is algebraic is proved. Let us
suppose that $G_{1}^{*}$ is algebraic.
Let $M$ be a finite index normal subgroup of $G$ containing $H$ and such
that $M/H$ is solvable. There exists a derived group $M^{(\ell)}$ of $M$ contained
in $H$. We define $J= M^{(\ell)}$. Since derived groups are characteristic, $J$
is a normal subgroup of $G$. It is clear
that $J \subset H$ and $G/J$ is virtually solvable.
The group $M_{1}^{*}$ is a finite index normal subgroup of $G_{1}^{*}$
and since the latter group is algebraic, $M_{1}^{*}$ is algebraic.
Moreover $J_{1}^{*}$ is the $\ell$-th derived group of $M_{1}^{*}$.
Derived groups of algebraic groups are algebraic (cf.  \cite{Borel}[2.3, p. 58]),
thus $J_{1}^{*}$ is algebraic.
\end{proof}

\bibliography{rendu}
\end{document}